\theoremstyle{plain}
\newtheorem{thm}{Theorem}[section]
\newtheorem{prop}[thm]{Proposition}
\newtheorem{lemma}[thm]{Lemma}
\newtheorem{cor}[thm]{Corollary}
\newtheorem{defprop}[thm]{Definition/Proposition}
\theoremstyle{definition}
\newtheorem{defi}[thm]{Definition}
\theoremstyle{definition}
\newtheorem{remark}[thm]{Remark}
\newtheorem{ep}[thm]{Example}
\newtheorem*{ack}{Acknowledgements}
\newcommand{\ZZ}{\ensuremath{\mathbb Z}}
\newcommand{\RR}{\ensuremath{\mathbb R}}
\newcommand{\su}{\ensuremath{\frak{su}}}
\newcommand{\g}{\ensuremath{\frak{g}}}
\renewcommand{\k}{\ensuremath{\frak{k}}}
\newcommand{\y}{\ensuremath{\frak{y}}}
\newcommand{\s}{\ensuremath{\frak{s}}}
\newcommand{\m}{\ensuremath{\frak{m}}}
\newcommand{\bs}{\mathbf{s}}                  
\definecolor{forest}{rgb}{0,0.5,0} 
\definecolor{orange}{rgb}{1,0.4,0}
\newcommand{\cG}{\mathcal{G}}
\newcommand{\cL}{\mathcal{L}}
\newcommand{\cC}{\mathcal{C}}
\newcommand{\cA}{\mathcal{A}}
\newcommand{\li}{\ensuremath{L_{\infty}}}
\newcommand{\pd}[1]{\frac{\partial}{\partial #1}} 
\newcommand{\bd}{\mathbf{d}}
\newcommand{\del}{\partial}
\newcommand{\cinf}{C^{\infty}}
\newcommand{\LX}{\mathfrak{X}^{\wedge {\bullet}}}
\renewcommand{\L}{\mathcal{L}}
\newcommand{\X}{\mathfrak{X}}
\newcommand{\Sh}{\mathrm{Sh}}
\newcommand{\Sn}{\mathcal{S}}
\newcommand{\maps}{\colon}
\newcommand{\tensor}{\otimes}
\renewcommand{\deg}[1]{\left \lvert #1 \right \rvert}
\newcommand{\ds}{\mathbf{s}^{-1}}                  
\newcommand{\innerprod}[2]{\bigl \langle #1  ,  #2 \bigr \rangle}
\newcommand{\pr}{\mathrm{pr}}
\newcommand{\epi}{\twoheadrightarrow}
\renewcommand{\S}{\bar{S}}
\newcommand{\rDelta}{\bar{\Delta}}
\newcommand{\rdDelta}[1]{\bar{\Delta}^{(#1)}}
\newcommand{\vs}{\varsigma}
\newcommand{\sQ}{Q^{1}}
\newcommand{\sF}{F^{1}}
\newcommand{\poi}{\text{$\mathsf{Ham}_{\infty}$}}
\newcommand{\Linf}{\mathsf{Lie}_{\infty}}
\newcommand{\str}{\mathfrak{string}}
\newcommand{\Xham}{\mathfrak{X}_{\mathrm{Ham}}}
\newcommand{\Xlham}{\mathfrak{X}_{\mathrm{LHam}}}
\newcommand{\ham}[1]{\Omega^{#1}_{\mathrm{Ham}}\left(M\right)}
\newcommand{\ip}[1]{\iota_{v_{#1}}}
\newcommand{\brac}[2]{\left \{ #1,#2 \right\}}
\newcommand{\alphak}[1]{\alpha_{1} \tensor \cdots \tensor \alpha_{#1}}
\newcommand{\alphadk}[1]{\alpha_{1},\hdots,\alpha_{#1}}
\newcommand{\vk}[1]{v_{\alpha_{1}} \wedge \cdots \wedge  v_{\alpha_{#1}}}
\newcommand{\xto}[1]{\xrightarrow{#1}}
\newcommand{\gl}{\ensuremath{\frak{gl}}}
\newcommand{\CE}{\mathrm{CE}}
\newcommand{\U}{\mathrm{U}}
\newcommand{\R}{\mathbb{R}}
\newcommand{\Z}{\mathbb{Z}}
\newcommand{\cAf}{\mathcal{A}_{\mathrm{flat}}}
\newcommand{\fat}[1]{\bigl \lVert #1 \bigr \rVert}
\newcommand{\bu}{\bullet}
\newcommand{\EGM}{E_{\bu}G \times M}
\newcommand{\bOmega}{\Omega^{\ast}}
\newcommand{\bX}{X_{\bullet}}
\DeclareMathOperator{\hor}{\mathrm{hor}}
\DeclareMathOperator{\Hom}{\mathrm{Hom}}
\DeclareMathOperator{\id}{\mathrm{id}}
\DeclareMathOperator{\im}{\mathrm{im}}
\DeclareMathOperator{\Ad}{\mathrm{Ad}}
\DeclareMathOperator{\ad}{\mathrm{ad}}
\DeclareMathOperator{\Tr}{\mathrm{Tr}}
\DeclareMathOperator{\Lie}{\mathrm{Lie}}
\DeclareMathOperator{\supp}{\mathrm{supp}}
\DeclareMathOperator{\curv}{\mathrm{curv}}
\DeclareMathOperator{\Ham}{\mathrm{Ham}}
\DeclareMathOperator{\dR}{\mathrm{dR}}
\DeclareMathOperator{\GL}{\mathrm{GL}}
\DeclareMathOperator{\SO}{\mathrm{SO}}
\DeclareMathOperator{\SU}{\mathrm{SU}}
\DeclareMathOperator{\Tot}{\mathrm{Tot}}
\DeclareMathOperator{\Car}{\mathrm{Car}}
\DeclareMathOperator{\Alt}{\mathrm{Alt}}
\DeclareMathOperator{\Sym}{\mathrm{Sym}}
\DeclareMathOperator{\pt}{\mathrm{pt}}
\DeclareMathOperator{\Map}{\mathrm{Map}}
\DeclareMathOperator{\spl}{\mathrm{spl}}
\begin{document}

\title{Homotopy moment maps}

\author{Martin Callies} 
\address{Mathematisches Institut, Georg-August Universit\"at G\"ottingen, Bunsenstrasse 3-5, D-37073 G\"ottingen, Germany. Current address: Fakult\"{a}t f\"{u}r Mathematik, SFB 701,
Universit\"{a}t Bielefeld, Postfach 100131, 33501 Bielefeld,
Germany } \email{mcallies@math.uni-bielefeld.de}

\author{Ya\"el Fr\'egier}
\address{MIT/University of Zurich/Universit\'e d'Artois} \email{yael.fregier@math.mit.edu} 

\author{Christopher L.\ Rogers} 
\address{Department of Mathematics and Statistics, University of
  Nevada, Reno, 1664 N. Virginia Street Reno, NV 89557-0084 USA
 } \email{chrisrogers@unr.edu, chris.rogers.math@gmail.com}

\author{Marco Zambon}
\address{KU Leuven, Department of Mathematics, Celestijnenlaan 200B box 2400, BE-3001 Leuven, Belgium}
\email{marco.zambon@wis.kuleuven.be}  

\thanks{\textbf{Keywords:} strong homotopy Lie algebra; moment map;
  equivariant cohomology; multisymplectic geometry} 

\begin{abstract}
  Associated to any manifold equipped with a closed form of degree
  $>1$ is an `$L_{\infty}$-algebra of observables' which acts as a higher/homotopy
  analog of the Poisson algebra of functions on a symplectic
  manifold.  In order to study Lie group actions on these manifolds,
  we introduce a theory of homotopy moment maps. Such a map is a
  $L_{\infty}$-morphism from the Lie algebra of the group into the
  observables which lifts the infinitesimal action.  We
  establish the relationship between homotopy moment maps and
  equivariant de Rham cohomology, and analyze the obstruction theory
  for the existence of such maps. This allows us to easily and
  explicitly construct a large number of examples. These include
  results concerning group actions on loop spaces and moduli spaces of
  flat connections. Relationships are also established with previous
  work by others in classical field theory, algebroid theory, and dg
  geometry. Furthermore, we use our theory to geometrically construct
  various $L_{\infty}$-algebras as higher central extensions of Lie
  algebras, in analogy with Kostant's quantization theory. In
  particular, the so-called `string Lie 2-algebra' arises this way.
\end{abstract}

\maketitle
\setcounter{tocdepth}{1} 
\tableofcontents

\section{Introduction}\label{introitus} 

This paper represents part of a larger project which involves studying the
symmetries of manifolds equipped with a closed differential
form. The motivation for this work stems from the desire to have a
more conceptual understanding of the role these manifolds play in
differential cohomology, generalized geometry, and 
field theory. 
In our approach, we view such manifolds as generalizations of
symplectic manifolds.

As a first step, we consider symmetries arising from a Lie group
acting on a manifold by diffeomorphisms which preserve a closed
differential form. A key component of our formalism is the 
`homotopy moment map'. This is a natural generalization of the moment
map used to study symmetries in symplectic geometry. However,
unlike their symplectic counterparts, our moment maps do not
correspond to morphisms between Lie algebras. Instead, they are morphisms
between objects called `$L_{\infty}$-algebras', which can
be thought of as homotopy-theoretic upgrades of Lie algebras. More
precisely, an $L_{\infty}$-algebra is a graded vector
space equipped with a skew-symmetric bracket which satisfies the
Jacobi identity up to coherent homotopy. The coherent homotopy is given as part
of the data by an infinite sequence of higher degree multi-linear
brackets which satisfy additional Jacobi-like identities.
$L_{\infty}$-algebras with underlying vector spaces concentrated in the first
non-positive $n$-degrees are often called `Lie $n$-algebras'. In
particular, a Lie 1-algebra is an ordinary Lie algebra.

Morphisms between $L_{\infty}$-algebras are not just linear maps which
preserve the brackets. This definition is too strict. Rather, a
morphism is an infinite collection of multi-linear maps which preserve
the brackets up to, again, coherent homotopy. We emphasize that this
notion of morphism between $L_{\infty}$-algebras plays a crucial role.

Perhaps it seems strange that these higher homotopical structures should appear
when studying something as classical as actions of Lie groups on manifolds. 
To understand why they are needed, we have to first recall some facts concerning
symmetries in symplectic geometry. 

\subsection{Symplectic geometry}
The important infinitesimal symmetries of a symplectic manifold
correspond to the Hamiltonian vector fields. These form a
Lie algebra whose bracket is the usual commutator of vector
fields. The space of smooth functions is also a Lie algebra, whose
bracket is specified by the symplectic 2-form. {This is the underlying
Lie algebra of the Poisson algebra.}
If the manifold is connected (we always assume
this is the case), then Kostant \cite{Kostant:1970} showed that the
Poisson algebra is characterized as a particular extension of the
Lie algebra of Hamiltonian vector fields by $\R$. The 2-cocycle
representing this central extension is determined by the symplectic
form.

Now suppose we have a Lie group $G$, with Lie algebra $\g$, acting on
the manifold via diffeomorphisms which preserve the symplectic
form. Assume further that the associated infinitesimal action is given
by a Lie algebra morphism from $\g$ to the Hamiltonian vector
fields. A `moment map' for the action corresponds to a lift of this Lie algebra morphism to the
central extension given by the Poisson algebra\footnote{Technically, this lift is not the moment map, but rather
the co-moment map.}. Whether a moment map exists or not for a
particular $G$-action is an important question in symplectic geometry.
It can be thought of as the symplectic analog of determining when a
projective representation of $G$ lifts to a linear one.

The relationship between symmetries in symplectic geometry and
representation theory is made explicit via `geometric quantization'. If
the symplectic form represents an integral cohomology class, then it
corresponds to the curvature of a principal $U(1)$-bundle equipped with
a connection. In this case, the Poisson algebra is
isomorphic to a Lie algebra consisting of the $U(1)$-invariant vector fields on the
bundle whose flows preserve the connection. This
Lie algebra acts naturally as differential operators on sections of
the associated Hermitian line bundle. Hence, if there is a $G$-action
on the symplectic manifold, then a moment map for this action gives a
representation of the Lie algebra $\g$ on the space of sections. In
certain cases, this action integrates to a global $G$-action.

If no moment map exists, then Kostant's construction produces
non-trivial central extensions of both $\g$ and $G$ which naturally act on
the space of sections of the Hermitian line bundle. Many important Lie
groups can be constructed this way e.g.\ central extensions of loop groups,
as well as the Heisenberg and Bott-Virasoro groups \cite[Sec.\ 2.4.]{Brylinski_book}.

\subsection{``Higher'' symplectic geometry}
Let us return to the more general case, and consider a manifold
equipped with a closed form of degree $n+1 > 2$. 
Such a manifold also has Hamiltonian vector fields, and these form
a Lie algebra just as they do in symplectic geometry.
To pursue the analogy further, one might try to construct a central
extension of the Hamiltonian vector fields using the closed $(n+1)$-form.
Unlike the symplectic case, the form does not
induce a skew-symmetric bracket on functions. But it does on a
particular subspace of $(n-1)$-forms, called
Hamiltonian $(n-1)$-forms. This bracket, however, fails to satisfy the Jacobi
identity. 

This lack of a genuine Lie bracket is the reason why
$L_{\infty}$-algebras appear.  In previous work \cite{RogersL}, the third author
considered such manifolds when the closed $(n+1)$-form satisfied a
mild non-degeneracy condition.  These are called `$n$-plectic' or
`multisymplectic' manifolds. He associated to such a manifold a Lie
$n$-algebra whose underlying vector space consists of the Hamiltonian $(n-1)$-forms and
all other forms of lower degree. Its brackets are completely
determined by the $(n+1)$-form and the de Rham differential. 
Later, the fourth author showed
that the non-degeneracy assumption is not necessary for the
construction, and therefore any `pre-$n$-plectic' manifold has
such a Lie $n$-algebra \cite{HDirac}.

In this work, we slightly generalize these previous constructions
and associate to any manifold equipped with a closed $(n+1)$-form
its `Lie $n$-algebra of observables'. When the form is non-degenerate,
this Lie $n$-algebra is isomorphic to the one constructed in
\cite{RogersL}, and in particular, we recover the underlying Lie
algebra of the usual Poisson algebra when $n=1$. 
In analogy with Kostant's central extension for a
symplectic manifold, this Lie $n$-algebra of observables can be
characterized uniquely, up to homtopy, as a $L_{\infty}$-extension of 
the Lie algebra of Hamiltonian vector fields whose classifying cocycle
is determined by the closed form. (See Thm.\ 3.4.1 in \cite{FRS}.)

If a Lie group $G$ acts on a pre-$n$-plectic manifold $(M,\omega)$ and the
infinitesimal action induces a Lie algebra morphism between $\g$ and
the Hamiltonian vector fields, then we define a
homotopy moment map, or  just `moment map' for short, to be a lift
of this Lie algebra morphism to an $L_{\infty}$-morphism from $\g$ to
the Lie $n$-algebra {of observables. The precise
 definition is given in Def.\ \ref{main_def}.
For $n=1$, we recover the usual notion of a
(co)-moment map in pre-symplectic geometry.}
At first sight, this definition may seem
too abstract or technical to be useful. However, thanks to some of the
tools developed here, we can easily and systematically construct such maps and
therefore produce a large variety of interesting examples.
{In almost all of these, the moment map is not a ``strict''
$L_{\infty}$-morphism.} 

{The ultimate goal is to complete the analogy with the symplectic
  case by understanding the role homotopy moment maps play in
  quantization and representation theory. Indeed, the results in
  \cite{FRS} imply that a homotopy moment map will lift a $\g$-action on
  $(M,\omega)$ to an action on a `higher bundle gerbe' over $M$ whose
  curvature is $\omega$. (See also Remark \ref{inf_sym_remark}
  below). Along with this, we are also interested in pursuing the
  geometric relationship between these moment maps and conserved
  quantities, in the sense of Hamiltonian dynamics and also classical
  field theory. (See Sec.\ \ref{end_sec} for further details.) }

\subsection{Summary of results}
Our exposition throughout is aimed at a broad audience 
of geometers and topologists. We assume the reader has essentially
no expertise in homotopical algebra or familiarity with higher geometric structures.

We begin with a quick introduction to $L_{\infty}$-algebras in Sec.\
\ref{Linfty_sec} and leave the more technical aspects to the appendix.
We review the necessary background on $n$-plectic geometry,
Hamiltonian vector fields, and the Lie $n$-algebra of observables in
Sec.\ \ref{nplectic_sec}.  
{We introduce the homotopy moment map in
Sec.\ \ref{sec:momaps}}.

\subsubsection*{Equivariant de Rham cohomology}
In Sec.\ \ref{equi_deRham_sec}, we present our first main result:
If we have a compact Lie group acting on a manifold,
then from any $(n+1)$-cocycle in equivariant de Rham cohomology we can
naturally and explicitly produce a $G$-invariant pre-$n$-plectic
structure and a homotopy moment map (Thm.\ \ref{thm:BSmm} and Thm.\ \ref{thm:CartanMM}).
The formula for the moment map generalizes the important
relationship between moment maps in symplectic geometry and degree 2
cocycles in equivariant cohomology. We find this result particularly
interesting since it suggests a geometric interpretation of higher
degree equivariant cocycles.

\subsubsection*{Closed 3-forms}
The first truly new (i.e.\ non-symplectic) examples will arise on
manifolds equipped with a closed 3-form. Such manifolds also play an
important role in generalized geometry and the theory of gerbes.  
So in Sec.\ \ref{sec:c3f}, we focus on aspects specific to this case.

\subsubsection*{Basic examples}
We then present some basic examples in Sec.\ \ref{examples_sec}. These include:
\begin{itemize}
\item {Exact pre-plectic forms: This generalizes familiar results in
  symplectic geometry involving $G$-invariant symplectic potentials.
  Special cases include $G$-actions on exterior powers of cotangent bundles,
  and the action of $\mathrm{SO}(n)$ on $\R^n$ equipped with the usual volume form.
}
\item{Compact Lie groups: The Cartan 3-form on such a group is
    invariant under conjugation and can be uniquely extended to an equivariant
    closed 3-form. This gives a moment map for the adjoint
    action. We point out a relationship between this moment map and certain
    quasi-Hamiltonian $G$-spaces.
 }
\item{$\mathrm{SO}(n)$-action on the $n$-sphere: This generalizes the
  Hamiltonian circle action on $S^2$ whose moment map corresponds to the
   ``height function'' along the $z$-axis.
}
\end{itemize}

\subsubsection*{Obstructions and higher central extensions}
In order to produce more examples, we study in Sec.\ \ref{obstruct_sec} the obstructions
to lifting a $G$-action to a homotopy moment map.
The results we present here are natural
generalizations of the symplectic ones. The
existence of a moment map for a $G$-action on a (connected)
pre-$n$-plectic manifold implies that a degree $(n+1)$ class $[c]$ in Lie algebra
cohomology is trivial. Conversely, if $[c]=0$ and
$M$ satisfies certain topological assumptions, then we can always
construct a moment map lifting the action (Thm.\ \ref{thm:momapyes}).

If $[c] \neq 0 $, then in Sec.\ \ref{central_ext_sec}
we show how to construct a $L_{\infty}$-morphism not from $\g$, but
rather from a Lie $n$-algebra $\widehat{\g}$.
The Lie $n$-algebra $\widehat{\g}$ is built using a representative of
$[c]$ and plays the role of a (non-trivial) higher central extension
of $\g$. This gives a new way to geometrically construct Lie $n$-algebras.
For example, via this construction we recover the string Lie 2-algebra
$\str(\g)$, which plays an interesting role in elliptic cohomology,
and `string structures'.

\subsubsection*{Moduli spaces and loop spaces}
In Sec.\ \ref{sec:flatc}, we use the results of Sec.\
\ref{obstruct_sec} to produce a more sophisticated example of a
homotopy moment map on an infinite-dimensional manifold.  If $P$
is a principal $G$-bundle on a $(n+1)$-dimensional compact oriented
manifold, then a degree $(n+1)$ invariant polynomial on $\g$ gives a
pre-$n$-plectic structure on the space of connections of $P$. We show
that this $(n+1)$-form is invariant under the action of the gauge
group, and that this action admits a moment map. If the $(n+1)$-form
is actually $n$-plectic, then we can perform a Marsden-Weinstein
reduction procedure to obtain the moduli space of flat
connections, endowed with a pre-$n$-plectic form. This generalizes the well-known Atiyah-Bott construction
in symplectic geometry for $G$-bundles over Riemann surfaces.

We continue to focus on infinite-dimensional examples in
Sec.\ \ref{loop_sec}. There we show that a homotopy moment map for a $G$-action 
on a pre-2-plectic manifold $(M,\omega)$ can be transgressed to
an ordinary moment map on the pre-symplectic loop space
$(LM,L\omega)$, where $L\omega$ is transgression of $\omega$, and the
action of $G$ on $LM$ is ``point-wise''. This gives an example of how
the higher geometry on $M$ interacts with the classical geometry on
$LM$. 

\subsubsection*{Comparisons with other work}
Numerous generalizations of moment maps already exist in the literature.
In Sec.\ \ref{others_sec}, we describe some relationships between 
homotopy moment maps and related work done by others.
In particular, we consider: multi-momentum maps studied by a variety
of authors in multisymplectic field theory
\cite{IbortCarCrampMultimomap, GIMM}, 
the multi-moment maps of Madsen and Swann 
\cite{MadsenSwannClosed, MadsenSwannMultimomap}, 
Bursztyn, Cavalcanti, and Gualtieri's work on group actions and
Courant algebroids \cite{BCG}, and Uribe's
work on group actions and dg-manifolds \cite{UribeDGman}.
Finally, we conclude in Sec.\ \ref{end_sec} with some open questions.

\begin{ack} 
C.L.R.\ thanks Christian Blohmann for helpful conversations. M.Z.\
thanks Alberto Cattaneo, Ezra Getzler and Camille Laurent-Gengoux for
inspiring discussions related to this work. C.L.R.\ and M.Z.\
gratefully thank Bernardo Uribe for several helpful conversations, and for
sharing with us his ideas on the relationship between
equivariant cohomology and moment maps.
Y.F.\ thanks Christian Duval from whom he learned about Atiyah-Bott's interpretation of moment maps in terms of equivariant cohomology.

We thank the referee for useful remarks that improved the paper.  

C.L.R.\ acknowledges support from the German Research Foundation
(Deutsche Forschungsgemeinschaft (DFG)) through the Institutional
Strategy of the University of G\"{o}ttingen, as well as RMATH
(Luxembourg) and ICMAT (Madrid) for travel support and hospitality.
M.C.\ acknowledges support from DFG
through the Research Training Group GK1493 "Mathematical Structures in
Modern Quantum Physics".
Y.F.\ was supported by FNR through FNR/09/AM2c/04, by the MarieCurie IOF fellowship 274032, and by the
RMATH (warmest thanks to Martin Schlichenmaier). M.Z.\ was partially
supported by projects PTDC/MAT/098770/2008 and PTDC/MAT/099880/2008
(Portugal) and by projects MICINN RYC-2009-04065, MTM2009-08166-E,
MTM2011-22612, ICMAT Severo Ochoa project SEV-2011-0087 (Spain).

\end{ack}
 
\section{Preliminaries}
Here we list the notation and conventions used throughout the
paper. We also give a brief review of the Cartan calculus for multi-vector fields.
\subsection{Graded linear algebra} Let $V$ be a $\Z$ graded vector space.  
For any $k \in \ZZ$, $V[k]$ is the graded vector space
\[
V[k]^{i} = V^{i+k}.
\]

Let $x_{1},\hdots,x_{n}$ be
elements of $V$ and $\sigma \in \Sn_n$ a permutation. The
\textbf{Koszul sign} $\epsilon(\sigma)=\epsilon(\sigma ;
x_{1},\hdots,x_{n})$ is defined by the equality
\[
x_{1}  \cdots  x_{n} = \epsilon(\sigma ;
x_{1},\hdots,x_{n}) x_{\sigma(1)} \cdots  x_{\sigma(n)},
\]
which holds in the free graded commutative algebra generated by
$V$, with product denoted by concatenation of elements. Given $\sigma \in \Sn_n$, let $(-1)^{\sigma}$
denote the usual sign of a permutation. Note that $\epsilon(\sigma)$ does
not include the sign $(-1)^{\sigma}$. 

We say $\sigma \in \Sn_{p+q}$ is a {\bf $\mathbf{(p,q)}$-unshuffle}
iff $\sigma(i) < \sigma(i+1)$ whenever $i \neq p$.  The set of
$(p,q)$-unshuffles is denoted by $\Sh(p,q)$. 
For example, $\Sh(2,1)$ is the set of cycles $\{ (1), (23), (123) \}$.
{The notion of a $(i_1,\ldots,i_k)$-unshuffle and the set
$\Sh(i_1,\ldots,i_k)$ are defined similarly.}

If $V$ and $W$ are graded
vector spaces, a linear map $f \maps V^{\tensor n} \to W$ is
\textbf{skew-symmetric} iff
\[
f(v_{\sigma(1)},\hdots,v_{\sigma(n)}) = (-1)^{\sigma}\epsilon(\sigma)
f(v_{1},\hdots,v_{n}),
\]
for all $\sigma \in \Sn_{n}$. The degree of an element $x_{1} \tensor \cdots
\tensor x_{n} \in V^{\tensor \bullet}$ of the graded tensor algebra
generated by $V$ is defined to be $\deg{x_{1} \tensor \cdots
\tensor x_{n}}=\sum_{i=1}^{n} \deg{x_{i}}$. 

Finally, the following sign occurs frequently, so
we give its own notation. For an
integer $k$ define:
\begin{equation*}
\vs(k) =-(-1)^{\frac{k(k+1)}{2}}.
\end{equation*} 
So for $k=1,2,3,4,5,...$ we have $\vs(k)=1,1,-1,-1,1,...$.
Notice that $\vs(k-1)\vs(k)=(-1)^{k}$ for all $k$.

\subsection{Group actions}
Throughout this paper $G$ denotes a Lie group and $\g$ its Lie
algebra. A $G$ action on a manifold $M$ is always from the left, unless
stated otherwise. Our convention for the induced action on forms
is the one given in \cite[Sec.\ 2.1]{G-S}. Namely, $G$ acts on $\Omega^{\bullet}(M)$
from the left via inverse pullback
\[
g \cdot \omega \mapsto \phi^{\ast}_{g^{-1}} \omega,
\]
where $\phi_{g}$ is the diffeomorphism corresponding to $g$.
We denote the corresponding infinitesimal action of the Lie
algebra $\g$ by the map
\begin{equation} \label{lie_alg_action}
v_{-} \maps \g \to \X(M), \quad x \mapsto v_x,
\end{equation}
where
\[
v_{x} \vert_{p} = \frac{d}{dt} \exp(-tx) \cdot p \vert_{t=0} \quad
\forall p \in M.
\]
We call $v_{-}$ the \textbf{fundamental vector field} associated 
to the $G$-action. Note that it is \textit{minus} the infinitesimal generator
associated to the $G$-action, and hence it is a morphism of Lie
algebras.

If $G$ is finite-dimensional, then we 
denote by $\g^{\vee}$ the dual of the Lie algebra $\g$.
Recall that the Chevalley-Eilenberg differential $\delta_{\CE}$ on $\Lambda^*(\g^\vee)$ is
\begin{equation} \label{eq:CE_diff}
\begin{split}
 \delta_{\CE}&\colon\Lambda^n(\g^\vee)\to \Lambda^{n+1}(\g^\vee), \\
\delta_{\CE}(c)(x_1,\ldots,x_{n+1})&:= \sum_{1\leq i < j\leq n+1} (-1)^{i+j} c([x_i,x_j],x_1,\ldots,\hat{x}_i,\ldots,\hat{x}_j,\ldots,x_{n+1}).
\end{split}\end{equation}
If $M$ is a $G$-manifold, then
let $ \iota^{k}_{\g} \maps \Omega^{\ast}(M) \to \Lambda^{k}(\g^{\vee})
\tensor \Omega^{\ast -k}(M)$ be the insertion operations
\begin{equation}\label{eq:iota_g}
\begin{split}
 \iota_{\g}^k\colon\Omega^*(M) &\to \Lambda^k(\g^\vee) \tensor
 \Omega^{\ast -k}(M), \\ 
 \iota^k_{\g}\alpha(x_1,\ldots,x_k)&:=\iota_{v_{x_k}}\cdots\iota_{v_{x_1}}\alpha\in\Omega^{\ast-k}(M)
\end{split}\end{equation}
where $\alpha\in\Omega^\ast(M)$, and  $x_1,\ldots,x_k\in\g$.
We will also use $\iota_{\g}$ to denote $\iota_\g^1$.

\subsection{Cartan calculus}
The \textbf{Schouten bracket}
of two decomposable multivector fields
$u_{1} \wedge \cdots \wedge u_{m}, v_{1} \wedge \cdots \wedge v_{n}
\in \LX(M)$ is
\begin{multline} \label{Schouten}
\left [ u_{1} \wedge \cdots \wedge u_{m}, v_{1} \wedge \cdots \wedge
  v_{n} \right] 
= \\\sum_{i=1}^{m} \sum_{j=1}^{n} (-1)^{i+j} [u_{i},v_{j}]
\wedge u_{1} \wedge \cdots \wedge \hat{u}_{i} \wedge  \cdots \wedge
u_{m}\\
\quad \wedge v_{1} \wedge \cdots \wedge \hat{v}_{j} \wedge \cdots \wedge v_{n},
\end{multline}
where $[u_{i},v_{j}]$ is the usual Lie bracket of vector fields.

The \textbf{interior product} of a decomposable
multivector field $v_{1} \wedge \cdots \wedge v_{n}$ with $\alpha \in \Omega^{\bullet}(M)$ is
\begin{equation} \label{interior}
\iota(v_{1} \wedge \cdots \wedge v_{n}) \alpha = \iota_{v_{n}} \cdots
\iota_{v_{1}} \alpha,
\end{equation}
where $\iota_{v_{i}} \alpha$ is the usual interior product of vector
fields and differential forms. 

The \textbf{Lie derivative} $\L_{v}$ of a differential form along a multivector field $v \in
\LX(M)$ is the graded commutator of $d$ and $\iota(v)$:
\begin{equation} \label{Lie}
\L_{v} \alpha =  d \iota(v) \alpha - (-1)^{\deg{v}} \iota(v) d\alpha,
\end{equation}
where $\iota(v)$ is considered as a degree $-\deg{v}$ operator.

The last identity we will need is for the graded commutator of
the Lie derivative and the interior product. Given $u,v \in
\LX(M)$, it follows from  \cite[Proposition A3]{Forger} that
\begin{equation} \label{commutator}
\iota([u,v]) \alpha = (-1)^{(\deg{u}-1)\deg{v}} \L_{u} \iota(v)  \alpha - \iota(v)\L_{u} \alpha.
\end{equation}

\section{$L_{\infty}$-algebras} \label{Linfty_sec}
In this section we briefly review $L_{\infty}$-algebras and
explicitly describe $L_{\infty}$-morphisms for the special cases
considered in this paper.

\begin{defi}[\cite{Lada-Markl}] \label{Linfty} An
{\boldmath $L_{\infty}$}{\bf-algebra} is a graded vector space $L$
equipped with a collection
\[\left \{l_{k} \maps L^{\tensor k} \to L| 1
  \leq k < \infty \right\}\]
of
graded skew-symmetric linear maps with  $\deg{l_{k}}=2-k$ such that
the following identity holds for $1 \leq m < \infty :$
\begin{align} \label{gen_jacobi}
   \sum_{\substack{i+j = m+1, \\ \sigma \in \Sh(i,m-i)}}
  (-1)^{\sigma}\epsilon(\sigma)(-1)^{i(j-1)} l_{j}
   (l_{i}(x_{\sigma(1)}, \dots, x_{\sigma(i)}), x_{\sigma(i+1)},
   \ldots, x_{\sigma(m)})=0.
\end{align}
\end{defi}
In the appendix (Sec.\ \ref{coalg-Linfty}), we recall how any
$L_{\infty}$-algebra $(L,l_{k})$ corresponds to a certain kind of
graded coalgebra $C(L)$ equipped with a coderivation $Q$ which
satisfies the identity 
$$Q \circ Q =0.$$
This identity is the origin of Eq.\  \eqref{gen_jacobi}. But it is easy to see that for small values of $m$
that Eq.\  \eqref{gen_jacobi} is a ``generalized Jacobi identity'' for the multi-brackets
$\{l_{k}\}$. For $m=1$, it implies that the degree 1 linear map
$l_{1}$ satisfies
\[
l_{1} \circ l_{1}=0
\]
and hence every $L_{\infty}$-algebra $(L,l_{k})$ has an underlying cochain complex
$(L,l_{1})$.
\begin{defi} \label{LnA} An $L_{\infty}$-algebra $(L,\{l_{k} \})$
  is a {\bf Lie} {\boldmath $n$}{\bf -algebra} iff the underlying
  graded vector space $L$ is concentrated in degrees $0, -1, \ldots, 1-n$.
\end{defi}
Note that if $(L,\{l_{k} \})$ is a Lie $n$-algebra, then by degree
counting $l_{k} =0 $ for $k > n+1$. An ordinary Lie algebra is the
same as a Lie $1$-algebra.

\subsection{$L_{\infty}$-morphisms} \label{morph-in-text_sec}
The following definition may at first seem satisfactory:
\begin{defi}[\cite{Lada-Markl}] \label{strict_morph_def_1}
Let $(L,l_{k})$ and $(L',l'_{k})$ be $L_{\infty}$-algebras. A degree 0
linear map $f \maps L \to L'$ is a \textbf{strict
 \boldmath $L_{\infty}$-morphism} iff
\begin{equation} \label{strict_def_eq}
\begin{split}
l'_{k} \circ f^{\tensor k} = f \circ l_{k} \quad \forall k \geq 1.
\end{split}
\end{equation}
\end{defi}
However, this definition of $L_{\infty}$-morphism does not reflect
the higher structure naturally residing within the
theory. Indeed, the better definition \cite[Remark 5.3]{Lada-Markl} 
uses the aforementioned relationship between $L_{\infty}$-algebras and dg-coalgebras.
We emphasize that the flexibility provided by
this higher structure is what allows us to produce the many explicit
examples of homotopy moment maps considered in this paper.

\begin{defi} \label{Linfty-morph_basic_def}
An {\boldmath $L_{\infty}$}-{\bf morphism} between
$L_{\infty}$-algebras $(L,l_{k})$ and $(L',l_{k}')$ is a morphism \newline
$F \maps \bigl(C(L),Q \bigr) \to \bigl(C(L'),Q' \bigr)$
between their corresponding differential graded (dg) coalgebras. That is, $F$ is a morphism
between the graded coalgebras $C(L)$ and $C(L')$ such that
\begin{equation} \label{preserve_codiff}
F \circ Q = Q' \circ F.
\end{equation} 
\end{defi}
It turns out that an $L_{\infty}$-morphism between $(L,l_{k})$ and
$(L',l'_{k})$ corresponds to an infinite collection of graded
skew-symmetric `structure maps'
\[
f_{k} \maps L^{\tensor k} \to L' \quad 1 \leq k < \infty,
\]
where $\deg{f_{k}} =1-k$, and such that a complicated compatibility relation with the
multi-brackets is satisfied. In particular, the degree zero map $f_{1}$ must be a
morphism between the underlying complexes $(L,l_{1})$ and $(L',l'_{1})$:
\[
f_{1}l_{1} = l'_{1} f_{1}.
\]
The compatibility relation, in the language of coalgebras, corresponds exactly to Eq.\  \eqref{preserve_codiff}.
Strict morphisms in the sense of Def.\ \ref{strict_morph_def_1}
correspond to the special case when $f_{k}=0$ for $k \geq 2$.
(See Prop.\ \ref{strict_morphism_prop} for more details.)
Outside of Sec.\ \ref{Linfty_morph_sec}, we shall mildly abuse
notation and denote a $L_{\infty}$-morphism via its structure maps as
\[
(f_{k}) \maps (L,l_{k}) \to (L',l'_{k}).
\]

$L_{\infty}$-morphisms are composable in the usual sense, and hence one can
speak of the category of $L_{\infty}$-algebras without explicitly describing
the higher structure mentioned above.
\begin{defi} \label{Linf_cat_def}
We denote by $\Linf$ the category whose objects are $L_{\infty}$-algebras
(Def.\ \ref{Linfty}) and whose morphisms are $L_{\infty}$-morphisms 
(Def.\ \ref{Linfty-morph_basic_def}). 
\end{defi}

The following is the correct notion of equivalence between
$L_{\infty}$-algebras which reflects the aforementioned homotopical structure between morphisms.
\begin{defi} \label{Linfty_qiso_def}
A morphism $(f_{k}) \maps (L,l_{k}) \to (L',l'_{k})$ of
$L_{\infty}$-algebras is a \textbf{$\mathbf{L_{\infty}}$-quasi-isomorphism}
iff the morphism of complexes
\[
f_{1} \maps (L,l_{1}) \to (L',l'_{1}) 
\]
induces an isomorphism on the cohomology:
\[
H^{\bullet} (f_{1}) \maps H^{\bullet}(L) \xto{\cong} H^{\bullet}(L').
\]
\end{defi}
\begin{remark} \label{quasi-remark}
$L_{\infty}$-quasi-isomorphisms induce an
equivalence relation on the category of $L_{\infty}$-algebras. 
Indeed, such a morphism between two $L_{\infty}$-algebras exists if and
only if the algebras are homotopy equivalent (e.g., as objects
of a simplicial category
\cite[Sec.\ 3.2] {DHR:2015}).
Roughly speaking, the situation here is analogous to the Whitehead theorem for weak
homotopy equivalences between CW complexes.
\end{remark}

\subsection{Morphisms from Lie algebras to \boldmath $L_{\infty}$-algebras}\label{secP}
In this paper, we will be particularly interested in
$L_{\infty}$-morphisms from a Lie algebra to a
Lie $n$-algebra $(L',l'_{k})$ with the following property:
\begin{equation}\label{property}
\forall k \geq 2 \quad  l'_{k}(x_{1},\ldots,x_{k}) = 0 \quad
\text{whenever $\sum_{i=1}^{k} \deg{x_{i}}  <  0.$} \tag{P}
\end{equation}

The following characterization is proven in the appendix (Cor.\ \ref{Lie_alg_P_cor}).
\begin{prop}\label{Lie_alg_P_prop1}
If $(\g,[\cdot,\cdot])$ is a Lie algebra and $(L',l'_{k})$ is a Lie $n$-algebra
satisfying property \eqref{property}, then 
the   graded
skew-symmetric maps
\[
f_{k} \maps \g^{\tensor k} \to L', \quad \deg{f_{k}} = 1-k, \quad 1
\leq k \leq n
\]
are the components of an $L_{\infty}$-morphism $\g \to L'$  if and
only if $\forall x_{i} \in \g$
\begin{multline} \label{cor_eq1}
\sum_{1 \leq i < j \leq k}
(-1)^{i+j+1}f_{k-1}([x_{i},x_{j}],x_{1},\ldots,\widehat{x_{i}},\ldots,\widehat{x_{j}},\ldots,x_{k})\\
=l'_{1} f_{k}(x_{1},\ldots,x_{k}) + l'_{k}(f_{1}(x_{1}),\ldots,f_{1}(x_{k})).
\end{multline}
for $2 \leq k \leq n$ and
\begin{multline} \label {cor_eq2}
\sum_{1 \leq i < j \leq n+1}
(-1)^{i+j+1}f_{n}([x_{i},x_{j}],x_{1},\ldots,\widehat{x_{i}},\ldots,\widehat{x_{j}},\ldots,x_{n+1})
=l'_{n+1}(f_{1}(x_{1}),\ldots,f_{1}(x_{n+1})).
\end{multline}
\end{prop}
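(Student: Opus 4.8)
The plan is to obtain \eqref{cor_eq1} and \eqref{cor_eq2} as the specialization to our situation of the general defining equations of an $L_{\infty}$-morphism written in terms of its structure maps. Recall from the appendix (Sec.\ \ref{coalg-Linfty}) that the coalgebra condition \eqref{preserve_codiff} for $F\maps C(\g)\to C(L')$ is equivalent to requiring that a collection of graded skew-symmetric maps $f_{k}\maps \g^{\tensor k}\to L'$ with $\deg{f_{k}}=1-k$ satisfy, for every $m\geq 1$, a compatibility identity of the schematic form
\[
\sum_{\substack{i+j=m+1 \\ \sigma\in\Sh(i,m-i)}} \pm\, f_{j}\bigl(l_{i}(x_{\sigma(1)},\ldots,x_{\sigma(i)}),x_{\sigma(i+1)},\ldots,x_{\sigma(m)}\bigr)=\sum_{p=1}^{m}\tfrac{1}{p!}\sum \pm\, l'_{p}\bigl(f_{k_{1}}(\ldots),\ldots,f_{k_{p}}(\ldots)\bigr),
\]
the inner sum on the right running over the partitions $k_{1}+\cdots+k_{p}=m$ with $k_{s}\geq 1$ and the associated unshuffles. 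First I would record the degree constraints forced by $L'$ being a Lie $n$-algebra: since $f_{k}$ has degree $1-k$ and $L'$ is concentrated in degrees $0,-1,\ldots,1-n$, we have $f_{k}=0$ for $k>n$, while $l'_{k}=0$ for $k>n+1$. Hence only $f_{1},\ldots,f_{n}$ are relevant, and the equations for $m>n+1$ collapse to $0=0$.

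Next I would simplify the left-hand side. Because $\g$ is an ordinary Lie algebra, $l_{1}=0$, $l_{2}=[\cdot,\cdot]$, and $l_{i}=0$ for $i\geq 3$; hence only the term $i=2$, $j=m-1$ survives, giving $f_{m-1}([x_{\sigma(1)},x_{\sigma(2)}],x_{\sigma(3)},\ldots)$ summed over $\Sh(2,m-2)$. Since all $x_{i}$ sit in degree $0$, every Koszul sign $\epsilon(\sigma)$ equals $1$ and the prefactor $(-1)^{i(j-1)}=(-1)^{2(m-2)}$ is trivial, so that a short computation shows that for the $(2,m-2)$-unshuffle sending $(1,2)\mapsto(i,j)$ with $i<j$ the permutation sign is $(-1)^{i+j+1}$. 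The left-hand side therefore becomes exactly the Chevalley--Eilenberg-type sum $\sum_{1\leq i<j\leq m}(-1)^{i+j+1}f_{m-1}([x_{i},x_{j}],x_{1},\ldots,\widehat{x_{i}},\ldots,\widehat{x_{j}},\ldots,x_{m})$ appearing on the left of \eqref{cor_eq1} and \eqref{cor_eq2}.

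The heart of the argument is the simplification of the right-hand side, and this is where property \eqref{property} enters. The term $p=1$ contributes $l'_{1}f_{m}(x_{1},\ldots,x_{m})$. For $p\geq 2$ each input $f_{k_{s}}(\ldots)$ has degree $1-k_{s}$, so the total input degree of $l'_{p}$ equals $\sum_{s}(1-k_{s})=p-m$. By \eqref{property} such a bracket vanishes unless $p-m\geq 0$; combined with $p\leq m$ (forced by $k_{1}+\cdots+k_{p}=m$ with $k_{s}\geq 1$) this leaves only $p=m$, whence all $k_{s}=1$. The residual sum over $\Sh(1,\ldots,1)=\Sn_{m}$, together with the factor $\tfrac{1}{m!}$ and the skew-symmetry of $l'_{m}$, collapses to the single term $l'_{m}(f_{1}(x_{1}),\ldots,f_{1}(x_{m}))$. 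Setting $k=m$ then yields \eqref{cor_eq1} for $2\leq k\leq n$; the case $k=n+1$, in which $f_{n+1}=0$ and hence $l'_{1}f_{n+1}=0$, yields \eqref{cor_eq2}; and $m=1$ is vacuous because $l'_{1}f_{1}=0$ automatically, the target $L'^{1}$ being zero. I expect the main obstacle to be purely bookkeeping: matching the undetermined signs in the general morphism equation with those in \eqref{cor_eq1}--\eqref{cor_eq2}, and in particular verifying that the $m!$ permutations in the surviving $p=m$ term recombine, with the correct Koszul and permutation signs, into exactly one copy of $l'_{m}(f_{1}(x_{1}),\ldots,f_{1}(x_{m}))$.
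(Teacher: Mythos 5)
Your proposal is correct and takes essentially the same approach as the paper's own proof (Prop.\ \ref{Lie_alg_prop_P2} and Cor.\ \ref{Lie_alg_P_cor} in the appendix): both specialize the coalgebra-derived morphism equations, use the Lie algebra structure of $\g$ so that only the binary bracket survives on the left (yielding the Chevalley--Eilenberg-type sum with sign $(-1)^{i+j+1}$), and invoke property \eqref{property} via the degree count $\sum_s (1-k_s)=p-m<0$ to kill every right-hand term except $p=1$ and $p=m$, with the remaining cases $m=1$, $m=n+1$, and $m>n+1$ handled by the same degree constraints. The paper merely organizes this bookkeeping through the coalgebra projections $F^{k}_{m}$, $Q^{1}_{k}$, $D^{m-1}_{m}$ before translating back to the structure maps, which is exactly the computation you describe.
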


\section{$L_{\infty}$-algebras from closed differential forms} \label{nplectic_sec}
Here  we recall various definitions and results about closed differential forms from
previous work \cite{RogersL} \cite{HDirac}, and introduce the
Lie $n$-algebra of observables associated to any manifold equipped with a
closed $(n+1)$-form. 

\begin{defi}
\label{n-plectic_def}
An $(n+1)$-form $\omega$ on a smooth manifold $M$ is 
{\boldmath $n$}{\bf-plectic}, or more specifically
an {\boldmath $n$}{\bf-plectic structure}, if it is both closed:
\[
    d\omega=0,
\]
and non-degenerate:
\[
\forall x \in M  ~  \forall v \in T_{x}M,\ \iota_{v} \omega =0 \Rightarrow v =0.
\]
If $\omega$ is an $n$-plectic form on $M$, then we call the pair $(M,\omega)$ 
an {\boldmath $n$}{\bf-plectic manifold}.
More generally, if $\omega$ is closed, but not necessarily
non-degenerate, then we call $(M,\omega)$ a {\bf pre-}{\boldmath $n$}{\bf-plectic manifold}
\end{defi}
Obviously, a (pre-) 1-plectic manifold is a (pre-) symplectic manifold.

\begin{defi} \label{hamiltonian}
Let $(M,\omega)$ be a pre-$n$-plectic manifold.  An $(n-1)$-form $\alpha$
is {\bf Hamiltonian} iff there exists a vector field $v_\alpha \in \X(M)$ such that
\[
d\alpha= -\ip{\alpha} \omega.
\]
We say $v_\alpha$ is a {\bf Hamiltonian vector field} corresponding to $\alpha$. 
The set of Hamiltonian $(n-1)$-forms and the set of Hamiltonian vector
fields on an pre-$n$-plectic manifold are both vector spaces and are denoted
as $\ham{n-1}$ and $\Xham(M)$, respectively. Note that if $\omega$ is
$n$-plectic, then associated to every Hamiltonian form is a unique
Hamiltonian vector field.
\end{defi}

\begin{defi}\label{loc_ham}
A vector field $v$ on a pre-$n$-plectic manifold $(M,\omega)$ is a {\bf
  local Hamiltonian vector field} iff
\[
\L_{v}\omega =0.
\]
The set of local Hamiltonian vector fields is a vector space and is
denoted as $\Xlham(M)$.
\end{defi}

\begin{defi}
\label{bracket_def}
Let $(M,\omega)$ be a pre-$n$-plectic manifold. Given $\alpha,\beta\in
\ham{n-1}$, the {\bf bracket} $\brac{\alpha}{\beta}$ is the
$(n-1)$-form given by
\[  \brac{\alpha}{\beta} = \iota_{v_{\beta}}\iota_{v_{\alpha}}\omega,\]
where $v_{\alpha}$ and $v_{\beta}$ are any Hamiltonian vector fields for $\alpha$ and $\beta$ respectively.
\end{defi}
 The bracket is well-defined, for if
both $v_{\alpha}$ and $v'_{\alpha}$ are Hamiltonian for $\alpha \in
\ham{n-1}$, then both $\iota_{v_{\beta}}\iota_{v_{\alpha}}\omega$ and
$\iota_{v_{\beta}}\iota_{v'_{\alpha}}\omega$ are equal to
$-\iota_{v_{\beta}} d\alpha$.

\begin{prop}\label{brac_prop}
If $(M,\omega)$ is a pre-$n$-plectic manifold and $v_{1},v_{2} \in
\Xlham(M)$ are local Hamiltonian vector fields, then $[v_{1},v_{2}]$
is a global Hamiltonian vector field with
\[
d\iota(v_{1} \wedge v_{2}) \omega = -\iota_{[v_{1},v_{2}]} \omega,
\]
and $\Xlham(M)$ and $\Xham(M)$ are Lie subalgebras of $\X(M)$.
\end{prop}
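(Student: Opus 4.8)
The plan is to prove the three assertions in turn, with the displayed identity doing essentially all of the work. First I would establish
\[
d\iota(v_{1} \wedge v_{2})\omega = -\iota_{[v_{1},v_{2}]}\omega
\]
and then read off its consequences. Since $\iota(v_{1}\wedge v_{2})\omega = \iota_{v_{2}}\iota_{v_{1}}\omega$ is an $(n-1)$-form and its differential equals $-\iota_{[v_{1},v_{2}]}\omega$, the form $\alpha := \iota(v_{1}\wedge v_{2})\omega$ is a Hamiltonian $(n-1)$-form in the sense of Def.\ \ref{hamiltonian}, with $[v_{1},v_{2}]$ as a corresponding Hamiltonian vector field. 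This is precisely the statement that $[v_{1},v_{2}]\in\Xham(M)$, i.e.\ the first claim.

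The core computation uses only the Cartan calculus identities \eqref{Lie} and \eqref{commutator}, together with the hypotheses $d\omega = 0$ and $\L_{v_{1}}\omega = \L_{v_{2}}\omega = 0$. Applying the commutator identity \eqref{commutator} with $u = v_{1}$ and $v = v_{2}$ (both of degree one, so the prefactor $(-1)^{(\deg{u}-1)\deg{v}}$ equals $1$) to $\alpha = \omega$ gives
\[
\iota_{[v_{1},v_{2}]}\omega = \L_{v_{1}}\iota_{v_{2}}\omega - \iota_{v_{2}}\L_{v_{1}}\omega = \L_{v_{1}}\iota_{v_{2}}\omega,
\]
using $\L_{v_{1}}\omega = 0$. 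Expanding the right-hand side by Cartan's formula \eqref{Lie} as $\L_{v_{1}}\iota_{v_{2}}\omega = d\iota_{v_{1}}\iota_{v_{2}}\omega + \iota_{v_{1}}d\iota_{v_{2}}\omega$, and observing that $d\iota_{v_{2}}\omega = \L_{v_{2}}\omega - \iota_{v_{2}}d\omega = 0$, I obtain $\iota_{[v_{1},v_{2}]}\omega = d\iota_{v_{1}}\iota_{v_{2}}\omega$. Finally the anticommutativity $\iota_{v_{1}}\iota_{v_{2}} = -\iota_{v_{2}}\iota_{v_{1}}$ on forms and the convention \eqref{interior} give $\iota_{v_{1}}\iota_{v_{2}}\omega = -\iota(v_{1}\wedge v_{2})\omega$, which yields the displayed identity after carrying the sign across $d$.

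For the subalgebra statements, both $\Xham(M)$ and $\Xlham(M)$ are already linear subspaces, so only closure under the Lie bracket of vector fields remains. For $\Xlham(M)$ I would invoke the standard identity $\L_{[v_{1},v_{2}]} = \L_{v_{1}}\L_{v_{2}} - \L_{v_{2}}\L_{v_{1}}$ acting on forms; then $\L_{v_{1}}\omega = \L_{v_{2}}\omega = 0$ immediately forces $\L_{[v_{1},v_{2}]}\omega = 0$, so $[v_{1},v_{2}]\in\Xlham(M)$. For $\Xham(M)$ I would first record the inclusion $\Xham(M)\subseteq\Xlham(M)$: if $d\alpha = -\iota_{v}\omega$ then, by \eqref{Lie} and $d\omega=0$, $\L_{v}\omega = d\iota_{v}\omega + \iota_{v}d\omega = -d\,d\alpha = 0$. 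Thus any two Hamiltonian vector fields are in particular local Hamiltonian, and the first part of the proposition already shows their bracket is again a global Hamiltonian vector field, giving closure of $\Xham(M)$.

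The computations are otherwise entirely routine; the only place requiring genuine care is the sign bookkeeping, specifically the degree-dependent factor in \eqref{commutator} (harmless here since $\deg{v_{1}}=\deg{v_{2}}=1$) and the anticommutativity of the interior products that generates the minus sign in the final identity. I expect reconciling that last sign with the convention in \eqref{interior} to be the only step where an error could plausibly slip in.
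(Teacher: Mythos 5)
Your proof is correct and follows essentially the same route as the paper's: the same two Cartan calculus identities, Eq.\ \eqref{commutator} and Eq.\ \eqref{Lie}, applied in the same order to obtain $\iota_{[v_{1},v_{2}]}\omega = d\iota_{v_{1}}\iota_{v_{2}}\omega$, with the sign then traced through the convention \eqref{interior}. The only difference is that you spell out the Lie subalgebra closure and the inclusion $\Xham(M)\subseteq\Xlham(M)$, which the paper leaves implicit.
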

\begin{proof}
if $v_{1},v_{2}$ are
locally Hamiltonian, then by Eq.\  \eqref{commutator},
\[
\L_{v_{1}} \iota_{v_{2}} \omega = \iota_{[v_{1},v_{2}]} \omega.
\]
On the other hand, by Eq.\  \eqref{Lie},
\[
\L_{v_{1}} \iota_{v_{2}} \omega = \iota_{v_{1}} d\iota_{v_{2}} \omega
+ d \iota_{v_{1}} \iota_{v_{2}} \omega. 
\]
But $\iota_{v_{1}} d\iota_{v_{2}} \omega=0$, since $d
\iota_{v_{2}}=\L_{v_{2}} - \iota_{v_{2}}d$.  
\end{proof}
Prop.\ \ref{brac_prop} implies in particular that 
if $v_\alpha$ and $v_\beta$ are Hamiltonian vector fields for $\alpha $ and $\beta$ respectively, then $[v_\alpha,v_\beta]$ is a Hamiltonian vector field for $\{\alpha,\beta\}$.

The next theorem gives a natural $L_{\infty}$-structure on differential
forms, which extends the bracket $\{\cdot,\cdot\}$ on
$\ham{n-1}$. The theorem is essentially Thm.\ 5.2 in \cite{RogersL}, together with its
generalization Thm.\ 6.7 in \cite{HDirac}.
\begin{thm} \label{ham-infty}
Given a  pre-$n$-plectic manifold $(M,\omega)$, there is a Lie $n$-algebra
$L_{\infty}(M,\omega)=(L,\{l_{k} \})$ with underlying graded vector space 
\[
L^{i} =
\begin{cases}
\ham{n-1} & i=0,\\
\Omega^{n-1+i}(M) & 1-n \leq i < 0,
\end{cases}
\]
and maps  $\left \{l_{k} \maps L^{\tensor k} \to L| 1
  \leq k < \infty \right\}$ defined as
\[ 
l_{1}(\alpha)=d\alpha,
\]
if $\deg{\alpha} < 0$ and
\[
l_{k}(\alphadk{k}) =
\begin{cases}
\vs(k) \iota(\vk{k}) \omega  & \text{if  $\deg{\alphak{k}}=0$},\\
0 & \text{if $\deg{\alphak{k}} < 0$},
\end{cases}
\]
for $k>1$, where $v_{\alpha_{i}}$ is any Hamiltonian vector field
associated to $\alpha_{i} \in \ham{n-1}$.
\end{thm}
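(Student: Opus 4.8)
The plan is to verify, in order, that the structure maps are well defined with the correct degrees and skew-symmetry, and then to check the generalized Jacobi identities \eqref{gen_jacobi}. The crucial preliminary observation is that, since $L$ is concentrated in non-positive degrees, for $k \ge 2$ the bracket $l_{k}(\alphadk{k})$ can be nonzero only when $\deg{\alphak{k}} = 0$, i.e.\ when every $\alpha_{i}$ lies in $L^{0} = \ham{n-1}$; in that case $\iota(\vk{k})\omega$ is an $(n+1-k)$-form, which sits in $L^{2-k}$, matching $\deg{l_{k}} = 2-k$ (and forcing $l_{k}=0$ for $k>n+1$). Independence of the choice of Hamiltonian vector fields $v_{\alpha_{i}}$ follows exactly as for the bracket in Def.\ \ref{bracket_def}: replacing $v_{\alpha_{i}}$ by another Hamiltonian lift changes the argument by a vector field $w$ with $\iota_{w}\omega = 0$, and since interior products anticommute up to sign we may let $\iota_{w}$ act first on $\omega$, killing the difference. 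For $k=2$ one must additionally note that $l_{2}(\alpha,\beta) = \brac{\alpha}{\beta}$ is again Hamiltonian, which is the content of Prop.\ \ref{brac_prop}, so that $l_{2}$ indeed lands in $L^{0}$. Graded skew-symmetry is immediate on degree-$0$ inputs, where the Koszul sign is trivial, from the skew-symmetry of the wedge $\vk{k}$; on inputs of negative total degree both sides vanish.

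The heart of the matter is a Cartan-calculus identity: for locally Hamiltonian vector fields $v_{1},\dots,v_{k}$ (so that $\L_{v_{i}}\omega = 0$) one has
\begin{equation*}
d\,\iota(v_{1}\wedge\cdots\wedge v_{k})\omega = (-1)^{k}\sum_{1\le i<j\le k}(-1)^{i+j}\,\iota\bigl([v_{i},v_{j}]\wedge v_{1}\wedge\cdots\wedge\hat{v}_{i}\wedge\cdots\wedge\hat{v}_{j}\wedge\cdots\wedge v_{k}\bigr)\omega.
\end{equation*}
I would prove this by induction on $k$: apply \eqref{Lie} to $v=v_{1}\wedge\cdots\wedge v_{k}$ and use $d\omega=0$ to write $d\,\iota(v_{1}\wedge\cdots\wedge v_{k})\omega = \L_{v_{1}\wedge\cdots\wedge v_{k}}\omega$, then peel off $v_{1}$ using the commutator identity \eqref{commutator} and the Schouten expansion \eqref{Schouten} of $[v_{1},v_{2}\wedge\cdots\wedge v_{k}]$, invoking $\L_{v_{i}}\omega=0$ and the inductive hypothesis for $v_{2}\wedge\cdots\wedge v_{k}$; all terms involving a bare $\L_{v_{i}}\omega$ drop out, leaving only the commutator brackets $[v_{i},v_{j}]$. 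The base case $k=2$ is exactly Prop.\ \ref{brac_prop}. Getting the overall sign $(-1)^{k}$ and the individual signs $(-1)^{i+j}$ correct is the main bookkeeping obstacle here.

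With this identity available, the Jacobi relations follow by a degree count. When the total input degree is negative the vanishing of $l_{k}$ ($k\ge2$) off degree $0$, together with $l_{1}^{2}=d^{2}=0$ and the fact that an exact form admits the zero Hamiltonian vector field, collapses \eqref{gen_jacobi} to a trivial identity. The substantive case is when all inputs $\alpha_{1},\dots,\alpha_{m}$ lie in $L^{0}$ and $m\ge3$: the only nonzero contributions to \eqref{gen_jacobi} then come from $i=2$ (the inner bracket $l_{2}$, whose output is again in $L^{0}$) and from $i=m$, $j=1$ (the outer differential applied to $l_{m}$), since any inner $l_{i}$ with $3\le i\le m$ produces a negative-degree element on which the outer $l_{j}$, $j\ge2$, vanishes. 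Using Prop.\ \ref{brac_prop} to identify the Hamiltonian vector field of $\brac{\alpha_{a}}{\alpha_{b}}$ with $[v_{\alpha_{a}},v_{\alpha_{b}}]$, the $i=2$ terms become $\vs(m-1)\,\iota([v_{\alpha_{\sigma(1)}},v_{\alpha_{\sigma(2)}}]\wedge\cdots)\omega$ summed over $\Sh(2,m-2)$, while the $i=m$ term is $\vs(m)\,d\,\iota(\vk{m})\omega$. The bijection between $(2,m-2)$-unshuffles $\sigma$ and pairs $i<j$, carrying the sign $(-1)^{\sigma}=-(-1)^{i+j}$, turns the first sum into the right-hand side of the Cartan identity, and the relation $\vs(m-1)\vs(m)=(-1)^{m}$ makes the two contributions cancel. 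This reduction — in particular the simultaneous matching of the Koszul, unshuffle, and $\vs$-signs — is where essentially all the work lies; the cases $m=1$ ($d^{2}=0$) and $m=2$ (skew-symmetry together with Prop.\ \ref{brac_prop}) are immediate.
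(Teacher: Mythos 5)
Your proof is correct. Note that the paper itself offers no proof of Thm.\ \ref{ham-infty} --- it defers to Thm.\ 5.2 of \cite{RogersL} and Thm.\ 6.7 of \cite{HDirac} --- but your argument is precisely the one used in those references: well-definedness of the brackets via $\iota_{w}\omega=0$ for differences of Hamiltonian lifts, the Cartan-type identity for locally Hamiltonian vector fields (which is exactly the paper's Lemma \ref{tech_lemma}, there cited to \cite{MadsenSwannClosed}), and the degree count showing that for degree-zero inputs only the $(i,j)=(2,m-1)$ and $(i,j)=(m,1)$ terms of \eqref{gen_jacobi} survive and cancel via $\vs(m-1)\vs(m)=(-1)^{m}$. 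Your sign bookkeeping (the unshuffle sign $(-1)^{\sigma}=-(-1)^{i+j}$ and the $\vs$-relation) checks out, so the proposal is a faithful, self-contained reconstruction of the cited proof.
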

\subsection{The Lie {\boldmath $n$}-algebra of observables}
Note that for the $n=1$ case, the underlying complex of
$L_{\infty}(M,\omega)$ is just the vector
space of Hamiltonian functions $\cinf(M)_{\mathrm{Ham}} \subseteq
\cinf(M)$. The only non-trivial bracket is $l_{2}=\{\cdot,\cdot\}$,
which is a Lie bracket. Hence, we recover the
the underlying Lie algebra of the usual Poisson algebra associated to a
pre-symplectic manifold. 
In the symplectic case,
$\cinf(M)_{\mathrm{Ham}} = \cinf(M)$, and there is a well-defined
surjective Lie algebra morphism
\[
\pi \maps \cinf(M) \epi \Xham(M)
\]
sending a function to its unique Hamiltonian vector field. If $M$ is
connected, then we see $\pi$ fits in the short exact sequence
\begin{equation} \label{KS_extension}
0 \to \R \to \cinf(M) \xto{\pi} \Xham(M) \to 0.
\end{equation}
This is the Kostant-Souriau central extension \cite{Kostant:1970,Souriau:1967}. It characterizes the
underlying Lie algebra of $\cinf(M)$, up to isomorphism, as the unique central extension determined by the
symplectic form (evaluated at a point $p \in M$). 

For the pre-symplectic case, Hamiltonian functions can have more than
one corresponding Hamiltonian vector field, and so a map $\cinf(M)_{\mathrm{Ham}} \to
\Xham(M)$ may not exist. Therefore one instead considers the Lie
algebra
\begin{align*}
\widetilde{\cinf(M)_{\mathrm{Ham}}}&= \{ (v,f) \in \Xham(M) \oplus \cinf(M)_{\mathrm{Ham}} ~ \vert ~ df =
-\iota_{v}\omega \} \\
 [(v_1,f_1),(v_2,f_2)]_{L} &=\bigl([v_{1},v_{2}], \brac{f_{1}}{f_{2}} \bigr).
\end{align*} 
The projection $(v,f) \mapsto v$ then gives a central
extension 
\begin{equation}\label{eq: sespre}
0 \to \R \to \widetilde{\cinf(M)_{\mathrm{Ham}}} \xto{\pi} \Xham(M) \to 0
\end{equation}
which generalizes \eqref{KS_extension} to any connected pre-symplectic
manifold \cite[Prop 2.3]{Brylinski_book}. If $(M,\omega)$ is
symplectic, then it is easy to see that
$\widetilde{\cinf(M)_{\mathrm{Ham}}}$ is isomorphic to
$\cinf(M)_{\mathrm{Ham}}=\cinf(M)$ as Lie algebras.

The higher analog of the central extension \eqref{eq: sespre} for a pre-$n$-plectic
manifold is obtained by slightly modifying the construction of $L_{\infty}(M,\omega)$.
\begin{thm} \label{poisson_thm}
Given a  pre-$n$-plectic manifold $(M,\omega)$, there is a Lie $n$-algebra
$\poi(M,\omega)$ with underlying graded vector space 
\begin{equation*}
\begin{split}
L^{0} & = \widetilde{\ham{n-1}}=\{ (v,  \alpha) \in \Xham(M) \oplus \ham{n-1}  ~ \vert ~ d
\alpha = -\iota_{v}\omega \} \\
L^{i} & = \Omega^{n-1+i}(M) \quad 1-n \leq i < 0,
\end{split}
\end{equation*}
and structure maps:
\begin{equation*}
\begin{split}
\tilde{l}_{1}(\alpha)&=
\begin{cases}
(0, d\alpha) & \text{if $\deg{\alpha}=-1$},\\
d \alpha & \text{if $\deg{\alpha} < -1$,}
\end{cases}\\
\tilde{l}_{2}(x_{1},x_{2}) &= 
\begin{cases}
\bigl( [v_{1},v_{2}], \iota(v_{1} \wedge v_{2}) \omega \bigr)
=\bigl( [v_{1},v_{2}], \{\alpha_1,\alpha_2\} \bigr) &
\text{if $\deg{x_{1} \tensor x_{2}}  = 0$,}\\
 0  & \text{otherwise},
\end{cases}
\end{split}
\end{equation*}
and, for $k > 2$: 
\[
\tilde{l}_{k}(x_{1},\ldots,x_{k}) = 
\begin{cases}
\vs(k)   \iota(\vk{k}) \omega & {\text{if $\deg{ x_{1}\tensor \cdots \tensor x_{k}} <0$}}, \\
0 & \text{otherwise}.
\end{cases}
\]
\end{thm}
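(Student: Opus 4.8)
The plan is to exploit the fact that $\poi(M,\omega)$ differs from the Lie $n$-algebra $L_{\infty}(M,\omega)$ of Thm.~\ref{ham-infty} only in degree $0$: the space $\ham{n-1}$ is replaced by the pairs $\widetilde{\ham{n-1}}$, and correspondingly $\tilde{l}_1$ on $L^{-1}$ and $\tilde{l}_2$ on $L^0$ carry an extra vector-field component, while the brackets in negative degrees and the higher brackets $\tilde{l}_k$ ($k>2$) are given by the same formulas as in $L_{\infty}(M,\omega)$. I would first check that the structure maps are well defined and land in the stated spaces, and then verify the generalized Jacobi identity \eqref{gen_jacobi} by comparing with $L_{\infty}(M,\omega)$ through the projection that forgets the vector field.

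For well-definedness, observe that $\tilde{l}_2$ on $L^0$ and all higher $\tilde{l}_k$ use the vector-field component $v$ of a degree-$0$ input $(v,\alpha)$, so---unlike in $L_{\infty}(M,\omega)$---no choice of Hamiltonian vector field enters; this is precisely the purpose of passing to $\widetilde{\ham{n-1}}$. The only closure statement requiring an argument is that $\tilde{l}_2\bigl((v_1,\alpha_1),(v_2,\alpha_2)\bigr)=\bigl([v_1,v_2],\iota(v_1\wedge v_2)\omega\bigr)$ actually lies in $\widetilde{\ham{n-1}}$, i.e. that $d\,\iota(v_1\wedge v_2)\omega=-\iota_{[v_1,v_2]}\omega$; since Hamiltonian vector fields are local Hamiltonian, this is exactly Prop.~\ref{brac_prop}, and the identification of the form-component with $\brac{\alpha_1}{\alpha_2}$ is Def.~\ref{bracket_def}. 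Likewise $\tilde{l}_1(\alpha)=(0,d\alpha)\in\widetilde{\ham{n-1}}$ because the closed form $d\alpha$ admits $0$ as a Hamiltonian vector field.

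To verify \eqref{gen_jacobi}, define $\Pi\colon\poi(M,\omega)\to L_{\infty}(M,\omega)$ by $(v,\alpha)\mapsto\alpha$ in degree $0$ and the identity in negative degrees. A direct check---independent of whether $\poi(M,\omega)$ is yet known to be an $L_\infty$-algebra---shows that $\Pi$ intertwines each $\tilde{l}_k$ with the corresponding $l_k$ of Thm.~\ref{ham-infty}, where for the higher brackets one simply takes the built-in vector field $v$ as the Hamiltonian vector field $v_\alpha$ appearing in $l_k$. Applying $\Pi$ to the left-hand side of \eqref{gen_jacobi} for $\poi(M,\omega)$ therefore produces the corresponding expression for $L_{\infty}(M,\omega)$, which vanishes by Thm.~\ref{ham-infty}; hence every instance of \eqref{gen_jacobi} for $\poi(M,\omega)$ takes values in $\ker\Pi$. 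Since $\ker\Pi$ is concentrated in degree $0$, any instance whose output lands in a negative degree holds on the nose. The output of the $m$-th identity has degree at most $2-m$, so it can land in degree $0$ only for $m\le 2$, and these finitely many cases (the relation $\tilde{l}_1\circ\tilde{l}_1=0$ and the $m=2$ identity on degree-$0$ inputs) are immediate from $d^2=0$ together with the fact that $\tilde{l}_1$ vanishes on $L^0$.

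The main obstacle is the closure of $\tilde{l}_2$ into $\widetilde{\ham{n-1}}$, which is supplied by Prop.~\ref{brac_prop}, together with the verification that $\Pi$ strictly intertwines the brackets in spite of the vector-field bookkeeping in degree $0$. Once these are in place, routing the generalized Jacobi identity through $\Pi$ reduces it to the already-established identities of Thm.~\ref{ham-infty} and avoids reproving the sign-laden relations directly, which would otherwise be the most laborious part of the argument.
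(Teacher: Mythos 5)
Your overall strategy --- project onto $L_{\infty}(M,\omega)$ via $\Pi$, reduce the generalized Jacobi identity \eqref{gen_jacobi} to Thm.\ \ref{ham-infty}, and check the $\ker\Pi$-component by hand --- is sound, and it is essentially an expanded version of the paper's own (very terse) proof. However, there is an off-by-one error in your degree count, and it makes you skip precisely the one case where the essential new ingredient is needed. In the $m$-th identity the composite $l_j\circ l_i$ with $i+j=m+1$ has operator degree $(2-i)+(2-j)=3-m$, so on inputs $x_1,\ldots,x_m$ the output sits in degree $\sum_a\deg{x_a}+3-m$, which is at most $3-m$, not $2-m$. Hence the output can land in degree $0$ for all $m\le 3$, and the case you must still check is $m=3$ with all three inputs $x_i=(v_i,\alpha_i)$ in degree $0$.

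That case is exactly where the Jacobi identity for the Lie bracket of vector fields enters, and your stated reasons ($d^2=0$ and the vanishing of $\tilde{l}_1$ on $L^0$) do not cover it. Indeed, for three degree-zero inputs the $(i,j)=(1,3)$ terms vanish, and the $(i,j)=(3,1)$ term $\tilde{l}_1\bigl(\tilde{l}_3(x_1,x_2,x_3)\bigr)$ has vanishing vector-field component because $\tilde{l}_1$ applied to a form yields $(0,d\,\cdot\,)$; so while the form component of the identity is indeed killed by your $\Pi$-reduction to Thm.\ \ref{ham-infty}, the $\ker\Pi$-component of the identity is
\begin{equation*}
\bigl[[v_1,v_2],v_3\bigr]-\bigl[[v_1,v_3],v_2\bigr]+\bigl[[v_2,v_3],v_1\bigr],
\end{equation*}
which vanishes only because the commutator of vector fields satisfies the Jacobi identity. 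This is the ingredient the paper's proof cites explicitly (``Eq.\ \eqref{gen_jacobi} is satisfied since the bracket of vector fields satisfies the Jacobi identity while the higher structure maps are identical to those of $L_{\infty}(M,\omega)$'') and which never appears in your argument. Once you add this $m=3$ check, your proof is complete and agrees in substance with the paper's.
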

\begin{proof} 
Eq.\  \eqref{gen_jacobi} is satisfied since the bracket of vector fields
satisfies the Jacobi identity while the higher structure maps are identical to those of $L_{\infty}(M,\omega)$.
\end{proof}
We call $\poi(M,\omega)$ the \textbf{Lie {\boldmath $n$}-algebra of observables}
associated to $(M,\omega)$. 
\begin{prop}\label{map}
Let $(M,\omega)$ be a pre-$n$-plectic manifold. 
\begin{enumerate}
\item{
The cochain map
\begin{equation*}
\pi \maps \poi(M,\omega) \epi \Xham(M) \label{pi_map}
\end{equation*}
defined to be the projection $(v,\alpha) \mapsto v$ in degree 0, and
trivial in all lower degrees lifts to a strict morphism of $L_{\infty}$-algebras.
}

\item{
If $(M,\omega)$ is $n$-plectic, then the cochain map
\[
\xymatrix{
\cinf(M) \ar[r]^-{d} \ar[d]^{\id} & \Omega^{1}(M) \ar[d]^{\id} \ar[r]^-{d} & \cdots \ar[r]^-{d}
& \Omega^{n-2}(M) \ar[d]^{\id} \ar[r]^-{d} & \ham{n-1} \ar[d]^{\phi} \\
\cinf(M) \ar[r]^-{d} & \Omega^{1}(M) \ar[r]^-{d} & \cdots \ar[r]^-{d} &
\Omega^{n-2}(M) \ar[r]^-{0 \oplus d} & \widetilde{\ham{n-1}} 
}
\]
with $\phi(\alpha)=(v_{\alpha},\alpha)$, where $v_{\alpha}$ is the
unique Hamiltonian vector field associated to $\alpha$, lifts to a
strict $L_{\infty}$-quasi-isomorphism $L_{\infty}(M,\omega) \xto{\sim}
\poi(M,\omega)$.
}
\end{enumerate}
\end{prop}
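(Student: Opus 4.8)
The plan is to verify directly that each of the two cochain maps satisfies the defining relations of a strict $L_{\infty}$-morphism, namely $l'_{k} \circ f_{1}^{\tensor k} = f_{1} \circ l_{k}$ for every $k \geq 1$ (Def.\ \ref{strict_morph_def_1}), where $f_{1}$ is the given cochain map and all higher structure maps are zero. I would organize the check according to the degrees of the inputs, exploiting two features of the brackets in Thm.\ \ref{ham-infty} and Thm.\ \ref{poisson_thm}. First, the higher multibrackets $l_{k}$ (for $k \geq 3$) of $L_{\infty}(M,\omega)$ and of $\poi(M,\omega)$ coincide, as noted in the proof of Thm.\ \ref{poisson_thm}: both are given by $\vs(k)\iota(\vk{k})\omega$ on inputs all of degree $0$ and vanish otherwise, while $\tilde{l}_{2}$ additionally records $[v_{1},v_{2}]$ in its degree-$0$ slot. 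Second, $\Xham(M)$, regarded as a Lie $1$-algebra, has $l'_{1}=0$, $l'_{2}=[\cdot,\cdot]$, and $l'_{k}=0$ for $k \geq 3$.

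For part (1), consider $\pi$. When $k=1$ both sides vanish: the target differential is zero, and $\pi \circ \tilde{l}_{1}$ vanishes because $\tilde{l}_{1}$ lands either in $(0,d\alpha)\in\widetilde{\ham{n-1}}$, killed by the projection, or in a strictly negative degree, where $\pi$ is trivial. For $k=2$, on a pair of degree-$0$ elements $(v_{1},\alpha_{1}),(v_{2},\alpha_{2})$ both sides equal $[v_{1},v_{2}]$, while on inputs of nonzero total degree both sides vanish. For $k \geq 3$ the left side vanishes since $l'_{k}=0$, and the right side vanishes because $\tilde{l}_{k}$ takes values in strictly negative degrees (or is zero), all annihilated by $\pi$. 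This is routine bookkeeping with no real obstacle.

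For part (2), write $\Phi$ for the cochain map, equal to the identity in negative degrees and to $\phi(\alpha)=(v_{\alpha},\alpha)$ in degree $0$. First I would confirm the displayed square commutes, i.e.\ that $\Phi$ is a cochain map; the only nontrivial instance is the bottom differential $0 \oplus d$, and commutativity amounts to $v_{d\beta}=0$, which holds because $\iota_{v_{d\beta}}\omega=-d(d\beta)=0$ forces $v_{d\beta}=0$ by non-degeneracy. This settles $k=1$. For $k \geq 3$, and for $k=2$ on inputs of nonzero total degree, both sides vanish by the degree convention (and when $k \geq 3$ with all inputs of degree $0$, the common output $\vs(k)\iota(\vk{k})\omega$ sits in a negative degree where $\Phi$ is the identity, so the two expressions match). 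The essential case is $k=2$ on two degree-$0$ forms $\alpha_{1},\alpha_{2}$: here $\Phi \circ l_{2}$ produces $(v_{\{\alpha_{1},\alpha_{2}\}},\{\alpha_{1},\alpha_{2}\})$, whereas $\tilde{l}_{2}(\phi\alpha_{1},\phi\alpha_{2})$ produces $([v_{\alpha_{1}},v_{\alpha_{2}}],\{\alpha_{1},\alpha_{2}\})$, so equality reduces to $v_{\{\alpha_{1},\alpha_{2}\}}=[v_{\alpha_{1}},v_{\alpha_{2}}]$.

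This last identity is the only non-formal step and is where I expect the content to lie: it is exactly the assertion following Prop.\ \ref{brac_prop} that $[v_{\alpha_{1}},v_{\alpha_{2}}]$ is a Hamiltonian vector field for $\{\alpha_{1},\alpha_{2}\}$, combined with the fact that in the $n$-plectic (non-degenerate) case each Hamiltonian form has a \emph{unique} Hamiltonian vector field, so the two necessarily agree. Finally, to upgrade the strict morphism to a quasi-isomorphism (Def.\ \ref{Linfty_qiso_def}), I would observe that $\Phi$ is in fact an isomorphism of the underlying complexes: it is the identity in every negative degree, and in degree $0$ the map $\phi$ is a linear isomorphism onto $\widetilde{\ham{n-1}}$ with inverse the projection $(v_{\alpha},\alpha)\mapsto\alpha$, again by uniqueness of $v_{\alpha}$. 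An isomorphism of complexes induces an isomorphism on cohomology, yielding the desired $L_{\infty}$-quasi-isomorphism.
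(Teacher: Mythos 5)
Your proposal is correct and takes essentially the same approach as the paper's own proof: a direct check of the strict-morphism identities, where the only substantive point in (2) is the equality $\tilde{l}_{2}\phi^{\tensor 2}=\phi l_{2}$ (i.e.\ $v_{\{\alpha_{1},\alpha_{2}\}}=[v_{\alpha_{1}},v_{\alpha_{2}}]$, via Prop.~\ref{brac_prop} and uniqueness of Hamiltonian vector fields in the non-degenerate case), with the quasi-isomorphism statement following because $\phi$ is an isomorphism of complexes. The paper's proof is just a terser version of yours, leaving the routine degree bookkeeping implicit.
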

\begin{proof}
To prove (1), note we have
\[
\pi \tilde{l}_{2} \bigl( (v_{1},\alpha_{1}), (v_{2},\alpha_{2}) \bigr) = [ \pi
(v_{1},\alpha_{1}), \pi  (v_{1},\alpha_{1})].
\]
Hence, Eq.\  \eqref{strict_def_eq} is satisfied. 
For (2), note that
we have the equalities
\begin{equation*}
\begin{split}
\tilde{l}_{2} \phi^{\tensor 2} &= \phi l_{2} \\
\tilde{l}_{k} \phi^{\tensor k} &= l_{k} \quad \forall k > 2.
\end{split}
\end{equation*}
Hence, the cochain map lifts to a strict $L_{\infty}$-morphism. The
map $\phi$ is an isomorphism, hence the corresponding
$L_{\infty}$-morphism is a quasi-isomorphism.
\end{proof}

\begin{remark} \label{inf_sym_remark}
There is a nice conceptual
interpretation of the Lie $n$-algebra $\poi(M,\omega)$ within the
context of differential cohomology whenever $\omega$ represents an
integral cohomology class i.e.\
\[
[\omega] \in \im \bigl( H^{n+1}(M,\Z) \to H^{n+1}(M,\R) \bigr).
\]
For any manifold $M$ there is a short exact sequence
\[
0 \to H^{n}(M,\U(1)) \to H^{n}_{\mathrm{Del}}(M) \xto{\curv}
\Omega^{n+1}_{\mathrm{cl/int}}(M) \to 0,
\]
where $H^{\bullet}(M,\U(1))$ is ordinary $\U(1)$-valued cohomology, 
$\Omega^{n+1}_{\mathrm{cl/int}}(M)$ is the group of closed and
integral $(n+1)$-forms, and $H^{\bullet}_{\mathrm{Del}}(M)$ is `smooth
Deligne cohomology' \cite[Sec.\ 1.5]{Brylinski_book}. The group 
$H^{n}_{\mathrm{Del}}(M)$ classifies certain higher geometric objects which
one could call `principal $\U(1)$ $n$-bundles'
equipped with an `$n$-connection'. The curvature of such an
$n$-connection is given by the surjection in the above sequence, and
therefore is an integral pre-$n$-plectic form on $M$.

For example, $H^{1}_{\mathrm{Del}}(M)$ is in bijection with isomorphism classes of
principal $\U(1)$ bundles with connection over $M$. The surjection in the above sequence sends such a
bundle to its curvature 2-form. The group $H^{2}_{\mathrm{Del}}(M)$
classifies geometric objects called $\U(1)$-gerbes equipped a
`2-connection' (also called a `connective structure' and its
`curving'), whose curvature is a closed integral
3-form on $M$.


It is known that the infinitesimal symmetries of gerbes form a Lie
2-algebra (for example, see \cite{Collier:2011}), and this pattern
continues for $n >2$. In \cite{FRS}, it is shown for any $n >1$
that the Lie $n$-algebra of connection-preserving infinitesimal
symmetries of a principal $\U(1)$ $n$-bundle whose curvature is
$\omega$ is quasi-isomorphic to the Lie $n$-algebra $\poi(M,\omega)$.
  \end{remark}

\section{Homotopy moment maps}\label{sec:momaps}
In symplectic geometry, a moment map $M \to \g^{\vee}$ can be equivalently expressed as
a co-moment map i.e.\ a Lie algebra morphism $\g \to \cinf(M)$.
In this section, we describe the natural $L_{\infty}$ analog of this
co-moment map. We call this a homotopy moment map.



\begin{defprop} \label{main_def}
Let $G$ be a Lie group with Lie algebra $\g$. Let $(M,\omega)$ be a pre-$n$-plectic manifold equipped with a
$G$-action which preserves $\omega$, and such that the infinitesimal
$\g$-action $x \mapsto v_{x}$ is via Hamiltonian vector fields.
A {\bf homotopy moment map} (or {\bf moment map} for short)
is a lift 
\begin{equation} \label{the_lift}
\xymatrix{
&&  \poi(M,\omega) \ar[d]^{\pi} \\
\g \ar @{-->}[urr] \ar[rr]^{v_{-}} && \Xham(M).
}
\end{equation}
of the Lie algebra morphism $v_{-}$ through the
$L_{\infty}$-morphism $\pi$ (defined in Prop.\ \ref{map}) in the category of $L_{\infty}$-algebras. 

Such a lift corresponds to an $L_{\infty}$-morphism 
\[
(f_{k}) \maps  \g \to L_{\infty}(M,\omega)
\]
such that \[
-\iota_{v_x} \omega=d(f_{1}(x))\quad\quad \text{   for all $x\in \g$}.
\]
\end{defprop}
Before we give a proof of the above correspondence between lifts and
morphisms into $L_{\infty}(M,\omega)$, we explain
in more detail what a  homotopy moment map actually is:
\begin{itemize}
\item{The condition $-\iota_{v_x} \omega=d(f_{1}(x))$ implies
    that $v_{x}$  
is a Hamiltonian vector field for $f_{1}(x) \in \ham{n-1}$.
}
\item{
By Prop.\ \ref{Lie_alg_P_prop1}, an $\li$-morphism $\g \to L_{\infty}(M,\omega)$ consists of
  a collection of $n$ skew-symmetric maps
\[
f_{k} \maps \g^{\tensor k} \to L, \quad 1 \leq k \leq n, 
\]
where $L$ is the underlying vector space of $L_{\infty}(M,\omega)$
and $\deg{f_{k}} =1-k$, which satisfy
\begin{multline} \label{main_eq_1}
\sum_{1 \leq i < j \leq k}
(-1)^{i+j+1}f_{k-1}([x_{i},x_{j}],x_{1},\ldots,\widehat{x_{i}},\ldots,\widehat{x_{j}},\ldots,x_{k})\\
=df_{k}(x_{1},\ldots,x_{k}) + \vs(k)\iota(v_{1}\wedge \cdots \wedge v_{k})\omega
\end{multline}
for $2 \leq k  \leq n$ and
\begin{multline} \label{main_eq_2}
\sum_{1 \leq i < j \leq n+1}
(-1)^{i+j+1}f_{n}([x_{i},x_{j}],x_{1},\ldots,\widehat{x_{i}},\ldots,\widehat{x_{j}},\ldots,x_{n+1})
=\vs(n+1)\iota(v_{1}\wedge \cdots \wedge v_{{n+1}})\omega,
\end{multline}
where $v_{i}$ is the vector field associated to $x_{i}$ via the $\g$-action.
(These equalities are obtained from Eqs.\ \eqref{cor_eq1} and \eqref{cor_eq2}
via substitution using the definition of $f_{1}$ and the maps $l_{k}$
defined in Thm.\ \ref{ham-infty}. Notice that 
 Thm.\ \ref{ham-infty} implies that $L_{\infty}(M,\omega)$ has
property \eqref{property}.)
}
\item{Finally, note that Prop.\ \ref{brac_prop} imply that $v_{[x,y]}= [v_{x},v_{y}]$ is a
Hamiltonian vector field for 
\[
\brac{f_{1}(x)}{f_{1}(y)}=l_{2}(f_{1}(x),f_{1}(y)).
\]
Of course, $f_{1} \maps \g \to \ham{n-1}$ need not preserve the
bracket on $\g$ i.e.\ in general
$
f_{1}([x,y]) \neq \brac{f_{1}(x)}{f_{1}(y)}.
$
This is a good property, in view of the facts that the Lie bracket of $\g$ satisfies the Jacobi identity but $\brac{\cdot}{\cdot}$ does not.
}
\end{itemize}

\begin{proof}[Proof of Def./Prop.\ \ref{main_def}]
Suppose we have an $L_{\infty}$-morphism
\[
(\tilde{f}_{k}) \maps  \g \to \poi(M,\omega)
\]
corresponding to a lift \eqref{the_lift}. Note that $\poi(M,\omega)$
satisfies Property \eqref{property}. Therefore the morphisms 
$(\tilde{f}_{k})$ are trivial for $k \geq n+1$ and satisfy the compatibility
equations given in Prop.\ \ref{Lie_alg_P_prop1}.
By the definition of the projection $\pi$,  the degree 0 map can be written as
\[
\tilde{f}_{1}(x)=\bigl(v_{x},f_{1}(x) \bigr) \in \widetilde{\ham{n-1}} 
\]
where $f_{1} \maps \g \to \ham{n-1}$ is a linear map satisfying 
$-\iota_{v_x} \omega=d(f_{1}(x))$. Moreover, $\pi$ is a strict
$L_{\infty}$-morphism and therefore
\[
\tilde{f}_{1}([x,y]) = \bigl(v_{[x,y]},f_{1}([x,y]) \bigr) = \bigl([v_{x},v_{y}],f_{1}([x,y]) \bigr).
\] 
Combining these observations with the fact that structure maps $\tilde{l}_{k}$ of
$\poi(M,\omega)$ agree with those of $L_{\infty}(M,\omega)$ for $k
\geq 3$, we obtain an $L_{\infty}$ morphism
\[
(f_{k}) \maps \g \to L_{\infty}(M,\omega)
\] 
with $f_{k} = \tilde{f}_{k}$ for $k >1$. The fact that every such
morphism gives a lift is now obvious.
\end{proof}

\begin{remark}
Note we have a generalization of the fact that, in symplectic
geometry, the image of a moment map is a Lie subalgebra of the Poisson
algebra of functions on the symplectic manifold. 

More precisely,
given a moment map with components
$f_{k} \maps \g^{\tensor k} \to L$ (for  $1 \leq k \leq n$),
its image is not an $\li$-subalgebra of $L_{\infty}(M,\omega)$ in general (unless $n=1$). However, the subcomplex generated by its image,
which we denote by $I$ and which is given by
\[
I^{k} =
\begin{cases}
\im(f_n) & k=-n+1\\
\im(f_{-k+1})+d(\im(f_{-k+2}))  & -n+2\le k \le 0
\end{cases}
\]
is an $\li$-subalgebra of $L_{\infty}(M,\omega)$. Indeed $I$ is closed w.r.t. the differential $d$ by construction. It is seen to be closed w.r.t. the higher brackets using the definition of the latter (Thm.\ \ref{ham-infty}) together with Eq.\  \eqref{main_eq_1} and \eqref{main_eq_2}, and because the Hamiltonian vector field of an exact element of $\ham{n-1}$  is zero.

\end{remark}

\section{Equivariant cohomology} \label{equi_deRham_sec} In this
section, we establish the relationship between equivariant cohomology
and moment maps. We interpret the defining equations for a moment map
within the context of the Bott-Shulman-Stasheff de Rham model for
equivariant cohomology. This model is related to the more
computationally tractable Cartan model via a natural chain map (a
generalization of the
``Cartan'' map), which is a quasi-isomorphism if the group acting
is compact. In particular, given any $(n+1)$-cocycle in the Cartan model
extending a $G$-invariant pre-$n$-plectic form, we obtain an explicit
formula for a corresponding moment map. This correspondence is natural
in the sense that the moment map for the pullback of a cocycle along a
$G$-equivariant map is the pullback of the moment map. As a special
case, this formula recovers the well known relationship between moment maps in
symplectic geometry and 2-cocycles in the Cartan model.

\subsection{The Bott-Shulman-Stasheff model}
Let $G$ be a Lie group and $M$ a $G$-manifold.
Let $G \ltimes M_{\bu}$ denote the simplicial manifold 
\[
G \ltimes M_{n}= G^{n} \times M
\]
with the face maps $d_i\colon G^n\times M \to G^{n-1} \times M$ given by 
\begin{equation*}\label{eq:faceGbuM}(g_1,\ldots,g_n,p)\mapsto \begin{cases} (g_2,\ldots,g_n,p) & i=0,\\ (g_1,\ldots,g_ig_{i+1},\ldots,g_n,p) & 0<i<n,\\ (g_1,\ldots, g_{n-1},g_n p) & i=n.\end{cases} \end{equation*}
The Bott-Shulman-Stasheff complex is the total complex of the double
complex of differential forms on $G \ltimes M_\bu$: 
\begin{equation}\begin{split}\label{eq:bsComplex}
 \Omega^{j,k}(G \ltimes M_\bu)&:=\Omega^k(G^j\times M),\\
 \bOmega(G \ltimes M_\bu)&:=\Tot(\Omega^{*,*}(G \ltimes M_\bu)), \\
 D&:= \del + (-1)^j d, 
\end{split}
\end{equation}
where $\del$ is the simplicial differential and $d$ is the de Rham differential. 
The de Rham theorem of Bott-Shulman-Stasheff \cite{BSS:1976} implies
that the cohomology of $\bigl(\bOmega(G\ltimes M_{\bu}),D \bigr)$ is the
equivariant cohomology of $M$ with real coefficients.

\subsection{Moment maps from equivariant cohomology}
We assume that $G$ is finite-dimensional, and
denote by $\g^{\vee}$ the dual of the Lie algebra $\g$.
If $(M,\omega)$ is a pre-$n$-plectic manifold equipped with a $G$-action preserving $\omega$, then 
it follows directly from Def./Prop.\ \ref{main_def} that a moment map corresponds
to a sum $f = \sum_{k=1}^{n} f_k$ with
\[
f_{k} \in \Lambda^{k}(\g^{\vee}) \tensor \Omega^{n-k}(M)
\]
such that
\[
(\delta_{\CE} \tensor \id) f + (\id \tensor d)f = - \sum_{k=1}^{n+1}\vs(k)\iota_{\g}^{k} \omega.
\]
Above, $ \delta_{\CE} \maps \Lambda^n(\g^\vee)\to
\Lambda^{n+1}(\g^\vee)$ is the Chevalley-Eilenberg differential
defined in Eq.\ \eqref{eq:CE_diff}, and
$ \iota^{k}_{\g} \maps \Omega^{\ast}(M) \to \Lambda^{k}(\g^{\vee})
\tensor \Omega^{\ast -k}(M)$ is the insertion operation defined in
Eq.\ \eqref{eq:iota_g}.
At this point, it is convenient to define the following complex:
\begin{equation} \label{eq:dbl_complex}
\begin{split}
 C^{k,m}_{\g}(M) &:=\Lambda^k(\g^\vee)\otimes\Omega^m(M)\\
 C^{\ast}_{\g}(M)&:=\Tot(C^{*,*}_{\g}(M) )\\
\bd &:= \delta_{\CE} + (-1)^{k}d.\\
\end{split}
\end{equation}
Given a vector in the total complex $\alpha \in  C^{i}_{\g}(M)$, we 
denote by $\alpha_{k}$ the summand of $\alpha$ belonging to 
$\Lambda^{k} (\g^{\vee}) \tensor \Omega^{i-k}(M)$.

The complex \eqref{eq:dbl_complex} leads us to a proposition whose proof is straightforward.
\begin{prop}\label{prop:dbl_complex}
If $(M,\omega)$ is a pre-$n$-plectic manifold equipped with a $G$-action 
preserving $\omega$, then $f = \sum_{k=1}^{n} f_k \in C^{n}_{\g}(M)$
is a moment map if and only if
\begin{equation} \label{eq:prop:dbl_complex}
\bd f^\vs = \sum_{k=1}^{n+1} (-1)^{k+1}\iota^{k}_{\g}\omega 
\end{equation}
where $f^\vs = \sum_{i=1}^{n} f^\vs_k$ with $f^{\vs}_k = \vs(k)f_k$. 
\end{prop}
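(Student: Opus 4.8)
The plan is to match \eqref{eq:prop:dbl_complex}, which is a single identity in the total complex $C^\ast_\g(M)$ of \eqref{eq:dbl_complex}, against the defining equations of a homotopy moment map component by component in the exterior degree. First I would recall, via Def./Prop.\ \ref{main_def} unpacked through Prop.\ \ref{Lie_alg_P_prop1}, that a moment map is the same datum as skew-symmetric maps $f_k\in\Lambda^k(\g^\vee)\tensor\Omega^{n-k}(M)$ for $1\le k\le n$ subject to three conditions: the lift (Hamiltonian) condition $df_1(x)=-\iota_{v_x}\omega$, Eq.\ \eqref{main_eq_1} for $2\le k\le n$, and Eq.\ \eqref{main_eq_2}. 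I would then rewrite all three inside $C^\ast_\g(M)$. By \eqref{eq:CE_diff}, the alternating sums on the left of \eqref{main_eq_1} and \eqref{main_eq_2} are exactly $-(\delta_{\CE}\tensor\id)f_{k-1}$, while \eqref{eq:iota_g} together with \eqref{interior} identifies $\iota(v_1\wedge\cdots\wedge v_k)\omega$ with $\iota_{\g}^{k}\omega$ evaluated on $(x_1,\dots,x_k)$. Adopting the conventions $f_0=f_{n+1}=0$, this packages the three conditions into a single family, which I will call $(\star_k)$:
\[
(\delta_{\CE}\tensor\id)f_{k-1}+(\id\tensor d)f_k=-\vs(k)\,\iota_{\g}^{k}\omega,\qquad 1\le k\le n+1,
\]
where $k=1$ reproduces the lift condition (as $\vs(1)=1$) and $k=n+1$ reproduces \eqref{main_eq_2}.

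Second, I would expand the left-hand side of \eqref{eq:prop:dbl_complex} by exterior degree. Since $f^\vs_k=\vs(k)f_k$ lies in $\Lambda^k(\g^\vee)\tensor\Omega^{n-k}(M)$ and $\bd=\delta_{\CE}+(-1)^k d$ on that summand, the component of $\bd f^\vs$ in $\Lambda^k(\g^\vee)\tensor\Omega^{n-k+1}(M)$ is
\[
\vs(k-1)\,(\delta_{\CE}\tensor\id)f_{k-1}+(-1)^k\vs(k)\,(\id\tensor d)f_k .
\]
Matching this against the corresponding component $(-1)^{k+1}\iota_{\g}^{k}\omega$ of the right-hand side of \eqref{eq:prop:dbl_complex} produces one equation for each $k$ with $1\le k\le n+1$, and \eqref{eq:prop:dbl_complex} is precisely the conjunction of these. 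So the whole statement reduces to checking that, for each $k$, this matched equation is equivalent to $(\star_k)$.

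The heart of the argument, and the only genuinely delicate point, is the sign reconciliation, so I expect the main obstacle to be bookkeeping the three competing sign sources: the $(-1)^k$ built into $\bd$, the Koszul signs $\vs(k)$, and the rescaling $f^\vs_k=\vs(k)f_k$. The key is the two elementary facts recorded in the Preliminaries, namely $\vs(k)^2=1$ and $\vs(k-1)\vs(k)=(-1)^k$. The latter gives $\vs(k-1)=(-1)^k\vs(k)$, which lets me factor the degree-$k$ component of $\bd f^\vs$ as $(-1)^k\vs(k)\bigl[(\delta_{\CE}\tensor\id)f_{k-1}+(\id\tensor d)f_k\bigr]$; multiplying the matched equation through by the involution $(-1)^k\vs(k)$ then collapses it exactly to $(\star_k)$. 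The same identity settles the two boundary degrees without extra work: at $k=1$ it yields the Hamiltonian condition, and at $k=n+1$ it amounts to $-\vs(n+1)=(-1)^n\vs(n)$, recovering \eqref{main_eq_2}. Hence $f$ is a moment map iff all $(\star_k)$ hold iff \eqref{eq:prop:dbl_complex} holds, which completes the proof.
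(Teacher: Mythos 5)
Your proof is correct and takes essentially the same route the paper intends: the paper omits the argument entirely (calling it ``straightforward'') after displaying the componentwise form of the moment map equations, and your verification---unpacking Def./Prop.\ \ref{main_def} via Prop.\ \ref{Lie_alg_P_prop1} into the family $(\star_k)$, expanding $\bd f^\vs$ by exterior degree, and reconciling signs with $\vs(k)^2=1$ and $\vs(k-1)\vs(k)=(-1)^k$---is precisely that straightforward check. In particular, your treatment of the boundary degrees $k=1$ and $k=n+1$ (with the conventions $f_0=f_{n+1}=0$) correctly recovers the Hamiltonian lift condition and Eq.\ \eqref{main_eq_2}, so nothing is missing.
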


\begin{remark}
The homotopy theory of $L_{\infty}$-algebras provides
a nice conceptual explanation for the appearance of the complex
$C^{\ast}_{\g}(M)$. The multilinear maps $ v_1 \wedge v_2 \wedge
\cdots \wedge v_k \mapsto \vs(k) \iota_{v_k}\iota_{v_{k-1}} \cdots
\iota_{v_{1}} \omega$ define an $L_{\infty}$-morphism $F^{\omega}$ from the Lie
algebra of Hamiltonian vector fields $\Xham(M)$ to an abelian
$L_{\infty}$-algebra $B^nA$ corresponding to the cochain complex $\Omega^{0}(M) \xto{d}
\Omega^{1} \xto{d} \cdots \xto{d} \Omega^{n-1}(M) \xto{d}
d\Omega^{n-1}(M)$, where $d \Omega^{n-1}(M)$ is in degree 0
\cite[Prop.\ 3.8]{FRS}. (The notation $B^nA$ is used to remind the
reader of the analogous situation for classifying spaces of bundles.)
The homotopy fiber of $F^{\omega}$ is the Lie
$n$-algebra of observables $\poi(M,\omega)$, i.e.\ there is a homotopy
pullback diagram 
\[
\begin{tikzpicture}[descr/.style={fill=white,inner sep=2.5pt},baseline=(current  bounding  box.center)]
\matrix (m) [matrix of math nodes, row sep=3em,column sep=4em,
  ampersand replacement=\&]
  {  
\poi(M,\omega) \& 0 \\
\Xham(M) \& B^nA \\
}; 
\path[->,font=\scriptsize] 
 (m-1-1) edge node[auto] {$ $} (m-1-2)
 (m-1-1) edge node[auto,swap] {$\pi $} (m-2-1)
 (m-1-2) edge node[auto] {$$} (m-2-2)
 (m-2-1) edge node[auto] {$F^{\omega}$} (m-2-2)
;
\begin{scope}[shift=($(m-1-1)!.30!(m-2-2)$)]
\draw +(-.25,0) -- +(0,0)  -- +(0,.25);
\node(h) at (.15,-.15) {\scriptsize{$h$}};
\end{scope}
\end{tikzpicture}
\]
As in Def.\ \ref{main_def}, we have the Lie algebra morphism $v_{-}
\maps \g \to \Xham(M)$ encoding the infinitesimal action. The
structure maps of the composition $F^{\omega} \circ v_{-}$ are
precisely the multilinear maps $\iota^{k}_{\g}\omega$. 
There is, in general, a ``mapping space'' for $L_{\infty}$-algebras, i.e., a simplicial
set $\Map_{\bullet}(L,L')$ whose 0-simplicies are
$L_{\infty}$-morphisms between $L$ and $L'$ \cite[Sec.\
3.2]{DHR:2015}. 
In the present case,  one
can show a moment map, or lift, $\g \to \poi(M,\omega)$ exists if and only if
$F^{\omega} \circ v_{-}$ is null--homotopic, i.e., $[F^{\omega} \circ
v_{-}]=[0] \in \pi_0 \Map_{\bullet}(\g,B^nA)$. This is in complete
analogy with how isomorphism classes of principal bundles correspond
to homotopy classes of maps into classifying spaces. In the present case, since $B^nA$ is an
abelian $L_{\infty}$-algebra (a chain complex), the simplicial set
$\Map_{\bullet}(\g, B^nA)$ is a simplicial vector space. Via the
Dold--Kan correspondence, the normalized chain complex of
$\Map_{\bullet}(\g, B^nA)$ is homotopy equivalent to the
complex $C^{\ast}_{\g}(M)$. Thus, a homotopy moment map exists if and only if
$F^{\omega} \circ v_{-}$ is null-homotopic if and only if it is a trivial $n$-cocycle
in $C^{\ast}_{\g}(M)$. This last statment is precisely the content of Prop.\
\ref{prop:dbl_complex}.

Applications of the complex $C^{\ast}_{\g}(M)$ as a computational tool 
are presented in \cite{FLRZ}.
\end{remark}

The relationship between moment maps and equivariant cohomology stems
from the fact that the complex $C^{\ast}_{\g}(M)$ can be identified
with the subcomplex 
\[
\Omega^{\ast}(G \times M)^{G} \subseteq \Omega^{1,\ast}(G \ltimes M_\bu)
\]
of the double complex \eqref{eq:bsComplex}. This is the subcomplex of forms on $G\times M$ invariant with
respect to the $G$-action $g' \cdot (g,p) = (g'g,p)$. We view $\Omega^{\ast}(G \times M)^{G}$
as the total complex of the double complex
\begin{equation}\begin{split}\label{eq:prodDoubleCx}
 \Omega^{k,m}(G\times M)^G:=&\Gamma(G\times M, \Lambda^k T^*G\otimes \Lambda^m T^*M)^G\subseteq \Omega^{k+m}(G\times M)^G, \\
 \Omega^*(G\times M)^G =& \Tot\bigl(\Omega^{*,*}(G\times M)^G\bigr), \\
 d =& d^G + (-1)^k d^M
\end{split}\end{equation}
where $d^G$ and $d^M$ denote the de Rham differentials in the $G$ and $M$ directions, respectively. 
\begin{lemma} \label{lem:dbl_complex1} 
Restriction to $M=\{e\}\times  M\overset{i}{\hookrightarrow} G\times
M$ induces an isomorphism of  double complexes
\[
r\colon\bigl(\Omega^{*,*}(G\times M)^G,d^G, d^M\bigr)\xto{\cong}
\bigl(C^{*,*}_\g(M),\delta_{\CE}, d \bigr), 
\]
and hence, an isomorphism of total complexes
\[  r\colon \bigl(\Omega^*(G\times M)^G,d\bigr)\xto{\cong}
\bigl(C_{\g}^*(M), \bd \bigr). \]
\end{lemma}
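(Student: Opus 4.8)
The plan is to exhibit $r$ concretely as ``restriction of the group factor to the identity $e$'', to check that it is a linear isomorphism in each bidegree, and then to verify that it intertwines the two partial differentials separately. The only structural input is the classical identification of left-invariant forms on $G$ with $\Lambda^{\bullet}(\g^{\vee})$, under which the de Rham differential becomes the Chevalley-Eilenberg differential; everything else is bookkeeping.

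First I would fix the $G$-invariant splitting $T^{*}(G\times M)=\pr_{G}^{*}T^{*}G\oplus \pr_{M}^{*}T^{*}M$. Because the action $g'\cdot(g,p)=(g'g,p)$ is trivial on $M$ and acts by left translation $L_{g'}$ on $G$, this splitting is $G$-invariant, so the bidegree decomposition $\Lambda^{k}T^{*}G\otimes\Lambda^{m}T^{*}M$ descends to invariant forms and produces exactly the bigrading of $\Omega^{*,*}(G\times M)^{G}$. Next I would use a global left-invariant coframe $\{\theta^{a}\}$ on $G$ dual to a basis of $\g$, and write any element of $\Omega^{k,m}(G\times M)^{G}$ as $\sum_{I}\theta^{I}\otimes\alpha_{I}$, the sum over multi-indices $I$ of length $k$. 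In the paper's inverse-pullback convention, $G$-invariance is precisely left-invariance in the $G$-direction, and since the $\theta^{I}$ are already left-invariant this forces each coefficient $\alpha_{I}$ to be independent of $g\in G$, i.e.\ a pullback of a form in $\Omega^{m}(M)$.

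I would then define $r$ on each bidegree by evaluating the $G$-factor at $e$: since $\theta^{a}|_{e}$ is a basis of $T^{*}_{e}G=\g^{\vee}$, set $\theta^{I}\otimes\alpha_{I}\mapsto (\theta^{I}|_{e})\otimes\alpha_{I}\in\Lambda^{k}(\g^{\vee})\otimes\Omega^{m}(M)=C^{k,m}_{\g}(M)$. Bijectivity is then immediate, the inverse being the (unique, by the previous paragraph) left-invariant extension. Compatibility with the $M$-direction differential is clear, since $d^{M}$ differentiates only the coefficients $\alpha_{I}$ and leaves the $\theta^{I}$ untouched, so $r\circ d^{M}=(\id\otimes d)\circ r$. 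For the $G$-direction, $d^{G}$ acts only on the left-invariant part $\theta^{I}$ (the coefficients being constant along $G$); the Maurer-Cartan structure equations compute $d^{G}$ of a left-invariant form, and under evaluation at $e$ this is exactly the Chevalley-Eilenberg differential $\delta_{\CE}$ of Eq.\ \eqref{eq:CE_diff}, so $r\circ d^{G}=(\delta_{\CE}\otimes\id)\circ r$. This gives the isomorphism of double complexes, and passing to total complexes with the shared sign rule $(-1)^{k}$ yields $r\colon(\Omega^{*}(G\times M)^{G},d)\xto{\cong}(C^{*}_{\g}(M),\bd)$.

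The main, and essentially the only nontrivial, step is the $G$-direction identity $r\,d^{G}=\delta_{\CE}\,r$: one must match the sign conventions of the Maurer-Cartan equation with those in \eqref{eq:CE_diff}. Concretely, for left-invariant forms and left-invariant vector fields the Cartan formula for $d$ has its derivative terms vanish (the relevant function values are constant), leaving only $\sum_{i<j}(-1)^{i+j}\omega([X_{i},X_{j}],\ldots)$, which is precisely $\delta_{\CE}$; so the intertwining holds on the nose with no stray sign. This is the standard statement that the left-invariant de Rham complex of $G$ is the Chevalley-Eilenberg complex of $\g$, and it is what makes the lemma true.
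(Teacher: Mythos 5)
Your proof is correct and follows essentially the same route as the paper: both establish bijectivity by observing that a $G$-invariant form on $G\times M$ is determined by its restriction to $\{e\}\times M$ (you via an explicit left-invariant coframe trivialization, the paper via a general lemma on invariant sections of equivariant bundles together with the splitting $\Lambda^{m}(\g^{\vee}\oplus T^{*}M)$), and both handle the differentials by the classical identification of the left-invariant de Rham complex of $G$ with the Chevalley--Eilenberg complex of $\g$. The only difference is packaging, not substance.
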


\begin{proof}
We use the following observation: if $E\to B$ is a $G$-equivariant
vector bundle and $i \maps M \hookrightarrow B$ an embedding such that
the action $G\times M\to B$ is a diffeomorphism, then the restriction of sections
$\Gamma(B,E)^G\to \Gamma(M, i^*E)$ is an isomorphism. 
Hence, the restriction of sections of $\Lambda^mT^*(G\times M)$ to $M =
  \{e\}\times M$ induces an isomorphism
\[
\Omega^m(G\times M)^G = \Gamma(G\times M, \Lambda^m(T^*(G\times M)))^G
\xto{\cong}\Gamma(M,i^*\Lambda^m T^*(G\times M)) =
\Gamma(M,\Lambda^m(\g^\vee\oplus T^*M)). 
\]
Composing this with the natural isomorphism
\begin{equation}\label{eq:exteriorProdOfSum}
\begin{split}
    \Psi\colon\Lambda^m(\g^\vee\oplus T^{*}_{p}M)&\xto{\cong}\bigoplus_{k+\ell=m} \Lambda^k(\g^\vee)\tensor\Lambda^\ell T^{*}_{p}M\\
    \Psi(\alpha)\bigl((x_1,\ldots,x_k)\otimes
    (w_1,\ldots,w_\ell)\bigr) &=
    \alpha\bigl((x_1,0),\ldots,(x_k,0),(0,w_1),\ldots,(0,w_\ell)\bigr)
\end{split}
\end{equation}
gives the isomorphism $r\colon\bigl(\Omega^{*,*}(G\times M)^G \bigr)\xto{\cong}
\bigl(C^{*,*}_\g(M) \bigr)$. It remains to show $r$ respects the differentials.
Indeed, since the Chevalley-Eilenberg
differential is the restriction of the de Rham differential to
left-invariant differential forms on $G$ (e.g. \cite[Chapter IV,
Prop. 3]{ccc2}), we have $r d^G = \delta_{\CE} r$. And finally, $r d^M = d r$ follows immediately from the definition of the differentials $d^M$ and $d$.
\end{proof}
Now that we identified $C_\g^*(M)$ as sitting inside
$\Omega^{1,*}(G \ltimes M_{\bu})$, we can reinterpret the term on the right hand
side of the moment map condition in Prop. \ref{prop:dbl_complex} in
terms of the Bott-Shulman-Stasheff complex.

\begin{prop}\label{prop:rDelOmega}
 Let $\omega\in\Omega^{n+1}(M)^G$. Then 
\[ 
r(\del \omega) =  \sum_{k=1}^{n+1} (-1)^{k+1}\iota_\g^k\omega.
\]
\end{prop}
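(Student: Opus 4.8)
The plan is to unwind the simplicial differential explicitly and then evaluate the restriction map $r$ term by term. For the passage from $(G\ltimes M_\bu)_0 = M$ to $(G\ltimes M_\bu)_1 = G\times M$ there are exactly two face maps, $d_0=\pr_M\colon G\times M\to M$ (the projection) and $d_1=a\colon G\times M\to M$ (the action $a(g,p)=g\cdot p$), so that $\del\omega = d_0^*\omega - d_1^*\omega = \pr_M^*\omega - a^*\omega$. First I would check that this form is $G$-invariant, so that $r$ may be applied: $\pr_M^*\omega$ is manifestly invariant since $\pr_M$ forgets the $G$-coordinate, while from $a\circ L_{g'}=\phi_{g'}\circ a$ (with $L_{g'}(g,p)=(g'g,p)$) and the invariance $\phi_{g'}^*\omega=\omega$ one gets $L_{g'}^*a^*\omega=a^*\omega$.

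Next I would compute $r$ on each summand. Recall that $r$ restricts to $\{e\}\times M$, uses the identification $T_{(e,p)}(G\times M)\cong\g\oplus T_pM$, and via $\Psi$ extracts the component in $\Lambda^k(\g^\vee)\tensor\Omega^{n+1-k}(M)$ by inserting $k$ vectors of the form $(x_i,0)$ and $n+1-k$ vectors of the form $(0,w_j)$. Since $d(\pr_M)_{(e,p)}(x,0)=0$, the pullback $\pr_M^*\omega$ vanishes as soon as any $\g$-direction is inserted; hence $r(\pr_M^*\omega)$ has only a $k=0$ component, equal to $\omega=\iota_\g^0\omega$.

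The heart of the computation is the term $a^*\omega$. Here I would differentiate the action at the identity: $da_{(e,p)}(0,w)=w$, while $da_{(e,p)}(x,0)=\frac{d}{dt}\big|_{t=0}\exp(tx)\cdot p=-v_x|_p$, the last equality being exactly the fundamental-vector-field sign convention of \eqref{lie_alg_action}. Consequently
\[
(a^*\omega)_{(e,p)}\bigl((x_1,0),\dots,(x_k,0),(0,w_1),\dots,(0,w_{n+1-k})\bigr)=(-1)^k\,\omega_p\bigl(v_{x_1},\dots,v_{x_k},w_1,\dots,w_{n+1-k}\bigr),
\]
and since $\iota_\g^k\omega(x_1,\dots,x_k)=\iota_{v_{x_k}}\cdots\iota_{v_{x_1}}\omega=\omega(v_{x_1},\dots,v_{x_k},-,\dots,-)$, this identifies the $k$-th component as $r(a^*\omega)_k=(-1)^k\iota_\g^k\omega$. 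Summing over $k$ yields $r(a^*\omega)=\sum_{k=0}^{n+1}(-1)^k\iota_\g^k\omega$.

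Finally I would subtract. The $k=0$ contributions of $r(\pr_M^*\omega)=\iota_\g^0\omega$ and of $r(a^*\omega)$ (namely $(-1)^0\iota_\g^0\omega=\omega$) cancel, leaving $r(\del\omega)=-\sum_{k=1}^{n+1}(-1)^k\iota_\g^k\omega=\sum_{k=1}^{n+1}(-1)^{k+1}\iota_\g^k\omega$, as claimed. I expect the only genuine obstacle to be careful sign bookkeeping: specifically the sign $da_{(e,p)}(x,0)=-v_x$ forced by the fundamental-vector-field convention, and verifying that the $k=0$ terms cancel so that the surviving sum begins at $k=1$ with sign $(-1)^{k+1}$.
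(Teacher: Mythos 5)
Your proof is correct and follows essentially the same route as the paper's: both decompose $\del\omega = d_0^*\omega - d_1^*\omega$, verify $G$-invariance so that $r$ applies, and exploit the key computation $d_{1*}|_{(e,p)}(x,u)=u-v_x$ coming from the fundamental-vector-field sign convention. The only cosmetic difference is that you evaluate $r$ on the two pullbacks separately and cancel the $k=0$ terms, whereas the paper states the combined formula for $r(d_0^*\omega-d_1^*\omega)$ directly.
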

\begin{proof}
  The face map $d_1\colon G\times M\to M$ is the
  $G$-action. Therefore, it is $G$-equivariant and hence
  $d_1^*\omega\in\Omega^{n+1}(G\times M)^G$. Since $d_0$ is the
  projection $G \times M \to M$, we
  also have $d_0^*\omega \in\Omega^{n+1}(G\times M)^G$,
  and hence
\[
\del\omega=d_0^*\omega - d_1^*\omega \in  \Omega^{n+1}(G\times M)^G. 
\]
The differential of the map $d_1$ at the point $(e,p)$ is given by 
\begin{equation*}
d_{1_{\ast}} \vert_{(e,p)}(x,u) = u - v_x, \quad \text{for $x \in
  T_eG$, $u \in T_pM$}.
\end{equation*}
If $x_1,\ldots,x_{n+1}\in\g$ and $u_1,\ldots,u_{n+1}\in T_pM$, then
\begin{equation*}
\begin{split}\label{eq:rdomega}
 r (d_0^*\omega - d_1^*\omega)(u_1,\ldots,u_{n+1}) =& 0, \\
r(d_0^*\omega - d_1^*\omega) (x_1,\ldots,x_k,u_1\ldots,u_{n-k+1})
=&  (-1)^{k+1} \bigl(\iota_{\g}^k\omega(x_1,\ldots,x_k) \bigr)(u_1,\ldots, u_{n-k+1}).
\end{split}
\end{equation*}
Thus, $r (\del\omega) = \sum_{k=1}^{n+1}(-1)^{k+1}\iota_\g^k\omega$.
\end{proof}

\begin{cor}\label{cor:htpyMMCond}
 An element $f=\sum_{k=1}^{n} f_k\in C_\g^{n}(M)$ with $f_k\in
 C_\g^{k,n-k}(M)$ is a homotopy moment map for the pre-$n$-plectic
 form $\omega \in \Omega^{n+1}(M)^G$ if and only if $f^\vs$ satisfies
 $\bd f^\vs = r(\partial \omega)$.
\end{cor}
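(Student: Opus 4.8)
The plan is to obtain this corollary as a direct concatenation of the two preceding propositions, since together they already supply both sides of the asserted equation; the substantive work lives entirely in Prop.~\ref{prop:dbl_complex} and Prop.~\ref{prop:rDelOmega}, and what remains is only to observe that their outputs match term by term.

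Concretely, I would start from Prop.~\ref{prop:dbl_complex}, which asserts that $f = \sum_{k=1}^{n} f_k \in C^{n}_{\g}(M)$ is a homotopy moment map for $\omega$ precisely when
\[
\bd f^\vs = \sum_{k=1}^{n+1} (-1)^{k+1}\iota^{k}_{\g}\omega.
\]
I would then invoke Prop.~\ref{prop:rDelOmega}, whose hypothesis $\omega \in \Omega^{n+1}(M)^G$ is exactly the $G$-invariance already built into the moment map setup, to rewrite the right-hand side as
\[
\sum_{k=1}^{n+1} (-1)^{k+1}\iota^{k}_{\g}\omega = r(\del\omega).
\]
Substituting this identity into the moment map condition gives $\bd f^\vs = r(\del\omega)$, and since the chain of statements consists of honest equivalences it is reversible, yielding the stated `if and only if'.

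Because the argument is a pure combination of established equivalences, I do not expect any genuine obstacle. The only care required is to confirm that the hypotheses align — in particular that `$G$ preserves $\omega$' in Prop.~\ref{prop:dbl_complex} coincides with `$\omega \in \Omega^{n+1}(M)^G$' in Prop.~\ref{prop:rDelOmega}, and that $r$ is the isomorphism of total complexes furnished by Lemma~\ref{lem:dbl_complex1}, so that both $\bd f^\vs$ and $r(\del\omega)$ indeed live in $C^{n+1}_{\g}(M)$ and may be compared. All the nontrivial content — translating the defining $L_\infty$-morphism equations \eqref{main_eq_1}--\eqref{main_eq_2} into the single total-complex identity governed by $\bd$, and computing $r(\del\omega)$ through the differential of the face map $d_1$ at $(e,p)$ — has already been discharged upstream, so the corollary follows with no further calculation.
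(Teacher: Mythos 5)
Your proposal is correct and is exactly the argument the paper intends: the corollary is stated without proof precisely because it is the immediate concatenation of Prop.~\ref{prop:dbl_complex} (the moment map condition $\bd f^\vs = \sum_{k=1}^{n+1}(-1)^{k+1}\iota^{k}_{\g}\omega$) with Prop.~\ref{prop:rDelOmega} (the identification of that right-hand side with $r(\del\omega)$). Your additional check that the hypotheses and the target complex $C^{n+1}_{\g}(M)$ align is sound and matches the paper's setup via Lemma~\ref{lem:dbl_complex1}.
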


If $G$ is compact and $\omega \in \Omega^{n+1}(M)$, then let
$\omega^{G} \in\Omega^{n+1}(M)^G$ denote the $G$-invariant form
obtained by averaging. Similarly, if $\beta \in\Omega^{n}(G \times
M)$, then let $\beta^G$ denote the $G$-invariant form on $G \times M$ obtained by
averaging with respect to the action $g' \cdot (g,p) = (g'g,p)$.

We now state our first main result which relates equivariant cohomology to
moment maps. In particular, if $G$ is compact, then any
cocycle in the Bott-Shulman-Stasheff complex gives a moment map.

\begin{thm}\label{thm:BSmm}
 Let $M$ be a $G$-manifold and $\alpha = \sum_{i=0}^{n+1} \alpha_{i}
 \in \Omega^{n+1}(G \ltimes M_\bu)$ a degree $(n+1)$-cocycle in the
Bott-Shulman-Stasheff complex with $\alpha_{i}
 \in \Omega^{i,n-i+1}(G \ltimes M_{\bu}) = \Omega^{n-i+1}(G^i\times M)$.
\begin{enumerate}
\item{ 
    If $\alpha_{0}\in\Omega^{n+1}(M)^G$ and $\alpha_{1}
    \in\Omega^{n}(G\times M)^G$, then $f^\vs = \sum_{k=1}^{n}r (\alpha_{1})_k$
    defines a homotopy moment map $f$ for the $G$-invariant
    pre-$n$-plectic form $\omega=\alpha_0$.
}
\vspace{.25cm}
 \item{
If $G$ is compact, then $f^\vs = \sum_{k=1}^{n}
r\bigl(\alpha_1^G\bigr)_k$ defines a homotopy moment map $f$ for the
$G$-invariant pre-$n$-plectic form $\omega=\alpha_0^G\in \Omega^{n+1}(M)^G$. 
}
\end{enumerate}
\end{thm}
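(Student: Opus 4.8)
The plan is to reduce both parts to the moment-map criterion of Corollary \ref{cor:htpyMMCond}: a degree-$n$ element $f\in C_\g^n(M)$ is a moment map for a $G$-invariant pre-$n$-plectic form $\omega$ if and only if $\bd f^\vs = r(\partial\omega)$, so the entire proof comes down to producing this single equation from the cocycle hypothesis. First I would unpack $D\alpha = 0$ into its bidegree components. Since $D = \partial + (-1)^j d$, the summand in $\Omega^{0,n+2}(G\ltimes M_\bu)$ gives $d\alpha_0 = 0$, so $\omega := \alpha_0$ is a pre-$n$-plectic form, while the summand in $\Omega^{1,n+1}(G\ltimes M_\bu)$ gives the only relation I will actually use,
\[
\partial\alpha_0 = d\alpha_1 \qquad\text{in }\ \Omega^{n+1}(G\times M).
\]
All higher components of the cocycle condition are irrelevant.

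For part (1), the hypotheses make both sides of this relation $G$-invariant, so I can push it through the isomorphism of total complexes $r$ of Lemma \ref{lem:dbl_complex1}, which intertwines the de Rham differential on $G\times M$ with $\bd$. Combined with Proposition \ref{prop:rDelOmega} this gives
\[
\sum_{k=1}^{n+1}(-1)^{k+1}\iota_\g^k\alpha_0 = r(\partial\alpha_0) = r(d\alpha_1) = \bd\,(r\alpha_1).
\]
The candidate $f^\vs = \sum_{k=1}^{n} r(\alpha_1)_k$ is obtained from $r\alpha_1$ by discarding its $\Lambda^0(\g^\vee)$-summand $(r\alpha_1)_0 = i^*\alpha_1 \in \Omega^n(M)$, since there is no moment-map component in $\g^\vee$-degree $0$. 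The crux of part (1) is to check that $\bd$ annihilates this discarded summand. Its de Rham part is $d(i^*\alpha_1) = i^*(d\alpha_1) = i^*(\partial\alpha_0)$, which vanishes because $d_0\circ i = d_1\circ i = \id_M$; its Chevalley--Eilenberg part equals $r(d^G\alpha_1^{(0)})$ by Lemma \ref{lem:dbl_complex1}, which vanishes because $G$-invariance forces the $\Omega^{0,n}(G\times M)$-component $\alpha_1^{(0)}$ of $\alpha_1$ to be constant along $G$. Hence $\bd f^\vs = \bd(r\alpha_1) = r(\partial\alpha_0)$, and Corollary \ref{cor:htpyMMCond} concludes.

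For part (2) I would average and repeat, the new feature being that $\alpha_0$ is no longer invariant. Put $\omega := \alpha_0^G$; it is closed because averaging commutes with $d$. Averaging the displayed relation over $g'\cdot(g,p) = (g'g,p)$, and using $d_0\circ\Phi_{g'} = d_0$ together with $d_1\circ\Phi_{g'} = \Psi_{g'}\circ d_1$, yields the \emph{corrected} relation
\[
d\alpha_1^G = \partial\alpha_0^G + d_0^*\bigl(\alpha_0 - \alpha_0^G\bigr),
\]
whose extra term is pulled back from $M$ along the projection $d_0 = \pr_M$ and measures the failure of $(\alpha_0^G,\alpha_1^G)$ to be an honest cocycle. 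Running the part-(1) computation, the Chevalley--Eilenberg part of the discarded summand still dies by invariance of $\alpha_1^G$, while the $\Lambda^0(\g^\vee)$-terms now cancel exactly: the extra term contributes $r\bigl(d_0^*(\alpha_0-\alpha_0^G)\bigr) = \alpha_0 - \alpha_0^G$ to $\bd(r\alpha_1^G)$, and this equals $\bd\bigl((r\alpha_1^G)_0\bigr) = d(i^*\alpha_1^G) = i^*d_0^*(\alpha_0-\alpha_0^G) = \alpha_0-\alpha_0^G$, which is subtracted off when passing from $r\alpha_1^G$ to $f^\vs$. Thus $\bd f^\vs = r(\partial\alpha_0^G) = r(\partial\omega)$, and Corollary \ref{cor:htpyMMCond} again concludes.

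I expect the bidegree bookkeeping and the two applications of $r$ to be routine. The real obstacle, present in both parts, is the careful handling of the $\Lambda^0(\g^\vee)\otimes\Omega^{\bu}(M)$-component, which lies outside the range $1\le k\le n$ of the moment-map structure maps: one must verify that discarding $(r\alpha_1)_0$ does not disturb the equation $\bd f^\vs = r(\partial\omega)$. In part (2) this becomes the cleanest conceptual point — the averaging correction $d_0^*(\alpha_0-\alpha_0^G)$ is precisely the coboundary $d(i^*\alpha_1^G)$ produced by the discarded summand, so the two cancel and no genuine cocycle for the averaged data is required.
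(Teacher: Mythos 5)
Your proposal is correct, and its overall route is the same as the paper's: extract $d\alpha_0=0$ and $\del\alpha_0=d\alpha_1$ from $D\alpha=0$, transport the latter through the isomorphism $r$ of Lemma \ref{lem:dbl_complex1} together with Prop.\ \ref{prop:rDelOmega}, discard the $\Lambda^0(\g^\vee)$-component, and invoke Cor.\ \ref{cor:htpyMMCond}. For part (1) the two arguments differ only cosmetically: the paper kills the discarded summand by matching $\Lambda^0$-components in $\bd\, r(\alpha_1)=r(\del\omega)$, whose $\Lambda^0$-part vanishes by Prop.\ \ref{prop:rDelOmega}, whereas you compute $\bd\,(r\alpha_1)_0=d(i^*\alpha_1)=i^*(\del\alpha_0)=0$ directly from $d_0\circ i=d_1\circ i=\id_M$; these are the same fact.

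Where you genuinely depart from the paper is part (2), and there your treatment is more careful than the published one. Writing $\Phi_{g'}(g,p)=(g'g,p)$ for the action used in averaging and $\phi_{g'}$ for the action on $M$, one has $d_0\circ\Phi_{g'}=d_0$ but $d_1\circ\Phi_{g'}=\phi_{g'}\circ d_1$, so averaging the cocycle identity gives exactly your corrected relation $d\alpha_1^G=\del\alpha_0^G+d_0^*(\alpha_0-\alpha_0^G)$. Consequently the paper's intermediate equality $r\bigl((\del\alpha_0)^G\bigr)=r\bigl(\del\alpha_0^G\bigr)$ is false in general (the two sides differ by $\alpha_0-\alpha_0^G$ in the $\Lambda^0$-slot), and so is its companion claim $\bd\, r(\alpha_1^G)=\sum_{k=1}^n\bd\, r(\alpha_1^G)_k$, since $d(i^*\alpha_1^G)=\alpha_0-\alpha_0^G\neq 0$ unless $\alpha_0$ was already invariant. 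The two discrepancies are equal and cancel, which is precisely what your proof makes explicit: the averaging defect $d_0^*(\alpha_0-\alpha_0^G)$ is exactly the $\bd$-image of the discarded summand $(r\alpha_1^G)_0$. So your argument not only proves the theorem but also repairs a harmless, self-cancelling inaccuracy in the paper's own proof of part (2).
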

\begin{proof}
The cocycle condition $D \alpha = 0$ implies that 
\begin{equation*}
d\alpha_0 = 0, \quad \del \alpha_0 =d\alpha_1.
\end{equation*}
Therefore, $\omega=\alpha_0$ is indeed a pre-$n$-plectic form. Since
$r(\partial \omega)_0 =0$, we have
$\bd r (\alpha_1) = \bd \sum_{k=1}^{n} r (\alpha_1)_{k}$. 
The first claim then follows from Cor.\ \ref{cor:htpyMMCond}.

For the second claim, note that the following equalities hold:
\[
\bd r\bigl(\alpha_1^G\bigr) = r\bigl(d \alpha_1^G\bigr)  =
r\bigl((d\alpha_1)^G\bigr) = r\bigl((\del \alpha_0)^G\bigr) =
r\bigl(\del\alpha_0^G\bigr), 
\]
and, like before, we have $\bd r\bigl(\alpha_1^G\bigr) = \sum_{k=1}^{n}
\bd {r}\bigl(\alpha_1^{G}\bigr)_k$.
\end{proof}

\subsection{Moment maps from the Cartan model}
When $G$ is compact, the 
Cartan model for the equivariant de Rham cohomology of $M$ is
better suited for computations than the Bott-Shulman-Stasheff model. Indeed,
in symplectic geometry the relationship between equivariant cohomology
and moment maps is typically described using this model. Analogously,
the main result in this section is that any cocycle in the Cartan model gives a
homotopy moment map.

Recall that the Cartan model
\cite[Sec.\ 4.2]{G-S} is the complex
\[
C^\ast_{G}(M) = \bigl (S^{\ast}(\g^{\vee})\tensor\Omega^{\ast}(M)
 \bigr)^{G}\\
\]
in which $\g^{\vee}$ is implicitly placed in degree 2, and
with differential
\[
d_{G}(\alpha)(x) = d(\alpha(x)) - \iota_{v_{x}} \bigl
( \alpha(x) \bigr) \quad \forall x \in \g,
\]
where $\alpha \in C_G(M)$ is regarded as a polynomial map $\alpha \colon \g \to \Omega^{\bullet}(M)$.
For any $k$, denote by 
 $$C^k_{G}(M)=\bigoplus_{2j\leq k}\;\; 
 {\bigl(S^{j}(\g^{\vee})\tensor\Omega^{k-2j}(M)\bigr)^G
}$$
the degree $k$ component of the Cartan model $C_G(M)$.
Given an element ${\alpha} \in C^k_{G}(M)$,
we denote by ${\alpha}_j$ its component in ${\left(S^{j}(\g^{\vee})\tensor\Omega^{k-2j}(M)\right)^G }$, for all $j=1,\dots,\lfloor \frac{k}{2} \rfloor$.
Hence, $${\alpha}={\alpha}_0+\dots+ {\alpha}_{\lfloor \frac{k}{2} \rfloor}.$$

\begin{defi}\label{extension}
An \textbf{extension} of an invariant closed differential form $\omega \in
\Omega^{k}(M)$ is a cocycle $\alpha \in C^k_{G}(M)$ such that 
$${\alpha}_0=\omega.$$

A {\bf {\boldmath$j$}-step extension} is an extension of the form $$\alpha={\alpha_0}+\dots+{\alpha}_j.$$
\end{defi}

\begin{thm}\label{thm:CartanMM}
  Given a degree $n+1$ cocycle 
$\omega +  \sum_{i=1}^{\lfloor\frac{n+1}{2}\rfloor} P_i\in C_G^{n+1}(M)$  
in the Cartan complex, with $\omega\in\Omega^{n+1}(M)^G$ and $P_i\in
  (S^i(\g^\vee)\otimes\Omega^{n-2i+1}(M))^G$, there is a natural
  homotopy moment map $\{f_k\}$ for the $G$-action on the pre-$n$-plectic
  manifold $(M,\omega)$. More precisely, for $k=1,\ldots,n$ we have
 \begin{equation} \label{eq:CartanMM}
 f_k = \sum_{i=1}^{\lfloor\frac{k+1}{2}\rfloor} \tfrac{(-1)^i\vs(k)i!(k-i)!}{2^{i-1}(k-2i+1)!}  
\Alt_k\bigl(\iota_\g^{k-2i+1}
P_i(\cdot,\underbrace{[\cdot,\cdot],\ldots,[\cdot,\cdot]}_{i-1})\bigr),
 \end{equation}
where $\Alt_k \maps (\g^{\vee}) ^{\tensor k} \to
\Lambda^{k} (\g^{\vee})$ is the (ungraded) skew-symmetrization 
$$q_1 \tensor q_2 \tensor \cdots \tensor q_k \mapsto \frac{1}{k!}
\sum_{\sigma \in \Sn_k} (-1)^{\sigma} q_{\sigma(1)} \tensor
q_{\sigma(2)} \tensor \cdots \tensor q_{\sigma(k)}.$$
In particular, the homotopy moment map $f$ is $G$-equivariant, i.e., $f_k \in \bigl(\Lambda^k(\g^\vee)\otimes\Omega^{n-k}(M)\bigr)^G$.
\end{thm}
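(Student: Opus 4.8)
The plan is to verify the explicit formula \eqref{eq:CartanMM} directly against the algebraic characterization of a moment map. By Prop.\ \ref{prop:dbl_complex} (equivalently Cor.\ \ref{cor:htpyMMCond} together with Prop.\ \ref{prop:rDelOmega}), a collection $\{f_k\}$ with $f_k \in \bigl(\Lambda^k(\g^\vee)\otimes\Omega^{n-k}(M)\bigr)$ is a homotopy moment map for $(M,\omega)$ precisely when its ``$\vs$-twist'' $f^\vs = \sum_k \vs(k) f_k$ satisfies
\[
\bd f^\vs = \sum_{k=1}^{n+1} (-1)^{k+1}\iota_\g^k\omega
\]
in the total complex $\bigl(C_\g^*(M),\bd\bigr)$. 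Since $\vs(k)^2 = 1$, the twist cancels the leading sign in \eqref{eq:CartanMM}, so it is cleanest to set $g_k := \vs(k) f_k$ and work with the sign-free expression $g_k = \sum_{i}\tfrac{(-1)^i i!(k-i)!}{2^{i-1}(k-2i+1)!}\,\Alt_k\bigl(\iota_\g^{k-2i+1}P_i(\cdot,[\cdot,\cdot],\ldots,[\cdot,\cdot])\bigr)$. (One could alternatively route through Thm.\ \ref{thm:BSmm} by producing a Bott--Shulman--Stasheff cocycle from the Cartan cocycle via a ``Cartan'' chain map, but the direct verification is what matches the explicit formula.)

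First I would unpack the Cartan cocycle condition. Writing $P_0 = \omega$ and applying $d_G$ to $\omega + \sum_i P_i$, the vanishing of $d_G$ in each bidegree yields $d\omega = 0$ (so that $(M,\omega)$ is indeed pre-$n$-plectic) together with the transgression relations $dP_i = \iota_{v_\cdot}P_{i-1}$ for $1 \le i \le \lfloor\frac{n+1}{2}\rfloor$, where $\iota_{v_\cdot}$ denotes insertion of a fundamental vector field into the slot that raises the symmetric polynomial degree. Next I would split the identity $\bd g = \sum_k(-1)^{k+1}\iota_\g^k\omega$ into its homogeneous components in $\Lambda^k(\g^\vee)\otimes\Omega^{n+1-k}(M)$. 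Since $\bd = \delta_{\CE} + (-1)^k d$, this amounts to the family of equations
\[
\delta_{\CE} g_{k-1} + (-1)^k d\, g_k = (-1)^{k+1}\iota_\g^k\omega, \qquad 1 \le k \le n+1,
\]
with the conventions $g_0 = g_{n+1} = 0$. The two extreme cases I would check first: for $k=1$ the equation reads $-d g_1 = \iota_\g\omega$, which holds because $g_1 = -P_1$ and $dP_1 = \iota_{v_\cdot}P_0 = \iota_\g\omega$; for $k = n+1$ it reduces to the purely algebraic relation $\delta_{\CE} g_n = (-1)^n\iota_\g^{n+1}\omega$, which is precisely the top obstruction equation \eqref{main_eq_2}.

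For the generic level $k$, I would expand both terms on the left. For the de Rham part, applying $d$ to $g_k$ and using that $d$ anticommutes with $\iota_\g$ on $G$-invariant forms (Cartan's formula \eqref{Lie} with $\L_{v_x}\omega = \L_{v_x}P_i = 0$), the cocycle relation $dP_i = \iota_{v_\cdot}P_{i-1}$ converts each $dP_i$ into an extra contraction against a fundamental vector field, lowering the index $i$ while raising the contraction order. For the Chevalley--Eilenberg part, expanding $\delta_{\CE} g_{k-1}$ via \eqref{eq:CE_diff} inserts a Lie bracket $[x_a,x_b]$; these brackets combine with those already present in $g_{k-1}$, and the commutator identity $\iota_{v_{[x,y]}} = \L_{v_x}\iota_{v_y} - \iota_{v_y}\L_{v_x}$ (Eq.\ \eqref{commutator}, valid since $v_{-}$ is a Lie algebra morphism) together with invariance converts the bracketed contractions into plain iterated contractions $\iota_\g^{k-2i+1}$. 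The content of the theorem is then that, after applying $\Alt_k$, the contributions indexed by $i$ coming from $\delta_{\CE} g_{k-1}$ and the contributions from $d\,g_k$ (which, via the cocycle relation, carry indices $i$ and $i-1$) telescope, leaving only the single term $(-1)^{k+1}\iota_\g^k\omega$ produced by the $i=1$ term together with the endpoint $P_0 = \omega$.

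I expect the main obstacle to be exactly this last step: the combinatorial bookkeeping showing that the coefficients $\tfrac{(-1)^i i!(k-i)!}{2^{i-1}(k-2i+1)!}$ are tuned so that, under the skew-symmetrization $\Alt_k$, the bracket-insertions produced by $\delta_{\CE}$ and those already built into $g$ cancel in pairs. The delicate point is the interaction between the symmetric-to-exterior conversion (each factor $[\cdot,\cdot]$ pairs two symmetric arguments of $P_i$ into one antisymmetric slot, with an attendant factor of $2$ and a binomial multiplicity) and the alternation, which is what forces the specific powers of $2$ and the factorials. Once the family of identities is established, the remaining assertions are immediate: $G$-invariance of each $f_k$ follows because $P_i$ is $G$-invariant and the operations $\iota_\g$, $\Alt_k$ and $[\cdot,\cdot]$ are $\Ad$-equivariant, so $f_k \in \bigl(\Lambda^k(\g^\vee)\otimes\Omega^{n-k}(M)\bigr)^G$; and naturality with respect to $G$-equivariant maps follows from the naturality of pullback, $\iota_\g$ and $\Alt_k$.
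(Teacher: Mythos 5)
Your reduction is sound as far as it goes: by Prop.~\ref{prop:dbl_complex}, proving the theorem is equivalent to verifying the family of identities
\[
\delta_{\CE}\, g_{k-1} + (-1)^k d\, g_k \;=\; (-1)^{k+1}\iota_\g^k\omega, \qquad 1\le k\le n+1,
\]
for $g_k=\vs(k)f_k$ (with $g_0=g_{n+1}=0$), and your endpoint checks are correct: $k=1$ follows from $g_1=-P_1$ and $dP_1=\iota_\g\omega$, and $k=n+1$ is indeed the purely algebraic top equation. (A secondary omission: your unpacking of $d_G(\omega+\sum_i P_i)=0$ lists only $d\omega=0$ and $dP_i=\iota_\g P_{i-1}$, but the cocycle condition also has a top component $\Sym(\iota_\g P_m)=0$, $m=\lfloor\frac{n+1}{2}\rfloor$, nontrivial when $n$ is even; the analogous condition $\iota_{v_x}\mu(x)=0$ is already used essentially in the proof of Cor.~\ref{main1}, and your verification would need it too.) The genuine gap is that the proposal stops exactly where the theorem begins: the verification that the coefficients $\tfrac{(-1)^i i!(k-i)!}{2^{i-1}(k-2i+1)!}$ make the intermediate identities $2\le k\le n+1$ hold is never carried out. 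You explicitly defer it as ``the main obstacle,'' describing a hoped-for telescoping between the $\delta_{\CE}$-terms and the $d$-terms without exhibiting it. That combinatorial identity --- the interplay of the symmetric-to-exterior conversion, the skew-symmetrization $\Alt_k$, the relation $dP_i=\iota_\g P_{i-1}$, and the commutator identity \eqref{commutator} --- \emph{is} the content of formula \eqref{eq:CartanMM}; asserting that the terms should ``cancel in pairs'' proves nothing. What you have is a correct reformulation of the statement, not a proof of it.

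It is worth seeing why the paper does not attempt your direct verification. Its proof routes through the Bott--Shulman--Stasheff model: it constructs a natural chain map $\Phi\maps C_G^*(M)\to\Omega^*(G\ltimes M_\bu)$ (Prop.~\ref{prop:CartanBS}) using Dupont's simplicial forms, the canonical simplicial connection $\theta$ on $E_\bu G\times M_\bu$, and the Cartan map; it then shows the $(0,n+1)$- and $(1,n)$-components of $\Phi\bigl(\omega+\sum_i P_i\bigr)$ are $G$-invariant even for non-compact $G$, and applies the first part of Thm.~\ref{thm:BSmm}. In that approach the coefficients in \eqref{eq:CartanMM} are \emph{derived} rather than verified: they emerge mechanically from the expansion of the curvature $s^*F_{\theta^{(1)}}^i$ and the fiber integration $\int_0^1 t_0^{k-i}t_1^{i-1}\,dt_1=\frac{(i-1)!(k-i)!}{k!}$ over $\Delta^1$, after which Thm.~\ref{thm:BSmm} guarantees the moment map equations hold. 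So the simplicial/chain-map machinery is precisely the device that replaces the combinatorial bookkeeping you leave open. Your route is a legitimate alternative and would yield an independent proof, but only once the telescoping identity is actually established; until that computation is written down you have a plan, not a proof.
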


\begin{proof}
The complete proof requires some technical prerequisites, and so it is given in the appendix
(see \ref{sec:CartanMM_proof}). Here we provide a sketch of the key points.
In Appendix \ref{sec:proof_CartanMM}, 
we first construct a natural map
\[
\Phi \maps C^{\ast}_{G}(M)  \to \Omega^{\ast}(G \ltimes M_{\bu})
\]
from the Cartan complex to the Bott-Shulman-Stasheff complex. This map
is often mentioned in the literature, but we were unable to find a
reference which is explicit enough for our needs.
We then consider the $(n+1)$-cocycle 
$\alpha = \sum_{k=0}^{n+1} \alpha_k =\Phi \bigl(\omega +
\sum_{i=1}^{\lfloor\frac{n+1}{2}\rfloor} P_i \bigr)$ in the
Bott-Shulman-Stasheff complex and show that the components $\alpha_0$ and $\alpha_1$
are $G$-invariant, even if $G$ is not compact.
The formula \eqref{eq:CartanMM} for the moment map $f$ then follows
from applying the first part of Thm.\
\eqref{thm:BSmm} to $\alpha$.
As the formula shows,  $f$ is constructed by applying the insertion operator
$\iota_{\g}^k$ to invariant elements in
$(S^i(\g^\vee)\otimes\Omega^{n-2i+1}(M))^G$. Hence, $f$ is equivariant.
\end{proof}

For the applications and examples ahead, it is worthwhile to write out
the first few terms of the moment map $f$.  If $\omega +
\sum_{i=1}^{\lfloor\frac{n+1}{2}\rfloor} P_i\in C_G^{n+1}(M)$ is a degree
$n+1$ cocycle as in Thm.\ \ref{thm:CartanMM}, then
\begin{equation} \label{eq:CartanMM_few}
\begin{split} 
f_1 &=  -P_1, \\
f_2 &= -\Alt_2 \iota_\g P_1, \\
f_3 &= \Alt_3\iota_\g^2 P_1 - \Alt_3P_2(\cdot,[\cdot,\cdot]), \\
f_4 &= \Alt_4\iota_\g^3 P_1 - 2\Alt_4\iota_\g P_2(\cdot,[\cdot,\cdot]),\\
f_5 &= -\Alt_5\iota_\g^4 P_1 + 3\Alt_4\iota_\g^2 P_2(\cdot,[\cdot,\cdot])- 3\Alt_5P_3(\cdot,[\cdot,\cdot],[\cdot,\cdot]), \\
&\vdots
\end{split}
\end{equation}
Also, we remark that if $M=\pt$, then $C_G^*(M) = C_G^*(\pt) =
\bigl(S^*(\g^\vee)\bigr)^G$, and Thm.\ \ref{thm:CartanMM} gives a map
$\bigl(S^i(\g^\vee)\bigr)^G \to \bigl(\Lambda^{2i-1}(\g^\vee)\bigr)^G$
which sends $P_i$ to

\[
f^\vs_{2i-1} = (-1)^i \tfrac{i!(i-1)!}{2^{i-1}}
\Alt_{2i-1} \bigl(P_i(\cdot,\underbrace{[\cdot,\cdot],\ldots,[\cdot,\cdot]}_{i-1})\bigr).
\]
This assignment differs by an additional factor of $-i!$ from the ``Cartan map'' defined in \cite[Section 2]{Cartan} (cf. \cite[Chapter VI Prop.\ IV]{ccc3}).

Applying Thm.\ \ref{thm:CartanMM} to
the special case when the degree $(n+1)$ Cartan cocycle is just a $1$-step
extension is particularly interesting. Indeed, it recovers the
classical correspondence in symplectic geometry between moment
maps and equivariant cohomology.

\begin{cor} \label{main1}
If $(M,\omega)$ is a pre-$n$-plectic manifold equipped with a
$G$-action, and $\omega - \mu$ is a $1$-step extension of $\omega$, then the maps
\[
\begin{array}{c}
f_{k} \maps \g^{\tensor k} \to \Omega^{n-k}(M),\\
f_{k}(x_{1},\ldots x_{k})= \vs(k)\iota(v_{x_{1}} \wedge\cdots \wedge
v_{x_{k-1}}) \mu(x_{k}),
\end{array}
\]
for   $ 1 \leq k \leq n$, are the components of a {$G$-equivariant} moment map $\g \to L_{\infty}(M,\omega)$. 
\end{cor}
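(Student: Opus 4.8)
The statement is the $1$-step specialization of Thm.\ \ref{thm:CartanMM}, so the plan is to read off \eqref{eq:CartanMM} in this case and then verify that the skew-symmetrization $\Alt_k$ is superfluous. First I would record what it means for $\omega-\mu$ to be a $1$-step extension: by Def.\ \ref{extension} it is a $d_G$-cocycle in the Cartan complex whose $S^0$-component is $\omega$ and whose only higher component is $-\mu\in\bigl(\g^\vee\otimes\Omega^{n-1}(M)\bigr)^G$. Writing the cocycle as $\omega+P_1$ with $P_1=-\mu$ and $P_i=0$ for $i\geq 2$, and unwinding $d_G(\omega-\mu)=0$ component by component, yields three facts I will use throughout: (i) $G$-invariance of $\omega$ and $\mu$ (membership in the Cartan complex), whose infinitesimal form reads $\L_{v_x}\mu(y)=\mu([x,y])$; (ii) the $\bigl(\g^\vee\otimes\Omega^{n}\bigr)$-component gives $d\bigl(\mu(x)\bigr)=-\iota_{v_x}\omega$ for all $x$; and (iii) the $\bigl(S^2(\g^\vee)\otimes\Omega^{n-1}\bigr)$-component gives $\iota_{v_x}\mu(y)+\iota_{v_y}\mu(x)=0$ for all $x,y$.

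With $P_1=-\mu$ and all other $P_i=0$, only the $i=1$ summand of \eqref{eq:CartanMM} survives, and its coefficient is $\tfrac{(-1)^{1}\vs(k)\,1!\,(k-1)!}{2^{0}(k-1)!}=-\vs(k)$; hence Thm.\ \ref{thm:CartanMM} gives $f_k=\vs(k)\,\Alt_k\bigl(\iota_\g^{k-1}\mu\bigr)$ (and in particular $f_1=\mu$). Spelling out the insertion operator \eqref{eq:iota_g} together with \eqref{interior}, the $k$-argument map $\iota_\g^{k-1}\mu$ sends $(x_1,\dots,x_k)$ to $\iota(v_{x_1}\wedge\cdots\wedge v_{x_{k-1}})\mu(x_k)$. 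Thus the whole content of the corollary is that this map is already skew-symmetric, so that $\Alt_k$ acts as the identity and the stated formula for $f_k$ follows at once.

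To prove that skew-symmetry I would use that the adjacent transpositions generate $\Sn_k$ and check them one at a time. A swap of $x_i$ and $x_{i+1}$ with $i\leq k-2$ merely reorders the factors of the decomposable multivector $v_{x_1}\wedge\cdots\wedge v_{x_{k-1}}$, producing the required sign because $x\mapsto v_x$ is linear. The remaining transposition $(k-1,\,k)$ is the only delicate point: setting $w=v_{x_1}\wedge\cdots\wedge v_{x_{k-2}}$ and using that $\iota_{v_a}$ anticommutes past the degree $-(k-2)$ operator $\iota(w)$, the sum of the two terms becomes
\[
\vs(k)(-1)^{k-2}\,\iota(w)\bigl(\iota_{v_{x_{k-1}}}\mu(x_k)+\iota_{v_{x_k}}\mu(x_{k-1})\bigr),
\]
which vanishes by fact (iii). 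This establishes full skew-symmetry, and with it the identification $\Alt_k\bigl(\iota_\g^{k-1}\mu\bigr)=\iota_\g^{k-1}\mu$ and hence the corollary.

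Alternatively, one can bypass Thm.\ \ref{thm:CartanMM} and check the defining equations \eqref{main_eq_1} and \eqref{main_eq_2} of Prop.\ \ref{Lie_alg_P_prop1} by hand. The mechanism is already visible for $k=2$: Cartan's magic formula and fact (ii) give $d\,f_2(x_1,x_2)=\L_{v_{x_1}}\mu(x_2)-\iota(v_{x_1}\wedge v_{x_2})\omega$, so the $\iota(\cdots)\omega$ terms cancel against the last term of \eqref{main_eq_1}, which therefore reduces to the infinitesimal invariance $\mu([x_1,x_2])=\L_{v_{x_1}}\mu(x_2)$ from fact (i). For general $k$ the same three inputs—Cartan's formula, the commutator identity \eqref{commutator} for $\iota([u,v])$, and facts (i)--(iii)—suffice. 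Here, however, I expect the main obstacle to be purely combinatorial rather than conceptual: one must track the interplay of the signs $\vs(k)$, the $(-1)^{i+j+1}$, and the Koszul signs generated when reordering the nested interior products, so that after applying $d$ the Lie-derivative contributions assemble into $\sum_{i<j}(-1)^{i+j+1}f_{k-1}([x_i,x_j],\dots)$ while the residual terms collect into $\vs(k)\,\iota(v_{x_1}\wedge\cdots\wedge v_{x_k})\omega$. Given the availability of Thm.\ \ref{thm:CartanMM}, I would present the short specialization argument as the proof and relegate this bookkeeping to the verification already carried out for the general theorem.
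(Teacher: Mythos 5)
Your proposal is correct and takes essentially the same route as the paper: apply Thm.\ \ref{thm:CartanMM} to the cocycle $\omega+P_1$ with $P_1=-\mu$ to get $f_k=\vs(k)\Alt_k\bigl(\iota_\g^{k-1}\mu\bigr)$, then use the relations coming from $d_G(\omega-\mu)=0$ (in particular $\iota_{v_x}\mu(x)=0$, polarized to $\iota_{v_x}\mu(y)=-\iota_{v_y}\mu(x)$) to conclude that $\iota_\g^{k-1}\mu$ is already skew-symmetric, so the $\Alt_k$ is redundant and the stated formula follows. The only blemish is a harmless typo: the component of the cocycle condition giving fact (iii) lives in $S^2(\g^\vee)\otimes\Omega^{n-2}(M)$, not $S^2(\g^\vee)\otimes\Omega^{n-1}(M)$.
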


\begin{proof}
Applying Thm.\ \ref{thm:CartanMM} to the cocycle $\omega + P_1$, where
$P_1=-\mu$, we obtain
\[ 
 f_k =  -\vs(k) \Alt_k \bigl(\iota_\g^{k-1} P_1 \bigr) = \vs(k) \Alt_k
 \bigl(\iota_\g^{k-1} \mu \bigr). 
\]
Let $x_1,\ldots,x_k \in \g$. The skew-symmetrization of $\iota_\g^{k-1}
\mu$ implies that $f_k(x_1,\ldots,x_k)$ is the sum of terms of the form 
\begin{equation} \label{eq:main_cor1}
\vs(k) (-1)^{\sigma} \frac{1}{k!}\iota(v_{x_{\sigma(1)}} \wedge\cdots \wedge v_{x_{\sigma(k-1)}}) \mu(x_{\sigma(k)})
\end{equation}
where $\sigma \in \Sn_{k}$. 

Since $d_{G}(\omega - \mu)=0$, we have
the equalities
\begin{equation}\label{eq:dgclosed}
d\omega=0, \quad 
d\mu(x) = -\iota_{v_{x}} \omega, \quad \iota_{v_{x}} \mu(x) =0
\end{equation}
for all $x \in \g$. The latter equality implies that
\[
\iota_{v_{x}} \mu(y) = -  \iota_{v_{y}}\mu(x) \quad  \forall x,y \in \g,
\]
which further implies
\[
\iota_{v_{k-1}} \iota_{v_{k-2}} \cdots \iota_{v_{1}} \mu(x_{k}) 
= - \iota_{v_{k-1}} \iota_{v_{k-2}} \cdots \iota_{v_{k}} \mu(x_{1}),
\]
where we write $v_{i}$ for the vector field $v_{x_{i}}$.
Hence, each term \eqref{eq:main_cor1} is skew-symmetric, and therefore
$f_{k}(x_{1},\ldots x_{k})= \vs(k)\iota(v_{x_{1}} \wedge\cdots \wedge
v_{x_{k-1}}) \mu(x_{k})$.

\end{proof}

\begin{remark} \label{no_compact_rem}
Note that in the proof of Thm.\ \ref{thm:CartanMM} and Cor.\ \ref{main1},
we do not require that $G$ be compact. We did not
need to use the fact that the cohomology of $C_{G}(M)$ is isomorphic to
the $G$-equivariant cohomology of $M$.
In particular, only the algebraic properties of the complexes
$C_{G}(M)$ and $\Omega^{\ast}(G \ltimes M_{\bu})$ are used to show that $1$-step
extensions give equivariant moment maps. Hence, Cor.\ \ref{main1}
also implies that we can produce equivariant moment maps from $1$-step
extensions in $C_{G}(M)$ where $G$ is any \textit{non-compact} Lie group
as well. This provides a useful algebraic
tool to build examples with.
\end{remark}

\section{Closed 3-forms}\label{sec:c3f}
In this section, we analyze the simplest case in which
non-strict moment maps appear, namely the case of closed
3-forms. Under the assumption that the Lie group $G$ is compact
semisimple, we prove the existence of moment maps provided the
$G$-action has a fixed point, and the uniqueness of moment maps up to
a certain equivalence. Furthermore, in the general setting, we
show that not all equivariant moment maps arise from $1$-step
extensions.

An important example is the Lie group itself
equipped with its Cartan 3-form. We consider this as a special case
later on in Sec.\ \ref{subs:conj}.

\subsection{Review of symplectic case}  
Let us first briefly discuss the case of 2-forms. In this case, moment maps as in Def./Prop.\ \ref{main_def} are necessarily strict by degree reasons.
 Let $G$ be a Lie group acting on a symplectic manifold $(M,\omega)$.
The classical notion of equivariant moment map is 
the following \cite{Ana}: a map $J \colon M \to \g^{\vee}$
such that $v_x$ is  the Hamiltonian vector field of $J^*(x)$ for all $x\in \g$ and so that $J$ is equivariant w.r.t. the $G$-action on $M$ and the coadjoint action of $G$ on $\g^{\vee}$.
In terms of  the co-moment map, i.e. the pullback of functions $f=J^*\colon \g \to C^{\infty}(M)$,
this means  
\begin{itemize}
\item [] $v_x$ is  the Hamiltonian vector field of  $f(x)$,  for all $x\in \g$,
\item [] $f\colon (\g,[\cdot,\cdot]) \to (C^{\infty}(M),\{\cdot,\cdot\})$ is a Lie algebra morphism,
\end{itemize} 
where 
$\{\cdot,\cdot\}$ the Poisson bracket on $M$.
Hence, in the symplectic case, our Def.\ \ref{main_def} agrees with classical notion of moment map.

An equivalent characterization of moment map is that
$\omega-f$ is a {closed} degree $2$ element of  the Cartan model for equivariant cohomology. In other words: in the symplectic case, all homotopy moment maps arise from equivariant extensions of $\omega$.

\subsection{Notation} \label{3-form_notation_sec}
In the remainder of this section, let $\omega$ be a closed 3-form on
$M$ which is invariant under  the action of a 
Lie group $G$. 
In this  case, a {moment map} (Def.\ \ref{main_def}) consists of two components 
$$ f_1 \colon \g \to \ham{1} ,\;\;\;\;\;\; f_2 \colon \g\otimes \g \to C^{\infty}(M),$$
where the second is skew-symmetric,
  such that  
  \begin{enumerate}
\item[A.]\label{cond_A}{$v_x$ is  the Hamiltonian vector field of  $f_1(x)$,  for all $x\in \g$,} 
\item[B.] \label{cond_B}{ the following two equations are satisfied: 
 \begin{eqnarray}
\label{eq:2momap1}
f_1([x,y]) -\underbrace{\{f_1(x), f_1(y)\}}_{\omega(v_x,v_y,\;\cdot\;)}&=&df_2(x,y)\\
\label{eq:2momap2}
 -\underbrace{l_3(f_1(x),f_1(y),f_1(z))}_{\omega(v_x,v_y,v_z)} &=&
f_2(x,[y,z])
 -f_2(y,[x,z])+f_2(z,[x,y]). 
\end{eqnarray}
} 
\end{enumerate}

 
\subsection{Existence of moment maps}  
If $G$ is also connected and semisimple, then it is well-known for the symplectic case
that a symplectic $G$-action admits a moment map \cite[Chapter
X]{Ana}.  However, for the present case, we now need an additional condition on the action.

\begin{prop}\label{cor:zeros}
If $G$ is compact, connected, and  semisimple and for all $x\in \g$ there is  point $p\in M$ such that $v_x(p)=0$, then there exists an equivariant homotopy moment map. 
\end{prop}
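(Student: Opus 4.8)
The plan is to construct the two components $f_1$ and $f_2$ by hand, averaging over the compact group $G$ at each stage to keep everything $G$-equivariant, and then to isolate a single residual obstruction living in $H^3(\g)$ which the hypothesis on the zeros of the fundamental vector fields will kill. Throughout I would use semisimplicity of $\g$ in the form of Whitehead's lemmas $H^1(\g)=H^2(\g)=0$, equivalently $(\g^\vee)^\g=(\Lambda^2\g^\vee)^\g=0$, and that a connected $G$ acts trivially on $H^\bullet(M)$. The target is precisely conditions A and B of Section \ref{3-form_notation_sec}, i.e. equations \eqref{eq:2momap1} and \eqref{eq:2momap2}.

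First I would produce $f_1$ satisfying condition A. Since $\omega$ is closed and $G$-invariant, each $\iota_{v_x}\omega$ is closed, and $x\mapsto[\iota_{v_x}\omega]\in H^2(M)$ is a $\g$-equivariant linear map into a trivial module, hence factors through $(\g^\vee\otimes H^2(M))^\g=(\g^\vee)^\g\otimes H^2(M)=0$. Thus $\iota_{v_x}\omega$ is exact, a linear primitive $f_1(x)\in\ham{1}$ with $df_1(x)=-\iota_{v_x}\omega$ exists, and averaging over $G$ makes $f_1$ equivariant without disturbing this equation because $\omega$ is invariant. Next I would solve \eqref{eq:2momap1}: a direct Cartan-calculus computation using $\mathcal{L}_{v_x}\omega=0$, $d\omega=0$ and \eqref{commutator} shows the $1$-form $\beta(x,y):=f_1([x,y])-\omega(v_x,v_y,\cdot)$ is closed. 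Its class defines a $\g$-equivariant map $\Lambda^2\g\to H^1(M)$ into a trivial module, hence an element of $(\Lambda^2\g^\vee)^\g\otimes H^1(M)=0$, so $\beta(x,y)$ is exact; choosing a skew-symmetric linear primitive and averaging yields an equivariant $f_2\colon\Lambda^2\g\to C^\infty(M)$ with $df_2(x,y)=\beta(x,y)$, which is exactly \eqref{eq:2momap1}.

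At this point only \eqref{eq:2momap2} remains, and its failure is measured by $\Theta:=\delta_{\CE}f_2+\iota_\g^3\omega$, where $\iota_\g^3\omega(x,y,z)=\omega(v_x,v_y,v_z)$ and the right-hand side of \eqref{eq:2momap2} equals $\delta_{\CE}f_2(x,y,z)$. Differentiating and using that $\iota_\g^1\omega-\iota_\g^2\omega+\iota_\g^3\omega$ is a cocycle in the complex $C^\ast_\g(M)$ of Prop.\ \ref{prop:dbl_complex} shows $d\Theta=0$, so (as $M$ is connected) $\Theta$ is a constant element of $\Lambda^3\g^\vee$; it is $\delta_{\CE}$-closed and, since all the data are equivariant, $G$-invariant. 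Because $(\Lambda^2\g^\vee)^\g=0$ the only available correction $f_2\mapsto f_2+\eta$ preserving \eqref{eq:2momap1} and equivariance is by $\eta\in(\Lambda^2\g^\vee)^\g=0$, so $\Theta$ is rigid and an equivariant moment map exists if and only if $[\Theta]=0\in H^3(\g)\cong(\Lambda^3\g^\vee)^\g$. As $\Theta$ is itself invariant, this is equivalent to $\Theta=0$.

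The hard part, and the only place the hypothesis enters, is showing $\Theta=0$. It suffices to prove $\iota_x\Theta=0$ for every $x\in\g$. Fixing $x$ and a point $p$ with $v_x(p)=0$, the term $\omega_p(v_x(p),v_y(p),v_z(p))$ vanishes, so $\Theta(x,y,z)=(\delta_{\CE}f_2)_p(x,y,z)$, and the remaining task is to see that this pointwise evaluation of the $f_2$-terms vanishes. When the action has a genuine $G$-fixed point $p_0$ the argument is clean: evaluation at $p_0$ is a $G$-equivariant map $C^\infty(M)\to\R$, so $f_2(\cdot,\cdot)_{p_0}\in(\Lambda^2\g^\vee)^\g=0$, whence $(\delta_{\CE}f_2)_{p_0}=0$ and $\Theta=0$ outright. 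For the stated pointwise hypothesis one runs the same evaluation at the $x$-dependent zero $p$; the delicate step is to control the terms $f_2(\cdot,[\cdot,\cdot])_p$ there, which I expect to handle using invariance and total antisymmetry of $\Theta$ together with the rigidity coming from $(\Lambda^2\g^\vee)^\g=0$. Once $\Theta=0$, equation \eqref{eq:2momap2} holds and $(f_1,f_2)$ is the desired equivariant homotopy moment map; equivalently, in the language of Prop.\ \ref{prop:dbl_complex} the class of $\iota_\g^1\omega-\iota_\g^2\omega+\iota_\g^3\omega$ in $C^\ast_\g(M)$ is trivial.
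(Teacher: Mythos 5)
Your construction of $f_1$ and $f_2$ and the reduction to a constant, $\delta_{\CE}$-closed, $G$-invariant obstruction $\Theta\in(\Lambda^3\g^{\vee})^{\g}$ is sound (it is essentially Thm.\ \ref{thm:momapyes} specialized to $n=2$, with the de Rham hypotheses replaced by Whitehead's lemmas, plus averaging for equivariance). The problem is that the decisive step -- the \emph{only} place where the hypothesis on zeros of the $v_x$ enters -- is exactly the one you leave open. Your clean argument requires a single point $p_0$ fixed by all of $G$, which is strictly stronger than the stated hypothesis: on a compact oriented $M$ with nonzero Euler characteristic every $v_x$ has a zero by Poincar\'e--Hopf, but there need be no common fixed point, and this is precisely the situation the proposition is aimed at (see the remark following Prop.\ \ref{cor:zeros}). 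For the actual hypothesis, the ingredients you invoke (``invariance and total antisymmetry of $\Theta$ together with rigidity'') do not suffice. Concretely: at a zero $p$ of $v_x$, equivariance of $f_2$ gives $f_2([x,a],b)|_p+f_2(a,[x,b])|_p=0$ for all $a,b\in\g$ (since $\L_{v_x}$ of any function vanishes at $p$), and substituting this into $\delta_{\CE}f_2$ yields
\[
\Theta(x,y,z)=f_2\bigl(x,[y,z]\bigr)\big|_p .
\]
So $\iota_x\Theta$ is expressed through the unknown numbers $\lambda_x(w):=f_2(x,w)|_p$. The $\ad_x$-invariance of $f_2(\cdot,\cdot)|_p$ forces $\lambda_x$ to vanish only on $\im(\ad_x)$, whereas the Cartan form $B(x,\cdot)$ also vanishes there and is supported on $\ker(\ad_x)$; antisymmetry and rigidity of $\Theta$ say nothing about $\lambda_x$ on $\ker(\ad_x)$. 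A correct finish along your lines does exist -- since $\lambda_x$ agrees with $\iota_x\Theta$ on brackets and $[\g,\g]=\g$, one gets $\lambda_x=\sum_i c_i B_i(x,\pi_i\cdot)$ after writing $\Theta=\sum_i c_i B_i(\cdot,[\cdot,\cdot])$ as a combination of the Cartan forms of the simple ideals; then skew-symmetry of $f_2$ gives $c_iB_i(x,x)=\lambda_x(x)=0$, hence $c_i=0$ by definiteness of the Killing form -- but no such argument appears in your write-up, so the proof is incomplete at its crucial point.

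It is worth contrasting this with how the paper sidesteps the issue entirely: it never works with an abstract primitive $f_2$. Lemma \ref{semi} produces $\mu$ with $d\mu(x)=-\iota_{v_x}\omega$ directly from $\g=[\g,\g]$ (explicitly $\mu(x)=\sum_i\iota(v_{x_i}\wedge v_{x_i'})\omega$ for $x=\sum_i[x_i,x_i']$, no cohomological input on $M$ needed), Lemma \ref{compact} averages it, and Lemma \ref{lem:abc} shows that $\iota_{v_x}\mu(x)$ is closed, hence constant on connected $M$. The hypothesis is then used exactly once: this constant vanishes because it manifestly vanishes at a zero of $v_x$. Consequently $\omega-\mu$ is a $1$-step extension in the Cartan model, and Cor.\ \ref{main1} delivers the equivariant moment map with $f_2(x,y)=\iota_{v_x}\mu(y)$ -- a choice of $f_2$ whose vanishing at zeros of $v_x$ is visible by construction, which is exactly the pointwise control your generic, abstractly chosen $f_2$ lacks.
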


Notice that the assumption on the action in Prop.\
\ref{cor:zeros} is satisfied when $M$ is oriented, compact, and
has non-zero Euler characteristic; since in
  this case the Poincar\'e-Hopf
theorem implies that every vector field on $M$ has a zero.
 
 The proof of Prop.\ \ref{cor:zeros} is based on the following lemmas,
 which are straightforward analogs of well-known statements in
 symplectic geometry. We phrase
 these more generally for $\omega$ any invariant closed
 form $\omega \in \Omega^{n+1}(M)$ with $n \geq2$.

\begin{lemma} \label{semi}
If $\g$ satisfies $[\g,\g]=\g$, then there exists a linear map $\mu \colon \g
\to \ham{n-1}$ such that $v_x$ is a Hamiltonian vector field for $\mu(x)$, that is, $d\mu(x) = -\iota_{v_{x}} \omega$ for all $x\in \g$.
\end{lemma}

\begin{proof} Let $x\in \g$.
Since $\g=[\g,\g]$, we can write $x=\sum_i [x_i,x'_i]$. 
The locally Hamiltonian vector field $v_x$ can hence be written as
$ v_{\sum_i[x_i,x'_i]}=\sum_i[v_{x_i},v_{x'_i}]$, and by Prop.\ \ref{brac_prop} it is  the  Hamiltonian vector field of 
$$\mu(x)=\sum_i \iota(v_{x_i} \wedge v_{x'_i}) \omega.$$
Now let $x$ range through a basis of $\g$, and extend $\mu$ to obtain a  linear map $\mu \colon \g
\to \ham{n-1}$. 
\end{proof}

\begin{lemma} \label{compact}
If  $G$ is   compact and the linear map $\mu \colon \g
\to \ham{n-1}$ satisfies $d\mu(x) = -\iota_{v_{x}} \omega$ for all $x\in \g$,  then
$$\mu'(x)=\int_G g^*(\mu (\Ad_gx))$$ 
is equivariant 
and satisfies $d\mu'(x) = -\iota_{v_{x}} \omega$.
\end{lemma}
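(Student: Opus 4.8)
The plan is to verify the two assertions---that $\mu'$ is Hamiltonian in the required sense and that it is $G$-equivariant---by direct computation, exploiting averaging over the compact group with its normalized bi-invariant Haar measure $dg$; here $g^*$ denotes pullback along the diffeomorphism $\phi_g$. Before computing, I would isolate three structural inputs. First, the $G$-invariance of $\omega$, which in the present conventions reads $\phi_g^*\omega = \omega$ for all $g$. Second, the transformation law for the fundamental vector fields, namely $\phi_g^* v_x = v_{\Ad_{g^{-1}}x}$; this follows from the definition of $v_x$ in Eq.\ \eqref{lie_alg_action} together with the identity $\phi_g(\exp(-tx)\cdot p) = \exp(-t\Ad_g x)\cdot \phi_g(p)$, which comes from $g\exp(-tx)g^{-1} = \exp(-t\Ad_g x)$. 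Third, the naturality identity $\phi_g^*(\iota_Y\alpha) = \iota_{\phi_g^* Y}(\phi_g^*\alpha)$, valid for any diffeomorphism. Since $G$ is compact, the integral defining $\mu'(x)$ converges to a smooth $(n-1)$-form, and $d$ commutes both with the pullbacks $\phi_g^*$ and with integration over $G$.

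For the Hamiltonian condition, I would differentiate under the integral sign and apply the hypothesis $d\mu(y) = -\iota_{v_y}\omega$ with $y = \Ad_g x$:
\[
d\mu'(x) = \int_G \phi_g^*\, d\bigl(\mu(\Ad_g x)\bigr)\, dg = -\int_G \phi_g^*\bigl(\iota_{v_{\Ad_g x}}\omega\bigr)\, dg.
\]
Applying naturality, then the vector-field transformation law, then invariance of $\omega$, the integrand becomes $\iota_{\phi_g^* v_{\Ad_g x}}(\phi_g^*\omega) = \iota_{v_{\Ad_{g^{-1}}\Ad_g x}}\omega = \iota_{v_x}\omega$, which is \emph{independent of $g$}. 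Integrating against the normalized Haar measure then yields $d\mu'(x) = -\iota_{v_x}\omega$, as desired.

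For equivariance, I would compute, for $h\in G$, using $\phi_h^*\circ\phi_g^* = (\phi_g\circ\phi_h)^* = \phi_{gh}^*$,
\[
\phi_h^*\mu'(x) = \int_G (\phi_g\circ\phi_h)^*\bigl(\mu(\Ad_g x)\bigr)\, dg = \int_G \phi_{gh}^*\bigl(\mu(\Ad_g x)\bigr)\, dg.
\]
Substituting $g' = gh$ and invoking invariance of Haar measure together with $\Ad_{g'h^{-1}} = \Ad_{g'}\Ad_{h^{-1}}$, the right-hand side equals $\int_G \phi_{g'}^*\bigl(\mu(\Ad_{g'}\Ad_{h^{-1}}x)\bigr)\,dg' = \mu'(\Ad_{h^{-1}}x)$. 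Replacing $h$ by $h^{-1}$ gives $\mu'(\Ad_h x) = \phi_{h^{-1}}^*\mu'(x) = h\cdot\mu'(x)$, which is exactly the statement that $\mu'$ intertwines the adjoint action on $\g$ with the induced $G$-action on forms.

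I expect the only genuinely delicate point to be the bookkeeping of conventions---pinning down the direction of $g^*$ and the resulting adjoint placement in the transformation law $\phi_g^* v_x = v_{\Ad_{g^{-1}}x}$. Getting this correct (rather than with $\Ad_g$) is precisely what makes the two adjoint factors cancel to $v_x$ in the Hamiltonian computation; with the opposite convention the integrand would depend on $g$ and the argument would collapse. Everything else---convergence of the integral, differentiation under the integral, and smoothness of $\mu'$---is routine given compactness of $G$.
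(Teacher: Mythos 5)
Your proposal is correct and follows essentially the same route as the paper: one checks that each integrand $g^*\bigl(\mu(\Ad_g x)\bigr)$ already satisfies $d\bigl(g^*\mu(\Ad_g x)\bigr) = -\iota_{v_x}\omega$ (the two adjoint factors cancelling precisely because of the invariance of $\omega$ and the law $\phi_g^* v_x = v_{\Ad_{g^{-1}}x}$), and then averaging over $G$ with the normalized bi-invariant Haar measure yields the Hamiltonian condition and, via the change of variables $g'=gh$, the equivariance. Your write-up simply makes explicit the convention bookkeeping and the Haar-measure substitution that the paper's one-line proof leaves to the reader.
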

\begin{proof}
For every $g\in G$ and $x\in \g$, one computes that $d(g^*(\mu (\Ad_gx)))
=-\iota_{v_{x}}\omega$ using the fact that $\omega$ is $G$-invariant. Integrating over $G$ we obtain a map $\mu'$ which is equivariant.
\end{proof}

\begin{lemma}\label{lem:abc}
If the linear map $\mu \colon \g
\to \ham{n-1}$ is equivariant and
 $d\mu(x) = -\iota_{v_{x}} \omega,$ then $\iota_{v_{x}} \mu(x)$ is automatically a closed $n-2$ form for all $x\in \g$.
\end{lemma}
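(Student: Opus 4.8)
The plan is to compute $d\bigl(\iota_{v_x}\mu(x)\bigr)$ directly via Cartan's magic formula, using the infinitesimal form of the equivariance hypothesis, and then to observe that both resulting terms vanish for structural reasons. This is short: there is essentially no obstacle once the conventions are pinned down.

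First I would translate the equivariance of $\mu$ into infinitesimal form. Since the $G$-action on forms is $g\cdot\alpha=\phi_{g^{-1}}^*\alpha$ and $v_y$ generates the flow $\phi_{\exp(-ty)}$, the infinitesimal action of $y\in\g$ on a differential form is exactly $\L_{v_y}$. Differentiating the equivariance relation $\mu(\Ad_g x)=g\cdot\mu(x)$ therefore yields
\[
\L_{v_y}\mu(x)=\mu([y,x])\qquad\forall\, x,y\in\g,
\]
up to an overall sign that depends on the chosen conventions. Crucially, for the argument below only the case $y=x$ is needed, so this sign is immaterial.

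Next I would apply $\L_{v_x}=d\iota_{v_x}+\iota_{v_x}d$ to $\mu(x)$ and isolate the term of interest:
\[
d\bigl(\iota_{v_x}\mu(x)\bigr)=\L_{v_x}\mu(x)-\iota_{v_x}\,d\mu(x).
\]
It then remains to see that both terms on the right vanish. For the first term, the infinitesimal equivariance relation with $y=x$ gives $\L_{v_x}\mu(x)=\mu([x,x])=0$. For the second term, the defining property $d\mu(x)=-\iota_{v_x}\omega$ gives $\iota_{v_x}\,d\mu(x)=-\iota_{v_x}\iota_{v_x}\omega=0$, since contracting a form twice with the same vector field is zero. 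Hence $d\bigl(\iota_{v_x}\mu(x)\bigr)=0$, as claimed.

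The only point requiring care is fixing the sign conventions in the infinitesimal equivariance relation (which rests on the convention that $v_{-}$ is \emph{minus} the infinitesimal generator, making the infinitesimal action on forms equal to $\L_{v_{-}}$). Once this is settled, the two vanishing mechanisms---antisymmetry of the Lie bracket (through $[x,x]=0$) and $\iota_{v}\iota_{v}=0$---close the argument immediately, with no genuine difficulty.
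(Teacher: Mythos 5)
Your proof is correct and is essentially identical to the paper's: both use Cartan's formula $d\iota_{v_x}\mu(x)=\L_{v_x}\mu(x)-\iota_{v_x}d\mu(x)$, kill the first term via the infinitesimal equivariance $\L_{v_x}\mu(x)=\mu([x,x])=0$, and kill the second via $\iota_{v_x}\iota_{v_x}\omega=0$. The paper simply takes the infinitesimal form of equivariance as the definition, so your extra remark on sign conventions is the only (harmless) addition.
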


\begin{proof}
The equivariance of $\mu$ is equivalent to 
$\L_{v_{x}} \mu(y ) = \mu([x,y])$ for all $x,y\in\g$. Now
 $d\iota_{v_{x}} \mu(x)=-\iota_{v_{x}} d\mu(x)+\L_{v_{x}} \mu(x)=0$. 
 \end{proof}

\begin{proof}[Proof of Prop.\ \ref{cor:zeros}]
  Since $G$ is semisimple, Lemma \ref{semi} produces $\mu \colon
  \g \to \ham{n-1}$ satisfying $d\mu(x) = -\iota_{v_{x}} \omega$. The
  compactness of $G$ allows to apply Lemma \ref{compact}, and
  therefore we may assume that $\mu$ is equivariant. For every $x\in
  \g$, the function $\iota_{v_{x}} \mu(x)$ is a constant function by
  Lemma \ref{lem:abc}.  We conclude that it must be identically zero
  since it vanishes at $p$. Hence $\omega -\mu$ is a $1$-step extension,
  so  Thm.\ \ref{main1} produces an equivariant
    moment map with components $f_1=\mu$ and $f_2(x,y)=\iota_{v_x}\mu(y)$.
\end{proof}

\subsection{Uniqueness of moment maps} \label{unique_subsec} Next, we comment briefly on uniqueness issues. Recall that a moment
map in symplectic geometry is unique if $H^1(\g,\RR)=0$
\cite[Section26]{Ana}. Similarly, in the pre-2-plectic case,
cohomological constraints on $\g$ will ensure uniqueness, but only up to a certain
equivalence given by an action of $C^{\infty}(M)$.

\begin{lemma}\label{lem: equi}
\mbox{}
\begin{itemize}
\item[a)]{If $\omega-\mu$ is a $1$-step extension of $\omega$ and
    $\psi\colon \g \to C^{\infty}(M)$ is any $G$-equivariant linear map, then
    $\omega-(\mu+d\psi)$ is also a $1$-step extension.}
\item[b)]{If $(f_1,f_2) \maps \g \to L_{\infty}(M,\omega)$ is a moment
    map and $\psi \maps \g \to C^{\infty}(M)$ is any linear map, then
    we obtain a new moment map with components
    \begin{align*}
      &\tilde{f}_{1} =f_1+d\psi  \\
      &\tilde{f}_{2}(x,y)  = f_2(x,y)+\psi([x,y]). 
  \end{align*}
    }
\end{itemize}
\end{lemma}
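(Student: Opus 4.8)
The plan is to verify both statements by direct substitution, checking the explicit conditions of Section \ref{3-form_notation_sec} in each case. Throughout I work in the $n=2$ setting, so that $f_1,\mu$ take values in $\ham{1}$ and $\psi$ in $C^{\infty}(M)$, and I use that $d(d\psi(x))=0$ and, for a function $\psi(x)$, that $\iota_{v_{x}}d\psi(x)=\L_{v_{x}}\psi(x)$.

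For part a), recall that $\omega-\mu$ being a $1$-step extension means $d_G(\omega-\mu)=0$, which unwinds into the three equations $d\omega=0$, $d\mu(x)=-\iota_{v_{x}}\omega$, and $\iota_{v_{x}}\mu(x)=0$, exactly as in Eq.\ \eqref{eq:dgclosed} from the proof of Cor.\ \ref{main1}. I would check these with $\mu+d\psi$ in place of $\mu$. The first equation is unchanged. The second follows at once from $d(\mu(x)+d\psi(x))=d\mu(x)=-\iota_{v_{x}}\omega$. The only equation requiring work is the third, which becomes $\iota_{v_{x}}(\mu(x)+d\psi(x))=\iota_{v_{x}}d\psi(x)=\L_{v_{x}}\psi(x)$; here the $G$-equivariance of $\psi$, in the form $\L_{v_{x}}\psi(y)=\psi([x,y])$, gives $\L_{v_{x}}\psi(x)=\psi([x,x])=0$. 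This is the single place where equivariance is used.

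For part b), I would first observe that $d\tilde{f}_1(x)=df_1(x)=-\iota_{v_{x}}\omega$, again because $d(d\psi(x))=0$; hence $\tilde{f}_1(x)$ is Hamiltonian with the same Hamiltonian vector field $v_x$, so condition A holds. Since the bracket $\{\tilde{f}_1(x),\tilde{f}_1(y)\}=\iota_{v_{y}}\iota_{v_{x}}\omega$ and the term $l_3(\tilde{f}_1(x),\tilde{f}_1(y),\tilde{f}_1(z))=\omega(v_x,v_y,v_z)$ depend only on the Hamiltonian vector fields, they are unchanged by the modification. With this in hand, verifying Eq.\ \eqref{eq:2momap1} for $(\tilde{f}_1,\tilde{f}_2)$ reduces to matching the extra term $d\psi([x,y])$, which appears identically on both sides via $\tilde{f}_1([x,y])=f_1([x,y])+d\psi([x,y])$ and $d\tilde{f}_2(x,y)=df_2(x,y)+d\psi([x,y])$.

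For Eq.\ \eqref{eq:2momap2} the left-hand side is unchanged, while the three $\psi$-correction terms on the right assemble, by linearity of $\psi$, into $\psi\bigl([x,[y,z]]-[y,[x,z]]+[z,[x,y]]\bigr)$; the argument vanishes by the Jacobi identity, so the equation is preserved. The main (and essentially only) subtle point is this last cancellation: the correction $\tilde{f}_2(x,y)=f_2(x,y)+\psi([x,y])$ is engineered precisely so that its contribution to Eq.\ \eqref{eq:2momap2} is governed by the Jacobiator, which is why part b) needs no equivariance hypothesis on $\psi$, in contrast with part a).
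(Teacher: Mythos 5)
Your verification is correct and is exactly the ``straightforward calculation'' that the paper leaves to the reader, handling both the matching $d\psi([x,y])$ terms in Eq.\ \eqref{eq:2momap1} and the Jacobi-identity cancellation in Eq.\ \eqref{eq:2momap2}. The only point worth adding is in part a): by Def.\ \ref{extension}, a $1$-step extension must be an element of the Cartan complex $C_G^{n+1}(M)$, so besides the three equations of Eq.\ \eqref{eq:dgclosed} you should also record that $\mu+d\psi$ is still $G$-equivariant, which is immediate from $\cL_{v_x}\bigl(d\psi(y)\bigr)=d\bigl(\cL_{v_x}\psi(y)\bigr)=d\psi([x,y])$.
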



\begin{proof}
A straightforward calculation left for the reader.
\end{proof}
\begin{remark}\label{rem:eqmomap}
In \cite{FLRZ}, a notion of equivalence for moment maps is considered
whose equivalence classes are strictly larger than those arising in Lemma \ref{lem: equi} b.
\end{remark}

\begin{prop} 
If $G$ is compact and semisimple or, more generally,  $H^1(\g,\RR)=H^2(\g,\RR)=0$, then: 
\begin{itemize}
\item[a)]{Any two equivariant extensions are related as in Lemma
    \ref{lem: equi} a,}
\item[b)]{Any two moment maps are related as in Lemma \ref{lem: equi} b.}
\end{itemize}
\end{prop}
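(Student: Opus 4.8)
The plan is to establish (b) first and then derive (a) from it, since the moment maps attached to $1$-step extensions carry precisely the data needed to promote the equivalence in (b) to the equivariant one in (a).

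For (b), let $f=(f_1,f_2)$ and $f'=(f_1',f_2')$ be two moment maps for the same data and set $g_1=f_1'-f_1$ and $g_2=f_2'-f_2$. Both moment maps have the same fundamental vector fields $v_x$, so condition A gives $dg_1(x)=-\ip{x}\omega+\ip{x}\omega=0$, and the terms $\brac{f_1(x)}{f_1(y)}=\omega(v_x,v_y,\cdot)$ and $l_3(f_1(x),f_1(y),f_1(z))=\omega(v_x,v_y,v_z)$ coincide for $f$ and $f'$. Subtracting \eqref{eq:2momap1} and \eqref{eq:2momap2} for the two maps therefore kills every $\omega$-dependent term and leaves the purely algebraic constraints
\[
g_1([x,y])=dg_2(x,y),\qquad \delta_{\CE}g_2=0,
\]
so that $g_2$ is a Chevalley--Eilenberg $2$-cocycle valued in $C^{\infty}(M)$, regarded as a trivial $\g$-module. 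This cancellation is the main bookkeeping step.

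Now I would invoke the hypotheses. Since $C^{\infty}(M)$ is an $\RR$-vector space with trivial action, $H^2(\g,C^{\infty}(M))\cong H^2(\g,\RR)\otimes C^{\infty}(M)=0$, so $g_2=\delta_{\CE}(-\psi)$ for some linear $\psi\colon\g\to C^{\infty}(M)$; that is, $g_2(x,y)=\psi([x,y])$. Substituting into $g_1([x,y])=dg_2(x,y)=d\psi([x,y])$ shows that $g_1-d\psi$ vanishes on $[\g,\g]$, and $H^1(\g,\RR)=0$ forces $[\g,\g]=\g$, whence $g_1=d\psi$ everywhere (and $\psi$ is unique). Thus $f_1'=f_1+d\psi$ and $f_2'=f_2+\psi([\cdot,\cdot])$, which is exactly Lemma \ref{lem: equi} b.

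For (a), let $\omega-\mu$ and $\omega-\mu'$ be two extensions (necessarily $1$-step, as the degree is $3$) and put $\nu=\mu'-\mu$; invariance of the extensions means $\nu$ is equivariant, $\L_{v_z}\nu(y)=\nu([z,y])$. By Cor.\ \ref{main1} the two extensions determine the moment maps $(\mu,\ip{\cdot}\mu)$ and $(\mu',\ip{\cdot}\mu')$, so (b) supplies the unique linear $\psi$ with $\mu'=\mu+d\psi$ and $\psi([x,y])=\ip{x}\nu(y)$. It then remains only to check that $\psi$ is equivariant, i.e.\ $\L_{v_z}\psi(w)=\psi([z,w])$; since $[\g,\g]=\g$ it suffices to take $w=[x,y]$, where the commutator identity \eqref{commutator} (in the form $\L_{v_z}\ip{x}=\iota_{v_{[z,x]}}+\ip{x}\L_{v_z}$), equivariance of $\nu$, and the Jacobi identity combine to give
\[
\L_{v_z}\psi([x,y])=\iota_{v_{[z,x]}}\nu(y)+\ip{x}\nu([z,y])=\psi([[z,x],y])+\psi([x,[z,y]])=\psi([z,[x,y]]).
\]
Hence $\mu'=\mu+d\psi$ with $\psi$ equivariant, which is Lemma \ref{lem: equi} a.

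The only real obstacle is the cancellation argument in (b): one must check carefully that, after subtracting the two moment-map equations, the residual content is exactly the cocycle condition $\delta_{\CE}g_2=0$ together with $g_1([x,y])=dg_2(x,y)$. Once that is in place, $H^1(\g,\RR)=H^2(\g,\RR)=0$ does all the remaining work, and the equivariance check in (a) is a short Cartan-calculus computation. I note that no compactness or averaging is required: the case of $G$ compact semisimple is subsumed because semisimplicity already yields $H^1(\g,\RR)=H^2(\g,\RR)=0$ by Whitehead's lemmas.
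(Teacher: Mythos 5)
Your proof is correct, and part (b) coincides with the paper's own argument: subtracting the two sets of moment-map equations kills every $\omega$-dependent term, identifies $f_2'-f_2$ as a Chevalley--Eilenberg $2$-cocycle valued in the trivial module $C^{\infty}(M)$, then $H^2(\g,C^{\infty}(M))=H^2(\g,\RR)\otimes C^{\infty}(M)=0$ produces $\psi$, and $f_1'-f_1=d\psi$ follows from Eq.\ \eqref{eq:2momap1} together with $[\g,\g]=\g$. Where you genuinely depart from the paper is part (a). The paper proves (a) independently of (b): it first extracts a primitive $\psi$ with $\mu'=\mu+d\psi$ (again via \eqref{eq:2momap1} and $[\g,\g]=\g$), observes that this $\psi$ need not be equivariant, forms the obstruction $c(x,y)=\L_{v_x}\psi(y)-\psi([x,y])$, proves $c$ is an $\RR$-valued $2$-cocycle using \eqref{eq:2momap2}, and then invokes $H^2(\g,\RR)=0$ a \emph{second} time to write $c=\delta_{\CE}b$ and replace $\psi$ by the equivariant $\tilde{\psi}=\psi-b$. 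You instead apply (b) to the two moment maps furnished by Cor.\ \ref{main1}, note that the resulting $\psi$ is pinned down on brackets by $\psi([x,y])=\ip{x}(\mu'-\mu)(y)$, and verify directly --- using $[\L_{u},\iota_{v}]=\iota_{[u,v]}$, the equivariance of $\mu'-\mu$, and the Jacobi identity --- that this canonical $\psi$ is \emph{already} equivariant, so no correction is needed. Your route is more economical (a single cohomological vanishing argument, with (a) reduced to (b) plus a short Cartan-calculus identity) and it yields the sharper observation that the $\psi$ canonically attached to the pair of extensions is automatically equivariant; what the paper's route buys is logical independence of the two statements, and in (a) it only ever needs vanishing of cohomology with constant real coefficients rather than the identification $H^2(\g,C^{\infty}(M))\cong H^2(\g,\RR)\otimes C^{\infty}(M)$. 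Both your proof and the paper's use equivariance in the infinitesimal sense $\L_{v_x}\psi(y)=\psi([x,y])$, so, as you note, compactness of $G$ plays no role beyond guaranteeing the Whitehead vanishing hypotheses.
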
  
\begin{proof}
a) We have to show that given two $1$-step extensions
$\omega-\mu$, $\omega-\mu'$ 

there is a  
$G$-equivariant $\tilde{\psi}\colon \g \to C^{\infty}(M)$ such that $\mu'=\mu+d\tilde{\psi}$. Recall that by  Thm.\ \ref{main1}, $f_1=\mu$ and 
$f_2(x,y)=\iota_{v_x}\mu(y)$ are the components of a moment map. Similarly, we obtain a moment map $(f_1',f_2')$ using $\mu'$. Hence from  Eq.\  \eqref{eq:2momap1}, we see that $(\mu'-\mu)([x,y])$ is an exact 1-form for all $x,y\in \g$. From $[\g,\g]=\g$, which is equivalent to $H^1(\g,\RR)=0$, we deduce that
  there exists a map $\psi\colon \g \to C^{\infty}(M)$ such that $\mu'=\mu+d\psi$. 
  
The map $\psi$ will not be equivariant in general. We now modify it
suitably to obtain an equivariant map. {Since $\mu$ and $\mu'$
 are $1$-step extensions of $\omega$, 
their equivariance and  Eq.\ \eqref{eq:dgclosed} imply that we have for all $x,y\in \g$:
\begin{equation}\label{eq:dpsi}
\L_{v_{x}} d\psi(y) 
=
d\psi([x,y]),\;\;\;\;\;\;\;
\iota_{v_{x}} d\psi(x)=\L_{v_{x}} \psi(x) 
=0.
\end{equation}}
Define $$c(x,y)=\L_{v_{x}} \psi(y)-\psi([x,y]).$$
The map $c$  is clearly the obstruction to $\psi$ being equivariant,
 and Eq.\  \eqref{eq:dpsi} implies that it is $\R$-valued and skew-symmetric.

We claim that $c$ is a Lie algebra cocycle. We have 
$$(\delta_{\CE}c)(x,y,z)=-\left(c([x,y],z)+c.p.\right)=\left(
\L_{v_{z}} \psi([x,y])-\psi([z,[x,y]])\right) +c.p.$$
 This is zero by the Jacobi identity of $\g$ and because  
 $$\L_{v_{z}} \psi([x,y]) +c.p.=\iota_{v_z}(\mu'-\mu)([x,y])+c.p.=
 \left(f_2(z,[x,y])-f'_2(z,[x,y])\right)+c.p.=0,$$
 where the last equality uses Eq.\  \eqref{eq:2momap2}.
 
From  $H^2(\g,\RR)=0$ we know that there exists $b\colon \g \to \RR$ such that $c=d_{\g}b$, that is,
$c(x,y)=-b([x,y])$ for all $x,y\in \g$. The map
$$\tilde{\psi}=\psi-b \colon \g \to C^{\infty}(M)$$ 
 clearly satisfies
 $\mu'=\mu+d\tilde{\psi}$, 
and it is equivariant since $
\L_{v_{x}} \tilde{\psi}(y)=\L_{v_{x}}{\psi}(y)=\tilde{\psi}([x,y])$.

b) We have to show that given two moment maps with components $(f_1,f_2)$
and $(f'_1,f'_2)$ respectively,  there is $\psi\colon \g \to C^{\infty}(M)$ such that $f'_1-f_1=d\psi$ and $(f'_2-f_2)(x,y)=\psi([x,y])$.

Notice that $f'_2-f_2 \colon \Lambda^2\g \to 
C^{\infty}(M)$ is a Lie algebra cocycle w.r.t. the \emph{trivial} representation of  $\g$ on $C^{\infty}(M)$, by Eq.\  \eqref{eq:2momap2}.
From $H^2(\g,C^{\infty}(M))=H^2(\g,\RR)\otimes C^{\infty}(M)=0$ we obtain a map $\psi \colon \g \to C^{\infty}(M)$ such that $(f'_2-f_2)(x,y)=\psi([x,y])$ for all $x,y \in \g$. To conclude we just need to assure that the identity
$f'_1-f_1=d\psi$ holds:  it does when both sides are applied to elements of the form $[x,y]\in \g$,   by Eq.\  \eqref{eq:2momap1} and the above. As $\g=[\g,\g]$, it holds for all elements of $\g$.
\end{proof}

\subsection{Equivariant moment maps not arising from equivariant cocycles}  \label{no_cocycle_sub_sec}
Let $G$ act on the pre-$2$-plectic manifold $(M,\omega)$.  Let
$\omega-\mu$ be an equivariant $1$-step extension, and let
  $(f_1,f_2) \maps \g \to \li(M,\omega)$ be the corresponding
  equivariant moment map (Thm.\ \ref{main1}).Here we explain
  how to modify
  this moment map to obtain a new \emph{equivariant} moment map that does \emph{not}
  arise from any equivariant cocycle.

If a linear map
  $\tilde{f_1}\colon \g\to \Omega^1(M)$
\begin{itemize}
\item \text{takes values in closed 1-forms}
\item \text{is equivariant:}   $d(\iota_{v_x}\tilde{f_1}(y))=\cL_{v_x}\tilde{f_1}(y)=\tilde{f_1}([x,y])$ for all $x,y\in \g$,
\end{itemize}
then $f_1+\tilde{f_1}$  satisfies condition (A)  at the beginning of Sec.\ \ref{3-form_notation_sec} and is equivariant.
If  a skew-symmetric map $\tilde{f_2}\colon  \g\otimes\g \to C^{\infty}(M)$ 
\begin{itemize}
\item is equivariant
\item  satisfies $\tilde{f_1}([x,y])=d(\tilde{f_2}(x,y))$  for all $x,y\in \g$
\item  satisfies $\tilde{f_2}(x,[y,z])+c.p.=0$  for all $x,y,z\in \g,$
\end{itemize}
 then $f_1+\tilde{f_1}$ and
$f_2+\tilde{f_2}$ are the components of a new equivariant moment
map. (The last two conditions above guarantee Eq.\
  \eqref{eq:2momap1} and \eqref{eq:2momap2} are satified).
Furthermore, if we require that the constant function $\iota_{v_x}\tilde{f_1}(x)$ is
non-zero for some $x\in \g$, then the last condition in Eq.\
\eqref{eq:dgclosed} cannot be satisfied, and hence the new moment map can \emph{not}
arise from any equivariant cocycle.
When $\g$ is an abelian Lie algebra and $M$ is connected, the equivariance of $\tilde{f_1}$ boils down to the condition that $\iota_{v_x}\tilde{f_1}(y)$ is a constant function for all $x,y \in \g$, and the three conditions on $\tilde{f_2}$ simply
imply that $\tilde{f_2}$ takes values in the constant functions.
Below we present a concrete instance of this construction: 
\begin{ep}\label{ep:noteq}
Let $G$ be the abelian group $S^1\times S^1$, and $(M,\omega)=(S^1\times S^1\times \RR, d\theta_1\wedge d\theta_2 \wedge dz)$. We take the infinitesimal action of $\g$ on $M$ to be 
  $(1,0)\in \g \mapsto \partial_{\theta_1}$, $(0,1)\mapsto \partial_{\theta_2}$.
It is easily checked that $\omega-\mu$ is an $1$-step extension (see Eq.\  \eqref{eq:dgclosed}), where $$\mu \colon \g \to \ham{1},\;\; (1,0)\mapsto zd\theta_2, (0,1)\mapsto -zd\theta_1.$$
We can take $\tilde{f_1}$ to be $$\tilde{f_1}\colon \g\to \Omega^1_{closed}(M),\;\; (1,0) \mapsto d\theta_1, (0,1) \mapsto d\theta_2,$$ and  any arbitrary skew-symmetric $\tilde{f_2}\colon  \g\otimes \g \to \RR$. Then, as seen above, $f_1+\tilde{f_1}$ and $f_2+\tilde{f_2}$ are the components of an equivariant moment map, which can not arise from any equivariant cocycle
since $\iota_{v_{x}}\tilde{f_1}(x)=1\neq 0$ for $x=(1,0)$.
\end{ep}
The discussion in this subsection proves:
\begin{prop}\label{prop:notall}
Not all equivariant moment maps for actions on $2$-plectic manifolds
arise from cocycles in the Cartan complex via the formula given in
Thm.\ \eqref{thm:CartanMM}.
\end{prop}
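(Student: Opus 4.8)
The plan is to prove the statement by exhibiting an explicit equivariant moment map that provably fails to come from any cocycle in the Cartan complex; this is precisely what the construction of Sec.\ \ref{no_cocycle_sub_sec}, culminating in Example \ref{ep:noteq}, provides. The key structural observation is that in the $2$-plectic case a degree $3$ cocycle in the Cartan complex is necessarily a $1$-step extension, since $\lfloor\frac{n+1}{2}\rfloor=\lfloor\frac32\rfloor=1$ leaves only the term $P_1$. Hence ``arising from a Cartan cocycle via the formula of Thm.\ \ref{thm:CartanMM}'' coincides with ``arising from a $1$-step extension $\omega-\mu$ via Cor.\ \ref{main1}'', in which case the degree $1$ component is $f_1=\mu=-P_1$.

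First I would isolate the detecting invariant. If an equivariant moment map $(g_1,g_2)$ arose from a Cartan cocycle $\omega+P_1$, then by Cor.\ \ref{main1} we would have $g_1=-P_1$ with $\omega+P_1$ a $1$-step extension. Its closedness $d_G(\omega+P_1)=0$ forces the three identities \eqref{eq:dgclosed}, and in particular the pointwise constraint $\iota_{v_x}g_1(x)=0$ for all $x\in\g$. Thus the scalar quantity $\iota_{v_x}g_1(x)$ must vanish identically for every moment map coming from a Cartan cocycle, and I would use exactly this quantity to detect non-cocycle moment maps.

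Next I would carry out the modification of Sec.\ \ref{no_cocycle_sub_sec}: starting from the moment map $(f_1,f_2)$ produced by an equivariant $1$-step extension, add a pair $(\tilde f_1,\tilde f_2)$ satisfying the compatibility list there (closedness and equivariance of $\tilde f_1$, the relation $\tilde f_1([x,y])=d(\tilde f_2(x,y))$, and the cocycle condition $\tilde f_2(x,[y,z])+\mathrm{c.p.}=0$). These are precisely the conditions that guarantee $f_1+\tilde f_1$ satisfies condition (A) at the beginning of Sec.\ \ref{3-form_notation_sec} and that Eqs.\ \eqref{eq:2momap1}--\eqref{eq:2momap2} persist, so the result is again a genuine equivariant moment map. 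Crucially, I would arrange $\iota_{v_x}\tilde f_1(x)\neq 0$ for some $x$; since $\iota_{v_x}f_1(x)=0$, this gives $\iota_{v_x}(f_1+\tilde f_1)(x)\neq 0$, contradicting the constraint isolated above. Finally I would verify the concrete instance of Example \ref{ep:noteq}: $G=S^1\times S^1$ acting on $(M,\omega)=(S^1\times S^1\times\R,\,d\theta_1\wedge d\theta_2\wedge dz)$, with $\mu$ as given and $\tilde f_1$ sending the two generators to $d\theta_1$ and $d\theta_2$. Here $\g$ is abelian and $M$ is connected, so equivariance of $\tilde f_1$ reduces to $\iota_{v_x}\tilde f_1(y)$ being constant, and $\tilde f_2$ may be taken constant-valued; since $\iota_{v_{(1,0)}}\tilde f_1((1,0))=1\neq 0$, the resulting moment map cannot arise from any Cartan cocycle.

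The main obstacle is conceptual rather than computational: one must be certain that the vanishing of $\iota_{v_x}g_1(x)$ is genuinely \emph{intrinsic} to the moment map, and not merely an artifact of a particular representative $\mu$ or $P_1$. This is where the first observation does the work: because the formula of Thm.\ \ref{thm:CartanMM} forces $P_1=-g_1$ and the extended form must have $\alpha_0=\omega$ fixed, the cocycle $\omega+P_1=\omega-g_1$ is uniquely determined by the moment map, and the Cartan cocycle condition is imposed on that very $P_1$. Hence the constraint $\iota_{v_x}g_1(x)=0$ is unavoidable for any Cartan-cocycle moment map, and its violation in the example completes the proof.
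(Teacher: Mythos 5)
Your proposal is correct and follows essentially the same route as the paper: the paper's proof of this proposition is precisely the discussion in Sec.~\ref{no_cocycle_sub_sec} culminating in Example~\ref{ep:noteq}, using the same detecting invariant $\iota_{v_x}f_1(x)$, whose vanishing is forced by the last identity in Eq.~\eqref{eq:dgclosed}. The only thing you add is an explicit justification that this invariant is intrinsic---for $n=2$ any Cartan cocycle extending $\omega$ is a $1$-step extension and the formula of Thm.~\ref{thm:CartanMM} forces $P_1=-f_1$, so the candidate cocycle is uniquely determined by the moment map---a point the paper leaves implicit.
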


\begin{remark}
In \cite{FLRZ} Example \ref{ep:noteq} is used to obtain an equivariant moment map which is \emph{not equivalent} (in the sense of \cite{FLRZ}, see Remark \ref{rem:eqmomap}) to any moment map arising from a $1$-step extension of $\omega$. This provides a statement stronger than Prop.\ \ref{prop:notall} above.
\end{remark}

\section{Examples} \label{examples_sec} In this section we present
more examples of moment maps, many of which are generalizations of
interesting examples from symplectic geometry. In light of Remark
\ref{no_compact_rem}, all of them  can be understood as arising from
extensions as in Thm.\ \ref{main1}, even if $G$ is not compact.

In Sec.\ \ref{sec:flatc} we will give one more (infinite dimensional)
example.

\subsection{Exact pre-{\boldmath $n$}-plectic forms}\label{sub:exact}
Let $M$ be a manifold with a $G$-action, let
  $\alpha\in \Omega^n(M)^G$ be a $G$-invariant $n$-form and consider $\omega=d\alpha$.

\begin{lemma}\label{extexact}
  If $\mu\in (\Omega^{n-1}(M)\otimes \g^{\vee}[-2])^G$ is
  defined by $\mu(x)=\iota_{v_x}\alpha$, then $\omega-\mu$ is a $1$-step extension of $\omega$.
\end{lemma}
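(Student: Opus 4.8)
The plan is to verify directly that $\omega-\mu$ is a degree $(n+1)$ cocycle in the Cartan complex $C_G(M)$ whose degree-zero component is $\omega$; this is exactly the assertion that it is a $1$-step extension (Def.\ \ref{extension}). First I would record the elementary structural facts. Since $\omega=d\alpha$ it is closed, and since $\alpha$ is $G$-invariant and $d$ commutes with the action, $\omega$ is $G$-invariant. Moreover $\mu(x)=\iota_{v_x}\alpha$ does lie in the invariant subspace $\bigl(\Omega^{n-1}(M)\otimes\g^\vee[-2]\bigr)^G$, because $v_-$ is a Lie algebra morphism and $\L_{v_x}\alpha=0$, so that $\L_{v_x}\mu(y)=\iota_{[v_x,v_y]}\alpha+\iota_{v_y}\L_{v_x}\alpha=\iota_{v_{[x,y]}}\alpha=\mu([x,y])$. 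Viewing $\omega-\mu$ as a polynomial map $\g\to\Omega^\ast(M)$, its only nonzero components sit in $S^0(\g^\vee)\otimes\Omega^{n+1}(M)$ (namely $\omega$) and in $S^1(\g^\vee)\otimes\Omega^{n-1}(M)$ (namely $-\mu$), both of total degree $n+1$. Hence it already has the shape of a $1$-step extension, and only the cocycle condition $d_G(\omega-\mu)=0$ remains to be checked.

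Then I would compute $d_G(\omega-\mu)$ using the Cartan differential $d_G(\beta)(x)=d(\beta(x))-\iota_{v_x}(\beta(x))$. Evaluating at $x\in\g$ with $\beta(x)=\omega-\iota_{v_x}\alpha$ gives two terms. For the first, $d\omega=d^2\alpha=0$, and Cartan's magic formula together with invariance yields $d\iota_{v_x}\alpha=\L_{v_x}\alpha-\iota_{v_x}d\alpha=-\iota_{v_x}\omega$, so $d(\beta(x))=\iota_{v_x}\omega$. For the second, $\iota_{v_x}\iota_{v_x}=0$ gives $\iota_{v_x}(\beta(x))=\iota_{v_x}\omega-\iota_{v_x}\iota_{v_x}\alpha=\iota_{v_x}\omega$. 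The two contributions cancel, so $d_G(\omega-\mu)(x)=\iota_{v_x}\omega-\iota_{v_x}\omega=0$ for all $x$, proving $\omega-\mu$ is closed.

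There is no genuine obstacle here: the statement collapses to a one-line application of Cartan's magic formula once the invariance $\L_{v_x}\alpha=0$ is invoked, with $d^2=0$ and $\iota_{v_x}\iota_{v_x}=0$ doing the rest. The only points demanding care are the sign bookkeeping in the Cartan differential and the degree count confirming that $-\mu$ occupies the $S^1(\g^\vee)$-slot, so that $\omega-\mu$ is honestly a $1$-step (rather than higher-step) extension. Once these are in place the conclusion is immediate.
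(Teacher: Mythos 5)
Your proof is correct and follows essentially the same route as the paper's: both verify equivariance of $\mu$ via $[\L_{v},\iota_{w}]=\iota_{[v,w]}$ together with $\L_{v_x}\alpha=0$, and both reduce the cocycle condition to $d\omega=0$, $d\iota_{v_x}\alpha=-\iota_{v_x}\omega$ (Cartan's magic formula plus invariance), and $\iota_{v_x}\iota_{v_x}\alpha=0$. The only difference is presentational — you evaluate $d_G(\omega-\mu)$ directly, while the paper cites the equivalent conditions recorded in Eq.\ \eqref{eq:dgclosed}.
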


\begin{proof}
It suffices to check that $\mu$   
is $G$-equivariant and satisfies \eqref{eq:dgclosed}. 
The equivariance is encoded by the condition $\cL_{v_x} {\mu(y)}=\mu([x,y])$ for all $x,y\in \g$, and 
follows from the Cartan relation $\iota_{[v,w]}=[\cL_v,\iota_w]=\cL_v\circ \iota_w -\iota_w\circ \cL_v$, together with the fact that $\cL_{v_x}\alpha=0$ for all $x$ in $\g$.
Concerning \eqref{eq:dgclosed}, clearly $d\omega=dd\alpha=0$. Moreover, the Cartan relation $d\circ i_{v_x}+i_{v_x}\circ d=\cL_{v_x}$, together with the invariance of $\alpha$ gives $d\mu(x)=-i_{v_x}\omega$. Finally the antisymmetry of $\alpha$ gives $i_{v_x}\mu(x)=0$.
\end{proof}

Hence by Thm \ref{main1} we obtain a homotopy moment map $\g \to L_{\infty}(M,\omega)$, given by  
 \[
\begin{array}{c}
f_{k} \maps \g^{\tensor k} \to \Omega^{n-k}(M),  ~ 1 \leq k \leq n\\
f_{k}(x_{1},\ldots x_{k})= (-1)^{k-1}\vs(k)\iota(v_{x_{1}} \wedge\cdots \wedge
v_{x_{k}}) \alpha
\end{array}
\]  

We present two concrete examples. The first one generalizes actions on cotangent bundles by cotangent lifts in symplectic geometry.
 \begin{ep}[Cotangent lifts]\label{ex:ctlifts}
 If $G$ acts on a manifold $N$ and $n$ is an integer, take
 $M=\Lambda^nT^*N$ and the $G$-action induced by the cotangent
 lift. Take $\alpha \in\Omega^n(M)$ to be the canonical form defined
 by 
\[
\alpha(w_1,\dots,w_n) \vert_{\xi}=\xi( \pi_*w_1, \ldots, \pi_*w_n) \quad
  \forall \xi \in M  
\]

where $\pi\colon M\to N$ is the projection and $w_1,\dots,w_n\in T_{\xi}M$. The $n$-form $\alpha$ is invariant, and it is known that $d\alpha$ is an $n$-plectic form. (In the case $n=1$ it is, up to sign, the canonical symplectic from on $T^*N$). From Lemma \ref{extexact} we know that an equivariant extension $\mu$ of $\omega$ is given by
$\mu(x)=\iota_{v_x}\alpha$, i.e. 
\[
\mu(x)(w_2,\dots,w_n) \vert_{\xi}  =\xi \bigl( (v_x)_{\pi(\xi)}, \pi_*w_2,
  \ldots, \pi_*w_n \bigr),
\]
where $v_x$ denotes the fundamental vector field for the action on $\Lambda^{n}T^{\ast}N$ (which restricts to the fundamental vector field for the action on $N$).

In other words, $(\mu(x))_{\xi}=\pi^*(\iota_{v_x}\xi)$.

\end{ep}

\begin{ep}[Linear actions on vector spaces]\label{linear}
  It is well-known that any symplectic representation $G \to
    \GL(V)$ on a symplectic vector space $(V,\omega)$ is
  Hamiltonian. More precisely, if we denote the action of $\xi
    \in \g$ on $V$ by $\xi \cdot p$,
and consider the unique moment map $J \maps V \to \g^{\vee}$
  vanishing at the origin, then its \textbf{components} $J^{\xi} \maps V \to \R$ $\forall \xi \in \g$
are the quadratic functions
\[
J^{\xi}(p) = J(p)(\xi) = -\frac{1}{2} \omega(p, \xi \cdot p).
\]
 Below we generalize such actions to higher degree forms, and will
see that the the analogs of the components for the moment map are no longer quadratic.

Let $V$ be any vector space and $\omega\in \wedge^{n+1}V^*$, giving rise to a constant pre-$n$-plectic form on $V$ (which is obviously closed and hence exact).
Consider a linear action of a Lie group $G \to \GL(V)$  preserving $\omega$.
We claim that a $G$-invariant primitive of $\omega$ is
\begin{equation*}
\alpha= \frac{\iota_E\omega}{n+1}
\end{equation*}
where $E$ is the Euler vector field on $V$ (in coordinates, $E=\sum_j x_j\pd{x_j}$). Indeed it is straightforward to check that $d\iota_E\omega=
\cL_{E}\omega=(n+1)\omega$. Also we have the equality 
$\cL_{v_\xi}\iota_E\omega=\iota_E\cL_{v_\xi}\omega+\iota_{[v_\xi,E]}\omega=0$  for all $\xi\in \g$, since $\omega$ is $G$-invariant and $E$ commutes with all linear vector fields.

Hence, we have a moment map induced by the $1$-step extension of $\omega$ given in Lemma \ref{extexact}. 
Let us study this map in more detail. We denote by
$\phi\colon \g\to \gl(V)$ the Lie algebra morphism
associated to the linear action.
The infinitesimal generator of the action given by any $\xi\in \g$
 is the linear vector field  $v_\xi|_p=-\phi(\xi) p$ (matrix multiplication).  For all $p\in V$ we have $E|_p=p$, so 
we obtain the following expression for the $k$-th component of moment map:
\begin{align*}
f_{k}(\xi_{1},\ldots \xi_{k})|_p&= (-1)^{k-1}\vs(k)\iota(v_{\xi_{1}} \wedge\cdots \wedge
v_{\xi_{k}}) \alpha|_p\\
&=-\vs(k)\frac{1}{n+1} \iota(p \wedge\phi(\xi_1)  p\wedge \cdots \wedge
\phi(\xi_k)  p) \omega.
\end{align*}
Notice that the coefficients of the $(n-k)$-form $f_{k}(\xi_{1},\ldots \xi_{k})$   are polynomials of degree $k+1$.
\end{ep}

The next example is a special case of Example \ref{linear} and  generalizes the following simple case of Hamiltonian action on a symplectic manifold: the action of the circle on $\RR^2$ by rotations, with moment map
$(x_1,x_2)\mapsto -\frac{1}{2}(x_1^2+x_2^2)$.

\begin{ep}[$\SO(n)$-action on $\RR^n$]\label{sorn}
We consider  the canonical action of $G=\SO(n)$ on  $\RR^n$, the latter endowed with the constant volume form
$$\omega=dx_1\dots dx_{n}.$$ 
 It is $G$-invariant, and by Example \ref{linear} an invariant
  primitive is $$\alpha=\frac{1}{n}\sum_{k=1}^{n}(-1)^{k+1}x_kdx_1\dots\widehat{dx_k}\dots dx_{n}.$$
 A  basis of the Lie algebra $\mathfrak{so}(n)$ is $\{e_{ij}: 1\le i<j\le n\}$, where $e_{ij}$ denotes the matrix with $-1$ in the $(i,j)$-th position, $1$ 
in the $(j,i)$-th position and zeros elsewhere.
The corresponding   generators of the action 
are given by 
$$v_{ij}=x_j\pd{x_i}-x_i\pd{x_j}.$$
By Lemma \ref{extexact} we know that 
 an equivariant $1$-step extension $\mu$ of $\omega$ is given by
\begin{eqnarray*}
\mu(e_{ij}) & = & \iota_{v_{ij}}\alpha\\
                     & = & \frac{1}{n}\Big(\sum_{k=1}^{i-1}-\sum_{k=i+1}^{n}\Big)(-1)^{k+1+i}  x_{j}x_k
dx_1\dots\widehat{dx_i}\dots \widehat{dx_k}\dots dx_{n} \\
& - & \frac{1}{n}\Big(\sum_{k=1}^{j-1}-\sum_{k=j+1}^{n}\Big)(-1)^{k+1+j}
x_{i}x_k 
dx_1\dots\widehat{dx_j}\dots \widehat{dx_k}\dots dx_{n},
\end{eqnarray*}
and that we  can build a moment map out of $\mu$.
Notice that,  in the symplectic case ($n=2$), one recovers  
$\mu(e_{12})=-\frac{1}{2}(x_1^2+x_2^2).$
\end{ep}

\subsection{Adjoint action and conjugacy classes}\label{subs:conj}
Let $G$ be a  
compact Lie group whose Lie algebra $\g$ is equipped with
an $\Ad$-invariant inner-product $\innerprod{\cdot}{\cdot}$. Then $G$
equipped with the bi-invariant Cartan 3-form
\[
\omega = \frac{1}{12} \innerprod{\theta_{L}}{[\theta_{L},\theta_{L}]}=\frac{1}{12} \innerprod{\theta_{R}}{[\theta_{R},\theta_{R}]}
\] 
is a pre-2-plectic manifold. Here $\theta_{L}$ and $\theta_{R}$ are, respectively,
the left and right invariant Maurer-Cartan 1-forms on $G$ satisfying
\[
d\theta_{L} + \frac{1}{2}[\theta_{L},\theta_{L}] =0 \quad d\theta_{R} - \frac{1}{2}[\theta_{R},\theta_{R}] =0.
\] 
If $G$ is, for example, also semi-simple, then $\omega$ is
non-degenerate and hence $(G,\omega)$ is 2-plectic.

Clearly, the action of $G$ on itself via conjugation preserves $\omega$, and
this action gives rise to a homotopy moment map. The Hamiltonian vector field
associated to $x \in \g$ is
\[
v_{x} = v^{L}_{x} - v^{R}_{x},
\]
where $v^{L}$ and $v^{R}$ are, respectively, the left and right
invariant vector fields on $G$ associated to $x$. A straightforward
calculation using the above Maurer-Cartan equations and the identity
$\Ad_{g} \theta_{L} = \theta_{R}$ gives:
\[
\frac{1}{2} d \innerprod{\theta_{L} + \theta_{R}}{x} = -\iota(v_{x}) \omega.
\]
The fact that this action lifts to a moment map follows from Thm.\ \ref{main1} and the
well-known fact that the $\g^{\vee}$-valued 1-form 
\[
\mu(x)=\frac{1}{2} \innerprod{\theta_{L} + \theta_{R}}{x} \quad
\forall x \in \g
\]
gives an equivariant extension $\omega -\mu$ of the Cartan 3-form. 

Let us write out the structure map $f_{2} \maps \g \tensor \g \to
\cinf(G)$ explicitly for this case. By definition, at a point $g \in G$,
we have
\begin{align*}
f_{2}(x,y)(g) = \iota(v_{x})\mu(y) \vert_{g} &= \frac{1}{2}
\innerprod{\theta_{L}(v_{x}) + \theta_{R}(v_{x})}{y} \vert_{g}\\
&=\frac{1}{2}\innerprod{ \bigl(\Ad_{g} -\Ad_{g^{-1}} \bigr)x}{y}.
\end{align*}
This piece of the moment map is related to an interesting invariant
2-form defined on the conjugacy classes of $G$. 
The Hamiltonian vector fields $v_{x}$ are minus the fundamental vector fields associated to
the conjugation action, and therefore span the tangent spaces of the
conjugacy classes. It follows from Proposition 3.1 in \cite{AMM} that
if $\iota_{C} \maps C
\hookrightarrow G$ is the inclusion of a conjugacy class, then
\[
dB = -\iota_{C}^{\ast}\omega
\]
where $B \in \Omega^{2}(C)^{G}$ is
\[
B_{g}(v_{x},v_{y}) = f_{2}(x,y)(g) \quad \forall g \in C \quad \forall
x,y \in \g. 
\]
Conjugacy classes are important examples of ``quasi-Hamiltonian
$G$-spaces'' \cite{AMM}, just as coadjoint orbits are examples of Hamiltonian
$G$-spaces in symplectic geometry. As the above example suggests, it may be interesting to
investigate further the relationship between  quasi-Hamiltonian $G$-spaces and
homotopy moment maps.

\subsection{Examples from {\boldmath $2$}-step extensions} \label{subsec:2-step}
The previous examples of moment maps arose from $1$-step
extensions. Here we give examples in which $2$-step extensions arise naturally.
The second example, the $\SO(n)$-actions on the $n$-sphere,
generalizes the well-known Hamiltonian action of $S^1$ on $S^2$.	  
  
\subsubsection{Products} Let $G_i$ act on the manifold $M_i$ and $\alpha_i$ be an equivariant cocycle
in the Cartan model for this action, i.e. $d_{G_i}\alpha_i=0$ ($i=1,2$).
Consider the product $\alpha_1\alpha_2$ (obtained simply wedge-multiplying the
differential form components).
Then $\alpha_1\alpha_2$ is an equivariant cocycle in the Cartan model for the
product action of $G_1\times G_2$ on $M_1\times M_2$, since $$d_{G_1\times
  G_2}(\alpha_1\alpha_2)=(d_{G_1\times G_2}\alpha_1)\alpha_2\pm
\alpha_1(d_{G_1\times G_2}\alpha_2)=(d_{G_1}\alpha_1)\alpha_2\pm \alpha_1(d_{
  G_2}\alpha_2)=0.$$

We spell this out when the equivariant cocycles are of the kind considered in
Cor.\ \ref{main1}:

\begin{prop}
  Let $G_i$ act on $(M_i,\omega_i)$ with $\omega_i\in \Omega^{n_i+1}(M_i)$ a
  closed form, for $i=1,2$. Let $\omega_i-\mu_i$ be $1$-step extensions, and
  regard $\mu_i$ as maps $\g_i\to \Omega^{n_i-1}(M_i)$.  Then the product
  action of $G_1\times G_2$ on the pre-$(n_1+n_2+1)$-plectic manifold
  $(M_1\times M_2, \omega_1 \omega_2)$ admits an equivariant extension given
  by
\begin{equation*}
\omega_1 \omega_2-\eta+p
\end{equation*}
where
\begin{align*}
  \eta \colon \g_1\oplus \g_2 \to \Omega^{n_1+n_2}(M_1\times M_2),&\;\;\;\; x_1+x_2\mapsto \mu_1^{{x_1}}  \omega_2+\omega_1 \mu_2^{{x_2}}\\
  p \colon S^2(\g_1\oplus \g_2) \to \Omega^{n_1+n_2-2}(M_1\times
  M_2),&\;\;\;\; (x_1+x_2,y_1+y_2)\mapsto \frac{1}{2}\left(\mu_1^{{x_1}}
    \mu_2^{{y_2}}+\mu_1^{{y_1}} \mu_2^{{x_2}}\right).
\end{align*}
(Here we denote by $x_i,y_i,...$ elements of $\g_i$, by $v_{x_i}$ the corresponding vector fields on $M_i$, and for the sake of readability
we omit wedge products and write  $\mu^x$ for $\mu(x)$.)
 \end{prop}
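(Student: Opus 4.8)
The plan is to apply the observation just made---that the product $\alpha_1\alpha_2$ of two equivariant Cartan cocycles is again an equivariant cocycle for the product action of $G_1\times G_2$ on $M_1\times M_2$---to the two $1$-step extensions $\alpha_i = \omega_i - \mu_i$, and then simply to identify the homogeneous components of the product with $\omega_1\omega_2$, $-\eta$, and $p$. Since $\omega_i - \mu_i$ is a $1$-step extension by hypothesis, it is in particular an equivariant $d_{G_i}$-cocycle, so this is legitimate.

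First I would expand
\[
\alpha_1\alpha_2 = (\omega_1 - \mu_1)(\omega_2 - \mu_2) = \omega_1\omega_2 - (\mu_1\omega_2 + \omega_1\mu_2) + \mu_1\mu_2,
\]
and decompose the right-hand side according to the symmetric-algebra degree in $(\g_1\oplus\g_2)^\vee$. The degree-zero summand is $\omega_1\omega_2\in\Omega^{n_1+n_2+2}(M_1\times M_2)$, which is the pre-$(n_1+n_2+1)$-plectic form; in particular $(\alpha_1\alpha_2)_0 = \omega_1\omega_2$, so the product is genuinely an extension of $\omega_1\omega_2$ in the sense of Def.\ \ref{extension}.

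Next I would read off the linear and quadratic summands. The part linear in $(\g_1\oplus\g_2)^\vee$ is $-(\mu_1\omega_2 + \omega_1\mu_2)$; evaluating the corresponding linear map on $x_1 + x_2\in\g_1\oplus\g_2$ and using that $\mu_i$ is supported on $\g_i$ gives exactly $-\eta(x_1+x_2)$. The quadratic part is $\mu_1\mu_2$; as a quadratic map it sends $x_1+x_2$ to $\mu_1^{x_1}\mu_2^{x_2}$, and polarizing produces the symmetric bilinear map $(x_1+x_2,\,y_1+y_2)\mapsto \tfrac{1}{2}\bigl(\mu_1^{x_1}\mu_2^{y_2} + \mu_1^{y_1}\mu_2^{x_2}\bigr) = p$. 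Equivariance of $\eta$ and $p$ under $G_1\times G_2$ is inherited from the $G_i$-equivariance of the $\mu_i$.

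The hard part will be the sign and normalization bookkeeping rather than any conceptual difficulty. One must verify that no Koszul signs intervene when combining the two Cartan complexes: because $(\g_1\oplus\g_2)^\vee$ sits in even (degree $2$) position, commuting a symmetric generator past a differential form costs nothing, and the orderings of the wedge products appearing above agree with those fixed in the definitions of $\eta$ and $p$. Thus the only genuine subtlety is the factor $\tfrac{1}{2}$ introduced when polarizing the degree-two symmetric component $\mu_1\mu_2$.
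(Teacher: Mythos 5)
Your proposal is correct and is essentially the paper's own argument: the paper states the proposition precisely as the "spelling out" of the immediately preceding observation that the product of two equivariant Cartan cocycles is an equivariant cocycle for the product action, applied to $\alpha_i=\omega_i-\mu_i$, with the components $\omega_1\omega_2$, $-\eta$, and $p$ read off from the expansion exactly as you do (including the polarization factor $\tfrac12$ in the quadratic part).
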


In particular, when $\omega_1$ and $\omega_2$ are symplectic,
the  action of $G_1\times G_2$ on the $3$-plectic manifold $(M_1\times M_2, \omega_1 \omega_2)$ admits a moment map. 

\subsubsection{$\SO(n)$-action on the $n$-sphere}
A classical example of a Hamiltonian action in symplectic geometry is the action of $S^1=\SO(2)$  on $S^2$ by rotations about the $z$- axis. The ``height function'' $-z \colon S^2\to \RR$  provides a moment map.

We now extend this to $S^n$ for $n\le 5$, by means of a computation that makes clear how to generalize this to higher values of $n$ as well. More precisely, we assume the following set-up:
the unit sphere $M=S^n\subset \RR^{n+1}$ is endowed with the
$(n-1)$-plectic volume
form $\omega$ obtained restricting $$\sum_{k=1}^{n+1}(-1)^{k+1}{x_k}dx_1\dots\widehat{dx_k}\dots dx_{n+1}=\alpha \wedge dx_{n+1}+\frac{(-1)^n}{n}x_{n+1}\cdot d\alpha,$$
where
$$\alpha=\sum_{k=1}^{n}(-1)^{k+1}{x_k}dx_1\dots\widehat{dx_k}\dots dx_{n}.$$
View $G=\SO(n)$ as a subgroup of $\SO(n+1)$ (embedded as matrices with a ``$1$'' in the lower right corner), and consider the obvious action on $S^n\subset \RR^{n+1}$ by matrix multiplication.

It is sufficient to find a $2$-step extension of $\omega$ in the Cartan complex, since then applying Thm. \ref{thm:CartanMM} one obtains an equivariant moment map. To this aim, we first spell out what it means to have such a cocycle in the Cartan complex. Consider an element $\omega+P+Q$ of the Cartan complex for the action  of some Lie group $G$, where
the degrees as polynomials on $\g$ are $0,1,2$ respectively. Choose a basis of $\g$, giving rise to infinitesimal generators of the action $v_I$ and  a basis $\{\xi_I\}$ of $\g^{\vee}$. Write $P=\sum_{I,J} \xi_I\otimes P^I$, and $Q=\sum_I \xi_I\xi_J\otimes Q^{IJ}$, where  $Q^{IJ}=Q^{JI}$. Then $\omega+P+Q$ is a Cartan cocycle if{f} $d\omega=0$ and
\begin{align}
\label{eq:I} -\iota_{v_I}\omega+dP^I&=0 \text{ for all }I,\\
\label{eq:IJ}  -\Sym(\iota_{v_J}P^I)+dQ^{IJ}&=0 \text{ for all }I\le J,\\
\label{eq:IJK}  \Sym(\iota_{v_K}Q^{IJ})&=0\text{ for all }I\le J\le K.
\end{align} 
 Here $\Sym(\iota_{v_J}P^I)=\frac{1}{2}(\iota_{v_J}P^I+\iota_{v_I}P^J)
  $ denotes the   symmetrization, and similarly for $\Sym(\iota_{v_K}Q^{IJ})$.


We now preceed to find a $2$-step extension of $\omega$ in the Cartan complex.
\vspace{.5cm}

\paragraph{\textit{Step 1: find $P$ solving equation \eqref{eq:I}}}

\noindent
This $\SO(n)$-action on $S^n$ preserves $\omega$, for in Ex.\ \ref{sorn} we saw that it preserves $\alpha$.
With $v_{ij}$ as in Ex.\ \ref{sorn}, we have $$\iota_{v_{ij}}\omega=d\left(\frac{(-1)^{n+1}}{n}\iota_{v_{ij}}\alpha \cdot x_{n+1}\right)+\frac{n+1}{n}\iota_{v_{ij}}\alpha \wedge dx_{n+1},$$
using the fact that the action preserves $\alpha$.
{To write out the right-most term we consider 
$\iota_{v_{ij}}{\alpha}$.  Using the fact that the function $\sum_{k=1}^{n+1}x_k^2$ equals one on $S^n$, and therefore the pullback to $S^n$ of its differential $2\sum_{k=1}^{n+1}x_kdx_k$ vanishes, a  lengthy computation shows that 
$$\iota_{v_{ij}}{\alpha}=(-1)^{i+j}\left(1-x_{n+1}^2\right)dx_1\dots\widehat{dx_i}\dots \widehat{dx_j}\dots dx_{n}+ (\text{terms containing $dx_{n+1}$}).$$
Hence
$$\iota_{v_{ij}}{\alpha} \wedge dx_{n+1}=(-1)^{i+j+n}d\left((x_{n+1}-\frac{1}{3}x_{n+1}^3)
dx_1\dots\widehat{dx_i}\dots \widehat{dx_j}\dots dx_{n}\right).$$
} 
A primitive for  $\iota_{v_{ij}}\omega$ is therefore
\begin{equation}\label{eq:muij}
P(e_{ij})=\frac{(-1)^{n+1}}{n}x_{n+1}\cdot\iota_{v_{ij}}\alpha +(-1)^{i+j+n}\frac{n+1}{n}
\left(x_{n+1}-\frac{1}{3}x_{n+1}^3\right)
dx_1\dots\widehat{dx_i}\dots \widehat{dx_j}\dots dx_{n}.
\end{equation}
Notice that  $P\in \mathfrak{so}(n)^{\vee}\otimes \Omega^{n-1}(M)$ is
$\SO(n)$-invariant: the condition $$\cL_{v_{ij}}P(e_{i'j'})=P([e_{ij},e_{i'j'}]),$$
for all $i<j$ and $i'<j'$, follows from a computation that uses
the identity $[\cL_v,\iota_w]=\iota_{[v,w]}$ and a careful care of signs. 
\vspace{.5cm}

\paragraph{\textit{Step 2: find $Q$ solving equation \eqref{eq:IJ}}}

\noindent
We now look for $Q$ so that eq. \eqref{eq:IJ} is satisfied.
Fix $i<j$ and $l<m$, where all four indices lie in $\{1,\dots,n\}$.
Assume first that all of $i,j,l,m$ are distinct. Then, writing $d_i$ as a short-form for  $dx_i$, we have
$$\iota_{v_{lm}}(d_1\dots\widehat{d_i}\dots \widehat{d_j}\dots d_{n})=(-1)^{l+m+1}B^{i,j}_{l,m}(x_ld_l+x_md_m)(d_1\dots\widehat{d_i}\dots \widehat{d_j}\dots \widehat{d_l}\dots\widehat{d_m}\dots d_{n})$$
where we define
$$B^{i,j}_{l,m}:=(-1)^{\mathrm{card}\{x\in \{i,j\}: l<x<m\}}.$$ 
From eq. \eqref{eq:muij}, using the obvious fact that $\iota_{v_{lm}}\iota_{v_{ij}}\alpha=-\iota_{v_{ij}}\iota_{v_{lm}}\alpha$, we obtain
\begin{align*}
&\frac{1}{2}(\iota_{v_{lm}}P^{ij}+\iota_{v_{ij}}P^{lm})=\\
&\frac{n+1}{2n}
\left(x_{n+1}-\frac{1}{3}x_{n+1}^3\right)
(-1)^{n+1}
(-1)^{i+j+l+m}B^{i,j}_{l,m}(x_id_i+x_jd_j+x_ld_l+x_md_m)(d_1\dots\widehat{d_i}\dots \widehat{d_j}\dots \widehat{d_l}\dots\widehat{d_m}\dots d_{n}).
\end{align*}
Using that $\sum_{k=1}^{n+1}x_kdx_k$ vanishes on $S^n$, 
we can replace the sum $x_id_i+x_jd_j+x_ld_l+x_md_m$ above by $-x_{n+1}d_{n+1}$, and 
we see that the above expression is an exact form, with primitive
$$Q^{(ij)(lm)}:=\frac{n+1}{2n}
\left(\frac{1}{3}x_{n+1}^3-\frac{1}{15}x_{n+1}^5\right)
(-1)^{n}
(-1)^{i+j+l+m}B^{i,j}_{l,m}(d_1\dots\widehat{d_i}\dots \widehat{d_j}\dots \widehat{d_l}\dots\widehat{d_m}\dots d_{n}).$$
When not all of $i,j,l,m$ are distinct, one can prove that $\iota_{v_{lm}}P^{ij}+\iota_{v_{ij}}P^{lm}=0$, hence 
even in  that case a primitive is given by the above formula for $Q^{(ij)(lm)}$, which equals zero since we are removing twice the same one-form.
Notice that $Q^{(ij)(lm)}=Q^{(lm)(ij)}$,
since   $B^{i,j}_{l,m}=B^{l,m}_{i,j}$. Further $Q\in S^2(\mathfrak{so}(n))^{\vee}\otimes\Omega^{n-4}(S^n)$ is $\SO(n)$-invariant.
\vspace{.5cm}

\paragraph{\textit{Step 3: check that equation \eqref{eq:IJK} is satisfied} }

\noindent
We have to check that
$$\iota_{v_{pq}}Q^{(ij)(lm)}+\iota_{v_{lm}}Q^{(pq)(ij)}+\iota_{v_{ij}}Q^{(lm)(pq)}=0.$$
Since $n\le 5$, two of the six indices appearing above will agree, so we may assume that $i=p$.
In that case the middle term vanishes, while the first and last one cancel each other out, as one can check keeping track carefully of the signs.

\begin{remark}
1) Writing out explicitly Eq.\  \eqref{eq:muij} we obtain  
\begin{align*}
P(e_{ij})&=(-1)^{i+j+n}\left(x_{n+1}-\frac{n-2}{3n}x_{n+1}^3\right) dx_1\dots\widehat{dx_i}\dots \widehat{dx_j}\dots dx_{n}\\
&+\frac{(-1)^{i+j}}{n}\left(\sum_{k=1}^{i-1}-\sum_{k=i+1}^{j-1}+\sum_{k=j+1}^{n}\right)(-1)^{k-1} x_k x_{n+1}^2 \cdot dx_1\dots\widehat{dx_i}\dots \widehat{dx_j} \dots \widehat{dx_k}\dots dx_{n+1}.
\end{align*}
Notice that the cubic terms disappear only in the case $n=2$.

2) The above proof suggests that, for arbitrary values of $n$, an extension of the volume form $\omega$ on $S^n$ to a cocycle in the Cartan complex is given by
$\omega + P_1+\dots+  P_{\lfloor\frac{n}{2}\rfloor}$ where $P_1$ is given in eq. \eqref{eq:muij} and, for $k\ge 2$:
$$P_k((i_1,j_1)\otimes\dots\otimes (i_k,j_k))=\pm \frac{1}{k!}\frac{n+1}{n}
\left(\frac{x_{n+1}^{2k-1}}{(2k-1)!!}-\frac{x_{n+1}^{2k+1}}{(2k+1)!!}\right)
 (d_1\dots\widehat{d_{i_1}}\dots \widehat{d_{j_1}}\dots \widehat{d_{i_k}}\dots\widehat{d_{j_k}}\dots d_{n}),$$
where $i_1<j_1,\dots,i_k<j_k$ are integers between $1$ and $n$, and
$N!!:=N\cdot(N-2)\cdot  \dots 5\cdot 3$. 

3) If $n$ is even, it is easy to see that the  volume form $\omega$ on
the $n$-sphere has a $\frac{n}{2}$-step extension. Indeed, the cocycle condition for a $\frac{n}{2}$-step extension $\omega + \sum_{i=1}^{\frac{n}{2}}P_i$ with $P_i\in\left(S^i(\mathfrak{so}(n)^{\vee})\otimes \Omega^{n-2i}(M)\right)^{\SO(n)}$ reads
\begin{align*}
 dP_1&= \iota_{\mathfrak{so}(n)} \omega, \\
dP_2 &=\Sym\left(\iota_{\mathfrak{so}(n)} P_1\right) , \\
\cdots &\\
 dP_{\frac{n}{2}}&=\Sym\left(\iota_{\mathfrak{so}(n)} P_{\frac{n-2}{2}}\right), \\
0 &=\Sym\left(\iota_{\mathfrak{so}(n)} P_{\frac{n}{2}}\right) 
\end{align*}
Using $H^{2i-1}(S^n) = 0$ and the fact that we can average to obtain $\SO(n)$-invariant forms, we can solve these one after the other for $P_1, P_2,...,P_{\frac{n}{2}}$. Note that since $n$ is even, $P_{\frac{n}{2}}$ has differential form degree zero and hence the last equation is automatically satisfied.
\end{remark}

\section{Obstructions and central extensions} \label{obstruct_sec}
Here we describe an obstruction to the existence of moment maps
characterized by a class in Lie algebra cohomology
(Thm.\ \ref{main2}). Conversely, we show that if both the class
and certain de Rham cohomology groups vanish, then a moment
map always exists (Thm.\ \ref{thm:momapyes}).  If the obstruction does
not vanish, we obtain a $L_{\infty}$-morphism into $\poi(M,\omega)$
not from the Lie algebra, but from a Lie $n$-algebra which can be
described  a `higher central extension' (Prop.\ \ref{ext_yes_prop}). 

Throughout this section, we assume we have
a Lie group $G$ 
acting on a pre-$n$-plectic manifold $(M,\omega)$
such that $\omega$ is preserved via infinitesimal diffeomorphisms i.e.\  the Lie algebra
$\g$ acts via local Hamiltonian vector fields:
\[
\L_{v_{x}} \omega =0,
\]
giving us the usual Lie algebra morphism
$$\g \to \Xlham(M),\; x \mapsto v_x.$$

\subsection{Lie algebra cohomology}

For any Lie algebra $\k$ (possibly infinite-dimensional), we 
have the cochain complex $\CE(\k)=\Hom(\Lambda^{\bullet}\k,\R)$ 
equipped with the usual Chevalley-Eilenberg differential $\delta_{\CE}$ as in Eq.\ \eqref{eq:CE_diff}
\[
\delta_{\CE} (c)(x_{1},\ldots,x_{n+1}) = \sum_{1 \leq i < j \leq n+1}
(-1)^{i+j}c([x_{i},x_{j}],x_{1},\cdots,
\hat{x}_{i},\cdots,\hat{x}_{j},\ldots,x_{n+1}).
\]



Before considering $G$-actions, we make an observation about
the Lie algebra of local Hamiltonian vector fields. 
The following proposition says that $\omega$ determines a class in
$H^{n+1}_{\CE}(\Xlham (M))$.

\begin{prop}\label{cocycle_prop}
If $(M,\omega)$ is a pre-$n$-plectic manifold, 
then $\forall p \in M$ the linear map  
\begin{align*}
c_{p} &\maps \Lambda^{n+1} \Xlham(M) \to \R\\
&v_{1} \wedge \cdots \wedge v_{n+1} \mapsto (-1)^{n} \vs(n+1) \iota( v_{1} \wedge \cdots \wedge v_{n+1})
\omega \vert_{p},
\end{align*}
is a degree $(n+1)$-cocycle in $\CE(\Xlham(M))$. Moreover, if $M$ is connected, 
then the cohomology class $[c_{p}]$ is independent of  $p \in
M$.
\end{prop}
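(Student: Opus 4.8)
The plan is to prove both assertions from one mechanism: for vector fields in $\Xlham(M)$ the two conditions $d\omega=0$ and $\L_{v_i}\omega=0$ constrain the relevant alternating sums so rigidly that the coboundary of $c_p$ vanishes, and its variation in $p$ is forced to be a coboundary.

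First I would verify the cocycle property. Using $\iota(v_1\wedge\cdots\wedge v_{n+1})\omega=\omega(v_1,\ldots,v_{n+1})$, evaluating on $v_1,\ldots,v_{n+2}\in\Xlham(M)$ gives $\delta_{\CE}(c_p)(v_1,\ldots,v_{n+2})=(-1)^n\vs(n+1)\,A$, where $A:=\sum_{i<j}(-1)^{i+j}\omega([v_i,v_j],v_1,\ldots,\widehat{v_i},\ldots,\widehat{v_j},\ldots,v_{n+2})\big|_p$. The Koszul (invariant) formula for the exterior derivative together with $d\omega=0$ yields $A=-B$, where $B:=\sum_i(-1)^{i+1}\,v_i\!\bigl[\omega(v_1,\ldots,\widehat{v_i},\ldots,v_{n+2})\bigr]\big|_p$. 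Independently, expanding each directional derivative $v_i[\omega(\cdots)]$ by the Leibniz rule for $\L_{v_i}$ and discarding the terms $(\L_{v_i}\omega)(\cdots)=0$ rewrites $B$ as a double sum of terms $\omega(\ldots,[v_i,v_k],\ldots)$; regrouping over unordered pairs $\{i,k\}$ shows that the two ordered contributions of each pair coincide, so that $B=-2A$. Combining $A=-B$ with $B=-2A$ forces $A=0$, hence $\delta_{\CE}(c_p)=0$ regardless of the prefactor $(-1)^n\vs(n+1)$.

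For the independence of $[c_p]$, since $M$ is connected I would join $q$ to $p$ by a smooth path $\gamma\maps[0,1]\to M$ and produce an explicit primitive. Setting $g:=\omega(v_1,\ldots,v_{n+1})\in\cinf(M)$ and $\xi=v_1\wedge\cdots\wedge v_{n+1}$, the fundamental theorem of calculus gives $(c_p-c_q)(\xi)=(-1)^n\vs(n+1)\int_0^1 (dg)_{\gamma(t)}(\dot\gamma(t))\,dt$. The key point is the pointwise identity $(dg)(X)=(-1)^{n+1}\sum_{i<j}(-1)^{i+j}\omega([v_i,v_j],v_1,\ldots,\widehat{v_i},\ldots,\widehat{v_j},\ldots,X)$, valid for every tangent vector $X$, which I would prove exactly as above: apply the Koszul formula and $d\omega=0$ to the fields $v_1,\ldots,v_{n+1},X$, then simplify with $\L_{v_i}\omega=0$ and the same pairing identity, with $X$ behaving as an inert spectator in the last slot. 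Defining $b\maps\Lambda^n\Xlham(M)\to\R$ by $b(w_1\wedge\cdots\wedge w_n)=-\vs(n+1)\int_0^1\omega_{\gamma(t)}(w_1,\ldots,w_n,\dot\gamma(t))\,dt$ (a convergent, alternating, real-valued cochain), substituting the pointwise identity into the integrand matches $\frac{d}{dt}c_{\gamma(t)}$ with $\delta_{\CE}$ of the integrand fiberwise; since $\delta_{\CE}$ is a fixed linear operator it passes through the integral, so $\delta_{\CE}(b)=c_p-c_q$ and $[c_p]=[c_q]$.

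The conceptual content is short, and I expect the main obstacle to be purely the sign and combinatorial bookkeeping concentrated in the pairing identity: verifying that each unordered pair doubles, pinning down the constant $(-1)^{n+1}$ in the pointwise identity, and checking that the slot occupied by the bracket (and by $X$ or $\dot\gamma$) can be transposed to the canonical position with signs that reassemble precisely into the Chevalley--Eilenberg differential. To streamline the write-up I would isolate this pairing computation as a single lemma and invoke it in both parts, so that the closedness of $\omega$ and local Hamiltonicity are the only geometric inputs used.
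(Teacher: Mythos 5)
Your proof is correct, and its skeleton is the same as the paper's: both parts hinge on the identity expressing the alternating sum $\sum_{i<j}(-1)^{i+j}\iota([v_{i},v_{j}]\wedge\cdots)\omega$ as $\pm\, d\iota(v_{1}\wedge\cdots\wedge v_{m})\omega$, applied once with $m=n+2$ (where the left-hand contraction vanishes for degree reasons, giving the cocycle property) and once with $m=n+1$ (integrated along a path, giving independence of the base point); your primitive $b$ is literally the paper's cochain $b(v_{1},\ldots,v_{n})=-\vs(n+1)\int_{\gamma}\iota(v_{1}\wedge\cdots\wedge v_{n})\omega$. The genuine difference is how that identity is obtained: the paper quotes it as Lemma \ref{tech_lemma} and refers to Madsen--Swann for its proof, whereas you derive the two needed instances from first principles --- the invariant (Koszul) formula for $d$ together with $d\omega=0$ gives your $A=-B$, and expanding each directional derivative via $\L_{v_{i}}\omega=0$ and regrouping over unordered pairs gives $B=-2A$, forcing $A=0$; the same computation with an extra slot yields the pointwise formula for $d\bigl(\omega(v_{1},\ldots,v_{n+1})\bigr)(X)$, the extension-dependent terms $[v_{i},X]$ cancelling, so the identity is tensorial in $X$ as needed to substitute $X=\dot\gamma(t)$. (In the final write-up you should state explicitly that $X$ is first extended to a vector field and that this cancellation makes the result extension-independent.) Your route buys a self-contained argument whose only geometric inputs are $d\omega=0$ and $\L_{v_{i}}\omega=0$, at the cost of the sign bookkeeping you flag; the paper's route buys brevity and a single reusable lemma, which it invokes again in the proofs of Thm.\ \ref{thm:momapyes} and Prop.\ \ref{ext_yes_prop}.
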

To prove the above proposition, we need the following technical
lemma. It is  a special case of \cite[Lemma 3.1]{MadsenSwannClosed}, to which we refer the reader for a proof. It also generalizes \cite[Lem.\ 3.7]{RogersL}\cite[Lem.\ 6.8]{HDirac}.
\begin{lemma}\label{tech_lemma}
If $(M,\omega)$ is a pre-$n$-plectic manifold and $v_{1},\hdots,v_{m} \in
\Xlham(M)$ with $m \geq 2$ then
\begin{multline} \label{big_identity}
d \iota(v_{1} \wedge\cdots \wedge v_{m}) \omega = \\(-1)^{m}\sum_{1 \leq i < j \leq
  m} (-1)^{i+j} \iota([v_{i},v_{j}] \wedge v_{1} \wedge \cdots
  \wedge \hat{v}_{i} \wedge \cdots \wedge \hat{v}_{j} \wedge \cdots \wedge v_{m})
\omega. 
\end{multline}
\end{lemma}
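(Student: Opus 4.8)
The plan is to prove \eqref{big_identity} by induction on $m$, at each stage peeling off the vector field $v_1$. Two facts are used throughout: $\omega$ is closed, and each $v_i$ is locally Hamiltonian, so $\L_{v_i}\omega = 0$ (and hence, by Cartan's formula, each $\iota_{v_i}\omega$ is closed). The base case $m=1$ is immediate, since the right-hand side is an empty sum while $d\iota(v_1)\omega = \L_{v_1}\omega - \iota_{v_1}d\omega = 0$.

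For the inductive step I would write $V = v_1 \wedge W$ with $W = v_2 \wedge \cdots \wedge v_m$, so that $\iota(V) = \iota(W)\circ \iota_{v_1}$ as operators on forms. Applying the commutator identity \eqref{commutator} with $u = v_1$ and $v = W$, and using $\deg{v_1}=1$ together with $\L_{v_1}\omega = 0$, collapses it to $\iota([v_1,W])\omega = \L_{v_1}\iota(W)\omega$. Expanding the right-hand side with Cartan's formula $\L_{v_1} = d\iota_{v_1} + \iota_{v_1}d$, and commuting $\iota_{v_1}$ from the outermost to the innermost slot (which introduces a factor $(-1)^{m-1}$, since it is moved past $\iota_{v_2},\ldots,\iota_{v_m}$), yields the recursion
\[
d\iota(V)\omega = (-1)^{m-1}\iota([v_1,W])\omega - (-1)^{m-1}\iota_{v_1}\,d\iota(W)\omega.
\]

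The two terms on the right produce precisely the two classes of summands of \eqref{big_identity}. Expanding the Schouten bracket via \eqref{Schouten} gives $[v_1,W] = \sum_{j=2}^{m}(-1)^{j}\,[v_1,v_j]\wedge v_2 \wedge \cdots \wedge \hat{v}_j \wedge \cdots \wedge v_m$, which accounts for all pairs $(1,j)$; one checks that the resulting coefficient $(-1)^{m-1}(-1)^{j}$ equals $(-1)^{m}(-1)^{1+j}$. For the second term I would insert the inductive hypothesis for the $m-1$ fields $v_2,\ldots,v_m$, and then use $\iota_{v_1}\iota(Z)\omega = \iota(Z\wedge v_1)\omega$ followed by a reordering of $v_1$ into its slot; this recovers all pairs $(i,j)$ with $2\le i<j\le m$, again with coefficient $(-1)^m(-1)^{i+j}$. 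Since these two groups partition $\{(i,j): 1\le i<j\le m\}$, assembling them completes the step.

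The main obstacle is the sign bookkeeping: confirming that the prefactors $(-1)^{m-1}$, the Schouten signs $(-1)^{j}$, and the transposition signs incurred when moving $\iota_{v_1}$ (equivalently $v_1$) into position all combine into the single coefficient $(-1)^{m}(-1)^{i+j}$ required by \eqref{big_identity}. As a consistency check, for $m=2$ the recursion reduces to $d\iota(v_1 \wedge v_2)\omega = -\iota([v_1,v_2])\omega$ (using $d\iota_{v_2}\omega = 0$), which recovers Prop.\ \ref{brac_prop}.
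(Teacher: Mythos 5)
Your proof is correct. The recursion is right: with $V=v_1\wedge W$ and the paper's convention $\iota(v_1\wedge\cdots\wedge v_m)=\iota_{v_m}\cdots\iota_{v_1}$, the identity \eqref{commutator} with $u=v_1$, $v=W$ and $\L_{v_1}\omega=0$ gives $\iota([v_1,W])\omega=\L_{v_1}\iota(W)\omega = (-1)^{m-1}d\iota(V)\omega + \iota_{v_1}d\iota(W)\omega$, which is exactly your displayed recursion; the Schouten expansion \eqref{Schouten} then produces the $(1,j)$ terms with coefficient $(-1)^{m-1}(-1)^j=(-1)^m(-1)^{1+j}$, and the inductive hypothesis (whose signs shift by $(-1)^{(i-1)+(j-1)}=(-1)^{i+j}$ under the relabeling) combined with $\iota_{v_1}\iota(Z)=\iota(Z\wedge v_1)$ and the transposition sign $(-1)^{m-3}=(-1)^{m-1}$ produces the remaining pairs with coefficient $-(-1)^{m-1}=(-1)^m$ times $(-1)^{i+j}$. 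I verified all of these sign computations and they assemble into \eqref{big_identity}; your $m=2$ consistency check against Prop.\ \ref{brac_prop} is also right.

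One point of comparison worth knowing: the paper does not actually prove this lemma. It defers entirely to Lemma 3.1 of Madsen and Swann \cite{MadsenSwannClosed}, of which the statement here is a special case (all multivectors decomposable into locally Hamiltonian vector fields). So your argument is not so much a different route as a route the paper declined to take: a self-contained induction that peels off one vector field at a time, using only the Cartan-calculus identities recorded in the paper's own preliminaries (the Schouten formula \eqref{Schouten}, the Lie derivative \eqref{Lie}, and the commutator identity \eqref{commutator}). That makes the paper logically self-contained at the cost of about a page of sign bookkeeping, which is precisely the part you handled correctly.
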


Now we have what we need to prove Prop.\ \ref{cocycle_prop}.
\begin{proof}[Proof of Prop.\ \ref{cocycle_prop}]
Clearly $c_{p} \in \Hom(\Lambda^{n+1}\Xlham(M),\R)$. We compute:
\begin{multline}\label{cocycle_prop_eq1}
\delta_{\CE}(c_{p})(x_{1},\ldots,x_{n+2}) =\\ \vs(n+1) \sum_{1 \leq i < j \leq n+2}
(-1)^{n+i+j} \iota([v_{i},v_{j}] \wedge v_{1} \wedge\cdots \wedge
\hat{v}_{i} \wedge\cdots \wedge \hat{v}_{j} \wedge\ldots v_{{n+2}})
\omega \vert_{p}.
\end{multline}
We use Lemma \ref{tech_lemma} for $m=n+2$. The right-hand side of 
of Eq.\  \eqref{cocycle_prop_eq1} above is equal to plus or minus the right-hand side of Eq.\ 
\ref{big_identity} evaluated at the point $p$.
However, the left-hand side of Eq.\  \eqref{big_identity} vanishes because $\omega \in
\Omega^{n+1}(M)$. Hence, $\delta_{\CE}(c_{p})=0$.

Now, assume $M$ is connected, and let $p' \in M$. There exists a path
$\gamma \maps [0,1] \to M$ such that $\gamma(0)=p$ and $\gamma(1)=p'$.
We define a map $b \maps \Lambda^{n} \Xlham(M) \to \R$ by
\[
b(v_{1},\ldots,v_{n}) = - \vs(n+1)\int_{\gamma} \iota( v_{1} \wedge \cdots \wedge v_{n})\omega.
\]
It follows from Lemma \ref{tech_lemma} that
\begin{multline*} 
d \iota(v_{1} \wedge\cdots \wedge v_{n+1}) \omega = \\(-1)^{n+1}\sum_{1 \leq i < j \leq
  n+1} (-1)^{i+j} \iota([v_{i},v_{j}] \wedge v_{1} \wedge \cdots
  \wedge \hat{v}_{i} \wedge \cdots \wedge \hat{v}_{j} \wedge \cdots \wedge v_{n+1})
\omega. 
\end{multline*}
Integrating both sides of the above equation over $\gamma$ gives
\[
\iota(v_{1} \wedge\cdots \wedge v_{n+1}) \omega\vert_{p'} -\iota(v_{1}
\wedge\cdots \wedge v_{n+1})\omega\vert_{p}
= (-1)^{n} \vs(n+1) \delta_{\CE}(b)(v_{1},\ldots,v_{n+1}),
\]
and, hence, $c_{p'} - c_{p} = \delta_{\CE}b$.
\end{proof}

If $G$ is acting on $(M,\omega)$, then Prop.\ \ref{cocycle_prop} gives
an important corollary.
\begin{cor}\label{cocycle_cor}
If $(M,\omega)$ is a pre-$n$-plectic manifold equipped with a $G$-action
such that $\g$ preserves $\omega$
then $\forall p \in M$ the linear map
\begin{align*}
c^{\g}_{p} \maps \Lambda^{n+1} \g &\to \R\\
x_{1} \wedge \cdots \wedge x_{n+1} &\mapsto (-1)^{n} \vs(n+1) \iota( v_{1} \wedge \cdots \wedge v_{n+1})
\omega \vert_{p},
\end{align*}
where $v_{i}$ is the vector field associated to $x_{i} \in \g$, is a
degree $(n+1)$-cocycle in $\CE(\g)$. Moreover, if $M$ is connected, 
then the cohomology class $[c^{\g}_{p}]$ is independent of  $p \in
M$.
\end{cor}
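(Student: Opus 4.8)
The plan is to realize $c^{\g}_{p}$ as the pullback of the cocycle $c_{p}$ from Prop.\ \ref{cocycle_prop} along a Lie algebra morphism, and then invoke the standard functoriality of Chevalley-Eilenberg cohomology. This reduces the corollary to a one-line observation once the right map is identified.

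First I would recall that the fundamental vector field map $v_{-} \maps \g \to \X(M)$, $x \mapsto v_{x}$, is a morphism of Lie algebras (as noted after Eq.\ \eqref{lie_alg_action}). Since by hypothesis $\g$ acts via local Hamiltonian vector fields, i.e.\ $\L_{v_{x}} \omega = 0$ for all $x \in \g$, this morphism factors through the Lie subalgebra $\Xlham(M) \subseteq \X(M)$, yielding a Lie algebra morphism $v_{-} \maps \g \to \Xlham(M)$. Any such morphism induces a pullback on cochains $v_{-}^{\ast} \maps \CE(\Xlham(M)) \to \CE(\g)$, given by $(v_{-}^{\ast}c)(x_{1}, \ldots, x_{k}) = c(v_{x_{1}}, \ldots, v_{x_{k}})$, and because $v_{-}$ preserves brackets this pullback commutes with the respective Chevalley-Eilenberg differentials; hence it is a cochain map.

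The key observation is then simply that $c^{\g}_{p} = v_{-}^{\ast} c_{p}$, which is immediate from the two defining formulas. Since $c_{p}$ is a degree $(n+1)$-cocycle in $\CE(\Xlham(M))$ by Prop.\ \ref{cocycle_prop}, its image under the cochain map $v_{-}^{\ast}$ is a degree $(n+1)$-cocycle in $\CE(\g)$, which is the first claim. For the final statement, suppose $M$ is connected. Prop.\ \ref{cocycle_prop} produces, for any $p' \in M$, a cochain $b \maps \Lambda^{n} \Xlham(M) \to \R$ with $c_{p'} - c_{p} = \delta_{\CE} b$. Applying $v_{-}^{\ast}$ and using once more that it is a cochain map gives $c^{\g}_{p'} - c^{\g}_{p} = \delta_{\CE}\bigl(v_{-}^{\ast} b\bigr)$, so $[c^{\g}_{p'}] = [c^{\g}_{p}]$ in $H^{n+1}_{\CE}(\g)$. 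I do not anticipate any real obstacle: the entire content is the identification of $c^{\g}_{p}$ as a pullback, after which both assertions follow formally from the functoriality of $\CE$. The only point requiring a moment's care is checking that the $\g$-action indeed lands in $\Xlham(M)$, which is exactly the standing hypothesis $\L_{v_{x}}\omega = 0$ of this section.
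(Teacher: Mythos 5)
Your proposal is correct and is essentially the paper's own argument: the paper's proof of this corollary consists precisely of the observation that $c^{\g}_{p}$ is the pullback of the cocycle of Prop.\ \ref{cocycle_prop} along the Lie algebra morphism $v_{-}\maps \g \to \Xlham(M)$, with both the cocycle property and the basepoint-independence following by functoriality of $\CE$. Your write-up just makes explicit the coboundary-pullback step for the second claim, which the paper leaves implicit.
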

\begin{proof}
By assumption, $\g$ acts via local Hamiltonian vector fields, 
and $c^{\g}_{p}$ is the pullback of the cocycle defined in Prop.\
\ref{cocycle_prop} along the Lie algebra morphism $v_{-}$.
\end{proof}
\begin{remark}
Note that if the $G$-orbit of a point $p$ is of dimension smaller than $n+1$, then $[c^{\g}_p]=0$.
In particular, this holds if $p$ is a fixed point of the $G$-action.
\end{remark}

The next proposition shows that the class $[c^{\g}_{p}] \in H^{n+1}_{\CE}(\g)$ is an obstruction to having a homotopy
moment map. 
\begin{prop}\label{main2}
If $(M,\omega)$ is a connected pre-$n$-plectic manifold, and $M$ is equipped with a
$G$-action which induces a homotopy moment map
$ \g \to L_{\infty}(M,\omega)$, then \[[c^{\g}_{p}]=0 \] where $[c^{\g}_{p}] \in
H^{n+1}_{\CE}(\g)$ is the cohomology class defined in Cor.\ \ref{cocycle_cor}.
\end{prop}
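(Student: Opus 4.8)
The plan is to read off the conclusion directly from the ``top'' defining equation of the moment map, namely Eq.\ \eqref{main_eq_2}, by evaluating it at the point $p$. The crucial preliminary observation is one of degree: since $L_{\infty}(M,\omega)$ is a Lie $n$-algebra, the structure map $f_{n}$ has degree $1-n$ and therefore takes values in $L^{1-n}=\Omega^{0}(M)=\cinf(M)$. Consequently $f_{n}(x_{1},\ldots,x_{n})$ is an honest function on $M$, and Eq.\ \eqref{main_eq_2} is an identity among functions. First I would note that the hypothesis, via Def./Prop.\ \ref{main_def} together with Prop.\ \ref{Lie_alg_P_prop1}, supplies structure maps $f_{k}$ satisfying Eqs.\ \eqref{main_eq_1} and \eqref{main_eq_2}; only the latter is needed here.

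Next I would introduce the $n$-cochain $b\maps \Lambda^{n}\g\to\R$ obtained by evaluating $f_{n}$ at $p$, i.e.\ $b(y_{1},\ldots,y_{n}):=f_{n}(y_{1},\ldots,y_{n})|_{p}$. Because $f_{n}$ is skew-symmetric and all its arguments lie in degree $0$ (so the Koszul signs are trivial), $b$ is a genuine element of $\CE^{n}(\g)=\Hom(\Lambda^{n}\g,\R)$. I would then evaluate both sides of Eq.\ \eqref{main_eq_2} at $p$. The left-hand side becomes $\sum_{1\le i<j\le n+1}(-1)^{i+j+1}b([x_{i},x_{j}],x_{1},\ldots,\widehat{x_{i}},\ldots,\widehat{x_{j}},\ldots,x_{n+1})$, which by the formula \eqref{eq:CE_diff} for $\delta_{\CE}$ is precisely $-\delta_{\CE}(b)(x_{1},\ldots,x_{n+1})$. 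The right-hand side becomes $\vs(n+1)\,\iota(v_{1}\wedge\cdots\wedge v_{n+1})\omega|_{p}$, which by the definition in Cor.\ \ref{cocycle_cor} equals $(-1)^{n}c^{\g}_{p}(x_{1},\ldots,x_{n+1})$. Comparing the two sides yields $c^{\g}_{p}=(-1)^{n+1}\delta_{\CE}(b)=\delta_{\CE}\bigl((-1)^{n+1}b\bigr)$, exhibiting $c^{\g}_{p}$ as a Chevalley--Eilenberg coboundary, so that $[c^{\g}_{p}]=0$.

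There is essentially no hard step: once the degree bookkeeping identifies $f_{n}|_{p}$ as an $n$-cochain on $\g$, the top moment-map equation literally asserts that $c^{\g}_{p}$ is a coboundary. The only thing demanding care is the sign tracking — the $(-1)^{i+j+1}$ in Eq.\ \eqref{main_eq_2} versus the $(-1)^{i+j}$ in $\delta_{\CE}$, together with the $(-1)^{n}\vs(n+1)$ normalization of $c^{\g}_{p}$ — but these combine into a single overall sign $(-1)^{n+1}$ on the primitive $b$, which is irrelevant for exactness. I would also remark that the argument applies verbatim at any chosen $p\in M$, in harmony with the $p$-independence of the class guaranteed by connectedness in Cor.\ \ref{cocycle_cor}.
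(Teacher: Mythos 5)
Your proof is correct and is essentially identical to the paper's argument: the paper likewise evaluates $f_{n}$ at $p$ to obtain an $n$-cochain (defining $b=(-1)^{n+1}f_{n}|_{p}$, thereby absorbing the sign you carry separately) and reads off from Eq.\ \eqref{main_eq_2} that $c^{\g}_{p}=\delta_{\CE}b$. The sign bookkeeping in your version checks out, so the two arguments differ only in where the overall factor $(-1)^{n+1}$ is placed.
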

\begin{proof}
By Def.\ \ref{main_def}  
the homotopy
moment map corresponds to structure maps
$f_{1},\ldots,f_{n}$ satisfying Eqs.\ \eqref{main_eq_1} and
\eqref{main_eq_2}. 
Since $\deg{f_{n}}=1-n$, the map $f_{n}$ takes values in
$\cinf(M)$. Let $p \in M$, and define
\[
b(x_{1},\ldots,x_{n}) = (-1)^{n+1}f_{n}(x_{1},\ldots,x_{n}) \vert_{p}.
\]
Clearly, $b \in \Hom(\Lambda^{n}\g,\R)$. Eq.\  \eqref{main_eq_2} then
implies
\[
(c^{\g}_{p})(x_{1},\ldots,x_{n+1})= \sum_{1 \leq i < j \leq n+1}
(-1)^{i+j}b([x_{i},x_{j}],x_{1},\ldots,\widehat{x_{i}},\ldots,\widehat{x_{j}},\ldots,x_{n+1}).
\]
Hence, $c^{\g}_{p}=\delta_{\CE}b$.
\end{proof}

\subsection{Lifting {\boldmath $\g$}-actions to moment maps}\label{subs:lift} 
Recall from Prop.\ \ref{map} that there is a surjective (and strict)
$L_{\infty}$-morphism
\[
\pi \maps \poi(M,\omega) \epi \Xham(M)
\]
which is simply the projection $(v,\alpha) \mapsto v$ in degree
0. Suppose we have a Lie group $G$ acting on $(M,\omega)$, such that
the infinitesimal action of $\g$ is via Hamiltonian vector
fields. Exhibiting a moment map for such an action means finding a lift
\begin{equation} \label{lift_diagram}
\xymatrix{
&& \poi(M,\omega) \ar[d]^{\pi} \\
\g \ar @{-->}[urr] \ar[rr]^{v_{-}} && \Xham(M)
}
\end{equation}
in the category of $L_{\infty}$-algebras. Since $\g$ acts by
Hamiltonian vector fields there always exists a (non-unique) degree
zero \textit{linear map}
\begin{equation*}
\begin{split}
\g & \to \poi(M,\omega)\\
x  & \mapsto (v_{x},\phi(x)) \in \Xham(M) \oplus \ham{n-1}
\end{split}
\end{equation*}
such that $d\phi(x) = -\iota_{v_{x}}\omega$.
When does such a linear map lift to an $L_{\infty}$-morphism?
Thm.\ \ref{main2} implies that it is necessary that the cohomology
class $[c^{\g}_{p}]$ vanish. The next theorem shows that when
certain topological assumptions are satisfied, this is also sufficient.
\begin{thm}\label{thm:momapyes}
Let $(M,\omega)$ be a connected pre-$n$-plectic manifold 
equipped with a $G$-action such that $\g$ acts via Hamiltonian vector
fields. Let 
\[
\phi \maps \g \to \ham{n-1}
\]
be any linear map such that $d\phi(x)=-\iota_{v_{x}} \omega$ for all
$x \in \g$. If $H^{i}_{\mathrm{dR}}(M) =0$ for $1 \leq i \leq n-1$ and 
$[c^{\g}_{p}]=0$, where $[c^{\g}_{p}] \in H^{n+1}_{\CE}(\g)$ is 
the cohomology class defined in Cor.\ \ref{cocycle_cor}, then there
exists a homotopy moment map
\[
(f_{k}) \maps \g \to L_{\infty}(M,\omega)
\]
such that 
\[
f_{1}=\phi.
\]
\end{thm}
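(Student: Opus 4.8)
The plan is to build the higher components $f_2,\dots,f_n$ one at a time by an obstruction-theoretic induction, using the vanishing de Rham groups to solve for primitives at each stage and invoking $[c^\g_p]=0$ only at the very last stage. Set $f_1:=\phi$, which satisfies the condition $d(f_1(x))=-\iota_{v_x}\omega$ required in Def./Prop.\ \ref{main_def}. Now suppose $f_1,\dots,f_{k-1}$ have been produced and satisfy \eqref{main_eq_1} at all levels $\le k-1$. For $2\le k\le n$ define the skew-symmetric multilinear map $G_k\colon\g^{\otimes k}\to\Omega^{n-k+1}(M)$ by
\[
G_k(x_1,\dots,x_k)=\sum_{1\le i<j\le k}(-1)^{i+j+1}f_{k-1}([x_i,x_j],x_1,\dots,\widehat{x_i},\dots,\widehat{x_j},\dots,x_k)-\vs(k)\iota(v_1\wedge\cdots\wedge v_k)\omega,
\]
so that \eqref{main_eq_1} at level $k$ is precisely the demand $df_k=G_k$.

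The key point, which I expect to be the main obstacle, is the \emph{Claim} that $G_k$ is $d$-closed whenever the lower equations hold. To verify it one computes $dG_k$, replaces each $df_{k-1}([x_i,x_j],\dots)$ by $G_{k-1}([x_i,x_j],\dots)$ using the inductive hypothesis, and rewrites the term $\vs(k)\,d\iota(v_1\wedge\cdots\wedge v_k)\omega$ via Lemma \ref{tech_lemma}. The resulting $f_{k-2}$-terms, which carry nested brackets $[[x_i,x_j],x_l]$ and products of disjoint brackets, cancel by the Jacobi identity and antisymmetry in $\g$; the remaining interior-product terms cancel against the output of Lemma \ref{tech_lemma}, the matching of signs being guaranteed by the identity $\vs(k-1)\vs(k)=(-1)^k$ recorded in the preliminaries. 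Granting the Claim, since $2\le k\le n$ forces $1\le n-k+1\le n-1$ and hence $H^{n-k+1}_{\mathrm{dR}}(M)=0$, each closed form $G_k(x_1,\dots,x_k)$ is exact; choosing a linear section of the surjection $d\colon\Omega^{n-k}(M)\to d\Omega^{n-k}(M)$ and composing it with $G_k$ yields a skew-symmetric multilinear $f_k$ with $df_k=G_k$, completing the inductive step.

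It remains to secure \eqref{main_eq_2}. Applying the Claim with $k=n+1$ shows $G_{n+1}\colon\g^{\otimes(n+1)}\to\cinf(M)$ is $d$-closed, so (as $M$ is connected) each value $G_{n+1}(x_1,\dots,x_{n+1})$ is a constant function, determined by its value at $p$. Writing $b:=(-1)^{n+1}f_n(\cdot)|_p\in\Hom(\Lambda^n\g,\R)$ and repeating the sign bookkeeping of Prop.\ \ref{main2} gives $G_{n+1}(\cdot)|_p=(-1)^n(\delta_{\CE}b-c^\g_p)$, so \eqref{main_eq_2} holds if and only if $\delta_{\CE}b=c^\g_p$. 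This is where I would use $[c^\g_p]=0$: fix $\beta\in\Hom(\Lambda^n\g,\R)$ with $\delta_{\CE}\beta=c^\g_p$. Since the primitive $f_n$ is determined only up to adding a map $\Lambda^n\g\to\R$ with values in the constant functions (as $M$ is connected), I may replace $f_n$ by $f_n+\gamma$, where $\gamma(X):=(-1)^{n+1}\beta(X)-f_n(X)|_p$; this leaves $df_n=G_n$ intact and forces the new $b$ to equal $\beta$, whence $\delta_{\CE}b=c^\g_p$ and $G_{n+1}\equiv 0$. The resulting collection $(f_k)$ then satisfies \eqref{main_eq_1} and \eqref{main_eq_2} with $f_1=\phi$, which by Prop.\ \ref{Lie_alg_P_prop1} and Def./Prop.\ \ref{main_def} is exactly a homotopy moment map lifting the given action.
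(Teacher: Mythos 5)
Your proposal is correct and follows essentially the same route as the paper's own proof: the same inductive construction of the $f_k$ (your Claim is the paper's Claim 1, with closedness of $G_k$ obtained from Lemma \ref{tech_lemma} together with the Jacobi-identity cancellation, i.e.\ $\delta_{\CE}^2=0$, and existence of primitives from $H^{n+1-k}_{\mathrm{dR}}(M)=0$), followed by the same final correction of $f_n$ by a constant-valued map on $\Lambda^{n}\g$ using $[c^{\g}_{p}]=0$. The only differences are cosmetic: you make the multilinearity of $f_k$ explicit via a linear section of $d$, and you normalize the evaluation $b$ at $p$ to a chosen primitive $\beta$ of $c^{\g}_{p}$ rather than subtracting a primitive of the obstruction, which is the same adjustment in substance.
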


\begin{proof}
  Let $f_{1}=\phi$, implying that
  $d(f_1(x))=-\iota_{v_{x}}\omega$ for all $x\in \g$. Notice that this
  equation is what is obtained allowing $k=1$ in
  Eq.\  \eqref{main_eq_1} (taking $f_0=0$).
We now find recursively solutions for the equations appearing in 
\eqref{main_eq_1}.

\textbf{Claim 1:} \emph{For every $2\le k \le n+1$, if $f_{k-1}$ satisfies Eq.\  \eqref{main_eq_1} for $k-1$, then 
\begin{multline}\label{eq:d0}  
\sum_{1 \leq i < j \leq k}
(-1)^{i+j+1}f_{k-1}([x_{i},x_{j}],x_{1},\ldots,\widehat{x_{i}},\ldots,\widehat{x_{j}},\ldots,x_{k})-\vs(k)\iota(v_{1}\wedge \cdots \wedge v_{k})\omega
\end{multline}
is a closed $n+1-k$-form for all $x_1,\dots,x_k\in \g$.}

\noindent To prove the claim we proceed as follows.
We have
\begin{align*} 
&d \iota(v_{1} \wedge\cdots \wedge v_{k}) \omega \\=& (-1)^{k}\sum_{1 \leq i < j \leq
  k} (-1)^{i+j} \iota([v_{i},v_{j}] \wedge v_{1} \wedge \cdots
  \wedge \hat{v}_{i} \wedge \cdots \wedge \hat{v}_{j} \wedge \cdots \wedge v_{k})
\omega\\
=& (-1)^{k}\sum_{1 \leq i < j \leq k} (-1)^{i+j} \vs(k-1)
\Big(- (\delta_{\CE} f_{k-2})([x_{i},x_{j}],x_{1},\ldots,\widehat{x_{i}},\ldots,\widehat{x_{j}},\ldots,x_{k})\\
 & \hspace{5.9cm}-  
df_{k-1}([x_{i},x_{j}],x_{1},\ldots,\widehat{x_{i}},\ldots,\widehat{x_{j}},\ldots,x_{k}) 
\Big)\\
=&\underbrace{(-1)^k\vs(k-1)}_{=\vs(k)}\Big(  
(-\delta^2_{\CE} f_{k-2})(x_{1},\ldots,x_{k})\\
 & \hspace{2.8cm}+
d\sum_{1 \leq i < j \leq k}
(-1)^{i+j+1}f_{k-1}([x_{i},x_{j}],x_{1},\ldots,\widehat{x_{i}},\ldots,\widehat{x_{j}},\ldots,x_{k})\Big)
\end{align*}
using Lemma \ref{tech_lemma} in the first equality, and in the second the fact that $f_{k-1}$ satisfies Eq.\  \eqref{main_eq_1} for $k-1$ as well as
the definition of $\delta_{\CE}$.
 Since the Chevalley-Eilenberg differential $\delta_{\CE}$ 
squares to zero, the claim follows.

\textbf{Claim 2:} \emph{For all $2\le k \le n$, there exist 
 $f_k \colon \Lambda^k\g \to \Omega^{n-k}(M)$ satisfying  Eq.\  \eqref{main_eq_1} for $k$.}

\noindent We prove Claim 2 by induction on $k$. The  case $k=1$ holds, as seen earlier, with $f_1=\phi$. We fix $2\le k \le n$. By the induction assumption we are allowed to apply Claim 1 for $k$.
The assumption $H^{n+1-k}(M)=0$ implies that there exists
$f_k \colon \Lambda^k\g \to \Omega^{n-k}(M)$ such that $f_k(x_1,\dots,x_k)$  is a primitive for the $n+1-k$-form \eqref{eq:d0}, for all $x_1,\dots,x_k\in \g$. Equivalently, $f_k$ satisfies  Eq.\  \eqref{main_eq_1} for $k$, proving Claim 2. 

In general, $f_n$ will not satisfy Eq.\  \eqref{main_eq_2}. 
It will   if{f} $h\in\Hom(\Lambda^{n+1}\g,C^{\infty}(M))$ vanishes, where $$
h(x_1,\dots,x_{n+1})=(\delta_{\CE}f_{n})(x_{1},\ldots,x_{n+1})+\vs(n+1)\iota(v_{1}\wedge \cdots \wedge v_{n+1})\omega.
$$
Now fix $p\in M$. We evaluate both summands of $h$ at $p$,
and obtain two elements of $\Hom(\Lambda^{n+1}\g,\R)$: the first one is $\delta_{\CE}$-exact
  by construction, the second is equal to $\pm c^{\g}_p$,
hence it is $\delta_{\CE}$-exact by assumption. This means that there exists $b\in
\Hom(\Lambda^{n}\g,\R)$ such that  $h|_p=\delta_{\CE}b$.  However by Claim 1 (for $k=n+1$) we know that $h(x_1,\dots,x_{n+1})$
is a closed zero form for all $x_1,\dots,x_{n+1}\in \g$, and since $M$
is connected this means that  $h$ lies in
$\Hom(\Lambda^{n+1}\g,\R)$. Hence $$h=\delta_{\CE}b\in \Hom(\Lambda^{n+1}\g,\R).$$ Replacing $f_n$ by
$f_n-b$ we therefore obtain a solution of Eq.\  \eqref{main_eq_2}, which
still satisfies Eq.\  \eqref{main_eq_1} for $n$ as $b$ takes values in the constants.  We conclude that
$f_1,\dots,f_{n-1},f_n-b$ are the components of a homotopy moment map.
\end{proof}
\begin{remark} \label{inf-dim_remark}
  Note that $G$ need not be
  finite-dimensional here; the theorem also applies to actions by
  locally exponential infinite-dimensional Lie groups. In Sec.\ \ref{sec:flatc},
  we consider a case in which $G$ is such a group acting on a
  pre-$n$-plectic locally convex topological vector space.
\end{remark}

\begin{remark}
The assumptions of Thm.\ \ref{thm:momapyes} can be
  weakened; in fact, only particular components of $H^{\bullet}_{\CE}(\g)\otimes H^{\bullet}_{\mathrm{dR}}(M)$ need to vanish \cite{FLRZ}.
\end{remark}

\subsection{Central {\boldmath $n$}-extensions} \label{central_ext_sec}
If $(M,\omega)$ is a connected symplectic manifold, then Kostant's
construction \cite{Kostant:1970} gives a morphism of central extensions
\[
\xymatrix{
\R \ar[r] \ar[d]  & \R \ar[d] \\
\widehat{\g} \ar[d] \ar[r]& \cinf(M) \ar[d]^{\pi} \\
\g \ar[r]^-{v_{-}}  & \Xham(M)
}
\]
where $\widehat{\g}$ is the central extension corresponding to the
2-cocycle $c^{\g}_{p}$. This central extension is non-trivial iff
there is no moment map which lifts the $\g$-action. 

Now we describe how these ideas generalize to
higher cases. First we recall a theorem of Baez and Crans \cite[Thm.\ 55]{hd6}:
Given a Lie algebra $(\g,[\cdot,\cdot])$ and a degree $(n+1)$-cocycle
$c \maps \Lambda^{n+1} \g \to \R$, there exists a Lie $n$-algebra
whose underlying complex is $\g$ in degree 0, $\R$ in degree $1-n$ and
0 in all other degrees. The structure maps are
trivial except in degree zero where we have:
\begin{equation*}
\begin{split}
l_{2}(x_1,x_2)& =[x_1,x_2] \\
l_{n+1}(x_{1},\ldots,x_{n+1}) & = c(x_{1},\ldots,x_{n+1})\\
l_{k}&=0 \quad \text{if } k \neq 2, k \neq n+1.
\end{split}
\end{equation*}
We call this Lie $n$-algebra a \textbf{central {\boldmath $n$}-extension} of $\g$
and denote it by $\widehat{\g}_{c}$. If $c$ and $c'$ are two such cocycles which differ by a coboundary,
then the corresponding Lie $n$-algebras are quasi-isomorphic (Cor.\ \ref{ext_qiso}).
If $n=1$, then we recover the usual notion of central extension by setting $l_{2}=[\cdot,\cdot] +c$.  
Let $\pi_{\g} \maps \widehat{\g}_c \epi \g$ denote the projection. It 
clearly lifts to a strict $L_{\infty}$-morphism.
\begin{prop}
The short exact sequence of complexes
\[
\R[n-1] \to \widehat{\g}_c \xto{\pi_{\g}} \g 
\]
lifts to a strict exact sequence \cite[Def.\ 9.3]{RogersPre} in the category of $L_{\infty}$-algebras.
\end{prop}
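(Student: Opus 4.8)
The plan is to unwind the definition of a strict exact sequence of $L_{\infty}$-algebras \cite[Def.\ 9.3]{RogersPre}: it amounts to exhibiting both maps, namely the inclusion $\iota\colon \R[n-1]\to \widehat{\g}_c$ of the degree $1-n$ summand and the projection $\pi_{\g}$, as \emph{strict} $L_{\infty}$-morphisms (Def.\ \ref{strict_morph_def_1}) whose underlying sequence of cochain complexes is short exact. Since the Baez--Crans construction already guarantees that $\widehat{\g}_c$ is a genuine $L_{\infty}$-algebra, and since $\R[n-1]$ (abelian) and $\g$ (a Lie $1$-algebra) evidently are, the entire content reduces to checking the strictness identities together with exactness of the underlying complexes.

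First I would record that all three complexes carry the zero differential, i.e.\ $l_1=0$ in each case; indeed $\widehat{\g}_c$ is concentrated in the two non-adjacent degrees $0$ and $1-n$. Hence exactness of the sequence of complexes is verified degreewise: $\iota$ is the identity on the degree $1-n$ part and zero elsewhere, while $\pi_{\g}$ is the identity on the degree $0$ part and zero elsewhere, so $\iota$ is injective, $\pi_{\g}$ is surjective, and $\im\iota=\R$ in degree $1-n$ equals $\ker\pi_{\g}$.

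Next comes strictness of $\pi_{\g}$, i.e.\ the identities $l^{\g}_k\circ\pi_{\g}^{\otimes k}=\pi_{\g}\circ l^{\widehat{\g}_c}_k$ for all $k$. Because $\g$ is a Lie $1$-algebra, $l^{\g}_k=0$ for $k\neq 2$, and $l^{\g}_2$ is the Lie bracket; for $k=2$ the identity holds on $\g\otimes\g$ and both sides vanish as soon as one argument lies in the degree $1-n$ summand, which $\pi_{\g}$ kills. For $k=n+1$ the left side vanishes while the right side is $\pi_{\g}\circ c$, which is zero since $c$ takes values in $\R$ in degree $1-n$. Symmetrically, for $\iota$ the right side $\iota\circ l^{\R[n-1]}_k$ always vanishes because $\R[n-1]$ is abelian, so I only need $l^{\widehat{\g}_c}_k$ to vanish on tuples drawn entirely from the central summand. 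This holds because the two surviving brackets $l_2=[\cdot,\cdot]$ and $l_{n+1}=c$ only read the $\g$-components of their arguments (equivalently, by degree counting when $n>1$, and by centrality of $\R$ when $n=1$).

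The argument is essentially bookkeeping and presents no genuine obstacle; the one point deserving a moment's care is the degree-counting verification that both nonzero brackets of $\widehat{\g}_c$ restrict to zero on the summand $\R[n-1]$ — that is, that this summand is a \emph{strict} ideal — since this is precisely what makes $\iota$ a strict morphism and the sequence exact in the $L_{\infty}$ sense.
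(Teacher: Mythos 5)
Your proposal is correct, and it matches the paper's (implicit) approach: the paper offers no proof at all, deeming the statement clear after remarking that $\pi_{\g}$ "clearly lifts to a strict $L_{\infty}$-morphism," and your verification is precisely the routine bookkeeping that remark suppresses. One small repair: your justification that $l_1=0$ because $\widehat{\g}_c$ sits in "non-adjacent degrees $0$ and $1-n$" fails for $n=2$ (degrees $0$ and $-1$ are adjacent) and for $n=1$; instead, $l_1=0$ holds simply because the Baez--Crans construction declares all structure maps trivial except $l_2$ and $l_{n+1}$, so the rest of your argument goes through unchanged.
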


The following proposition is the higher analog of Kostant's
construction in symplectic geometry for central extensions such as the Heisenberg Lie
algebra. 
\begin{prop} \label{ext_yes_prop}
Let $(M,\omega)$ be a connected pre-$n$-plectic manifold 
equipped with a $G$-action such that $\g$ acts via Hamiltonian vector
fields and let $p \in M$. Assume $H^{k}_{\mathrm{dR}}(M) =0$ for $1 \leq k \leq n-1$. 
If $\widehat{\g}$ is the central $n$-extension constructed from the $(n+1)$-cocycle
$c^{\g}_{p}$ defined in Cor.\ \ref{cocycle_cor}, then there exists an
$L_{\infty}$-morphism
\[
(f_{i}) \maps \widehat{\g} \to \poi(M,\omega)
\]
such that the following diagram (strictly) commutes
\begin{equation}\label{extend_diag}
\xymatrix{
\widehat{\g} \ar[d]_{\pi^{\g}} \ar[r]^-{(f_{i})} & \poi(M,\omega) \ar[d]^{\pi} \\
\g \ar[r]^{v_{-}} & \Xham(M)
}
\end{equation}
\end{prop}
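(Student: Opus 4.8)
The plan is to construct $(f_i)$ by recycling the recursive procedure in the proof of Thm.\ \ref{thm:momapyes}, using the extra degree $(1-n)$ generator of $\widehat{\g}$ to absorb the obstruction that was previously killed by the hypothesis $[c^{\g}_p]=0$. Write $\theta$ for a basis vector of the $\R$ sitting in degree $1-n$ of $\widehat{\g}$, so that $l_2(x,y)=[x,y]$ and $l_{n+1}(x_1,\dots,x_{n+1})=c^{\g}_p(x_1,\dots,x_{n+1})\,\theta$ are the only nonzero brackets. Since $\poi(M,\omega)$ is concentrated in degrees $1-n\le i\le 0$, degree counting shows that any component $f_k$ having $\theta$ among its $k\ge 2$ arguments would take values below degree $1-n$ and hence vanishes, and that $f_k|_{\Lambda^k\g}=0$ for $k>n$. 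Therefore the only data to be specified are skew-symmetric maps $f_k\colon\Lambda^k\g\to\Omega^{n-k}(M)$ for $1\le k\le n$, with $f_1$ lifted to $\tilde f_1(x)=(v_x,f_1(x))\in\widetilde{\ham{n-1}}$, together with a single element $f_1(\theta)\in C^{\infty}(M)=L^{1-n}$.

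I first build the maps on $\Lambda^{\bullet}\g$. Taking $f_1=\phi$ and running Claim 1 and Claim 2 of the proof of Thm.\ \ref{thm:momapyes} produces $f_2,\dots,f_n$ satisfying Eq.\ \eqref{main_eq_1} for $2\le k\le n$; this is precisely where the hypotheses $H^{i}_{\mathrm{dR}}(M)=0$ for $1\le i\le n-1$ are used. As in that proof, the failure of Eq.\ \eqref{main_eq_2} is measured by
\[
h(x_1,\dots,x_{n+1})=(\delta_{\CE}f_n)(x_1,\dots,x_{n+1})+\vs(n+1)\,\iota(v_1\wedge\cdots\wedge v_{n+1})\omega,
\]
which by Claim 1 (for $k=n+1$) together with connectedness of $M$ is a constant function. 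Replacing $f_n$ by $f_n-f_n|_p$, where $f_n|_p$ denotes the constant-valued cochain $x_1\wedge\cdots\wedge x_n\mapsto f_n(x_1,\dots,x_n)|_p$, preserves Eq.\ \eqref{main_eq_1} (constants are $d$-closed) and makes $(\delta_{\CE}f_n)|_p=0$. Hence $h$, being constant, equals its value at $p$, which by Cor.\ \ref{cocycle_cor} is $\vs(n+1)\,\iota(v_1\wedge\cdots\wedge v_{n+1})\omega|_p=(-1)^{n}c^{\g}_p$.

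Now I set $f_1(\theta)$ equal to the constant function $(-1)^{n}$ and verify the morphism axioms. Because $\widehat{\g}$ differs from $\g$ only through the bracket $l_{n+1}$, the compatibility relations of $(f_i)$ on all-$\g$ inputs coincide with Eq.\ \eqref{main_eq_1} for $2\le m\le n$ and are already satisfied, while at level $m=n+1$ the new bracket contributes the term $f_1\bigl(l_{n+1}(x_1,\dots,x_{n+1})\bigr)=c^{\g}_p(x_1,\dots,x_{n+1})\,f_1(\theta)$, so the relation reads
\[
\sum_{1\le i<j\le n+1}(-1)^{i+j+1}f_n([x_i,x_j],x_1,\dots,\widehat{x_i},\dots,\widehat{x_j},\dots,x_{n+1})+c^{\g}_p(x_1,\dots,x_{n+1})\,f_1(\theta)=\vs(n+1)\,\iota(v_1\wedge\cdots\wedge v_{n+1})\omega.
\]
The first sum equals $-\delta_{\CE}f_n$ and $f_1(\theta)=(-1)^n$, so the left-hand side is $-\delta_{\CE}f_n+(-1)^n c^{\g}_p$, which equals the right-hand side because $\vs(n+1)\,\iota(v_1\wedge\cdots\wedge v_{n+1})\omega=(-1)^n c^{\g}_p-\delta_{\CE}f_n$ after the modification of $f_n$. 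The relations with $\theta$ among the inputs reduce to the chain-map identity, which holds since $f_1(\theta)$ is constant, hence closed, while $l_1^{\widehat{\g}}=0$; all remaining relations are vacuous because their targets lie outside the range $[1-n,0]$. Finally, diagram \eqref{extend_diag} commutes strictly: on $\g$ we have $\pi\tilde f_1(x)=v_x=v_{-}(\pi^{\g}x)$, and on $\theta$ both composites vanish.

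The main technical point is the sign bookkeeping in the level $(n+1)$ relation: one must confirm that the term produced by $l_{n+1}^{\widehat{\g}}$ enters with coefficient $+1$ and matches the constant $(-1)^{n}c^{\g}_p$ for the chosen value of $f_1(\theta)$. The other thing to be checked carefully, though it is purely a matter of degree counting, is that the narrow range $[1-n,0]$ of $\poi(M,\omega)$, together with the concentration of $\widehat{\g}$ in degrees $0$ and $1-n$, really does annihilate every compatibility relation except those treated above.
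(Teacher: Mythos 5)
Your construction is correct and follows essentially the same route as the paper's proof: there, too, one sets $f_1=\phi$ on $\g$ and $f_1(r)=(-1)^n r$ on the central $\R$, runs the recursive primitive-finding argument from the proof of Thm.\ \ref{thm:momapyes} (using $H^k_{\mathrm{dR}}(M)=0$), subtracts from $f_n$ its value at $p$, and identifies the resulting constant discrepancy with $(-1)^n c^{\g}_p$, the bookkeeping of which compatibility relations survive being packaged in Prop.\ \ref{ext_morph_prop}. The only justification to adjust: the vanishing of $\tilde{l}_2\bigl(f_1(x),f_1(\theta)\bigr)$, whose output degree $1-n$ lies \emph{inside} the allowed range $[1-n,0]$, is not a consequence of degree counting but of the defining vanishing of the brackets of $\poi(M,\omega)$ on negative total degree (property \eqref{property}, cf.\ Thm.\ \ref{poisson_thm}), which is exactly how the paper's appendix handles these terms.
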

\begin{proof}
We shall produce maps $f_{1},\ldots,f_{n}$ such that equalities
given in Prop.\ \ref{ext_morph_prop} are satisfied.
Since $\g$ acts by Hamiltonian vector fields, there exists 
a linear map $\phi \maps \g \to \widetilde{\ham{n-1}}$
with $\phi(x)=(v_{x},\alpha_{x})$ such that $d \alpha_{x} = -\iota_{v_{x}} \omega$.
The map
\begin{equation*}
\begin{split}
f_{1}(x) &= \phi(x) \quad \forall x \in \g \\
f_{1}(r) &= (-1)^{n}r \in \cinf(M) \quad \forall r \in \R
\end{split}
\end{equation*}
gives a degree 0 chain map $f_{1}$ from the underlying complex of
$\widehat{\g}$ to that of $\poi(M,\omega)$.

We then proceed as we did in the first part of the proof of Thm.\ \ref{thm:momapyes}.
Namely, since all closed $k$-forms have a primitive for $0 \leq k \leq
n-1$, we inductively obtain maps $f_i \maps \Lambda^{i}\g \to
\Omega^{n-i}(M)$ for $i=2,\ldots,n$ such that Eq.\ 
\eqref{ext_eq1} is satisfied. Let $b \maps \Lambda^{n} \g \to \R$ be
\[
b(x_{1},\ldots,x_{n}) = f_{n}(x_{1},\ldots,x_{n})\vert_{p},
\]
and let $\tilde{f}_{n}=f_{n}-b$. 

Since $db(x_{1},\ldots,x_{n}) =0$ for
all $x_{i}$, the map $\tilde{f}_{n}$ also satisfies \eqref{ext_eq1}.
It remains to show that Eq.\  \eqref{ext_eq2}
holds i.e.\ given $x_{1},\ldots,x_{n+1} \in \g$, the function
\begin{multline*}
C=\sum_{1 \leq i < j \leq n+1}
(-1)^{i+j+1}\tilde{f}_{n}([x_{i},x_{j}],x_{1},\ldots,\widehat{x_{i}},\ldots,\widehat{x_{j}},\ldots,x_{n+1})\\
+(-1)^{n} c^{\g}_{p}(x_{1},\ldots,x_{n+1}) - \vs(n+1)\iota(v_{1}\wedge\cdots \wedge v_{n+1})\omega
\end{multline*}
vanishes. Using Lemma \ref{tech_lemma} and 
Eq.\  \eqref{ext_eq1}
for the case $m=n$, we conclude that $C$ is closed and therefore:
\begin{equation*}
C =C(p) = (-1)^n c^{\g}_{p}(x_{1},\ldots,x_{n+1}) -\vs(n+1)\iota(v_{1}\wedge\cdots \wedge v_{n+1})\omega \vert_{p}=0.
\end{equation*}
Hence, the collection $f_{1},\ldots,f_{n-1},\tilde{f}_{n}$ gives the desired
morphism, and it follows from the definition of $f_{1}$ that the
diagram \eqref{extend_diag} commutes.
\end{proof}
A more homotopy-theoretic and conceptual interpretation  of the above
proposition appears in  Sec.\ 3.5 of \cite{FRS}. 

Below we give some examples of
what kinds of Lie $n$-algebras can be constructed in this way.

\begin{ep}[Heisenberg $n$-algebra] 
Let $V$ be a finite-dimensional real vector space. A linear
non-zero skew-symmetric form $\omega \in \Lambda^{n+1}
V^{\ast}$ of degree $n+1$  induces a translation-invariant closed
differential form on $V$. Therefore, $(V,\omega)$ is a pre-$n$-plectic
manifold and $V$ (seen as an abelian Lie algebra) acts on itself via translations. This gives a Lie
algebra morphism $v_{-} \maps V \to \Xham(V)$. 
Since $\omega$ is non-zero, the degree $(n+1)$ class $[c^{V}_{p}]$ is
non-trivial. Hence, there is no homotopy moment map lifting the
action of $V$. Let $\widehat{V}$ be the associated central
$n$-extension. Prop.\ \ref{ext_yes_prop} implies that $\widehat{V}$
sits in a commuting diagram of $L_{\infty}$-algebras of the form \eqref{extend_diag}. 

Compare with Ex.\ \ref{linear}, for which any linear action on $(V,\omega)$  admits a moment map.
\end{ep}

\begin{ep}[String Lie 2-algebra]
Let $G$ be a compact connected simple Lie group, and let 
$\omega=\frac{1}{12} \innerprod{\theta_{L}}{[\theta_{L},\theta_{L}]}$
be the Cartan 3-form.  As previously mentioned in
Sec.\ \ref{subs:conj}, $(G,\omega)$ is a 2-plectic manifold, and the
action of $G$ on itself via conjugation lifts to a homotopy moment
map. Clearly, $\omega$ is also preserved by the action of $G$ on
itself via left-translation, but the corresponding degree 3 class
$[c^{\g_{\mathrm{L}}}_{p}]$ is not trivial. (Indeed, $\langle \cdot,
[\cdot,\cdot] \rangle$ is a generator of $H^{3}_{\CE}(\g)$).
The corresponding central 2-extension is the
\textbf{string Lie 2-algebra} $\str(\g)$. When $G=\mathrm{Spin}(n)$, this Lie 2-algebra (or
rather its integration) plays a very interesting role in a certain elliptic
cohomology theory and in the theory of ``spin structures'' on loop spaces.
(See, for example,  Sec.\ 1 of \cite{S-P:2011} for a review.)

Since $G$ is compact and simple we have $H^{1}_{\mathrm{dR}}(G) \cong
H^{1}_{\CE}(\g)=0$. Hence, Prop.\ \ref{ext_yes_prop} implies that
there is a commuting diagram of $L_{\infty}$-algebras:
\[
\xymatrix{
\str(\g) \ar[d]_{\pi^{\g}} \ar[r] & \poi(G,\omega) \ar[d]^{\pi} \\
\g \ar[r]^{v^{\mathrm{left}}_{-}} & \Xham(G)
}
\]
This result gives a nice conceptual interpretation to the relationship
previously established in \cite{Baez-Rogers:2010}
between $\str(\g)$ and $L_{\infty}(G,\omega)$.
\end{ep}

\section{Moduli spaces of flat connections}\label{sec:flatc}
Here we consider homotopy moment maps on spaces of connections over higher-dimensional manifolds (see Thm.\ \ref{moduli_mmap_thm}). 
Currently, our motivation for this example is simply
to generalize the famous Atiyah-Bott construction \cite{A-B:1983} in symplectic
geometry.
Since our construction begins by considering an invariant polynomial in $S(\g^{\vee})^{G}$
of higher degree $\ge 2$, it is possible that some of these ideas could
find application in certain topological field theories which
  generalize Chern-Simons theory.

\subsection{Invariant polynomials}
Given an integer $n\ge 1$, we consider the following data:
\begin{itemize}
\item a real, finite dimensional Lie algebra $\g$ equipped with a
  invariant polynomial $q\in S^{n+1}(\g^{\vee})^G$,
\item a $(n+1)$-dimensional compact, oriented manifold $M$, and
\item a principal $G$-bundle $\pi\colon P\to M$,
where $G$ is any Lie group integrating $\g$. The group $G$ acts on the
right of $P$ via diffeomorphisms $R_{g}$.
\end{itemize}
We denote by 
\[
\hat{\xi}(p) = \frac{d}{dt} R_{\exp(t \xi)} (p)  \vert_{t=0}
\]
the infinitesimal generators of the action of
$G$ on $P$, for all $\xi \in \g$. 
We say an invariant polynomial  $q$ is
\textbf{non-degenerate} iff the map 
\[
\begin{array}{c}
\g \to S^n(\g^{\vee}) \\
x \mapsto \iota_{x}q
\end{array}
\]
is injective.
\begin{ep} \label{symtrace_example}
If $G$ is a matrix group, then the symmetrized (real) trace gives obvious
examples of invariant polynomials. In particular, for
$G=\SU(N)$,  we define:
\[
q_{k}(x_1,\ldots,x_{k}) = -\frac{1}{k!}\sum_{\sigma \in \Sn_{k}} \mathrm{Re}
\Tr (x_{\sigma(1)} x_{\sigma(2)}  \cdots x_{\sigma(k)}) \quad \forall
x_i \in \su(N).
\]

 It is well known that the polynomial $q_{2}$ gives a real inner product on
$\su(N)$, but more generally, one can show for $G=\SU(2)$ that every $q_{2n}$ is non degenerate, for $n>0$.
Consider $\{e_{i}\}$ the basis of $G=\SU(2)$ given by
\[
e_{1} = \frac{1}{2} \begin{pmatrix} 0&1 \\ -1 & 0 \end{pmatrix}
\quad 
e_{2} = \frac{1}{2} \begin{pmatrix} 0& \text{i} \\ \text{i} & 0 \end{pmatrix}
\quad
e_{3} = \frac{1}{2} \begin{pmatrix} \text{i} & 0 \\ 0 & -\text{i} \end{pmatrix}.
\]
The identity
\begin{equation} \label{su2_identity}
e_{i} e_{j} = -\frac{1}{4} \delta_{ij} I + \frac{1}{2} \sum_{k}
\varepsilon_{ijk} e_{k},\end{equation}
where $\varepsilon_{ijk}$ is totally skew and $\varepsilon_{123}=1$, implies that 
$e_{i} e_{j}^{2n} = (-\frac{1}{4})^{n} e_i$. Therefore \begin{equation*}
e_{i} e_{j}^{2n+1} = (-\frac{1}{4})^{n+1} \delta_{ij} I + \frac{1}{2} (-\frac{1}{4})^{n}\sum_{k}
\varepsilon_{ijk} e_{k}.\end{equation*}
In particular
\begin{equation}\label{truc}
q_{2(n+1)}(e_i,e_j, \dots, e_j)=-\mathrm{Re} \Tr(e_{i} e_{j}^{2n+1})=-2(-\frac{1}{4})^{n+1} \delta_{ij}.
\end{equation}
\end{ep}
 This enables to show that  $q_{2(n+1)}$ is non-degenerate. Indeed, suppose there exists $x
\in \su(2)$ such that
\[
q_{2(n+1)}(x,y_{1},\dots,y_{2n+1})=0 \quad \forall y_{i} \in \su(2).
\]
Write $x=\sum_{i}x^{i} e_{i}$, then (\ref{truc}) means that  $x_i=q_{2(n+1)}(x,e_{i},\dots,e_{i}) = 0$ for all
$i$  implies that $x=0$.
 So $q_{2(n+1)}$ is non-degenerate. Note that this is not true for all $q_{k}$.
In fact,  Eq.\  \eqref{su2_identity} shows that $q_{3}=0$. 
 
\subsection{The gauge group action}
A \textbf{connection} on $P$ is a $\g$-valued 1-form $A \in \Omega^1(P,\g)$
satisfying 
\[
A(\hat{\xi})=\xi, \quad  R_g^{\ast}A=\Ad_{g^{-1}} A 
\]
for all $\xi \in \g$ and $g \in G$.
The set $\cA$ of all connections on $P$ is
an affine space modeled on the vector space $(\Omega^1_{\hor}(P) \tensor \g)^G$,
where the first factor denotes the 1-forms on $P$ annihilated by
vectors tangent to the fibers. The left action of $G$ on
$\Omega^1_{\hor}(P) \tensor \g$ is
\[
g\cdot(\alpha \tensor \xi)= R_{g}^{\ast}\alpha \tensor \Ad_{g} \xi.
\]
The \textbf{gauge group} $\cG$ of $P$ is the group of smooth maps $f \maps
P \to G$ satisfying
\[
R_{g}^{\ast}f(p) = g^{-1}f(p)g \quad \forall g \in G. 
\]
Such a map $f$ can be identified with a $G$-equivariant map $\phi \maps
P \to P$ covering $\id_{M}$ where
\[
\phi(p) = R_{f(p)} (p). 
\]
The gauge group acts on the space of connections $\cA$ from the left: 
\begin{equation} \label{gauge_action}
f\cdot A= \Ad_f A + (f^{-1})^*\theta_L,
\end{equation}
where $\theta_L\in \Omega^1(G,\g)$ is the left invariant Maurer-Cartan
form on $G$ and $f^{-1}$ is the composition of $f$ with the inversion on $G$. If $\phi$ is the bundle automorphism associated to $f$,
then the action is simply $\phi\cdot A=(\phi^{-1})^{\ast}A$. 

To obtain the infinitesimal analog of the above,  we consider maps $X \maps P \to \g$
satisfying
\[
R_{g}^{\ast}X(p) = \Ad_{g^{-1}}X(p)\quad \forall g \in G. 
\] 
The space of all such maps forms the \textbf{Lie algebra
 of infinitesimal gauge transformations} $\Lie(\cG)$. This plays the
role of the Lie algebra associated to $\cG$. 
\begin{remark}
Indeed, $\cG$ is a locally
exponential Lie group modeled on the Lie algebra $\Lie(\cG)$
\cite [Thm.\ 1.11]{Wockel:2007}.
This means that for each $X \in \Lie(\cG)$ the initial value problem 
\[
\gamma(0)=e_{\cG}, \quad \gamma(t)^{-1} \cdot \gamma^{\prime}(t)=X
\]
has a solution $\gamma_{X} \in \cinf(\R, \cG)$, and there exists a unique smooth exponential map
\[
\exp \maps \Lie(\cG) \to \cG, \quad X \mapsto \gamma_{X}(1)
\]
and an open neighborhood $0 \in W \subset \Lie(\cG)$ such that $\exp
\vert_{W}$ is a diffeomorphism onto some open neighborhood of the identity $e_{\cG}$.
\end{remark}
Differentiating the
action \eqref{gauge_action} gives an action of $\Lie(\cG)$ on
$\cA$. Specifically, given $X \in \Lie(\cG)$ we define the
fundamental vector
field $V_{X} \maps \cA \to (\Omega^1_{\hor}(P) \tensor \g)^G$ by
\begin{equation} \label{eq:inf_gauge}
V_{X}(A) = \frac{d}{dt} \left(\exp(-tX) \cdot A\right) \vert_{t=0}.
\end{equation}
Note that the assignment $X \mapsto V_{X}$ is a Lie algebra morphism
from $\Lie(\cG)$ to $\X(\cA)$. Also, a simple calculation shows
\[
V_{X}(A) = d_{A}X.
\]
where $d_{A} \maps (\Omega^{\bullet}_{\hor}(P) \tensor \g)^G \to
(\Omega^{\bullet+1}_{\hor}(P) \tensor \g)^G$ is the 
\textbf{(exterior) covariant derivative} 
\[
d_{A}\alpha = d\alpha + [A,\alpha].
\]

\subsection{Closed forms from invariant polynomials}
An invariant polynomial $q \in S^{n+1}(\g^{\vee})^{G}$ gives a
constant, hence closed, $(n+1)$-form on the space of connections
$\cA$. To see this, first consider the following $(n+1)$-form on $P$:
\[
q(\alpha_{1},\ldots,\alpha_{n+1}) \in \Omega^{n+1}(P),
\]
where each $\alpha_{i}$ is in $(\Omega^1_{\hor}(P)\otimes \g)^G$. This form
clearly vanishes when contracted with any vertical vector on $P$. Moreover, the $\Ad$
invariance of $q$ combined with the $G$ invariance of $\alpha_{i}$
implies that  $q(\alpha_{1},\ldots,\alpha_{n+1})$
is invariant under the action of $G$ on $P$. Therefore,
$q(\alpha_{1},\ldots,\alpha_{n+1})$ is \textbf{basic} i.e.\ it corresponds to the
pullback of a unique $(n+1)$-form on $M$ along $\pi \maps P \to M$.
We ``abuse notation'' by also denoting this $(n+1)$-form on $M$
as $q(\alpha_{1},\ldots,\alpha_{n+1})$. By integration, we then obtain a closed 
$(n+1)$-form on $\cA$:
\begin{equation}\label{eq:oma}
\omega(\alpha_1,\dots,\alpha_{n+1}) \vert_{A}=\int_{M}
q(\alpha_1,\dots,\alpha_{n+1}) \quad \forall \alpha_i\in T_{A}\cA=(\Omega^1_{\hor}(P)\otimes \g)^G.
\end{equation}
The following proposition shows that, for some cases, $\omega$ is in fact $n$-plectic.
\begin{prop}\label{invpoly_non-degen_prop}
If the invariant polynomial $q \in  S^{n+1}(\g^{\vee})^{G}$ is
non-degenerate, then $\omega$ \eqref{eq:oma}  is an $n$-plectic structure on $\cA$.
\end{prop}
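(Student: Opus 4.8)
The plan is to separate the two defining conditions of an $n$-plectic structure. Closedness is immediate: by \eqref{eq:oma} the value of $\omega$ is one and the same $(n+1)$-linear expression on every tangent space $T_{A}\cA=(\Omega^1_{\hor}(P)\tensor\g)^G$, independent of the base point $A$, so $\omega$ is a constant form on the affine space $\cA$ and hence $d\omega=0$. (Its graded skew-symmetry comes from wedging the degree-one form parts, the symmetry of $q$ contributing no sign.) All the content therefore lies in non-degeneracy, which I would establish by contradiction: assuming $0\neq\alpha_1\in T_{A}\cA$ with $\iota_{\alpha_1}\omega=0$, I will exhibit tangent vectors $\alpha_2,\dots,\alpha_{n+1}$ for which $\int_M q(\alpha_1,\dots,\alpha_{n+1})\neq 0$.

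Since tangent vectors to $\cA$ are exactly sections of $T^*M\tensor\mathrm{ad}P$, the hypothesis $\alpha_1\neq 0$ gives a point $x_0\in M$ with $\alpha_1(x_0)\neq 0$. First I would pass to a coordinate ball $U$ around $x_0$ together with a local trivialization of $P$, so that over $U$ the section $\alpha_1$ reads $\sum_\mu (\alpha_1)_\mu\, dx^\mu$ with $\g$-valued coefficient functions $(\alpha_1)_\mu$, not all vanishing at $x_0$; reordering coordinates, I may assume the coefficient along $dx^1$, namely $a_1:=(\alpha_1)_1(x_0)$, is nonzero. Then I would take the test vectors to be $\alpha_k=\rho\,(dx^{k}\tensor y_k)$ for $k=2,\dots,n+1$, where $y_k\in\g$ are constants to be chosen and $\rho\geq 0$ is a bump function supported in $U$ with $\rho(x_0)>0$; extended by zero these are genuine global smooth equivariant horizontal forms, hence elements of $(\Omega^1_{\hor}(P)\tensor\g)^G$. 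Wedging the form parts collapses the integrand to a single surviving monomial,
\[
q(\alpha_1,\alpha_2,\dots,\alpha_{n+1})=\pm\,\rho^{\,n}\,q\big((\alpha_1)_1,y_2,\dots,y_{n+1}\big)\;dx^1\wedge\cdots\wedge dx^{n+1},
\]
because $dx^\nu\wedge dx^2\wedge\cdots\wedge dx^{n+1}$ is nonzero only for $\nu=1$. Everything thus reduces to the pointwise statement that the $y_k$ can be chosen with $q(a_1,y_2,\dots,y_{n+1})\neq 0$, which is precisely the non-degeneracy of $q$: since $a_1\neq 0$, the map $x\mapsto\iota_x q$ being injective forces $\iota_{a_1}q\neq 0$ in $S^n(\g^\vee)$, and a nonzero symmetric $n$-form cannot vanish on all tuples. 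Fixing such $y_k$ and shrinking the support of $\rho$ so that $q((\alpha_1)_1,y_2,\dots,y_{n+1})$ keeps a constant sign near $x_0$ makes $\int_M q(\alpha_1,\dots,\alpha_{n+1})$ a nonzero multiple of $\int\rho^n>0$, the desired contradiction.

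The hard part will be the interplay between the symmetry of $q$ and the antisymmetry of the wedge: a naive choice of test vectors (for example all equal to $\alpha_1$) makes $q(\alpha_1,\dots,\alpha_1)$ vanish identically, so the pointwise pairing is genuinely subtler than that of a symplectic form. The device that resolves this is to spread the test vectors across \emph{distinct} coordinate covectors $dx^2,\dots,dx^{n+1}$, which forces the one surviving term to pair $a_1$ (the component of $\alpha_1$ along the leftover covector $dx^1$) with the free Lie-algebra slots of $q$ — exactly the configuration where non-degeneracy applies. Apart from this, the remaining points are routine: checking that the bump-function sections really are smooth equivariant horizontal forms, and that $\cA$, an affine space over a locally convex vector space, carries a well-defined notion of constant (hence closed) differential form.
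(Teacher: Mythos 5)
Your proposal is correct and follows essentially the same strategy as the paper's proof: argue by contradiction, localize in a chart with a trivialization, assume WLOG the $dx^1$-component of the offending tangent vector is nonzero at a point, invoke non-degeneracy of $q$ to pick constant Lie-algebra elements pairing nontrivially with it, and build the test vectors as bump-function multiples of $dx^k$ tensored with those elements so that the integral is a nonzero sign-definite quantity. The only (cosmetic) difference is that you work downstairs on $M$ via the identification $T_A\cA\cong\Omega^1(M,\mathrm{ad}P)$, whereas the paper works upstairs on $P$ with explicitly $\Ad$-equivariant coefficient functions; these are equivalent formulations of the same construction.
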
 
\begin{proof}
Given $\beta \in (\Omega^1_{\hor}(P)\otimes \g)^G$ such that
\[
\int_{M} q(\beta,\alpha_2,\dots,\alpha_{n+1}) =0  \quad \forall \alpha_{i} \in (\Omega^1_{\hor}(P)\otimes \g)^G,
\]
we assume, in order to lead to a contradiction, that there exists $p
\in P$ such that $\beta \vert_{p} \neq 0$. Let $U
\subseteq M$ be a chart containing $y=\pi(p)$ admitting a
trivialization $\tau \maps \pi^{-1}(U) \xto{\sim} U \times G$ such that
$\tau(p)=(y,e)$. Let $x^{1},\hdots,x^{n+1}$ be coordinates
on $U$. Working locally over $\pi^{-1}(U)$, and implicitly using the trivialization, we write $\beta =
\sum^{n+1}_{i=1} \beta_{i} d \pi^{\ast}x^{i}$ where $\beta_{i} \maps U \times G
\to \g$ satisfies $\beta_{i}(x,g) = \Ad_{g^{-1}} \beta_{i}(x,e)$. By
our assumption, there exists an $i$ such that $\beta_{i}(y,e) \neq
0$. Without loss of generality, we may further assume $i=1$.

Since $q$ is non-degenerate, there exists
$\xi_{2},\ldots,\xi_{n+1} \in \g$ such that $q(\beta_{1}(y,e),\xi_{2},\ldots,\xi_{n+1}) > 0$.
Hence, there exists a smaller neighborhood $V \subseteq U$ containing
$y$ such that
\[
q(\beta_{1}(x,e),\xi_{2},\ldots,\xi_{n+1}) > 0 \quad \forall x \in V.
\]
Define $\g$-valued maps $f_{2},\hdots,f_{n+1}$ on $\pi^{-1}(V)$ by
\[
f_{i}(x,g) = \Ad_{g^{-1}} \xi_{i}. 
\]
Finally, let $\varphi \maps M \to [0,1]$ be a ``bump function'' whose support
is contained in $V$.

Using all of this, we can define global $\g$-valued 1-forms
$\alpha_{2},\ldots,\alpha_{n+1}$ on $P$ by
\[
\alpha_{i} = \pi^{\ast}(\varphi)  d \pi^{\ast}(x^{i}) \tensor f_{i}.
\]
By construction, each $\alpha_{i}$ is in
$(\Omega^1_{\hor}(P)\tensor\g)^G$, and therefore we have a contradiction:
\[
0=\int_{M} q(\beta,\alpha_2,\dots,\alpha_{n+1}) =
\int_{\supp \varphi} q(\beta_{1}(x,e),\xi_{2},\ldots,\xi_{n+1})
dx^{1} dx^{2} \cdots dx^{n+1} > 0
\]
This implies that $\beta$ is zero. Hence, $\omega$ is non-degenerate.
\end{proof}

\subsection{The moment map}
From here on, we assume the following:
\vspace{.5cm}
\begin{itemize}
\item{The principal $G$-bundle $P \to M$ admits a flat connection. }
\end{itemize}
\vspace{.5cm}
We equip the space of connections $\cA$ with the closed
$(n+1)$-form $\omega$ given in Eq.\  \eqref{eq:oma}.

We begin by considering the linear map $\mu \colon \Lie(\cG)\to
\Omega^{n-1}(\cA)$ defined as
\begin{equation}\label{eq:mux}
\mu(X) (\alpha_1,\dots,\alpha_{n-1}) \vert_{A} =\int_{M} q(F_{A},\alpha_1,\dots,\alpha_{n-1},X),
\end{equation}
for all $X\in \Lie(\cG), A \in \cA$ and $\alpha_i\in T_{A}\cA$. Here  $F_{A}$ is the curvature of the connection $A$: 
\[
F_{A}=dA+\frac{1}{2}[A,A]\in (\Omega_{\hor}^2(P) \tensor\g)^{G}.
\]
The curvature is horizontal and it satisfies 
$R_g^*F_{A}=\Ad_{g^{-1}}F_{A}$. So, the $(n+1)$-form $q(F_{A},\alpha_1,\dots,\alpha_{n-1},X)$ on $P$
descends to a unique $(n+1)$-form on $M$.

\begin{prop}\label{noteworthy}
The map $\mu \maps \Lie(\cG)\to \Omega^{n-1}(\cA)$ defined by Eq.\  \eqref{eq:mux}
is $\cG$-equivariant, i.e.\
\[
(f^{-1})^*\mu(X)=\mu(\Ad_{f} X) \quad \forall f\in \cG, X \in \Lie(\cG).
\]
\end{prop}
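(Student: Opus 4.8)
The plan is to unwind the definition of the induced $\cG$-action on $\Omega^{n-1}(\cA)$ and reduce the identity to the pointwise $\Ad$-invariance of $q$. Following the sign convention of the paper, $(f^{-1})^{*}$ denotes pullback along the diffeomorphism $\Phi_{f^{-1}}\colon\cA\to\cA$, $A\mapsto f^{-1}\cdot A$, so that for tangent vectors $\alpha_1,\dots,\alpha_{n-1}\in T_A\cA=(\Omega^1_{\hor}(P)\tensor\g)^G$,
\[
\bigl((f^{-1})^{*}\mu(X)\bigr)(\alpha_1,\dots,\alpha_{n-1})\vert_{A}=\mu(X)\bigl((\Phi_{f^{-1}})_{*}\alpha_1,\dots,(\Phi_{f^{-1}})_{*}\alpha_{n-1}\bigr)\vert_{f^{-1}\cdot A}.
\]
So I need two ingredients: the pushforward of tangent vectors under $\Phi_{f^{-1}}$, and the value of the curvature at $f^{-1}\cdot A$.

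First I would record that, because the gauge action \eqref{gauge_action} is affine in $A$ with constant term $(f^{-1})^{*}\theta_L$ and linear part $\alpha\mapsto\Ad_f\alpha$ (the pointwise adjoint action on the $\g$-component), its differential at every point of the affine space $\cA$ is simply $\alpha\mapsto\Ad_f\alpha$; hence $(\Phi_{f^{-1}})_{*}\alpha_i=\Ad_{f^{-1}}\alpha_i$. Second, I would use the standard gauge transformation law for the curvature, $F_{f\cdot A}=\Ad_f F_A$, which follows from $F_A=dA+\tfrac12[A,A]$ and the action formula \eqref{gauge_action}; applied to $f^{-1}$ in place of $f$ this gives $F_{f^{-1}\cdot A}=\Ad_{f^{-1}}F_A$.

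Substituting these into the definition \eqref{eq:mux} yields
\[
\bigl((f^{-1})^{*}\mu(X)\bigr)(\alpha_1,\dots,\alpha_{n-1})\vert_{A}=\int_{M}q\bigl(\Ad_{f^{-1}}F_A,\Ad_{f^{-1}}\alpha_1,\dots,\Ad_{f^{-1}}\alpha_{n-1},X\bigr).
\]
The last step is to invoke the $\Ad$-invariance of $q\in S^{n+1}(\g^{\vee})^{G}$ pointwise on $P$: applying $\Ad_f$ to all $n+1$ entries of the integrand leaves $q$ unchanged, turns $\Ad_{f^{-1}}F_A$ and $\Ad_{f^{-1}}\alpha_i$ back into $F_A$ and $\alpha_i$, and converts the last entry $X$ into $\Ad_f X$. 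Since this is an algebraic identity holding before integration, it survives the integral over $M$, giving $\int_M q(F_A,\alpha_1,\dots,\alpha_{n-1},\Ad_f X)=\mu(\Ad_f X)(\alpha_1,\dots,\alpha_{n-1})\vert_A$, which is the claim.

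I expect the only genuine subtlety to be bookkeeping the conventions: keeping track of whether each transformation contributes $\Ad_f$ or $\Ad_{f^{-1}}$ (in the pushforward, in the curvature law, and in the convention $f\cdot\beta=(f^{-1})^{*}\beta$ for forms), and confirming that the $\Ad$-invariance of $q$ is applied pointwise on $P$ before descending to $M$ and integrating. The curvature transformation law and the descent of the relevant forms to $M$ are standard, so the real content is the pointwise invariance of $q$ combined with careful sign-tracking.
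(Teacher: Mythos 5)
Your proposal is correct and follows essentially the same route as the paper's proof: compute the pullback on tangent vectors, use that the differential of the gauge action is $\Ad$ (which you justify via the affine structure, a point the paper merely asserts), use the curvature transformation law $F_{f\cdot A}=\Ad_f F_A$, and finish with the pointwise $\Ad$-invariance of $q$. The only cosmetic difference is that you work directly with $\Phi_{f^{-1}}$ while the paper proves the equivalent identity $f^{*}\mu(X)=\mu(\Ad_{f^{-1}}X)$.
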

\begin{proof}
Let  $A \in \cA$ and $\alpha_1,\dots,\alpha_{n-1} \in T_{A}\cA$.
We have 
\begin{equation} \label{equimu}
(f^*\mu(X))(\alpha_1,\dots,\alpha_{n-1} )\vert_A=
\mu(X)(f_*\alpha_1,\dots,f_*\alpha_{n-1})\vert_{f\cdot A}=
\int_{M} q(F_{f\cdot A},f_*\alpha_1,\dots, f_*\alpha_{n-1},X).
\end{equation}
A straightforward computation shows that $F_{f\cdot A}=\Ad_fF_{A}$.
It is also not difficult to show that the differential $f_{\ast}$ of the map $A
\mapsto f \cdot A$ is  $f_*\alpha_i=\Ad_{f}\alpha_{i}$.
Hence, from the $\Ad_f$-invariance of $q$, we see that the right-hand
side of Eq.\  \eqref{equimu} is
\[
\mu(\Ad_{f^{-1}}X)(\alpha_1,\dots,\alpha_{n-1})\vert_{A}.
\]
\end{proof}

Next, we show the image of $\mu$ lies in Hamiltonian forms. The
associated Hamiltonian vector fields are those induced by the infinitesimal
gauge transformations \eqref{eq:inf_gauge}.

\begin{prop}\label{lem:dm}
If $X \in \Lie(\cG)$, then $\mu(X)$ is a Hamiltonian $(n-1)$-form with Hamiltonian vector field $V_{X}$, where
\[
V_{X}\vert_{A} = d_{A} X = dX + [A,X] \quad \forall A \in \cA.
\]
\end{prop}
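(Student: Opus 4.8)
The plan is to verify the defining property of a Hamiltonian form directly: by Definition \ref{hamiltonian}, $\mu(X)$ is Hamiltonian with Hamiltonian vector field $V_X$ precisely when $d\mu(X) = -\iota_{V_X}\omega$ on $\cA$. Since $\cA$ is an affine space modelled on $W = (\Omega^1_{\hor}(P)\tensor\g)^G$ and, by Eq.\ \eqref{eq:oma}, $\omega$ is a constant $(n+1)$-form, I would check this identity by evaluating both sides on arbitrary constant tangent vectors $\beta_0,\dots,\beta_{n-1}\in W$.

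For the left-hand side, I would first record that the directional derivative of the curvature along $\beta\in W$ is $\partial_{\beta}F_A = d_A\beta = d\beta + [A,\beta]$, which follows by differentiating $F_A = dA + \tfrac12[A,A]$. Because constant vector fields on an affine space have vanishing Lie brackets, the Cartan formula for the exterior derivative reduces to
\[
(d\mu(X))(\beta_0,\dots,\beta_{n-1})\vert_A = \sum_{i=0}^{n-1}(-1)^i \partial_{\beta_i}\bigl[\mu(X)(\beta_0,\dots,\widehat{\beta_i},\dots,\beta_{n-1})\bigr]\vert_A.
\]
Since only $F_A$ in \eqref{eq:mux} depends on $A$, applying $\partial_{\beta_i}$ replaces $F_A$ by $d_A\beta_i$, giving $(d\mu(X))(\beta_0,\dots,\beta_{n-1})\vert_A = \sum_i (-1)^i\int_M q(d_A\beta_i,\beta_0,\dots,\widehat{\beta_i},\dots,\beta_{n-1},X)$.

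The heart of the argument is the Leibniz/Bianchi identity for the invariant polynomial: for $\Ad$-invariant $q$ and equivariant horizontal $\g$-valued forms, the $[A,\cdot]$-terms of the various $d_A$ cancel by $\ad$-invariance of $q$, so that on $M$ one has $d\,q(\gamma_0,\dots,\gamma_n) = \sum_j (-1)^{|\gamma_0|+\dots+|\gamma_{j-1}|}\,q(\gamma_0,\dots,d_A\gamma_j,\dots,\gamma_n)$. I would apply this to the $n$-form $q(\beta_0,\dots,\beta_{n-1},X)$ on $M$, integrate over the compact oriented boundaryless $M$, and invoke Stokes' theorem to kill $\int_M d(\cdots)$. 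Using the graded symmetry of $q$ to reorder arguments (the relevant Koszul signs all turn out to be $+1$, since moving $d_A\beta_i$ of degree $2$ past the degree-$1$ forms $\beta_0,\dots,\beta_{i-1}$ costs $(-1)^{2i}$), the resulting relation rearranges exactly into $\sum_i(-1)^i\int_M q(d_A\beta_i,\dots) = -\int_M q(d_AX,\beta_0,\dots,\beta_{n-1}) = -(\iota_{V_X}\omega)(\beta_0,\dots,\beta_{n-1})\vert_A$, which is the claim. The main obstacle is establishing this Leibniz rule and tracking the Koszul and reordering signs consistently; everything else is formal, and one notes that the flatness assumption on $P$ is not needed for this statement.
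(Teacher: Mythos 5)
Your proposal is correct and takes essentially the same route as the paper's own proof: constant extensions of tangent vectors on the affine space $\cA$, the identity $\frac{d}{dt}F_{A+t\alpha_i}\big\vert_{t=0}=d_A\alpha_i$, and the integrated Leibniz rule for the invariant polynomial, which is precisely the paper's Lemma \ref{lem:techAB} and is proved there exactly as you describe (the $[A,\cdot]$-terms cancel by $\ad$-invariance of $q$, the resulting form $q(\beta_0,\dots,\beta_{n-1},X)$ is basic, and Stokes' theorem kills the integral of its differential). Your sign bookkeeping matches the paper's, and your observation that the flatness assumption on $P$ plays no role in this particular statement is also correct.
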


To prove this, we will use the following lemma.  
\begin{lemma}\label{lem:techAB} 
If $q \in S^{k}(\g^{\vee})^{G}$ is a degree ${k}$ invariant polynomial,
$A\in \cA$ is a connection, and  $\beta_{1},\ldots,\beta_{k} \in
(\Omega_{\hor}^{\bullet} (P) \tensor \g)^{G}$ are forms with
$\deg{\beta_{1}} +  \deg{\beta_{2}} + \cdots + \deg{\beta_{k}} =n$,
then
\[
\sum_{i=1}^{k} (-1)^{|\beta_1|+\dots+|\beta_{i-1}|}
\int_{M} q( \beta_1,\dots,d_{A}\beta_{i},\dots, \beta_{k})=0,
\]
where the above sign for $i=1$ is defined to be $+1$.
\end{lemma}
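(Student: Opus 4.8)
The plan is to establish the covariant Leibniz rule
\[
d\, q(\beta_1,\ldots,\beta_k) = \sum_{i=1}^{k} (-1)^{\deg{\beta_1}+\cdots+\deg{\beta_{i-1}}} q(\beta_1,\ldots,d_{A}\beta_i,\ldots,\beta_k)
\]
for the form $q(\beta_1,\ldots,\beta_k)$ on $P$, and then integrate over $M$. Since each $\beta_i$ is horizontal and $G$-invariant and $q$ is $\Ad$-invariant, the form $q(\beta_1,\ldots,\beta_k)$ is basic and descends to an $n$-form on $M$, exactly as in the discussion preceding Eq.\ \eqref{eq:oma}. As $\dim M = n+1$ and $M$ is compact oriented without boundary, Stokes' theorem gives $\int_{M} d\, q(\beta_1,\ldots,\beta_k) = 0$. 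Combining this with the covariant Leibniz rule yields precisely the asserted identity, so the whole argument reduces to proving that Leibniz rule.

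To prove it, I would start from the ordinary graded Leibniz rule for the de Rham differential. Because $q$ is a constant multilinear map, it commutes with $d$ up to Koszul signs:
\[
d\, q(\beta_1,\ldots,\beta_k) = \sum_{i=1}^{k} (-1)^{\deg{\beta_1}+\cdots+\deg{\beta_{i-1}}} q(\beta_1,\ldots,d\beta_i,\ldots,\beta_k).
\]
Replacing each $d\beta_i$ by $d_{A}\beta_i = d\beta_i + [A,\beta_i]$ introduces the correction term
\[
\sum_{i=1}^{k} (-1)^{\deg{\beta_1}+\cdots+\deg{\beta_{i-1}}} q(\beta_1,\ldots,[A,\beta_i],\ldots,\beta_k),
\]
and the heart of the argument is to show that this correction vanishes.

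The vanishing of the correction is where the $\Ad$-invariance of $q$ enters, and this is the step I expect to require the most care with signs. Differentiating $q(\Ad_g x_1,\ldots,\Ad_g x_k) = q(x_1,\ldots,x_k)$ at $g=e$ gives the infinitesimal invariance $\sum_{i=1}^{k} q(x_1,\ldots,[\xi,x_i],\ldots,x_k) = 0$ for all $\xi, x_1,\ldots,x_k \in \g$. By multilinearity it suffices to treat the decomposable case $A = \theta \tensor \xi$ and $\beta_j = \gamma_j \tensor x_j$ with $\theta$ a $1$-form. Then $[A,\beta_i] = (\theta \wedge \gamma_i) \tensor [\xi,x_i]$, and pulling $\theta$ out to the front past $\gamma_1,\ldots,\gamma_{i-1}$ produces the sign $(-1)^{\deg{\beta_1}+\cdots+\deg{\beta_{i-1}}}$, which cancels the explicit sign already present in the correction term. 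The correction therefore collapses to
\[
(\theta \wedge \gamma_1 \wedge \cdots \wedge \gamma_k) \sum_{i=1}^{k} q(x_1,\ldots,[\xi,x_i],\ldots,x_k),
\]
which is zero by infinitesimal invariance. The main obstacle is thus purely the bookkeeping of Koszul signs needed to see this cancellation; once it is in place, the covariant Leibniz rule, and hence the lemma, follow at once.
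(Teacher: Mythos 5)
Your proof is correct and follows essentially the same route as the paper's: both decompose $d_{A}\beta_i = d\beta_i + [A,\beta_i]$, identify the $d$-part with $d\,q(\beta_1,\ldots,\beta_k)$ via the Leibniz rule, kill the bracket terms by infinitesimal $\Ad$-invariance of $q$, and then use that the resulting exact form is basic so that Stokes' theorem on the compact oriented $M$ finishes the argument. The only difference is presentational (you derive a covariant Leibniz rule first and spell out the Koszul-sign bookkeeping on decomposables, which the paper leaves implicit).
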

\begin{proof}
By replacing $d_{A}\beta_{i}$ by $d\beta_{i} + [A,\beta_{i}]$ for
all $i$ in
\begin{equation} \label{lem:teachAB_eq}
\sum_{i=1}^{k} (-1)^{|\beta_1|+\dots+|\beta_{i-1}|} q( \beta_1,\dots,d_{A}\beta_{i},\dots, \beta_{k})
\end{equation}
we rewrite the above as the sum of two basic $(n+1)$-forms on $P$:
\[
d \bigl( q( \beta_1, \beta_2,\ldots, \beta_{k}) \bigr ) + \left (\sum_{i=1}^{k} (-1)^{|\beta_1|+\dots+|\beta_{i-1}|}
q( \beta_1,\dots,[A,\beta_{i}],\dots, \beta_{k}) \right),
\]
where the summation over $i$  is, in fact, zero by the infinitesimal
$G$-invariance of $q$. Hence, \eqref{lem:teachAB_eq} descends to an
exact $(n+1)$-form on $M$, and so its integral vanishes by Stokes' theorem.
\end{proof}

\begin{proof}[Proof of Prop.\ \ref{lem:dm}]
We need to show $d\mu(X)=-\iota(V_X)\omega$ for all $X \in \Lie(\cG)$.
Let $A \in \cA$. Given tangent vectors $\alpha_1,\dots,\alpha_{n}
\in T_{A}\cA$, we denote by the same symbols their extension to
constant vector fields on $\cA$. Hence, $[\alpha_{i},\alpha_{j}]=0$
for all $i$ and $j$, and so the de Rham differential becomes
\begin{align*}
d\mu(X)(\alpha_1,\dots,\alpha_{n})&=\sum_i(-1)^{i+1}\cL_{\alpha_i}(\mu(X)(\alpha_1,\dots,\widehat{\alpha_i},\dots,\alpha_{n})).
\end{align*}
The identity
\[
\Bigl. \frac{d}{dt}F_{A +t \alpha_i}  \Bigr \rvert_{t=0}=d_{A}\alpha_i,
\]
combined with Lemma \ref{lem:techAB} imply that
\begin{align*}
  d\mu(X)(\alpha_1,\dots,\alpha_{n}) &=\sum_i(-1)^{i+1} \int_{M}
  q(d_{A}\alpha_i,\alpha_1,\dots,\widehat{\alpha_i},\dots,\alpha_{n},X)\\
&=-(-1)^n\int_{M} q( \alpha_1,\dots,\alpha_{n},d_{A}X))\\
&=- (\iota(V_X)\omega)( \alpha_1,\dots,\alpha_{n}).
\end{align*}
\end{proof}

The main result of this section is the following theorem.
Its proof features the obstruction theory developed in Sec.\ \ref{obstruct_sec}. 
\begin{thm} \label{moduli_mmap_thm}
There exists a homotopy moment map 
\[
(f_{i}) \maps \Lie(\cG) \to L_{\infty}(\cA,\omega)
\]
lifting the action of $\Lie(\cG)$ on $(\cA,\omega)$ such that
\[
f_{1}(X)(\alpha_1,\dots,\alpha_{n-1}) \vert_{A} = \mu(X)(\alpha_1,\dots,\alpha_{n-1}) \vert_{A} =\int_{M} q(F_{A},\alpha_1,\dots,\alpha_{n-1},X)
\]
for all $A \in \cA$ and $\alpha_{i} \in T_{A}\cA$.
\end{thm}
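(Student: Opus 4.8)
The plan is to verify the three hypotheses of Theorem \ref{thm:momapyes} for the $\Lie(\cG)$-action on $(\cA,\omega)$ and to take $f_1=\mu$, with $\mu$ the map \eqref{eq:mux}; the theorem then builds the remaining components $f_2,\dots,f_n$ recursively, and by construction the resulting moment map has $f_1=\mu$, which is exactly the assertion of Theorem \ref{moduli_mmap_thm}. Since $\cG$ is only locally exponential and infinite-dimensional, I would invoke Remark \ref{inf-dim_remark} to justify that Theorem \ref{thm:momapyes} applies in this setting. The three things to check are: that $\Lie(\cG)$ acts by Hamiltonian vector fields with $\mu$ a compatible primitive; that $H^i_{\mathrm{dR}}(\cA)=0$ for $1\le i\le n-1$; and that the obstruction class $[c^{\Lie(\cG)}_{A}]\in H^{n+1}_{\CE}(\Lie(\cG))$ of Corollary \ref{cocycle_cor} vanishes.

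The first two hypotheses are immediate from the structure already established. Proposition \ref{lem:dm} shows that each $\mu(X)$ is a Hamiltonian $(n-1)$-form whose Hamiltonian vector field is precisely the fundamental vector field $V_X=d_AX$ of the infinitesimal gauge action; combined with the fact (noted after \eqref{eq:inf_gauge}) that $X\mapsto V_X$ is a Lie algebra morphism, this shows $\phi=\mu$ satisfies $d(\mu(X))=-\iota_{V_X}\omega$, as required. For the topological hypothesis, $\cA$ is an affine space modelled on a locally convex vector space, hence convex and smoothly contractible, so the Poincaré lemma gives $H^i_{\mathrm{dR}}(\cA)=0$ for all $i\ge 1$.

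The heart of the argument is the vanishing of the obstruction, and this is where the standing assumption that $P$ admits a flat connection is used. Because $\cA$ is connected, Corollary \ref{cocycle_cor} guarantees that $[c^{\Lie(\cG)}_A]$ does not depend on the base point, so I would evaluate a representative at a flat connection $A_0$. There $F_{A_0}=0$ and $V_X\big|_{A_0}=d_{A_0}X$, so that, using the definition \eqref{eq:oma} of $\omega$,
\[
c^{\Lie(\cG)}_{A_0}(X_1,\dots,X_{n+1})=(-1)^n\vs(n+1)\int_M q(d_{A_0}X_1,\dots,d_{A_0}X_{n+1}).
\]
To see this integral is zero, apply Lemma \ref{lem:techAB} to $q\in S^{n+1}(\g^\vee)^G$ with entries $\beta_1=X_1$ (degree $0$) and $\beta_i=d_{A_0}X_i$ (degree $1$) for $2\le i\le n+1$; there are $n+1$ entries whose degrees sum to $n$, so the lemma applies. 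Each summand with $i\ge 2$ contains the factor $d_{A_0}\beta_i=d_{A_0}^2X_i=[F_{A_0},X_i]=0$ by flatness, so only the $i=1$ term survives, and it is exactly $\int_M q(d_{A_0}X_1,\dots,d_{A_0}X_{n+1})$. Hence this term vanishes, giving $c^{\Lie(\cG)}_{A_0}=0$ and therefore $[c^{\Lie(\cG)}_A]=0$.

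With all three hypotheses in hand, Theorem \ref{thm:momapyes} produces the desired homotopy moment map with $f_1=\mu$. The only genuinely delicate step is the obstruction computation; the rest is bookkeeping. I expect that the flat connection is the crucial ingredient precisely because it forces $d_{A_0}^2=0$, collapsing the integration-by-parts identity of Lemma \ref{lem:techAB} onto the single term we need — which explains why the existence of a flat connection is imposed as a standing hypothesis rather than merely producing a point where $\mu$ vanishes.
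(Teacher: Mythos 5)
Your proposal is correct and follows essentially the same route as the paper's own proof: both verify the hypotheses of Thm.\ \ref{thm:momapyes} with $\phi=\mu$ (invoking Remark \ref{inf-dim_remark} for the infinite-dimensional setting), both obtain $H^{i}_{\mathrm{dR}}(\cA)=0$ by identifying $\cA$ with a locally convex vector space centered at a flat connection $A_0$ and applying the Poincar\'e Lemma, and both kill the obstruction cocycle $c^{\Lie(\cG)}_{A_0}$ by applying Lemma \ref{lem:techAB} to the entries $X_1, d_{A_0}X_2,\dots,d_{A_0}X_{n+1}$ and using $d_{A_0}^{2}X_i=[F_{A_0},X_i]=0$ by flatness. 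Your explicit degree bookkeeping for Lemma \ref{lem:techAB} and the check of the first hypothesis via Prop.\ \ref{lem:dm} are details the paper leaves implicit, but the argument is the same.
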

\begin{proof}
  By Remark \ref{inf-dim_remark}, the theorem is proved
  if we can show that the assumptions listed in Thm.\ \ref{thm:momapyes} are
  satisfied by taking the linear map $\phi$ (defined there) to be $\mu$.
  Let $A_{0} \in \cA$ be a flat connection.  We identify
  $\cA$ with the locally convex topological vector space
  $(\Omega^1_{\hor}(P) \tensor \g)^G$ so that $A_{0}$ corresponds to
  the origin. The Poincare Lemma holds \cite[Lem.\
  1.4.1]{Brylinski_book},  and hence the de Rham cohomology of $\cA$ is
  $\R$ in degree 0, and trivial in all higher degrees. Next, observe
  that the Lie algebra cocycle
\[
c^{\Lie(\cG)}_{A_{0}}(X_{1},\ldots,X_{n+1}) = \pm \omega(V_{X_{1}},\ldots,V_{X_{n+1}})
\vert_{A_{0}} = \int_{M} q(d_{A_{0}}X_{1},\ldots,d_{A_{0}}X_{n+1}).
\]
introduced in Cor.\ \ref{cocycle_cor} is trivial. Indeed, Lemma \ref{lem:techAB} implies that
\[
\int_{M} q(d_{A_{0}}X_{1},\ldots,d_{A_{0}}X_{n+1}) = \sum^{n+1}_{i=2}
\pm \int_{M} q( X_{1},\dots,d^{2}_{A_{0}}X_{i},\dots,
d_{A_{0}}X_{n+1}),
\]
and, since $A_{0}$ is flat, we have $d^{2}_{A_{0}}X_{i} = [F_{A_{0}},X_{i}] =0$.
\end{proof}

\subsection{Reduction}\label{subsec:red}
We equip $(\cA,\omega)$ with a moment map given by Thm.\
\ref{moduli_mmap_thm}, and now describe a type of
Marsden-Weinstein reduction. As we shall see, when $\omega$ is
non-degenerate, the quotient of a zero-level set gives a pre-$n$-plectic structure on the moduli space of flat connections.

We denote by $\cAf \subset \cA$ the set of flat connections.
If $A$ is a flat connection, then the tangent space $T_{A}\cAf$
consists of all vectors $\alpha \in T_{A} \cA$  that satisfy $d_{A}
\alpha =d\alpha + [A,\alpha]=0$. We also consider the following ``zero
level set'' of $\mu$ :
\[
\cC =\{A \in \cA ~ \vert ~ \mu(X)\vert_{A}=0
~ \text{for all $X\in \Lie(\cG)$}\}. 
\]

The $\cG$-action restricts to $\cAf$, and by Prop.\
  \ref{noteworthy}, it also restricts to $\cC$.
It follows from the definition of $\mu$ that
\[
\cAf \subseteq \cC.
\]

At least in certain cases, the two subspaces are equal, as the following
proposition demonstrates.

\begin{prop}\label{lem:Aflat}
Let $q \in  S^{n+1}(\g^{\vee})^{G}$ be the invariant polynomial used
in constructing $\omega \in \Omega^{n+1}(\cA)$.
If $q$ is non-degenerate, then $\cAf = \cC.$
\end{prop}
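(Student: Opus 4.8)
The inclusion $\cAf \subseteq \cC$ has already been recorded, so the plan is to prove the reverse inclusion $\cC \subseteq \cAf$ by a local bump-function argument modeled directly on the non-degeneracy proof of Prop.\ \ref{invpoly_non-degen_prop}. Let $A \in \cC$; by the definition of $\mu$ in Eq.\ \eqref{eq:mux} this means
\[
\int_M q(F_A, \alpha_1, \dots, \alpha_{n-1}, X) = 0
\]
for all $\alpha_i \in (\Omega^1_{\hor}(P)\tensor\g)^G$ and all $X \in \Lie(\cG)$. I want to deduce that $F_A = 0$, i.e.\ that $A$ is flat. Arguing by contradiction, suppose $F_A\vert_p \neq 0$ for some $p \in P$.

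As in Prop.\ \ref{invpoly_non-degen_prop}, choose a chart $U \ni y = \pi(p)$ with coordinates $x^1, \dots, x^{n+1}$ and a trivialization $\tau \maps \pi^{-1}(U) \xto{\sim} U \times G$ with $\tau(p) = (y,e)$. Working over $\pi^{-1}(U)$ I write $F_A = \sum_{i<j} F_{ij}\, d\pi^*x^i \wedge d\pi^*x^j$ with $F_{ij} \maps U\times G \to \g$ satisfying $F_{ij}(x,g) = \Ad_{g^{-1}}F_{ij}(x,e)$. Since $F_A\vert_p \neq 0$, some component is nonzero at $(y,e)$; after relabeling coordinates I may assume $F_{12}(y,e) \neq 0$. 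The key input is the non-degeneracy of $q$: the map $v \mapsto \iota_v q$ is injective, so $\iota_{F_{12}(y,e)}q \neq 0$ in $S^n(\g^\vee)$, and hence there exist $\zeta_1, \dots, \zeta_n \in \g$ with $q(F_{12}(y,e), \zeta_1, \dots, \zeta_n) > 0$ (negating one $\zeta_i$ if necessary). By continuity this inequality persists on a smaller neighborhood $V \subseteq U$ of $y$.

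The decisive step is then to build equivariant test data realizing this positive value. Pick a bump function $\varphi \maps M \to [0,1]$ supported in $V$, define $\g$-valued functions $f_k(x,g) = \Ad_{g^{-1}}\zeta_k$ on $\pi^{-1}(V)$, and set
\[
\alpha_k = \pi^*(\varphi)\, d\pi^*(x^{k+2}) \tensor f_k \in (\Omega^1_{\hor}(P)\tensor\g)^G, \quad 1 \le k \le n-1, \qquad X = \pi^*(\varphi)\, f_n \in \Lie(\cG).
\]
The point of reserving the coordinates $x^3, \dots, x^{n+1}$ for the $n-1$ one-forms is that in the expression $q(F_A, \alpha_1, \dots, \alpha_{n-1}, X)$ the wedge $d\pi^*x^3 \wedge \cdots \wedge d\pi^*x^{n+1}$ kills every term of $F_A$ except the $F_{12}$ term, since only $d\pi^*x^1\wedge d\pi^*x^2$ can complete it to the volume form. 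Pulling back along the local section $s(x) = \tau^{-1}(x,e)$ (legitimate because the integrand is a basic form on $P$) and using $f_k(x,e) = \zeta_k$ gives
\[
\int_M q(F_A, \alpha_1, \dots, \alpha_{n-1}, X) = \pm \int_{\supp\varphi} \varphi(x)^n\, q\bigl(F_{12}(x,e), \zeta_1, \dots, \zeta_n\bigr)\, dx^1 \cdots dx^{n+1},
\]
which is nonzero, contradicting $A \in \cC$. Hence $F_A = 0$, so $A \in \cAf$, completing the proof.

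The step I expect to require the most care is the bookkeeping in the third paragraph: verifying that the chosen $\alpha_k$ and $X$ genuinely lie in $(\Omega^1_{\hor}(P)\tensor\g)^G$ and $\Lie(\cG)$ respectively (this is precisely why the $\Ad_{g^{-1}}$-twist in $f_k$ is needed), and tracking the permutation signs so that the localized integrand collapses cleanly to $\varphi^n\, q(F_{12}, \zeta_1, \dots, \zeta_n)$ times the coordinate volume form. Everything else is a transcription of Prop.\ \ref{invpoly_non-degen_prop}; the only genuine difference is that $F_A$ is a $2$-form rather than a $1$-form, which is what forces the ``$d\pi^*x^1\wedge d\pi^*x^2$ together with the remaining $n-1$ coordinates'' organization of the test forms.
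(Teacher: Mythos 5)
Your proposal is correct and follows exactly the route taken in the paper: the paper's proof of Prop.\ \ref{lem:Aflat} simply says to repeat the bump-function argument of Prop.\ \ref{invpoly_non-degen_prop} with the $1$-form $\beta$ replaced by the $2$-form $F_A$, which is precisely what you carry out, including the correct handling of the one genuine difference (reserving $d\pi^*x^1\wedge d\pi^*x^2$ for $F_A$ and using the remaining $n-1$ coordinates plus the gauge element $X$ for the test data). Your write-up is in fact more detailed than the paper's, but the underlying argument is the same.
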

\begin{proof}
Let $A \in \cC$ and assume, in order to lead to a contradiction, that
there exists $p \in P$ such that $F_{A} \vert_{p} \neq 0$.
We proceed as we did in the proof of Prop.\
\ref{invpoly_non-degen_prop}, replacing there the 1-form $\beta$ with
the 2-form $F_{A}$. The non-degeneracy allows us to construct 1-forms 
$\alpha_{1},\ldots\alpha_{n-1} \in (\Omega^1_{\hor}(P)\otimes \g)^G$
and element $X \in \Lie(\cG)$ such that
\[
\int_{M} q(F_{A},\alpha_1,\dots,\alpha_{n-1},X) >0.
\]
Hence, we have $\mu(X)(\alpha_1,\dots,\alpha_{n-1}) \vert_{A} >0$,
which contradicts $A \in \cC$.
\end{proof} 
 
Next, we consider pre-$n$-plectic structures induced on $\cAf$ and $\cC$.
\begin{prop}
If  $\iota \maps \cAf \hookrightarrow \cA$ is the inclusion, then 
\[
V_X \in \ker \iota^* \omega 
\]
for all $X \in \Lie(\cG)$.

\end{prop}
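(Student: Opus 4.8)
The plan is to unwind what membership in $\ker\iota^{*}\omega$ means and then reduce the claim to a single application of Lemma \ref{lem:techAB}. Since the $\cG$-action preserves $\cAf$, the fundamental vector field $V_{X}$ is tangent to $\cAf$; concretely, at a flat connection $A$ one has $V_{X}\vert_{A}=d_{A}X$ by Prop.\ \ref{lem:dm}, and $d_{A}(d_{A}X)=[F_{A},X]=0$ confirms tangency. Thus $\iota_{V_{X}}(\iota^{*}\omega)=\iota^{*}(\iota_{V_{X}}\omega)$, and showing this vanishes amounts to proving that for every $A\in\cAf$ and every $\alpha_{1},\dots,\alpha_{n}\in T_{A}\cAf$,
\[
\omega(V_{X},\alpha_{1},\dots,\alpha_{n})\vert_{A}
= \int_{M} q(d_{A}X,\alpha_{1},\dots,\alpha_{n}) = 0,
\]
where I have used the description of $\omega$ in Eq.\ \eqref{eq:oma} and the fact that $T_{A}\cAf$ consists exactly of the $d_{A}$-closed elements of $T_{A}\cA$.

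Next I would apply Lemma \ref{lem:techAB} to the polynomial $q\in S^{n+1}(\g^{\vee})^{G}$ with the $n+1$ arguments $\beta_{1}=X$ (of degree $0$) and $\beta_{j+1}=\alpha_{j}$ (each of degree $1$) for $j=1,\dots,n$, whose total degree is $0+n=n$ as the lemma requires. The $i=1$ summand of that identity is precisely $\int_{M} q(d_{A}X,\alpha_{1},\dots,\alpha_{n})$, while every summand with $i\geq 2$ carries a factor $d_{A}\alpha_{j}$ for some $j$. Because each $\alpha_{j}$ is tangent to $\cAf$, we have $d_{A}\alpha_{j}=0$, so all those summands vanish and the identity collapses to $\int_{M} q(d_{A}X,\alpha_{1},\dots,\alpha_{n})=0$, which is exactly the desired vanishing.

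This argument is essentially immediate, so I do not expect a serious obstacle; the only points needing care are bookkeeping ones. First, one must verify that $V_{X}$ really is tangent to $\cAf$ (and not merely to $\cA$), so that contracting the pulled-back form is legitimate — this is the flatness identity $[F_{A},X]=0$ noted above. Second, one should record the degree of $X$ as $0$ when invoking Lemma \ref{lem:techAB}, so that the total-degree hypothesis $\deg{\beta_{1}}+\dots+\deg{\beta_{n+1}}=n$ is met and the signs line up correctly. Once these are in place, the conclusion $V_{X}\in\ker\iota^{*}\omega$ for all $X\in\Lie(\cG)$ follows directly.
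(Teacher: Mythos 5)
Your proposal is correct and follows essentially the same route as the paper: evaluate $\omega(V_X,\alpha_1,\dots,\alpha_n)\vert_A = \int_M q(d_A X,\alpha_1,\dots,\alpha_n)$ at a flat connection, then apply Lemma \ref{lem:techAB} and use $d_A\alpha_j=0$ for tangent vectors to $\cAf$ to see that every remaining term vanishes. Your extra check that $V_X$ is tangent to $\cAf$ (via $d_A(d_A X)=[F_A,X]=0$) is a point the paper handles implicitly through the earlier observation that the $\cG$-action restricts to $\cAf$, so it is a welcome but not essentially different addition.
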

\begin{proof}
Let $X \in \Lie(\cG)$ and $A \in \cAf$. Fix $\alpha_2,\dots,\alpha_{n+1} \in T_{A}\cAf$. 
We have the equalities
\[
\omega(V_{X},\alpha_{2},\ldots,\alpha_{n+1}) \vert_{A} = 
\omega(d_{A}X,\alpha_2\dots,\alpha_{n+1})=\int_{M}q(d_{A}X,\alpha_2,\dots,\alpha_{n+1}).
\]
Hence, Lemma \ref{lem:techAB} implies that
\[
\omega(V_{X},\alpha_{2},\ldots,\alpha_{n+1}) \vert_{A} = 
\sum^{n+1}_{i=2} \pm \int_{M} q( X,\dots,d_{A} \alpha_{i},\dots,\alpha_{n+1}).
\]
The right-hand side of the above is zero, since $d_{A} \alpha_{i}=0$ for
all $\alpha_{i}$.
\end{proof}
Recall that Prop.\ \ref{lem:dm} implies that 
$\iota^{\ast}\omega$ is $\cG$-invariant. This fact combined with the above
two propositions gives the following:
\begin{cor}
If the quotient space $\cAf/ \cG$ is a smooth manifold, then it
carries a closed $(n+1)$-form induced by $\omega$. In addition, if
$\omega$ is non-degenerate, then $\cAf/ \cG=\cC/ \cG$.
\end{cor}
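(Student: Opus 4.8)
The plan is to treat the two assertions in turn. For the first, I would show that $\iota^{\ast}\omega$ is a \emph{basic} form for the $\cG$-action on $\cAf$, so that it descends along the quotient map, and then check that the descended form is closed. The invariance of $\iota^{\ast}\omega$ under $\cG$ is already at hand: since each $\mu(X)$ is Hamiltonian for $V_X$ (Prop.~\ref{lem:dm}), Cartan's formula together with $d\omega=0$ gives $\cL_{V_X}\omega = d\iota(V_X)\omega + \iota(V_X)d\omega = 0$, so $\omega$, and hence $\iota^{\ast}\omega$, is invariant. The preceding proposition supplies horizontality: the fundamental vector fields $V_X$ span the tangent spaces to the $\cG$-orbits in $\cAf$ and all lie in $\ker\iota^{\ast}\omega$. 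Thus $\iota^{\ast}\omega$ is invariant and horizontal, i.e.\ basic, and assuming $\cAf/\cG$ is a smooth manifold so that $\pi\colon\cAf\to\cAf/\cG$ is a surjective submersion, there is a unique $(n+1)$-form $\bar\omega$ on $\cAf/\cG$ with $\pi^{\ast}\bar\omega = \iota^{\ast}\omega$.

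Closedness of $\bar\omega$ is then a routine descent argument: one has $d\iota^{\ast}\omega = \iota^{\ast}d\omega = 0$, so $\pi^{\ast}(d\bar\omega) = d(\pi^{\ast}\bar\omega) = d\iota^{\ast}\omega = 0$, and since $\pi^{\ast}$ is injective on forms (as $\pi$ is a surjective submersion) we conclude $d\bar\omega = 0$. This establishes the first assertion.

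For the second assertion I would reduce to Prop.~\ref{lem:Aflat} by observing that $\omega$ is non-degenerate precisely when $q$ is. One implication is Prop.~\ref{invpoly_non-degen_prop}. For the converse, if $q$ were degenerate, I would pick $0\neq\xi\in\g$ with $\iota_\xi q=0$ and, mimicking the bump-function construction in the proof of Prop.~\ref{invpoly_non-degen_prop}, build a nonzero $\beta\in(\Omega^1_{\hor}(P)\otimes\g)^G$ whose pointwise values lie in the $\Ad$-direction of $\xi$; the identity $q(\theta\otimes\xi,\,\cdot\,)=0$ then forces $\iota(\beta)\omega=0$, contradicting non-degeneracy of $\omega$. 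Given $\omega$ (equivalently $q$) non-degenerate, Prop.~\ref{lem:Aflat} yields $\cAf=\cC$, whence $\cAf/\cG=\cC/\cG$.

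The main obstacle is the horizontality step in the first part: the preceding proposition tells us only that the specific fields $V_X$ annihilate $\iota^{\ast}\omega$, so to conclude that $\iota^{\ast}\omega$ is genuinely basic one must know that the $V_X$ exhaust the vertical distribution of the $\cG$-action on $\cAf$ — equivalently, that the orbit tangent spaces are spanned by the fundamental vector fields, which holds since the infinitesimal $\cG$-action on $\cAf$ is exactly $X\mapsto V_X$. Once basicness is secured, the descent of both the form and its closedness is formal, and the second assertion is immediate from the cited propositions.
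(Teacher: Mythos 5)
Your proof is correct and follows essentially the same route as the paper, which states this corollary without proof as the immediate consequence of the $\cG$-invariance of $\iota^{\ast}\omega$ (via Prop.~\ref{lem:dm}), the fact that the fundamental vector fields $V_X$ lie in $\ker\iota^{\ast}\omega$, and Prop.~\ref{lem:Aflat}; your descent-of-basic-forms argument just makes the quotient step explicit. The one point you treat that the paper leaves implicit is the passage from non-degeneracy of $\omega$ to non-degeneracy of $q$ (the hypothesis actually required by Prop.~\ref{lem:Aflat}), and your bump-function construction of a nonzero $\beta\in(\Omega^1_{\hor}(P)\otimes\g)^G$ with $\iota(\beta)\omega=0$ from a degeneracy direction $\xi$ of $q$ correctly settles that converse.
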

 
Unfortunately, if  $\omega$ is non-degenerate, then it
does \emph{not} follow that the $(n+1)$-form on $\cC / \cG$  is non-degenerate, as this example shows.

\begin{ep}

  We consider the simplest possible ``higher'' case.  Let $M$ be of
  dimension $3$, $G=\RR$, and $P$ the trivial bundle $\RR\times M \to
  M$.  Let $p \colon \RR
  \times \RR\times \RR\to \RR$  be just the multiplication of three
  numbers; it is clearly a non-degenerate invariant polynomial on the Lie algebra.

Since the bundle is trivial, we can write our data as
\begin{itemize}
\item $\cA=\Omega^1(M)$\text{, a vector space}
\item $\cG= C^{\infty}(M)$
\item $\Lie(\cG)=C^{\infty}(M)$
\item $V_X=dX$ for all $X\in \Lie(\cG)$
\item $\omega(\alpha_1,\alpha_2,\alpha_3)
  \vert_{A}=\int_{M}\alpha_1 \wedge \alpha_2 \wedge \alpha_3$
\end{itemize}
Clearly $\cA_{flat}=\Omega_{\mathrm{closed}}^1(M)$, again a vector space, and 
$\cAf/\cG=H^1(M)$.

The induced 3-form on $H^1(M)$ is the just the evaluation on the fundamental class of $M$:
$$H^1(M)\otimes H^1(M) \otimes H^1(M) \mapsto   \RR,\;\; a_1\otimes a_2\otimes a_3\mapsto \langle a_1\wedge a_2\wedge a_3, [M]\rangle.$$
Therefore it is non-degenerate iff
 for all $a_1\in H^1(M)$,
\begin{equation}\label{eq:123}
a_1a_2a_3=0 \text{ for all $a_2,a_3\in H^1(M)$}
\Rightarrow a_1=0. 
\end{equation}
In general, condition \eqref{eq:123} is not satisfied.
Recall that the pairing $H^1(M)\otimes
H^2(M)\to H^3(M)\cong \RR$ is non-degenerate 
by Poincar\'e
duality. Hence, the non-degeneracy condition \eqref{eq:123} is satisfied if{f} the
product map
$$H^1(M)\otimes H^1(M) \to H^2(M)$$ is surjective.

So, for example, if $M=S^2\times S^1$, then condition
\eqref{eq:123} is not satisfied, and indeed $H^1(S^2\times S^1)\cong
\RR$ must carry the zero 3-form. When $M=S^1\times S^1\times S^1$ is
the torus, condition \eqref{eq:123} is satisfied, and $H^1(M)\cong
\RR^3$ carries a constant volume form.
\end{ep}

\section{Loop spaces} \label{loop_sec} In this section, we show how
homotopy moment maps for $G$-actions on pre-$2$-plectic manifolds
$(M,\omega)$ can be transgressed to ordinary moment maps on the
associated pre-symplectic loop space $LM$ (see
Thm.\ \ref{loop_momap}). Motivation for studying such actions arises, for example,
in topological field theory. There one can consider a group of symmetries
$G$ acting on a ``target space'' $M$ equipped with a closed form
$\omega$. The form can be transgressed to a mapping space
$\mathrm{Map}(X,M)$ i.e.\ the ``space of fields''. The induced $G$-action 
is then defined ``point-wise''. The results of this section
could be interpreted as an elementary example of this process for the
case $X=S^{1}$.  Roughly speaking, this demonstrates how the higher
symplectic geometry on $M$ can interact with the ordinary geometry on
$\mathrm{Map}(X,M)$.

\subsection{Actions on loop spaces}

For any manifold $M$, the free loop space $LM$, i.e.\ the space of smooth loops
\[
LM = \cinf(S^{1},M),
\]
is an infinite-dimensional Fr\'echet manifold (\cite{Brylinski_book}, \cite{Stacey} and  \cite{Pre-Seg}). The tangent space at
$\gamma \in LM$ can be identified with global sections of the pullback of
$TM$ along $\gamma \maps S^{1} \to M$, i.e.\
\[
T_{\gamma}LM = \cinf(S^{1},\gamma^{\ast}TM)
\]
There is a degree $-1$
chain map 
\[
\ell \maps \Omega^{\bullet}(M) \to \Omega^{\bullet -1}(LM)
\]
called \textbf{transgression}.
Explicitly, it sends a
$k$-form $\alpha$ on $M$ to the $(k-1)$-form 
$\alpha^{\ell}$ on $LM$ given by the formula
\[
\alpha^{\ell} \vert_{\gamma}(v_1,\ldots,v_{k-1}) = \int^{2\pi}_{0} 
\alpha(v_{1},\ldots,v_{k-1},\dot{\gamma}) \vert_{\gamma(s)} ~ ds
\quad \forall \gamma \in LM, ~ \forall v_{1},\ldots,v_{k-1} \in T_{\gamma}LM.
\]
Since transgression commutes with the de Rham differential, any
pre-$n$-plectic structure $\omega$ on 
$M$ gives a pre-$(n-1)$-plectic structure $\omega^{\ell}$ on $LM$.

Suppose $M$ is a manifold equipped with a $G$-action $G \times M
\to M$. This induces a ``point-wise''  action $G \times LM \to LM$ given by:
\[
(g \cdot \gamma)(s) = g \cdot\gamma(s) \quad \forall g \in G, ~
\forall \gamma \in LM.
\]
Given an element $x \in \g$ and a loop $\gamma \in LM$, we
obtain a smooth path in $LM$
\[
\R \ni t \mapsto \exp(-x t)\cdot \gamma, 
\]
and an action of $\g$ on $LM$ via the fundamental vector field
\[
v^{\ell}_{x} \vert_{\gamma} = \left. \frac{d}{dt}
  \exp(-x t)\cdot \gamma \right \rvert_{t=0} \quad \forall
\gamma \in LM.
\]
We then observe, by differentiation,
that the fundamental vector field $v^{\ell}_{x}$ on $LM$ evaluated at
a point $\gamma \in LM$ is just $\gamma^{\ast}v_{x}$
(a section of $\gamma^{\ast}TM \to S^{1}$), where $v_{x}$ is the
fundamental vector field on $M$ associated to $x$.  

More generally, restricting vector fields on $M$ to loops in $M$ gives
us a map
\begin{equation*}
\begin{split}
\Gamma(TM) &\to \Gamma(TLM)\\
v &\mapsto v^{\ell}, 
\end{split}
\end{equation*}
defined by $v^{\ell} \vert_{\gamma}=\gamma^{\ast}v$. 
That $v^{\ell}$ is a smooth vector field with respect to the induced
smooth structure on $TLM$  follows from the fact that it is
the composition of two smooth maps. The first of these is a map $LM
\to LTM$ given by $ \gamma \mapsto v \circ \gamma$ \cite[Thm.\ 3.27]{Stacey}.
The second map is the natural diffeomorphism of vector bundles $LTM \cong TLM$
covering the identity on $LM$ \cite[Thm.\ 4.2]{Stacey}.

\subsection{Actions on pre-symplectic loop spaces} 
Suppose $(M,\omega)$ is a pre-2-plectic manifold.
Then
$(LM,\omega^{\ell})$ is a
pre-symplectic manifold.
We have:
\begin{prop} \label{loops_ham}
If   $\alpha$ is a Hamiltonian 1-form with
Hamiltonian vector field  $v$, then 
the vector field $v^{\ell}$ is
Hamiltonian for the function $\alpha^{\ell} \maps LM \to \R$.
\end{prop}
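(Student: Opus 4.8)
The plan is to deduce the statement from a single compatibility identity between transgression and contraction, together with two facts already recorded above: that $\ell$ commutes with the de~Rham differential, and that the transgressed vector field satisfies $v^{\ell}\vert_{\gamma}=\gamma^{\ast}v$, so that its value at the parameter $s$ is simply $v\vert_{\gamma(s)}$. Recall that in the pre-symplectic (i.e.\ pre-$1$-plectic) setting of Def.\ \ref{hamiltonian}, saying that $v^{\ell}$ is Hamiltonian for the function $\alpha^{\ell}\maps LM\to\R$ means precisely that $d(\alpha^{\ell})=-\iota_{v^{\ell}}\omega^{\ell}$.

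First I would establish the key identity: for every $\beta\in\Omega^{k}(M)$ and every $v\in\X(M)$,
\begin{equation}\label{eq:transg_contract}
\iota_{v^{\ell}}\bigl(\beta^{\ell}\bigr)=\bigl(\iota_{v}\beta\bigr)^{\ell},
\end{equation}
an equality of $(k-2)$-forms on $LM$. This follows by directly unwinding the transgression formula. Evaluating the left-hand side at a loop $\gamma$ on tangent vectors $w_{1},\ldots,w_{k-2}\in T_{\gamma}LM$ gives
\[
\int_{0}^{2\pi}\beta\bigl(v^{\ell},w_{1},\ldots,w_{k-2},\dot{\gamma}\bigr)\big\vert_{\gamma(s)}\,ds,
\]
and since $v^{\ell}\vert_{\gamma(s)}=v\vert_{\gamma(s)}$, the integrand equals $\bigl(\iota_{v}\beta\bigr)\bigl(w_{1},\ldots,w_{k-2},\dot{\gamma}\bigr)\vert_{\gamma(s)}$, which is exactly the integrand defining $\bigl(\iota_{v}\beta\bigr)^{\ell}$. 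No sign appears, because $\dot{\gamma}$ occupies the final argument slot on both sides.

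With \eqref{eq:transg_contract} in hand the proof is immediate. Since $\alpha$ is Hamiltonian with Hamiltonian vector field $v$, we have $d\alpha=-\iota_{v}\omega$. Applying $\ell$ and using that it commutes with $d$,
\[
d\bigl(\alpha^{\ell}\bigr)=(d\alpha)^{\ell}=\bigl(-\iota_{v}\omega\bigr)^{\ell}=-\bigl(\iota_{v}\omega\bigr)^{\ell}\overset{\eqref{eq:transg_contract}}{=}-\iota_{v^{\ell}}\bigl(\omega^{\ell}\bigr),
\]
which is the desired Hamiltonian condition; note that $\alpha^{\ell}$ is a function because $\alpha$ is a $1$-form, and $\omega^{\ell}$ is the pre-symplectic $2$-form on $LM$.

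The computation itself is routine; the only genuine care needed is the bookkeeping of signs and degrees. The one place a spurious sign could intrude is the chain-map property of $\ell$ (fiber integration over $S^{1}$ a priori only anticommutes with $d$), but since $S^{1}$ is closed there are no boundary terms, and the excerpt has fixed the convention so that $\ell\circ d=d\circ\ell$ holds honestly; identity \eqref{eq:transg_contract} is sign-clean on its own. A secondary, purely technical point is to confirm that these manipulations are legitimate on the Fr\'echet manifold $LM$, which reduces to the already-cited smoothness of the assignment $v\mapsto v^{\ell}$ and of transgression.
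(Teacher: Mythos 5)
Your proposal is correct and follows essentially the same route as the paper's own proof: both reduce the claim to the Hamiltonian condition $d\alpha=-\iota_{v}\omega$, the chain-map property of transgression, and the pointwise identification $v^{\ell}\vert_{\gamma(s)}=v\vert_{\gamma(s)}$. The only difference is organizational—you isolate the compatibility $\iota_{v^{\ell}}(\beta^{\ell})=(\iota_{v}\beta)^{\ell}$ as a general lemma, while the paper verifies exactly this identity inline for $\beta=\omega$ by evaluating $\iota_{v^{\ell}}\omega^{\ell}$ on tangent vectors at a loop.
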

\begin{proof} By assumption we have $d\alpha =-\iota (v) \omega$, and 
we wish to show 
\[
d\alpha^{\ell}=-\iota (v^{\ell}) \omega^{\ell}.
\]
Let $\gamma \in LM$. For all $u \in T_{\gamma}LM$, we have
\begin{align*}
(\iota_{v^{\ell}} \omega^{\ell})(u) &= 
\int^{2\pi}_{0} 
\omega( v^{\ell},u,\dot{\gamma}) \vert_{\gamma(s)} ~ ds\\
&=-\int^{2\pi}_{0} d\alpha(u,\dot{\gamma}) \vert_{\gamma(s)} ~ ds\\
&=- (d\alpha)^{\ell}(u)
\end{align*}
\end{proof}
Now suppose that  $(M,\omega)$ is equipped with  a
$G$-action and with a homotopy moment map $\g \to
L_{\infty}(M,\omega)$. Let $f_{1} \maps \g \to \ham{1}$, and $f_{2}
\maps \g \tensor \g \to \cinf(M)$ be the corresponding structure maps for this moment map.
For $x \in \g$, the vector field $v_{x}$ is Hamiltonian for the 1-form $\alpha_{x}=f_{1}(x)$. The 2-form
$\omega^{\ell}$ on $LM$ is $G$-invariant, and the $v_x^{\ell}$ are Hamiltonian.
The next theorem says that the homotopy moment map on $(M,\omega)$ transgresses
to an ordinary moment map for the action of $G$ on  $(LM,\omega^{\ell})$ 

\begin{thm} \label{loop_momap}
If $(M,\omega)$ is a pre-2-plectic manifold equipped with a
$G$-action and a homotopy moment map
$f \colon \g \to L_{\infty}(M,\omega)$, then  
 \begin{align*}
 \psi : \g & \to \cinf(LM)\\
     x & \mapsto (f_1(x))^{\ell}
  \end{align*} is a  moment map for 
the induced action of $G$ on the pre-symplectic loop space $(LM,\omega^{\ell})$ 
\end{thm}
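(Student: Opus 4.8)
The plan is to verify directly the two conditions defining a moment map in the pre-symplectic case, as recalled at the start of Sec.\ \ref{sec:c3f}: that $v^{\ell}_{x}$ is the Hamiltonian vector field of $\psi(x)$ for every $x \in \g$, and that $\psi$ is a morphism of Lie algebras into $(\cinf(LM), \{\cdot,\cdot\})$. The first condition is essentially free: since $f$ is a homotopy moment map, condition (A) of Sec.\ \ref{3-form_notation_sec} says that $v_{x}$ is a Hamiltonian vector field for the $1$-form $f_{1}(x)$, and Prop.\ \ref{loops_ham} then applies verbatim to conclude that $v^{\ell}_{x}$ is Hamiltonian for $(f_{1}(x))^{\ell} = \psi(x)$.

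For the morphism property I would compute the Poisson bracket $\{\psi(x),\psi(y)\}$ on $LM$. By the first step its Hamiltonian vector fields are $v^{\ell}_{x}$ and $v^{\ell}_{y}$, so $\{\psi(x),\psi(y)\} = \omega^{\ell}(v^{\ell}_{x}, v^{\ell}_{y})$. Plugging in the transgression formula and using the pointwise identity $v^{\ell}_{x}|_{\gamma} = \gamma^{\ast}v_{x}$, this becomes
\[
\{\psi(x),\psi(y)\}|_{\gamma} = \int^{2\pi}_{0} \omega(v_{x},v_{y},\dot{\gamma})|_{\gamma(s)}\,ds,
\]
and since $\omega(v_{x},v_{y},\cdot)$ is exactly the bracket $\{f_{1}(x),f_{1}(y)\}$ of Hamiltonian $1$-forms on $M$, the right-hand side is $\bigl(\{f_{1}(x),f_{1}(y)\}\bigr)^{\ell}|_{\gamma}$. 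Invoking Eq.\ \eqref{eq:2momap1} to rewrite $\{f_{1}(x),f_{1}(y)\} = f_{1}([x,y]) - df_{2}(x,y)$ and applying transgression termwise leaves $\psi([x,y]) - (df_{2}(x,y))^{\ell}$.

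The crux is that the transgression of an exact $1$-form vanishes on loops: for any $\phi \in \cinf(M)$ one has $(d\phi)^{\ell}|_{\gamma} = \int^{2\pi}_{0} \tfrac{d}{ds}\phi(\gamma(s))\,ds = 0$ by the fundamental theorem of calculus and the periodicity of $\gamma$. Applied to $\phi = f_{2}(x,y)$, this annihilates the correction term and yields $\{\psi(x),\psi(y)\} = \psi([x,y])$, establishing the morphism property. This is also the conceptual heart of the result: the $f_{2}$-anomaly of the homotopy moment map on $M$, which is precisely what prevents $f_{1}$ from being a strict Lie algebra morphism, is exactly the piece that transgression destroys, so the lifted map $\psi$ is a genuine (strict) moment map on $LM$. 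I do not anticipate a serious obstacle; the only care needed is in matching the transgression and Poisson-bracket sign conventions, and in noting that $\{\cdot,\cdot\}$ is well defined on the merely pre-symplectic $LM$ because the bracket depends only on the Hamiltonian vector fields.
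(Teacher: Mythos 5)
Your proposal is correct and follows essentially the same route as the paper's proof: transgress the defining relation $f_{1}([x,y]) - \{f_{1}(x),f_{1}(y)\} = df_{2}(x,y)$ and observe that the transgression of an exact $1$-form vanishes on loops (the paper phrases this via Stokes' theorem), so the $f_{2}$-anomaly dies and $\psi$ preserves brackets. The only cosmetic difference is that the paper establishes the Hamiltonian condition for $v^{\ell}_{x}$ (your condition (A), via Prop.\ \ref{loops_ham}) in the discussion preceding the theorem and therefore reduces the proof to the morphism property alone, exactly as you do in your second step.
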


\begin{proof}
To
prove the theorem, it is sufficient to show that $\psi$
is a Lie
algebra morphism.

The bracket of the functions $\psi(x)$ and $\psi(y)$ evaluated at
$\gamma \in LM$ is:
\begin{align*}
\brac{\psi(x)}{\psi(y)} \vert_{\gamma} &= \int^{2\pi}_{0} \omega(v_{x} ,v_{y},\dot{\gamma})
\vert_{\gamma(s)} ~ ds\\
&=\int^{2\pi}_{0} \iota_{\dot{\gamma} }l_{2}(f_{1}(x),f_{1}(y)) \vert_{\gamma(s)} ~ ds\\
&=\big(l_{2}(f_{1}(x),f_{1}(y))\big)^{\ell}\vert_{\gamma},
\end{align*}
where $l_{2}$ is the bi-linear bracket for the Lie 2-algebra $L_{\infty}(M,\omega)$.
On the other hand, $\psi([x,y])$ is, by definition, equal to the
transgression of the 1-form $f_{1}([x,y])$. The definition of an
$L_{\infty}$-morphism implies that
\begin{equation}\label{eq:fl}
l_{2}(f_{1}(x),f_{1}(y))- f_{1}([x,y]) = df_{2}(x,y).
\end{equation}
Since $df_{2}(x,y)$ is an exact 1-form, Stokes theorem implies for all
$\gamma \in LM$:
\[
(d(f_{2}(x,y)))^{\ell}=\int^{2\pi}_{0} \iota_{\dot{\gamma}}d(f_{2}(x,y)) \vert_{\gamma(s)} ds =0
\]
Hence applying the transgression operator $\ell$ to Eq.\  \eqref{eq:fl} we obtain
\[
\brac{\psi(x)}{\psi(y)} - \psi([x,y])=0.
\]
\end{proof}

{ 
\begin{remark} The map $$\Omega^1_{\mathrm{Ham}}(M)\to C^{\infty}_{\mathrm{Ham}}(LM),\;\; \alpha \mapsto \alpha^{\ell}$$
is well-defined by  Prop.\ \ref{loops_ham} and preserves (binary) brackets, as shown in the  proof of Thm.\ \ref{loop_momap}. Therefore   
there is a strict  $L_{\infty}$-morphism from $\li(M,\omega)$ to
$\li(LM,\omega^{\ell})$, whose only non-vanishing component is the map $\alpha \mapsto \alpha^{\ell}$.
Hence, given a homotopy moment map for $(M,\omega)$, composing with
the above $L_{\infty}$-morphism we get a homotopy moment map for
$(LM,\omega^{\ell})$. In \cite{FLRZ} we extend Thm.\ \ref{loop_momap}  further.

\end{remark}
}

\section{Relation to other work} \label{others_sec}

 \subsection{Other notions of moment map}

Let $M$ be a manifold endowed with a closed $n+1$-form $\omega$, and $G$ a Lie group acting on $M$ preserving $\omega$.

{ A \textbf{multimomentum map}  in the sense of  Cari\~nena-Crampin-Ibort \cite[Sec.\ 4.2]{IbortCarCrampMultimomap} is a map $f_1 \colon \g \to \ham{n-1}$ satisfying \[
-\iota_{v_x} \omega=d(f_{1}(x))\quad\quad \text{   for all $x\in g$}.
\]
Such maps are called  \textbf{covariant momentum maps} 
 in \cite{GIMM}; they are used there to study symmetries in classical field theories.
Hence, if $(f_{k})$ is a homotopy moment map, then
its first component  is a multimomentum/covariant momentum map in the above sense.

Madsen-Swann
\cite[Sec.\ 3]{MadsenSwannClosed} consider the {\bf {\boldmath $n$}-th Lie kernel} $P_n$ (a $\g$-submodule of $\Lambda^n\g$), and define a {\bf multi-moment map}  as an equivariant map $\nu \colon M\to P_n^*$ such that $\iota(v_p)\omega=d(\nu^*p)$ for all $p \in P_n$. It is clear from Eq.\  \eqref{main_eq_1} (for $k=n$), that if $(f_i) \maps
\g \to L_{\infty}(M,\omega)$ is an equivariant homotopy moment map, then
the formula $\nu^*p=-\vs(n)f_n(p)$ for all $p\in P_n$
defines  a multi-moment map $\nu$.}

For the sake of clarity we spell out   the case  $n=2$, which was worked out first in  \cite[Sec.\ 2]{MadsenSwannMultimomap}.
In that case  $P_2=\ker([\cdot,\cdot]\colon \Lambda^2\g\to \g)$.
Notice that if $x,y\in \g$ are {commuting} elements, i.e. $x\wedge y \in P_2$, then the
invariance of $\omega$ under $G$ implies that $\iota(v_x\wedge
v_y)\omega$ is a closed 1-form. A multi-moment map  is an equivariant map $\nu \colon M\to P_2^*$ such that $\iota(v_p)\omega=\sum_i\iota(v_{x_i}\wedge v_{y_i})\omega$ is exact with primitive $\nu^*p$, for any $p=\sum_ix_i\wedge y_i \in P$.

\subsection{Actions on Courant algebroids}
 Recall that a \textbf{Courant algebroid} consists of a vector bundle $E\to M$ with a non-degenerate symmetric pairing on the fibers, a bilinear bracket $[\![ \cdot,\cdot ]\!]$ on $\Gamma(E)$, and a bundle map $\rho \colon E\to TM$ satisfying certain conditions, see for instance \cite[Def.\  4.2]{Dima}. Courant algebroids appear naturally in the study of Dirac and generalized complex structures.

Let $(M,\omega)$ be a  pre-$2$-plectic manifold. There is a  Courant algebroid associated to the closed 3-form $\omega$, namely
 $TM\oplus T^*M$ with the natural pairing $\langle  X+\xi, Y+\eta \rangle=\xi(Y)+\eta(X)$, the projection $\rho$ onto the first factor, and 
  bracket  $$[\![ X+\xi, Y+\eta ]\!]_{\omega}=[X,Y]+\cL_X\eta-\iota_Y d\xi+\iota_Y\iota_X \omega.$$ This Courant algebroid  is sometimes called the $\omega$-twisted Courant algebroid, and 
we will denote  it by
   $(TM\oplus T^*M)_\omega$.

Bursztyn-Cavalcanti-Gualtieri \cite{BCG}  defined the notion of extended action on a Courant algebroid. We spell out only the case of a \textbf{trivially extended action} \cite[Def.\ 2.12]{BCG} on the above Courant algebroid: it is an action of a connected Lie group $G$ on $M$ 
together with a linear map $\xi \colon \g \to \Gamma(T^*M)$ such that
$\g \to \Gamma(TM\oplus T^*M)_{\omega}, x \mapsto v_x+\xi(x)$ is
bracket-preserving and the  Lie algebra morphism $x\mapsto
\ad_{v_x+\xi(x)} =[\![ v_x+\xi(x),\cdot ]\!]_{\omega}$
into the infinitesimal automorphisms of the Courant algebroid 
 integrates to an action of $G$ on $TM\oplus T^*M$.

The following lemma is essentially contained in   \cite[Sec.\ 2.2]{BCG}.
\begin{lemma}\label{fromBCG}
 Let $G$ act on pre-$2$-plectic manifold $(M,\omega)$ and $\mu \in (\g^{\vee}\otimes \Omega^1(M))^G$.
 $\omega-\mu$ is an equivariant extension of $\omega$ if and only
 if
$$\Psi \colon \g \to \Gamma(TM\oplus T^*M)_{\omega},\;\; x \mapsto
v_x-\mu(x)$$ is a trivially extended action on the Courant algebroid
${(TM\oplus T^*M)_\omega}$ 
integrating to the action of $G$  by  tangent-cotangent lifts
with isotropic image:
\[
\innerprod{\im \Psi}{\im \Psi}=0.
\]
\end{lemma}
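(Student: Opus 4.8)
The plan is to translate both sides of the claimed equivalence into the \emph{same} system of pointwise conditions on $\mu$ and then observe they coincide. On the left-hand side, recall from Def.\ \ref{extension} that $\omega-\mu$ is an equivariant extension precisely when it is a cocycle in the Cartan model; since $\mu$ is assumed $G$-invariant, I only need to unpack $d_G(\omega-\mu)=0$. Viewing $\omega-\mu$ as the polynomial map $x\mapsto \omega-\mu(x)$ and applying $d_G(\alpha)(x)=d(\alpha(x))-\iota_{v_x}(\alpha(x))$, I would separate the result by polynomial degree in $x$: the degree-zero part gives $d\omega=0$ (automatic), the degree-one part gives $d\mu(x)=-\iota_{v_x}\omega$, and the degree-two part gives $\iota_{v_x}\mu(x)=0$. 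These are exactly the conditions recorded in Eq.\ \eqref{eq:dgclosed}.

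For the right-hand side, the whole argument rests on three direct computations with the $\omega$-twisted pairing and bracket, which I would carry out. Using $[v_x,v_y]=v_{[x,y]}$,
\[
[\![ v_x-\mu(x), v_y-\mu(y) ]\!]_{\omega} = v_{[x,y]} - \cL_{v_x}\mu(y) + \iota_{v_y}d\mu(x) + \iota_{v_y}\iota_{v_x}\omega .
\]
For an arbitrary section $Y+\eta$ of $(TM\oplus T^*M)_\omega$,
\[
\ad_{\Psi(x)}(Y+\eta) = [v_x,Y] + \cL_{v_x}\eta + \iota_Y d\mu(x) + \iota_Y\iota_{v_x}\omega .
\]
Finally, the natural pairing gives
\[
\innerprod{\Psi(x)}{\Psi(y)} = -\iota_{v_y}\mu(x) - \iota_{v_x}\mu(y).
\]

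With these in hand I match conditions in both directions. The isotropy $\innerprod{\im\Psi}{\im\Psi}=0$ is, by the third identity and polarization, equivalent to $\iota_{v_x}\mu(x)=0$ for all $x$. From the second identity, $\ad_{\Psi(x)}$ equals the infinitesimal tangent-cotangent lift $Y+\eta\mapsto[v_x,Y]+\cL_{v_x}\eta$ if and only if $\iota_Y\bigl(d\mu(x)+\iota_{v_x}\omega\bigr)=0$ for every $Y$, i.e.\ $d\mu(x)=-\iota_{v_x}\omega$; and once this holds, the morphism $x\mapsto\ad_{\Psi(x)}$ manifestly integrates to the (globally defined) tangent-cotangent lift of the $G$-action on $M$. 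Moreover, given these two conditions together with the equivariance $\cL_{v_x}\mu(y)=\mu([x,y])$ of $\mu$, the first identity collapses to $\Psi([x,y])$, so $\Psi$ is automatically bracket-preserving. Reading these off in both directions yields the equivalence: the extension conditions of Eq.\ \eqref{eq:dgclosed} hold iff $\Psi$ is a trivially extended action integrating by tangent-cotangent lifts with isotropic image.

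The main obstacle is conceptual rather than computational: correctly identifying that, under $d\mu(x)=-\iota_{v_x}\omega$, the integration demanded in the definition of a trivially extended action is \emph{exactly} the tangent-cotangent lift, so that integrability is not an extra hypothesis but is automatic from the $G$-action on $M$. Some care with the sign conventions in the Courant bracket and with the fundamental-vector-field convention $x\mapsto v_x$ (which is minus the infinitesimal generator, Eq.\ \eqref{lie_alg_action}) is needed to make the three displayed identities come out as stated, but once they are established the equivalence is simply a matter of pairing up the three scalar/form conditions on $\mu$.
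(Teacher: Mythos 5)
Your proposal is correct and takes essentially the same route as the paper: unpack the Cartan cocycle condition into the three identities of Eq.\ \eqref{eq:dgclosed}, then match them respectively with bracket preservation, with $\ad_{\Psi(x)}=\cL_{v_x}$ (whence integration by tangent-cotangent lifts), and with isotropy of $\im \Psi$. The only difference is that where the paper simply cites the discussion following Prop.\ 2.11 of \cite{BCG} for the first two equivalences, you establish them by direct computation with the twisted bracket and pairing --- your three displayed identities are correct under the paper's conventions --- which makes the argument self-contained.
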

\begin{proof} To simplify the notation, denote $\xi=-\mu$. Suppose  $\omega+\xi$ is an equivariant extension (so   Eq.\  \eqref{eq:dgclosed} holds). The following two statements are contained in the text after Prop.\ 2.11 of \cite{BCG}. First, the condition $\iota_{v_x}\omega-d(\xi(x))=0$ and the equivariance of $\xi$ are equivalent to the fact that $\Psi$ preserves brackets
and  $TM\oplus\{0\}$ is a $\g$-equivariant splitting.
Second, $\iota_{v_x}\omega-d(\xi(x))=0$ is also equivalent to 
$\ad_{v_x+\xi(x)}$ 
    being the Lie derivative  $\cL_{v_x}$, acting on vector fields and 1-forms (hence $\Psi$ integrates to the action of $G$ on $TM\oplus T^*M$ is by  tangent-cotangent lifts). The condition $\iota_{v_x}(\xi(x))=0$ clearly means that the image of $\Psi$ is isotropic.

The converse implication is proven by reversing the above argument.
\end{proof}


On the other hand, we know that an equivariant extension $\omega-\mu$ delivers a moment map $(f_k) \colon \g \to \li(M,\omega)$, by Thm.\  \ref{main1}. When $\omega$ is non-degenerate, the relation between $f$ and the extended action $\Psi$ of Lemma \ref{fromBCG} is simply
\begin{equation}\label{pif}
\Psi= i\circ f.
\end{equation}
Here
$$(i_k) \colon \li(M,\omega) \to \li((TM\oplus T^*M)_\omega)$$
is the embedding  of Lie 2-algebras given in \cite[Thm.\ 5.2]{RogersCou}
\footnote{The same theorem appears also as \cite[Thm.\ 7.1]{RogersPre} but with different sign conventions.} (it is defined only when $\omega$ is non-degenerate).  $\li((TM\oplus T^*M)_\omega)$ denotes the Lie 2-algebra associated 
to the  Courant algebroid $(TM\oplus T^*M)_\omega$ by
Weinstein-Roytenberg \cite{rw}\cite[Thm.\ 6.5]{RogersPre}; its  underlying graded vector space is $C^{\infty}(M)$ in degree $-1$ and $\Gamma(TM\oplus T^*M)_{\omega}$ in degree zero, and the binary bracket in degree zero is the twisted Courant bracket, i.e. the skew-symmetrization of $[\![\cdot,\cdot ]\!]_{\omega}$.
In summary, the composition of the two Lie 2-algebra morphisms on the r.h.s. of Eq.\  \eqref{pif} happens to be a strict morphism from $\g$  to  $\li((TM\oplus T^*M)_\omega)$, whose only non-trivial component is $\Psi$.

\begin{remark}
There is a notion of moment map for extended actions on Courant
algebroids \cite[Def.\ 2.14]{BCG}. In the case of a trivially extended action, however,
the only such moment map is the zero map. 
\end{remark}

\begin{remark}
Whenever the action of $G$ on $(M,\omega)$ is by Hamiltonian vector fields,  i.e. there is a linear map $f \colon \g\to \Omega^1_{\Ham}(M)$ satisfying 
$\iota_{v_x}\omega=-df(x)$ for all $x$,  one has $\ad_{v_x+\xi(x)}=\cL_{v_x}$. Hence, given an arbitrary moment map $(f_k) \colon \g \to \li(M,\omega)$, the Lie algebra morphism into the infinitesimal automorphisms of the Courant algebroid  suggested by Lemma \ref{fromBCG}
$$\g\to \mathfrak{aut}((TM\oplus T^*M)_\omega), x\mapsto \ad_{v_x-f_1(x)}=\cL_{v_x}$$ 
 does not encode neither $\omega$ nor the moment map. This is a clear indication  the moment map   $(f_k)$ can not be encoded naturally in terms on a map of $\g$ into the infinitesimal automorphisms of the Courant algebroid.

\end{remark}

\subsection{Actions on differential graded manifolds}\label{action}
Let $M$ be a manifold endowed with a closed $n+1$-form $\omega$.
Uribe \cite{UribeDGman} considers the graded manifold $P=T[1]M\oplus\RR[n]$, 
whose graded algebra of function is $\Omega(M)\otimes S[t]$,
where $S[t]$ denotes polynomials in a variable $t$ of degree $n$.
The graded manifold $P$ is endowed with the homological  vector field
$Q=d_{\dR}+\omega\partial_t$, where $d_{\dR}$ is the de Rham vector field on $T[1]M$. 
Assume that a connected Lie group $G$ acts on $M$, and assume for simplicity that  $\omega$ is preserved by the action.

Denote  $$\s\y\m(P,Q)=\X_{<0}(P)\oplus \{Y\in \X_0(P):[Q,Y]=0\}$$ (the vector fields of negative degree on $P$, together with the degree zero vector fields commuting with $Q$). It is a DGLA with the usual bracket of vector fields and differential $[Q,\bullet]$.
The main motivation for Uribe to study these objects is that, when $n=2$, $\s\y\m(P,Q)$ is isomorphic to the DGLA of symmetries of the $\omega$-twisted exact Courant algebroid over $M$.
$\s\y\m(P,Q)$ contains a sub-DGLA 
$\g\s\y\m(P,Q)$, whose degree zero component consists of vector fields preserving the function $\omega$, and with degree $-1$ component $$\{\iota_X+\alpha\partial_t: X\in \X(M),\alpha\in \Omega^{n-1}(M), d\alpha=-\iota_X\omega\}$$(see \cite[Sec.\ 3.2]{UribeDGman}).
Further, there is a DGLA associated to the Lie algebra $\g$ of $G$: it is\footnote{It is a DGLA concentrated in degrees $-1$ and $0$, which corresponds to the natural structure of Lie algebra crossed module of $\g$ over itself.} $\g[1]\oplus \g$, with bracket given by the bracket and adjoint action of
$\g$, and differential $\id_{\g}$.

We are now ready to reproduce two statements.
First, \cite[Lemma 3.7]{UribeDGman} states that strict morphisms of
DGLAs  
\begin{equation} \label{uribe_eq1}
\g[1]\oplus \g \to \g\s\y\m(P,Q) 
\end{equation}
lifting the action of $\g$ on $M$ are in bijective correspondence with cocycles $\omega-\mu$ in the Cartan model, where 
$\mu\in \Omega^{n-1}(M)\otimes S^1\g^{\vee}[-2]$.

Second, by   \cite[Prop.\ 2.15]{UribeDGman}, $L_{\infty}$-morphisms 
\begin{equation} \label{uribe_eq2}
\g[1]\oplus \g \to  \s\y\m(P,Q)
\end{equation}
lifting the map
$\g[1]\oplus \g \to \X(M)[1]\oplus \X(M)$ induced by the action of $\g$ on $M$ are in bijective correspondence with closed extensions of $\omega$ in the BRST model $\wedge\g^{\vee}\otimes S\g^{\vee}\otimes \Omega(M)$.

Notice that, as $\g\s\y\m(P,Q)$ is a sub-DGLA of $\s\y\m(P,Q)$, the
morphisms appearing in \eqref{uribe_eq1}
are particular cases of those appearing in \eqref{uribe_eq2}.

The relevance of the above to our work is as
follows. A cocycle $\omega-\mu$ in the Cartan model induces two kinds
of objects: by Thm.\ \ref{main1} it induces a moment map of a
particular form (equivariant, and determined by its unary component);
by \cite[Lemma 3.7]{UribeDGman} it induces an infinitesimal action of
$\g[1]\oplus \g$ on $(P,Q)$ of a particular form (strict, and
preserving $\omega$). Proposition 2.15 in \cite{UribeDGman}
suggests that there is a relation between arbitrary homotopy moment maps and
closed extensions of $\omega$ in the BRST model. We are pursuing such
ideas in future work.

\section{Concluding remarks} \label{end_sec}
This work raises many questions and suggests a variety of possible directions for future research.
We mentioned some of these throughout the text. Here we give a few more.

First, several of the examples introduced in this paper deserve
more thorough investigation. In particular:
\begin{itemize}
\vspace{.5cm}
\item{In Sec.\ \ref{sub:exact}, we consider moment maps
    arising from exact $n$-plectic forms. Manifolds equipped such
    $n$-plectic structures naturally arise in certain models of
    classical field theory \cite{GIMM}. How do homotopy moment maps
    relate to the conservation laws and symmetries studied within
    these models? 

    {Along these lines, let us
      just mention that if a Hamiltonian $(n-1)$-form $H$ is invariant
      under the $G$-action, the existence of a (not necessarily
      equivariant) moment map implies the existence of many conserved
      quantities, i.e., differential forms for which the Lie
      derivative by the Hamiltonian vector field $v_H$ is exact. In
      particular, the ``dynamics'' given by $v_H$ is constrained to the
      level sets of certain functions.}

}
\vspace{.5cm}
\item{
    In Sec.\ \ref{subs:conj}, we note a relationship between the
    homotopy moment map arising from a Lie group acting on itself via
    conjugation and the theory of quasi-Hamiltonian
    $G$-spaces. Moreover, if $G$ acts on a pre-2-plectic manifold
    $(M,\omega)$, with $\omega$ both $G$-invariant and representing a
    degree 3 integral cohomology class, then we also have reasons to suspect
    that there is a relationship between homotopy moment maps lifting the
    $G$-action, and $G$ equivariant $U(1)$-gerbes on $M$. 
   Indeed, the results in \cite{FRS}
      imply that a homotopy moment map lifts the $\g$-action on
      $(M,\omega)$ to a $\g$-action on any
      $U(1)$-gerbe whose 3-curvature is $\omega$.}
    Some relationships have already been established
    between trivializations of $G$-equivariant gerbes and
    quasi-Hamiltonian $G$-spaces (e.g.\ \cite{Gomi:2004}). What is the precise relationship
    which intertwines these formalisms with homotopy moment maps?
 
 \end{itemize}

Further development of the general theory of homotopy moment maps
would also be desirable. For example:

\begin{itemize}
\item{ 
    When should two moment maps be
    considered equivalent?  Indeed, there are well-known uniqueness
    results for moment maps in symplectic geometry and, for example,
    abstract moment maps in Hamiltonian cobordism theory. For us, the
    question is particularly relevant since some of our constructions
    (e.g.\ Thm.\ \ref{thm:momapyes}) depend on a number of choices. We
    briefly discussed uniqueness issues in Sec.\ \ref{unique_subsec}
    for the case of pre-2-plectic manifolds. Also, in Section 7 of \cite{FLRZ},
    various candidates for equivalences are proposed. One of these is closely
    related to the notion of homotopy equivalence between
    $L_{\infty}$-morphisms (e.g., \cite[Def.\ 4.7]{DHR:2015}).
} 
\vspace{.5cm}
\item {Can one perform reduction of pre-$n$-plectic forms using an equivariant moment map $(f_{k}) \maps  \g \to L_{\infty}(M,\omega)$, analogously
to the Marsden-Weinstein-Meyer reduction in symplectic geometry?
It is easily checked that, if $$S=\{p \in M: f_1(x)|_p=0 \text{ for all }x\in \g\}$$ satisfies certain regularity conditions, then it is $G$-invariant and the pullback of $\omega$ descends to $S/G$. However $S$ is the vanishing set of a family of $(n-1)$-forms, and it is hard to control its smoothness properties. The only instance we are aware of where the above-mentioned reduction procedure works, apart from the one discussed in Subsection \ref{subsec:red}, is   Ex.\ \ref{ex:ctlifts}, namely
the (free and proper) cotangent lift action of $G$ on $\Lambda^n
T^*N$: in this case $S/G$ is canonically isomorphic to $\Lambda^n
T^*(N/G)$ with its canonical $n$-plectic form. This procedure is
  probably too na\"ive in general, and it only uses a small part of the information provided by the moment map.

}
\end{itemize}
 
\appendix
\section{Explicit formulas for $L_{\infty}$-morphisms}
Here we recall the relationship between
$L_{\infty}$-morphisms and morphisms of dg coalgebras. We use
this to prove Prop.\ \ref{Lie_alg_P_prop1} from Section
\ref{Linfty_sec} and other results needed throughout the text.
The facts presented here are well-known to experts, but it is
difficult to find explicit formulas in the literature. 
For more details on coalgebras, we suggest Sections 3d and 22a of \cite{FHT}, 
and Appendix B of \cite{Quillen:1969}. The description of $L_{\infty}$-algebras 
as coalgebras is also reviewed in Section 2 of \cite{Lada-Markl}.
Unlike the aforementioned references, we use cohomological conventions
i.e.\ our (co)differentials have degree $+1$.


\subsection{The coalgebra {\boldmath $S(V)$}}\label{cofree_sec}
Given a graded vector space $V$, the graded symmetric algebra
\begin{equation*}
\begin{split}
S(V) &= \R \oplus V \oplus S^{2}(V) \oplus S^{3}(V) \oplus \hdots\\
& =\R \oplus \S(V)
\end{split}
\end{equation*}
is naturally a cocommutative coalgebra. This means that it is equipped with
a linear map $\Delta \maps S(V) \to S(V) \tensor S(V)$
(the comultiplication) such that $\left( \Delta \tensor \id \right) \Delta
= \left(\id \tensor \Delta \right) \Delta$ (coassociativity) and $T
\circ \Delta = \Delta$ (cocommutativity), where $T(x\tensor y) =
(-1)^{\deg{x} \deg{y}} y \tensor x$. In this case, $\Delta$ is the
unique morphism of algebras such that $\Delta(v) = v \tensor 1 + 1
\tensor v$ for all $v\in V$.

From $\Delta$ we also obtain a cocommutative coalgebra structure on
the reduced symmetric algebra:
\[
\rDelta \maps \S(V) \to \S(V) \tensor \S(V)
\]
where $\rDelta c = \Delta c - c \tensor 1 - 1\tensor c$ is the
reduced comultiplication. Explicitly,
\begin{equation*}
 \label{redcomult}
\begin{split}
 \rDelta(v_{1} \odot v_{2} \odot \cdots \odot v_{n})=&
  \sum_{1 \leq p \leq n-1} \sum_{\sigma \in \Sh(p,n-p)} \epsilon(\sigma)
  \left(v_{\sigma(1)} \odot v_{\sigma(2)} \odot \cdots \odot v_{\sigma(p)} \right)\\
  & \tensor \left ( v_{\sigma(p+1)} \odot v_{\sigma(p+2)} \odot \cdots \odot v_{\sigma(n)}
  \right).
\end{split}
\end{equation*}

The reduced diagonal $\rdDelta{n}$ is recursively defined by the formulas:
\begin{equation*} 
\begin{split}
\rdDelta{0}&=\id\\
\rdDelta{1}&=\rDelta\\
\rdDelta{n} &= \bigl( \rDelta \tensor \id^{\tensor(n-1)} \bigr) \circ
\rdDelta{n-1} \maps \S(V) \to \S(V)^{\tensor(n+1)}.
\end{split}
\end{equation*}
A simple induction argument shows that we can rewrite $\rdDelta{n}$
as
\begin{equation} \label{red_diag2}
\rdDelta{n} = \bigl( \rdDelta{n-1} \tensor \id \bigr) \circ \rDelta.
\end{equation}
Using Eq.\  \eqref{red_diag2}, it is easy to see that
$\S^{\bullet \leq  k}(V) \subseteq \ker \rdDelta{k}$.
The following lemma will be useful in the proceeding sections. It
follows straightforwardly via induction.
\begin{lemma} \label{red_diag_lemma}
 If $v_{1} \odot v_{2} \odot \cdots \odot v_{n} \in \S(V)$, and $ 1 \leq p \leq n-1$ then
\begin{align*}
\rdDelta{p}(v_{1} \odot \cdots \odot v_{n}) &=\sum^{k_{1} + k_{2} +
  \cdots + k_{p+1}=n}_{k_{1},k_{2},\hdots,k_{p+1} \geq 1} \sum_{\sigma
  \in \Sh(k_{1},k_{2},\hdots,k_{p+1})} \epsilon(\sigma) v_{\sigma(1)}
\odot \cdots \odot v_{\sigma(k_{1})} \\
&\quad \tensor v_{\sigma(k_{1} + 1)} \odot \cdots \odot
v_{\sigma(k_{1}+k_{2})} \tensor v_{\sigma(k_{1} +k_{2} + 1)} \odot \cdots \odot
v_{\sigma(k_{1}+k_{2} +k_{3})} \tensor \cdots \\
& \quad \tensor v_{\sigma(n-k_{p+1} + 1)} \odot \cdots \odot v_{\sigma(n)}. 
\end{align*}
In particular, we have
\[
\rdDelta{n-1}(v_{1} \odot \cdots \odot v_{n}) = \sum_{\sigma \in
  \Sn_{n}} \epsilon(\sigma) v_{\sigma(1)} \tensor v_{\sigma(2)}
  \tensor \cdots \tensor v_{\sigma(n)}. 
\]
\end{lemma}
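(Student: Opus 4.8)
The plan is to establish the formula for $\rdDelta{p}$ by induction on $p$, and then to read off the statement for $\rdDelta{n-1}$ as the degenerate case in which every block has size one. For the base case $p=1$ one has $\rdDelta{1}=\rDelta$, and the asserted sum over compositions $k_1+k_2=n$ with $k_1,k_2\ge 1$ together with $(k_1,k_2)$-unshuffles is literally the defining formula for $\rDelta$ recalled above, once one identifies the composition $(k_1,k_2)$ with the split point $1\le k_1\le n-1$.

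For the inductive step I would use the recursion in the form \eqref{red_diag2}, namely $\rdDelta{p}=(\rdDelta{p-1}\tensor\id)\circ\rDelta$. Applying $\rDelta$ first expands $v_1\odot\cdots\odot v_n$ as a sum over split points $1\le m\le n-1$ and $(m,n-m)$-unshuffles $\tau$, with first tensor factor $v_{\tau(1)}\odot\cdots\odot v_{\tau(m)}$ of length $m$ and second factor of length $n-m$. Applying the inductive hypothesis for $\rdDelta{p-1}$ to the first factor then splits it into $p$ further blocks of sizes $k_1,\dots,k_p$ with $k_1+\cdots+k_p=m$; setting $k_{p+1}:=n-m$ yields exactly the block sizes $(k_1,\dots,k_{p+1})$, all positive and summing to $n$, that appear in the claim.

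The step I expect to be the crux is verifying that this two-stage expansion reproduces precisely the sum indexed by $\Sh(k_1,\dots,k_{p+1})$, with the correct Koszul signs. Concretely, I would exhibit the bijection sending a pair consisting of an $(m,n-m)$-unshuffle $\tau$ and a $(k_1,\dots,k_p)$-unshuffle $\rho$ of its first $m$ letters to the composite permutation $\sigma$; one checks that $\sigma$ is increasing on each of the consecutive runs of sizes $k_1,\dots,k_p$ inside $\{1,\dots,m\}$ and on $\{m+1,\dots,n\}$, hence is a $(k_1,\dots,k_{p+1})$-unshuffle, and that every such unshuffle arises from a unique pair $(\tau,\rho)$, recovered by sorting the first $k_1+\cdots+k_p$ entries of $\sigma$. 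This is the combinatorial shadow of the coassociativity already built into $\rDelta$. For the signs, the multiplicativity of the Koszul sign under composition guarantees that the sign $\epsilon(\tau)$ produced by $\rDelta$ and the sign produced by $\rdDelta{p-1}$ applied to the reordered letters $v_{\tau(1)},\dots,v_{\tau(m)}$ combine into the single sign $\epsilon(\sigma)$ of the composite; here it is essential that $\rho$ permutes only the first $m$ letters.

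Finally, taking $p=n-1$ forces every $k_i=1$, so each symmetric factor $v_{\sigma(k_1+\cdots+k_{i-1}+1)}\odot\cdots$ collapses to a single $v_{\sigma(i)}$, the constraint set $\Sh(1,\dots,1)$ is all of $\Sn_n$ (no pair of adjacent indices lies inside a common block), and the formula reduces to $\sum_{\sigma\in\Sn_n}\epsilon(\sigma)\,v_{\sigma(1)}\tensor\cdots\tensor v_{\sigma(n)}$, as claimed.
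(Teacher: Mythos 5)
Your proof is correct and is essentially the paper's: the paper merely states that the lemma ``follows straightforwardly via induction,'' and your induction on $p$ via the recursion $\rdDelta{p}=(\rdDelta{p-1}\tensor\id)\circ\rDelta$, together with the bijection $(\tau,\rho)\leftrightarrow\sigma$ between pairs of unshuffles and $(k_1,\ldots,k_{p+1})$-unshuffles and the multiplicativity of Koszul signs, is exactly that argument made explicit. The only point worth adding is that for split points $m<p$ in the $\rDelta$-expansion the inductive hypothesis does not literally apply; those terms vanish because $\S^{\bullet\leq p-1}(V)\subseteq\ker\rdDelta{p-1}$, a fact recorded in the paper immediately before the lemma, and this is consistent with the empty sum over compositions of $m$ into $p$ positive parts.
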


Note that the above lemma implies that $\ker \rdDelta{k} =\S^{\bullet
  \leq  k}(V)$ for  $k \geq 0$ and hence
\begin{equation} \label{prim_cogen}
\S(V)=\bigcup_{n} \ker \rdDelta{n}. 
\end{equation}

\subsection{Coalgebra morphisms} \label{coalgebra_morphisms}
A \textbf{morphism} between the reduced coalgebras $(\S(V),\rDelta)$ and $(\S(V'),\rDelta')$
is a degree 0 linear map $F \maps \S(V) \to \S(V')$ such that
\begin{equation*} 
\rDelta' \circ F = (F \tensor F) \circ \rDelta.
\end{equation*}
Given such a morphism, we define the restriction-projections:
\begin{equation}\label{Fproj}
F^p_{n}= \pr_{\S^{p}(V')} \circ F \vert_{\S^{n}(V)} \maps
\S^{n}(V) \to \S^{p}(V').
\end{equation}
The following proposition implies that the linear map
\[
F^{1} \maps \S(V) \to V', \quad F^{1} := F^{1}_{1} + F^{1}_{2} + \cdots
\]
uniquely determines the coalgebra morphism $F$.

\begin{prop}\label{FHT_prop}
If $V$ and $V'$ are graded vector spaces, and $F^{1} \maps \S(V)
\to  V'$  is a degree zero linear map, then there exists a unique
morphism of coalgebras
\[
F \maps \S(V) \to \S(V')
\]
lifting $F^{1}$ such that $\pr_{ V'} \circ F = F^{1}$.
\end{prop}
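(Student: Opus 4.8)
The plan is to prove that the reduced symmetric coalgebra $\bigl(\S(V'),\rDelta'\bigr)$ is \emph{cofree} on its cogenerators $V'$, so that coalgebra morphisms into it are classified by degree zero linear maps into $V'$. The essential structural input is the conilpotency identity \eqref{prim_cogen}, $\S(V)=\bigcup_n\ker\rdDelta{n}$, which guarantees that every sum over symmetric powers written below terminates on each $\S^n(V)$ and hence defines an honest linear map.

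First I would record the iterated form of the morphism condition. If $F$ satisfies $\rDelta'\circ F=(F\tensor F)\circ\rDelta$, then a short induction on $p$, using the recursive definition of $\rdDelta{p-1}$ (and its primed analogue) together with the morphism condition in the inductive step, yields
\[
\rdDelta{p-1}'\circ F = F^{\tensor p}\circ\rdDelta{p-1},\qquad p\geq 1.
\]
To extract the components $F^p_n$ of \eqref{Fproj}, I compose this identity with $\pr_{V'}^{\tensor p}$ and restrict to $\S^n(V)$. On the right I obtain $(\pr_{V'}\circ F)^{\tensor p}\circ\rdDelta{p-1}=(F^1)^{\tensor p}\circ\rdDelta{p-1}$. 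On the left, writing $F|_{\S^n(V)}=\sum_q F^q_n$, Lemma \ref{red_diag_lemma} shows that $\pr_{V'}^{\tensor p}\circ\rdDelta{p-1}'$ annihilates $\S^q(V')$ for $q\neq p$ — for $q<p$ because $\S^{\bullet\leq p-1}(V')\subseteq\ker\rdDelta{p-1}'$, and for $q>p$ because every term of $\rdDelta{p-1}'$ then has a tensor factor lying in $\S^{\bullet\geq 2}(V')$, which $\pr_{V'}$ kills — while on $\S^p(V')$ it produces the full graded symmetrization $\sum_{\sigma\in\Sn_p}\epsilon(\sigma)(\cdots)$. Composing with the canonical symmetrization map $\mu_p\maps (V')^{\tensor p}\to\S^p(V')$ and using the resulting normalization $\mu_p\circ\pr_{V'}^{\tensor p}\circ\rdDelta{p-1}'\big|_{\S^p(V')}=p!\cdot\id$, I arrive at the explicit formula
\[
F^p_n=\tfrac{1}{p!}\,\mu_p\circ(F^1)^{\tensor p}\circ\rdDelta{p-1}\big|_{\S^n(V)}.
\]
Since this expresses every component of $F$ in terms of $F^1$ alone, the lift is unique.

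For existence I would simply \emph{define} $F:=\sum_{p\geq 1}\tfrac{1}{p!}\,\mu_p\circ(F^1)^{\tensor p}\circ\rdDelta{p-1}$, which is a finite sum on each $\S^n(V)$ by \eqref{prim_cogen}. The normalization $\pr_{V'}\circ F=F^1$ is immediate, as only the $p=1$ summand (where $\rdDelta{0}=\id$ and $\mu_1=\id$) survives projection onto $V'$, all higher summands landing in $\S^{\bullet\geq 2}(V')$. The remaining, and genuinely technical, task is to verify the coalgebra morphism identity $\rDelta'\circ F=(F\tensor F)\circ\rDelta$ for this candidate $F$; I expect this to be the main obstacle. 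It reduces to a combinatorial identity matching, for each splitting of an unshuffle of $\{1,\ldots,n\}$, the two ways of grouping the inputs — split-then-apply-$F$ against apply-$F$-then-split — with all Koszul signs $\epsilon(\sigma)$ correctly accounted for. This follows from the coassociativity and cocommutativity of $\rDelta$ together with the compatibility of the multiplications $\mu_p$ with $\rDelta'$, but carrying it out cleanly requires a careful induction on $n$ organized through Lemma \ref{red_diag_lemma} and the factorization \eqref{red_diag2} of the iterated reduced diagonal.
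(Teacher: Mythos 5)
Your construction is exactly the paper's: the map $F=\sum_{p\geq 1}\tfrac{1}{p!}\,\mu_p\circ(F^1)^{\tensor p}\circ\rdDelta{p-1}$ coincides with the paper's formula \eqref{morph_eq3}, since the map $\psi^{(p)}$ defined there is precisely $\tfrac{1}{p!}\,\mu_p\circ(F^1)^{\tensor p}$ on each tensor-degree piece, and well-definedness is justified by conilpotency \eqref{prim_cogen} in both cases; your component formula likewise reproduces \eqref{big_morphism_eq_2}. Two points of comparison are worth recording. First, your uniqueness argument---iterating the comorphism identity to $\rdDelta{p-1}'\circ F=F^{\tensor p}\circ\rdDelta{p-1}$ and projecting via Lemma \ref{red_diag_lemma}, using that $\pr_{V'}^{\tensor p}\circ\rdDelta{p-1}'$ annihilates $\S^q(V')$ for $q\neq p$ and, after symmetrizing, is $p!\cdot\id$ on $\S^p(V')$---is correct and complete, and it is something the paper does not supply: the paper's proof consists of declaring the statement a special case of Prop.\ 4.1 in Sec.\ B3 of \cite{Quillen:1969} (or Lemma 22.1 of \cite{FHT}) and then merely recalling the construction of $F$. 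Second, the step you defer as ``the main obstacle''---verifying $\rDelta'\circ F=(F\tensor F)\circ\rDelta$ for the candidate $F$---is exactly the step the paper also leaves to those references, so your writeup is no less self-contained than the paper's own proof; but if you intend a genuinely self-contained argument, that combinatorial verification (or an explicit citation, as the paper chooses) is still owed. Your outline for carrying it out---an induction organized through the factorization \eqref{red_diag2}, coassociativity and cocommutativity of $\rDelta$, and compatibility of the symmetrization maps with $\rDelta'$---is the standard and workable route.
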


\begin{proof}
The statement is a special case of Prop.\ 4.1 in Sec.\ B3 of
\cite{Quillen:1969} or Lemma 22.1 in \cite{FHT}.  Here we just recall the construction of the coalgebra morphism $F$. First, define for each $p >0$: 
\[
\psi^{(p)} \maps \underset{p}{\underbrace{\S(V) \tensor \S(V) \tensor \cdots \tensor \S(V)}}
\to \S ^{p} (V')
\]
\begin{equation*}
\psi^{(p)}(c_{1} \tensor c_{2} \cdots \tensor c_{p}) = \frac{1}{p!}
F^{1}_{k_{1}}(c_{1}) \odot \cdots \odot F^{1}_{k_{p}}(c_{p}),
\end{equation*}
where $c_{1},\hdots,c_{p}$ are simple tensors  with $c_{i} \in
\S^{k_{i}}(V)$ and  $F^{1}_{k_{i}} =F^{1}\vert_{ \S^{k_{i}}(V)}$.
Define $F$ to be: 
\begin{equation}\label{morph_eq3}
F(c) =
\sum_{p=0}^{\infty} \psi^{(p+1)} \circ \rdDelta{p}(c), \quad  c \in \S(V).
\end{equation}
Note the infinite sum is well-defined since Eq.\  \eqref{prim_cogen} holds.
\end{proof}

We use Lemma \ref{red_diag_lemma} to write out the formula for $F$
explicitly in terms of the maps $F^{1}_{k}$. Given
$v_{1},\hdots,v_{n} \in V$,  we have 
\begin{equation} \label{big_morphism_eq_1}
\begin{split}
F(v_{1} \odot \cdots \odot v_{n}) &=  F^{1}_{n}(v_{1} \odot \cdots
\odot v_{n}) + \sum_{p=1}^{n-1}
\sum^{k_{1} + k_{2} +
  \cdots + k_{p+1}=n}_{k_{1},k_{2},\hdots,k_{p+1} \geq 1} \sum_{\sigma
  \in \Sh(k_{1},k_{2},\hdots,k_{p+1})} \frac{\epsilon(\sigma)}{(p+1)!}\\
&\quad \times F^{1}_{k_{1}}(v_{\sigma(1)}
\odot \cdots \odot v_{\sigma(k_{1})}) 
\odot F^{1}_{k_{2}}(v_{\sigma(k_{1} + 1)} \odot \cdots \odot
v_{\sigma(k_{1}+k_{2})}) \odot \cdots \\
& \quad \odot F^{1}_{k_{p+1}}(v_{\sigma(m-k_{p+1} + 1)} \odot \cdots \odot v_{\sigma(n)}).
\end{split}
\end{equation}
This gives explicit formulas for the projections $F^{p}_{n}$ defined in \eqref{Fproj}:
\begin{equation} \label{big_morphism_eq_2}
\begin{split}
F^{p}_{n}(v_{1} \odot \cdots \odot v_{n}) &=
\sum^{k_{1} + k_{2} +
  \cdots + k_{p}=n}_{k_{1},k_{2},\hdots,k_{p} \geq 1} \sum_{\sigma
  \in \Sh(k_{1},k_{2},\hdots,k_{p})} \frac{\epsilon(\sigma)}{p!}
F^{1}_{k_{1}}(v_{\sigma(1)} \odot \cdots \odot v_{\sigma(k_{1})}) \\
& \quad \odot F^{1}_{k_{2}}(v_{\sigma(k_{1} + 1)} \odot \cdots \odot
v_{\sigma(k_{1}+k_{2})}) \odot \cdots 
\odot F^{1}_{k_{p}}(v_{\sigma(m-k_{p} + 1)} \odot \cdots \odot v_{\sigma(n)}).
\end{split}
\end{equation}
In particular, 
\[
F^{n}_{n}(v_{1} \odot \cdots \odot v_{n}) = F^{1}_{1}(v_{1}) \odot F^{1}_{1}(v_{2}) \odot \cdots \odot F^{1}_{1}(v_n),
\]
and
\[
F^{p}_{n}(v_{1} \odot \cdots \odot v_{n})=0 \quad \text{for $p >n$}.
\]

\subsection{{\boldmath $L_{\infty}$}-algebras as dg coalgebras} \label{coalg-Linfty}
A \textbf{codifferential} on the coalgebra $(\S(V),\rDelta)$ is 
a degree $+1$ linear map $Q \maps \S(V) \to \S(V)$ satisfying
\[
\rDelta Q = \left (Q \tensor \id \right) \rDelta +  (\id \tensor Q ) \rDelta.
\]
and
\[
Q \circ Q =0.
\]

If $V$ is a graded vector space, then $\bs V$
 (resp.\ $\ds V$) denotes the suspension (resp.\
 desuspension) of $V$ i.e.
 \[
 (\bs V)_{i} = V_{i-1}, \quad  (\ds V)_{i} = V_{i+1}.
 \]
Recall that in Def.\ \ref{Linfty}, we defined an $L_{\infty}$-algebra
structure on $L$ to be a collection of skew-symmetric maps $\{l_{k}
\maps L^{\tensor k} \to L \}_{k=1}^{\infty}$ with $\deg l_{k} =2-k$ which satisfy a
rather complicated generalization of the Jacobi identity. In contrast,
the following theorem provides a more elegant description.

\begin{thm}[Thm.\ 2.3 \cite{Lada-Markl}] \label{coalg_Linfty_thm}
An $L_{\infty}$-structure $(l_{k})$ on a graded vector space $L$
uniquely determines a degree $1$ codifferential $Q$ on the coalgebra
$$C(L)=\S(\ds L).$$ Conversely, any such codifferential on $C(L)$
uniquely
determines an $L_{\infty}$-structure on $L$.
\end{thm}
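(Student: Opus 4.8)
The plan is to set up the correspondence in two layers: first a purely coalgebraic lemma characterizing coderivations on the cofree cocommutative coalgebra, and then the décalage that converts the resulting graded-symmetric data into the skew-symmetric brackets $l_{k}$.

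First I would prove the coderivation analog of Prop.\ \ref{FHT_prop}: a degree $+1$ linear map $Q \maps \S(V) \to \S(V)$ satisfying the co-Leibniz rule $\rDelta Q = (Q \tensor \id)\rDelta + (\id \tensor Q)\rDelta$ is uniquely determined by its corestriction $Q^{1} := \pr_{V} \circ Q \maps \S(V) \to V$, and conversely every family of degree $+1$ maps $\{Q^{1}_{k} \maps \S^{k}(V) \to V\}_{k \geq 1}$ lifts to a unique such coderivation, given by the formula
\[
Q(v_{1} \odot \cdots \odot v_{n}) = \sum_{k=1}^{n} \sum_{\sigma \in \Sh(k,n-k)} \epsilon(\sigma)\, Q^{1}_{k}(v_{\sigma(1)} \odot \cdots \odot v_{\sigma(k)}) \odot v_{\sigma(k+1)} \odot \cdots \odot v_{\sigma(n)}.
\]
One checks directly, using Lemma \ref{red_diag_lemma} and \eqref{prim_cogen}, that this is well defined, is a coderivation, and recovers $Q^{1}$ under $\pr_{V}$. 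Uniqueness follows because the difference of two lifts is a coderivation with vanishing corestriction, which an induction on symmetric degree (via the explicit form of $\rdDelta{n}$) forces to be zero.

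Next I would specialize to $V = \ds L$ and perform the décalage. Each corestriction component $Q^{1}_{k} \maps \S^{k}(\ds L) \to \ds L$ is a degree $+1$ graded-\emph{symmetric} map; conjugating it by the (de)suspension isomorphisms relating $L^{\tensor k}$ with $(\ds L)^{\tensor k}$ and $L$ with $\ds L$ produces a graded-\emph{skew-symmetric} map $l_{k} \maps L^{\tensor k} \to L$ with $\deg{l_{k}} = 2-k$, matching Def.\ \ref{Linfty}. The passage from symmetric to skew-symmetric and the precise signs introduced are exactly the Koszul signs generated by commuting the odd (de)suspensions past the arguments. This yields a bijection between families $(Q^{1}_{k})_{k \ge 1}$ and bracket families $(l_{k})_{k \ge 1}$, and hence, by the first step, a bijection between degree $+1$ coderivations on $C(L)$ and the data of Def.\ \ref{Linfty}. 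It then remains to match the two quadratic conditions. Since $Q$ has odd degree, $Q \circ Q = \tfrac12 [Q,Q]$ is again a coderivation, so by the uniqueness in the first step $Q \circ Q = 0$ holds if and only if $\pr_{\ds L} \circ Q \circ Q = 0$. Computing $\pr_{\ds L} \circ Q \circ Q$ on $\S^{m}(\ds L)$ and using the lifting formula isolates exactly the terms in which an inner $Q^{1}_{i}$ feeds into an outer $Q^{1}_{j}$ with $i + j = m+1$, summed over $(i,m-i)$-unshuffles; translating through the décalage turns this into the left-hand side of the generalized Jacobi identity \eqref{gen_jacobi}, so that $Q \circ Q = 0$ is equivalent to \eqref{gen_jacobi} for all $m$, giving both directions.

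The main obstacle is the sign accounting in the décalage: verifying that the Koszul signs generated by moving suspensions/desuspensions through the tensor factors, combined with the signs appearing in $\rdDelta{p}$ and in the co-Leibniz expansion, reproduce precisely the coefficient $(-1)^{\sigma}\epsilon(\sigma)(-1)^{i(j-1)}$ of \eqref{gen_jacobi} and the degree shift $2-k$ of $l_{k}$. Everything else is a formal consequence of the cofreeness results of Sec.\ \ref{cofree_sec}; the genuine content is this bookkeeping, which I would organize by performing all manipulations on the coalgebra side in terms of the $Q^{1}_{k}$ and applying a single global desuspension only at the very end.
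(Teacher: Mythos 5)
Your proposal is correct and takes essentially the same approach as the paper and the Lada--Markl reference it cites: your coderivation-lifting lemma is the coderivation analogue of Prop.~\ref{FHT_prop} and reproduces Eq.~\eqref{coder_eq}, your d\'ecalage with suspension signs is Eq.~\eqref{struc_skew}, and your identification of $Q \circ Q = 0$ with the generalized Jacobi identity \eqref{gen_jacobi} by projecting onto cogenerators is precisely the correspondence the paper records.
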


We will need to briefly describe the correspondence given by the theorem. 
We define the restrictions
\begin{equation*}
 \label{restrict}
Q_{m}=Q \vert_ {\S^{m}(\ds L)} \maps \S^{m}(\ds L) \to \S(\ds L)
\end{equation*}
so that $Q = Q_{1} + Q_{2} +Q_{3} + \hdots$, and the projections
\begin{equation} \label{Qproj}
Q^{k}_{m} = \pr_{\S^{k}(\ds L)} \circ Q_{m} \maps \S^{m}(\ds L) \to
\S^{k}(\ds L).
\end{equation}
It follows from Lemma 2.4 in \cite{Lada-Markl} that $Q$
is uniquely determined by the collection of maps
\begin{equation*} 
\label{struct_maps}
\sQ_{m} = \pr_{\ds L} \circ Q_{m} \maps \S^{m}(\ds L) \to \ds L, \quad
m \geq 1.
\end{equation*}
These are related to the skew-symmetric ``structure maps'' $l_{m}
\maps L^{\tensor m} \to L$ via the formula
\begin{equation}\label{struc_skew}
\sQ_{m} = (-1)^{\frac{m(m-1)}{2}} \ds \circ
l_{m} \circ \bs^{\tensor m},
\end{equation}
while the entire coderivation $Q$ can be expressed as
\begin{multline} \label{coder_eq}
Q_{m}(\ds x_{1} \odot \cdots \odot \ds x_{m}) = 
\sQ_{m}(\ds x_{1} \odot \cdots \odot \ds x_{m}) + \\
\sum^{m-1}_{i=1} \sum_{\sigma \in \Sh(i,m-i)}
\epsilon(\sigma) \sQ_{i}(\ds x_{\sigma(1)} \odot \cdots \odot
\ds x_{\sigma(i)} )\odot \ds x_{\sigma(i+1)} \odot \cdots \odot \ds x_{\sigma(m)},
\end{multline}
for all $x_{i} \in L$.
The condition $Q \circ Q =0$ is equivalent to the generalized
Jacobi identity \eqref{gen_jacobi} for the collection $(l_{k})$. In particular, it implies that $l_{1}$ is degree $+1$ differential on $L$.

\subsection{{\boldmath $L_\infty$}-Morphisms: General case}\label{Linfty_morph_sec}
Thanks to Thm\ \ref{coalg_Linfty_thm}, it is now clear what an
$L_{\infty}$-morphism should be.
\begin{defi}\label{morphism_def}
A \textbf{morphism} between $L_{\infty}$-algebras $(L,l_{k})$ and $(L',l'_{k})$ 
is a coalgebra morphism
\[
F \maps C(L) \to C(L')
\]
such that
\[
FQ = Q'F.
\]
\end{defi}
The following proposition says that `strict morphisms' in the sense of
Def.\ \ref{strict_morph_def_1} are precisely those coalgebra morphisms
that satisfy
\[
\forall k \geq 2 \quad F^{1}_{k} =0.
\]
We leave the proof to the reader.
\begin{prop}\label{strict_morphism_prop}
If $(L,l_{k})$ and $(L',l'_{k})$ are $L_{\infty}$-algebras, 
and $f \maps L \to L'$ is a degree zero linear map satisfying
\[
l'_{k} \circ f^{\tensor k} = f \circ l_{k} \quad \forall k \geq 1,
\]
then the linear map $F \maps C(L) \to C(L')$ given by
\[
F( \ds x_{1} \odot \cdots \ds x_{k}) = \ds f(x_{1}) \odot \cdots \odot
\ds f(x_{k})
\]
is a strict $L_{\infty}$-morphism.
\end{prop}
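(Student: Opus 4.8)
The plan is to check the two defining properties of an $L_{\infty}$-morphism (Def.\ \ref{morphism_def}) separately and then read off strictness. First I would observe that $F$ is nothing but the factor-by-factor extension of the single degree-zero map $g = \ds \circ f \circ \bs \maps \ds L \to \ds L'$, i.e.\ the image of $g$ under the reduced symmetric coalgebra construction. Since $f$ has degree zero, $g$ preserves the degrees of the generators and hence leaves untouched all Koszul signs $\epsilon(\sigma)$ occurring in the reduced comultiplication $\rDelta$. A direct comparison of $\rDelta' \circ F$ with $(F \tensor F)\circ \rDelta$ (equivalently, functoriality of $\S(-)$ as a functor to coalgebras) then shows that $F$ is a morphism of graded coalgebras. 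Moreover $F$ visibly preserves symmetric word-length, so its corestriction $\sF_{k} = \pr_{\ds L'}\circ F\vert_{\S^{k}(\ds L)}$ vanishes for all $k \geq 2$; this is exactly the condition identifying $F$ as a strict morphism in the sense of Def.\ \ref{strict_morph_def_1}, with sole structure map $f$.

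The substance is the chain-map condition $F Q = Q' F$. Rather than expanding both sides fully, I would invoke a uniqueness principle. Combining the coalgebra-morphism identity $\rDelta' F = (F \tensor F)\rDelta$ with the coderivation property $\rDelta Q = (Q \tensor \id + \id \tensor Q)\rDelta$ gives $\rDelta'(FQ) = (FQ \tensor F + F \tensor FQ)\rDelta$, and the same computation with $Q'$ first gives $\rDelta'(Q'F) = (Q'F \tensor F + F \tensor Q'F)\rDelta$. Thus both $FQ$ and $Q'F$ are $(F,F)$-coderivations, and a standard lemma (the analogue of Prop.\ \ref{FHT_prop} for coderivations along a fixed coalgebra map) guarantees that such a map is determined by its composition with $\pr_{\ds L'}$. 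Hence it suffices to verify $\pr_{\ds L'}F Q = \pr_{\ds L'}Q' F$.

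Both sides I would evaluate on a generator $\ds x_{1} \odot \cdots \odot \ds x_{n}$. Because $F$ preserves word-length, the right-hand side collapses to $\sQ_{n}'\bigl(\ds f(x_{1}) \odot \cdots \odot \ds f(x_{n})\bigr)$, where $\sQ'_{n}$ is the top structure map of $Q'$. On the left, expanding $Q$ via Eq.\ \eqref{coder_eq} yields exactly one summand lying in $\S^{1}(\ds L)$, namely $\sQ_{n}(\ds x_{1} \odot \cdots \odot \ds x_{n})$, the remaining summands having word-length $\geq 2$; since $F$ preserves word-length and $\pr_{\ds L'}$ annihilates everything outside $\S^{1}(\ds L')$, only this term survives and the left-hand side equals $g\bigl(\sQ_{n}(\ds x_{1} \odot \cdots \odot \ds x_{n})\bigr)$. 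Substituting the relation \eqref{struc_skew} between $\sQ_{m}$ and $l_{m}$ (and its primed analogue), cancelling the common sign $(-1)^{n(n-1)/2}$, and using $\bs \ds = \id$, both expressions reduce respectively to $\ds \circ f \circ l_{n}(x_{1},\ldots,x_{n})$ and $\ds \circ l_{n}' \circ f^{\tensor n}(x_{1},\ldots,x_{n})$, which coincide precisely by the hypothesis $l'_{k}\circ f^{\tensor k} = f \circ l_{k}$.

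I expect the main obstacle to be the bookkeeping of the uniqueness step, namely stating and justifying the $(F,F)$-coderivation lemma cleanly, together with keeping the suspension conventions and Koszul signs consistent; once those are in place, the final collapse onto the hypothesis is immediate.
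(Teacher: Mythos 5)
The paper offers no proof of this proposition---it is explicitly left to the reader---so there is no argument of the authors' to compare against; your proof is correct. Your three steps (coalgebra-morphism property via functoriality of $\S(-)$ and degree-preservation of $f$; word-length preservation giving $\sF_k=0$ for $k\geq 2$, hence strictness; reduction of $FQ=Q'F$ to corestrictions via the uniqueness lemma for $(F,F)$-coderivations) are all sound, and the computation of both corestrictions is right: only the word-length-one summand of Eq.\ \eqref{coder_eq} survives the projection, and the hypothesis $l'_k\circ f^{\tensor k}=f\circ l_k$ finishes the job. It is worth noting that the uniqueness principle you invoke is precisely what underlies the paper's own reformulation of $FQ=Q'F$ as the system \eqref{codiff_compat_eq}; so you could streamline by citing that equation directly: since Eq.\ \eqref{big_morphism_eq_2} gives $F^p_m=0$ for $p\neq m$ and $F^m_m = g^{\odot m}$ with $g=\ds\circ f\circ \bs$, the system \eqref{codiff_compat_eq} collapses to $g\circ \sQ_m = {Q'}^1_m\circ g^{\odot m}$, which by \eqref{struc_skew} is the hypothesis. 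The only point you defer that deserves to be made explicit is the sign bookkeeping in that last translation: the Koszul signs produced by $\bs^{\tensor m}$ acting on $\ds x_1\tensor\cdots\tensor \ds x_m$ and on $\ds f(x_1)\tensor\cdots\tensor \ds f(x_m)$ coincide exactly because $\deg{f(x_i)}=\deg{x_i}$, which is what allows the common factor $(-1)^{m(m-1)/2}$ to cancel; with that remark added, your write-up is complete.
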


More generally, if $F \maps (C(L),Q) \to (C(L'), Q')$ is any $L_{\infty}$-morphism,
then the projections defined in Eq.\ \eqref{Fproj} and Eq.\
\eqref{Qproj} allow us to write the equality $F Q = Q' F$ as
\begin{equation} \label{codiff_compat_eq}
\sum^{m}_{k=1}\sF_{k} Q^{k}_{m}=\sum^{m}_{k=1} Q'^{1} _{k} F^{k}_{m} 
\quad \forall m  \geq 1.
\end{equation}
Every such $F$ is of the form \eqref{morph_eq3}, since by Prop.\
\ref{FHT_prop} it is the unique lift of its projection
$F^{1}=F^{1}_{1} + F^{1}_{2} + F^{1}_{3} + \cdots$. Hence, $F$ is
uniquely determined by its corresponding collection of
``structure maps'' $\{ f_{k} \maps L^{\tensor k} \to L' \}^{\infty}_{k \geq 1}$ 
which satisfy
\begin{equation} \label{morph_eq1}
 F^{1}_{k} = (-1)^{\frac{k(k-1)}{2}} \ds \circ
 f_{k} \circ \bs^{\tensor k}.
\end{equation}
Hence each $f_{k}$ is graded
skew-symmetric with $\deg{f_{k}} =1-k$.
Note that the equality $FQ=Q'F$ implies that the degree zero map
\[
f_{1} \maps (L,l_1) \to (L',l'_{1})
\]
is a morphism of cochain complexes. This leads us to the notion of
$L_{\infty}$-quasi-isomorphism given in Def.\ \ref{Linfty_qiso_def}.

\begin{remark} \label{rmk:general_morphism}
The defining equations \eqref{codiff_compat_eq} for a morphism 
$(L,l_{k}) \to (L',l'_{k})$ between $L_\infty$-algebras
can be given explicitly in terms of the structure maps
$f_k$, $l_k$, and $l'_k$. First, one uses  Eqs.\ \eqref{big_morphism_eq_2} and
\eqref{coder_eq} to rewrite  \eqref{codiff_compat_eq} in terms of the
multi-linear maps $F^{1}_{k}$, $Q^{1}_{k}$, and $Q'^{1}_{k}$. 
As Eqs.\ \eqref{struc_skew} and \eqref{morph_eq1} show, these
maps are nothing but the structure maps composed with the appropriate
suspensions and desuspensions. (See Cor.\ \ref{Lie_alg_P_cor} for a special case.)
\end{remark}

\subsection{Lie algebras} \label{lie_alg_sec}
Any  differential graded Lie algebra (DGLA) can be thought of as a $L_{\infty}$-algebra 
by associating to $(\g,d,[\cdot,\cdot])$ the coalgebra $\S(\ds \g)$ with codifferential $D$ defined by the equations
\begin{equation} \label{coder_liealg}
\begin{split}
D_{1}(\ds x) &= \ds d x \\
D^{1}_{2}(\ds x, \ds y) &= (-1)^{\deg{x}}\ds [x,y]\\
\quad D^{1}_{k}&= 0, ~ k \geq 3.
\end{split}
\end{equation}
A DGLA morphism $f \maps \g \to \g'$ induces a unique strict $L_{\infty}$-morphism between 
$(\S(\ds \g),D)$ and $(\S(\ds \g'),D')$. 
We treat ordinary Lie algebras as DGLAs concentrated in degree zero with differential $d=0$.

Now we consider $L_{\infty}$-algebra morphisms whose sources
are just Lie algebras $(\g,[\cdot,\cdot])$.
Since the projections $D^{k}_{m}$ are built from the structure maps
$D^{1}_{m}$ via Eq.\  \eqref{coder_eq},  we have
\begin{equation*}
D^{k}_{m} =0 \quad \text{whenever $k \neq m-1$.}
\end{equation*} 
Therefore, Eq.\  \eqref{codiff_compat_eq}, which a coalgebra morphism $F
\maps \S(\ds \g)
\to \S(\ds L)$ must satisfy to be an $L_{\infty}$-morphism, simplifies to 
\begin{equation} \label{Lie_alg_morph}
\begin{split}
Q^{1}_{1}F^{1}_{1} &= 0,\\
\sF_{m-1} D^{m-1}_{m} &=\sum^{m}_{k=1} Q^{1} _{k} F^{k}_{m}  \quad \forall m  \geq 2.
\end{split}
\end{equation}

In particular, homotopy moment maps (Def.\ \ref{main_def}) are 
 $L_{\infty}$-morphisms from a Lie algebra to a
Lie $n$-algebra $(L,l_{k})$ satisfying Property \eqref{property}, which we defined in Sec.\ \ref{secP} as being:
\[
\forall k \geq 2 \quad  l_{k}(x_{1},\ldots,x_{k}) = 0 \quad
\text{whenever $\sum_{i=1}^{k} \deg{x_{i}} < 0.$}
\]
Equation \eqref{struc_skew} implies that this is equivalent to the
corresponding codifferential $Q$ on $\S(\ds L)$ satisfying
\[
\forall k \geq 2 \quad  Q^{1}_{k}(\ds x_{1} \odot \cdots \odot \ds x_{k}) = 0 \quad
\text{whenever $\deg{\ds x_{1} \odot \cdots \odot \ds x_{k}} < k.$}
\]

\begin{prop}\label{Lie_alg_prop_P2}
If $(\g,[\cdot,\cdot])$ is a Lie algebra and $(L,l_{k})$ is a Lie $n$-algebra
satisfying Property \eqref{property}, then a coalgebra morphism 
$F \maps \S(\ds \g) \to \S(\ds L)$ is an $L_{\infty}$-algebra morphism if and
only if
\begin{equation}\label{Lie_alg_prop_P_eq1}
\sF_{m-1} D^{m-1}_{m} = Q^{1}_{1} \sF_{m} + Q^{1}_{m}F^{m}_{m}
\end{equation}
for $2 \leq m \leq n$, and
\begin{equation} \label{Lie_alg_prop_P_eq2}
F^{1}_{n}D^{n}_{n+1} = Q^{1}_{n+1}F^{n+1}_{n+1},
\end{equation}
where $D$ and $Q$ are the codifferentials determined by
$[\cdot,\cdot]$, and $(l_{k})$, respectively.
\end{prop}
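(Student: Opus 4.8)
The plan is to prove Proposition~\ref{Lie_alg_prop_P2} by specializing the general compatibility condition \eqref{Lie_alg_morph} for an $L_\infty$-morphism out of a Lie algebra, using the two structural vanishing facts that are now available: the source-side identity $D^{k}_{m}=0$ for $k\neq m-1$, and the target-side Property~\eqref{property} in its codifferential form $Q^{1}_{k}(\ds x_1\odot\cdots\odot\ds x_k)=0$ whenever $\deg{\ds x_1\odot\cdots\odot\ds x_k}<k$. First I would fix $m$ and examine the right-hand side $\sum_{k=1}^{m} Q^{1}_{k}F^{k}_{m}$ of \eqref{Lie_alg_morph}, arguing that most of the $m$ summands vanish. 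The term $Q^{1}_{k}F^{k}_{m}$ feeds into $Q^{1}_{k}$ an element of $\S^{k}(\ds L)$ obtained as a $\odot$-product of images $F^{1}_{k_i}$ with $\sum k_i = m$; since the source $\g$ sits in degree $0$, each desuspended generator $\ds x$ has degree $-1$, and the degree of $F^{1}_{k_i}(\cdots)$ is controlled by $\deg F^{1}_{k_i}=0$ acting on a degree $-k_i$ element. Tracking these degrees shows that the argument of $Q^{1}_k$ has total degree strictly below $k$ precisely when $2\le k\le m-1$ (and also for $k=m$ once $m\le n$), so Property~\eqref{property} kills those summands.

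The key computation is therefore a degree-counting bookkeeping, carried out separately in the two ranges. For $2\le m\le n$, I would show that on the right-hand side only $k=1$ and $k=m$ survive: the $k=1$ term is $Q^{1}_{1}F^{1}_{m}=Q^{1}_{1}\sF_m$, and the $k=m$ term is $Q^{1}_{m}F^{m}_{m}$, where $F^{m}_{m}(\ds x_1\odot\cdots\odot\ds x_m)=\ds f_1(x_1)\odot\cdots\odot\ds f_1(x_m)$ has each factor of degree $-1$ (hence total degree $-m$, which is $<m$ in general, but $Q^{1}_m$ on it is exactly the image of $l_m$ applied to degree-$0$ elements $f_1(x_i)$, which is allowed by Property~\eqref{property}). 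The left-hand side $\sum_{k=1}^{m}\sF_k Q^{k}_{m}$ collapses via $D^{k}_{m}=0$ for $k\neq m-1$ to the single term $\sF_{m-1}D^{m-1}_{m}$, giving \eqref{Lie_alg_prop_P_eq1}. For $m=n+1$, the same degree count shows the $k=1$ term $Q^{1}_{1}\sF_{n+1}$ vanishes because $\sF_{n+1}$ would land in a degree forcing $l_1$ to act trivially (indeed $f_{n+1}=0$ by the Lie $n$-algebra degree constraint), all intermediate $k$ terms vanish by Property~\eqref{property}, and only $Q^{1}_{n+1}F^{n+1}_{n+1}$ remains, yielding \eqref{Lie_alg_prop_P_eq2} after identifying the left side as $F^{1}_{n}D^{n}_{n+1}$.

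The main obstacle I anticipate is the careful sign and degree accounting needed to confirm that the surviving summands are exactly the ones claimed, and in particular to verify the boundary behavior at $k=m$ versus $k=m-1$: the interplay between the shift by $\ds$ (which lowers the effective degree by one per generator), the constraint $\deg l_k = 2-k$, and the Lie $n$-algebra truncation $L^{i}=0$ for $i>0$ and $i<1-n$ must all be reconciled so that no extra term sneaks in for $m>n+1$ and so that $F^{k}_{m}=0$ for $k>m$ is used correctly. I would also need to confirm that the passage from \eqref{Lie_alg_prop_P_eq1}--\eqref{Lie_alg_prop_P_eq2} back to the concrete formulas \eqref{cor_eq1}--\eqref{cor_eq2} of Proposition~\ref{Lie_alg_P_prop1} is just a matter of substituting \eqref{struc_skew}, \eqref{coder_liealg}, and \eqref{morph_eq1}; this is routine but sign-sensitive, so I would defer it to the corollary (Cor.~\ref{Lie_alg_P_cor}) rather than clutter the proof of the proposition itself.
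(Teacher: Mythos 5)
Your proposal is correct and is essentially the paper's own argument: Property \eqref{property} together with the explicit product formula \eqref{big_morphism_eq_2} for $F^{k}_{m}$ kills the intermediate terms of $\sum_{k=1}^{m} Q^{1}_{k}F^{k}_{m}$, leaving only $Q^{1}_{1}F^{1}_{m} + Q^{1}_{m}F^{m}_{m}$, while $D^{k}_{m}=0$ for $k\neq m-1$ collapses the source side and the degree truncation of $\ds L$ handles $m\geq n+1$. When writing it up, drop the stray parenthetical claiming Property \eqref{property} also kills the $k=m$ term for $m\le n$ (your second paragraph correctly retains that term, since $l_m$ is applied there to degree-zero elements $f_1(x_i)$), and record explicitly that the $m=1$ condition $Q^{1}_{1}F^{1}_{1}=0$ and all conditions with $m\geq n+2$ hold automatically by the same degree reasoning --- the ``if'' direction of the equivalence needs both of these, and the paper checks them.
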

\begin{proof}
We will show the conditions given in Eqs.\ \eqref{Lie_alg_prop_P_eq1}
and \eqref{Lie_alg_prop_P_eq2} are equivalent to
those in \eqref{Lie_alg_morph}. First, note that for any coalgebra
morphism $F \maps \S(\ds \g) \to \S(\ds L)$
\[
Q^{1}_{1}F^{1}_{1} = 0 
\]
holds trivially, since $F^{1}_{1}$ is a degree 0 map and $\ds \g$ is in degree -1, while $Q^{1}_{1}$ has
degree $+1$ and $\ds L$ is concentrated in degrees $-n,\ldots,-1$.
Next, we observe that Property \eqref{property} and Eq.\  \eqref{big_morphism_eq_2} imply that
\[
\sum^{m}_{k=1} Q^{1} _{k} F^{k}_{m}=Q^{1}_{1} \sF_{m} + Q^{1}_{m}F^{m}_{m}  \quad \forall m  \geq 2.
\]

When $m \geq n+1$, the degree condition on $\ds L$ implies that
 $F^{1}_{m} =0$ and hence
\begin{align*}
 Q^{1}_{1} \sF_{m}&=0 \quad \forall m \geq n+1, \\
F^{1}_{m-1} D^{m-1}_{m} &=0 \quad \forall m \geq n+2.
\end{align*}
For the same reason, $Q^{1}_{m}=0$ whenever $m \geq n+2$. Therefore
\[
Q^{1}_{m}F^{m}_{m} =0  \quad \forall m  \geq n+2.
\]
Hence, satisfying Eqs.\ \eqref{Lie_alg_prop_P_eq1} and \eqref{Lie_alg_prop_P_eq2} is both necessary
and sufficient for $F$ to be an $L_{\infty}$-morphism.
\end{proof}

We now prove Prop.\ \ref{Lie_alg_P_prop1} as a corollary of the above.
\begin{cor}[Prop.\ \ref{Lie_alg_P_prop1}] \label{Lie_alg_P_cor}
If $(\g,[\cdot,\cdot])$ is a Lie algebra and $(L,l_{k})$ is a Lie $n$-algebra
satisfying property \eqref{property}, then a collection of $n$
skew-symmetric maps
\[
f_{m} \maps \g^{\tensor m} \to L, \quad \deg{f_{m}} = 1-m, \quad 1
\leq m \leq n
\]
determine an $L_{\infty}$-morphism $\S(\ds \g) \to \S(\ds L)$ via Eq.\  \eqref{morph_eq1} if and
only if $\forall x_{i} \in \g$
\begin{multline} \label{acor_eq1}
\sum_{1 \leq i < j \leq m}
(-1)^{i+j+1}f_{m-1}([x_{i},x_{j}],x_{1},\ldots,\widehat{x_{i}},\ldots,\widehat{x_{j}},\ldots,x_{m})\\
=l_{1} f_{m}(x_{1},\ldots,x_{m}) + l_{m}(f_{1}(x_{1}),\ldots,f_{1}(x_{m})).
\end{multline}
for $2 \leq m \leq n$ and
\begin{multline} \label {acor_eq2}
\sum_{1 \leq i < j \leq n+1}
(-1)^{i+j+1}f_{n}([x_{i},x_{j}],x_{1},\ldots,\widehat{x_{i}},\ldots,\widehat{x_{j}},\ldots,x_{n+1})
=l_{n+1}(f_{1}(x_{1}),\ldots,f_{1}(x_{n+1})).
\end{multline}
\end{cor}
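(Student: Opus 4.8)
The plan is to deduce the corollary directly from Proposition \ref{Lie_alg_prop_P2}, which has already reduced the statement ``$F$ is an $L_\infty$-morphism'' to the two coalgebra identities \eqref{Lie_alg_prop_P_eq1} and \eqref{Lie_alg_prop_P_eq2}. Thus it suffices to show that, under the correspondence \eqref{morph_eq1} between the coalgebra morphism $F$ and its structure maps $f_m$, these two identities unwind to \eqref{acor_eq1} and \eqref{acor_eq2}. The entire argument is therefore a translation from the desuspended coalgebra language back to skew-symmetric multilinear maps, and the only genuine work is sign bookkeeping. First I would assemble the dictionary: by \eqref{morph_eq1} and \eqref{struc_skew} we have $F^1_m = (-1)^{m(m-1)/2}\ds\circ f_m\circ\bs^{\otimes m}$ and $\sQ_m = (-1)^{m(m-1)/2}\ds\circ l_m\circ \bs^{\otimes m}$, so that $Q^1_1$ encodes $l_1$ and $Q^1_m$ encodes $l_m$; and from the special case of \eqref{big_morphism_eq_2} evaluated on desuspended generators, $F^m_m(\ds x_1\odot\cdots\odot\ds x_m)=\ds f_1(x_1)\odot\cdots\odot\ds f_1(x_m)$.

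The remaining ingredient is $D^{m-1}_m$. Since $\g$ sits in degree $0$ and $D^1_k=0$ for $k\geq 3$ by \eqref{coder_liealg}, the coderivation formula \eqref{coder_eq} shows that the only summands of $D_m$ landing in $\S^{m-1}(\ds\g)$ are the $(2,m-2)$-unshuffle terms built from $D^1_2$, giving
\[
D^{m-1}_m(\ds x_1\odot\cdots\odot\ds x_m) = \sum_{\sigma\in\Sh(2,m-2)}\epsilon(\sigma)\,(-1)^{\deg{x_{\sigma(1)}}}\ds[x_{\sigma(1)},x_{\sigma(2)}]\odot\ds x_{\sigma(3)}\odot\cdots\odot\ds x_{\sigma(m)}.
\]
Here I would exploit the degree-$0$ hypothesis on $\g$ twice: the suspension sign $(-1)^{\deg{x_{\sigma(1)}}}$ is trivial, and, because each $\ds x_i$ has degree $-1$, the Koszul sign collapses to the ordinary permutation sign, $\epsilon(\sigma;\ds x_1,\ldots,\ds x_m)=(-1)^{\sigma}$. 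Rewriting the sum over $(2,m-2)$-unshuffles as a sum over pairs $i<j$ then produces exactly the signs $(-1)^{i+j+1}$ on the left-hand sides of \eqref{acor_eq1} and \eqref{acor_eq2}.

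Finally I would match both sides. Applying $F^1_{m-1}$ to the above and expanding the right-hand sides $Q^1_1 F^1_m + Q^1_m F^m_m$ via the dictionary, both sides acquire global suspension factors $(-1)^{(m-1)(m-2)/2}$ and $(-1)^{m(m-1)/2}$ together with the signs from commuting $\bs^{\otimes}$ past the $\ds$'s. The main obstacle, and the only delicate point, is verifying that these suspension signs combine with the $(-1)^{\sigma}$ from the unshuffles so that the coefficient of the bracket term is precisely $(-1)^{i+j+1}$ while the coefficients of $l_1 f_m(x_1,\ldots,x_m)$ and of $l_m(f_1(x_1),\ldots,f_1(x_m))$ are both $+1$, leaving no residual sign factors. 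I would organize this by evaluating each identity on a single decomposable generator $\ds x_1\odot\cdots\odot\ds x_m$ and tracking signs transposition-by-transposition, again using the degree-$0$ hypothesis to kill all suspension signs attached to the individual $x_i$. For the top case $m=n+1$ the summand $Q^1_1 F^1_{n+1}$ vanishes, since $\ds L$ is concentrated in degrees $-n,\ldots,-1$ and hence $F^1_{n+1}=0$; this is precisely why \eqref{acor_eq2} carries no $l_1 f_{n+1}$ term and yields the second identity. Once the signs are checked, Eqs.\ \eqref{Lie_alg_prop_P_eq1}--\eqref{Lie_alg_prop_P_eq2} are seen to be equivalent to \eqref{acor_eq1}--\eqref{acor_eq2}, and the corollary follows.
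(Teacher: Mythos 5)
Your proposal is correct and follows essentially the same route as the paper's proof: both reduce to Eqs.\ \eqref{Lie_alg_prop_P_eq1}--\eqref{Lie_alg_prop_P_eq2} via Prop.\ \ref{Lie_alg_prop_P2}, compute $D^{m-1}_{m}$ from \eqref{coder_eq} and \eqref{coder_liealg} using that $\g$ sits in degree $0$ (so the unshuffle signs collapse to $(-1)^{i+j+1}$), and then match both sides through the dictionary $F^{1}_{m}(\ds x_1 \odot \cdots \odot \ds x_m)=\ds f_m(x_1,\ldots,x_m)$, $Q^{1}_{1}F^{1}_{m}=\ds l_1 \circ f_m$, and $Q^{1}_{m}F^{m}_{m}=\ds l_m(f_1(\cdot),\ldots,f_1(\cdot))$. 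Your explicit observations that the Koszul sign equals the ordinary permutation sign on degree $-1$ elements and that $F^{1}_{n+1}=0$ kills the would-be $l_1 f_{n+1}$ term are exactly the sign mechanisms the paper invokes implicitly, so nothing is missing.
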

\begin{proof}
Assume we are given such maps $f_{1},\hdots,f_{n}$ satisfying the above
equalities. Using Eq.\  \eqref{morph_eq1}, we construct the corresponding
degree 0 maps $F^{1}_{1},\hdots,F^{1}_{n}$, and set $F^{1}_{k}=0$ for
$k \geq n +1$. By Prop.\ \ref{FHT_prop}, these give a unique
coalgebra morphism $F \maps \S(\ds \g) \to \S(\ds L)$. To show $F$ is
an $L_{\infty}$-morphism, Prop.\ \ref{Lie_alg_prop_P2} implies it is
sufficient to show Eqs.\
\eqref{Lie_alg_prop_P_eq1} and \eqref{Lie_alg_prop_P_eq2} hold. 
From Eq.\  \eqref{coder_liealg}, we have the equality
$D^{1}_{2}(\ds x \odot \ds y) = \ds [x,y]$, while Eq.\  \eqref{coder_eq} implies that
\begin{align*}
D^{m-1}_{m}(\ds x_1 \odot \cdots \odot \ds x_m) &= \sum_{\sigma \in \Sh(2,m-2)}
\epsilon(\sigma) D^{1}_{2}(\ds x_{\sigma(1)} \odot \ds x_{\sigma(2)})
\odot \cdots \odot \ds x_{\sigma(m)}\\
&=\sum_{1 \leq i < j \leq m}
(-1)^{i+j+1}\ds[x_{i}, x_{j}] \odot \ds x_{1} \odot \cdots
\widehat{\ds x_{i}} \cdots \widehat{\ds x_{j}} \cdots \odot \ds x_{m}).
\end{align*}
The signs in the last equality above are due to the fact that $\g$ is in degree 0.
It follows from Eq.\  \eqref{morph_eq1} that 
\begin{equation} \label{foo_eq1}
F^{1}_{m}(\ds x_1 \odot \cdots \odot \ds x_m) = \ds f_{m} (x_1, \hdots, x_m).
\end{equation}
Therefore, the left-hand sides of Eqs.\ \eqref{Lie_alg_prop_P_eq1}
and \eqref{Lie_alg_prop_P_eq2} are the desuspension of the left-hand sides
of Eqs.\ \eqref{acor_eq1} and \eqref{acor_eq2}, respectively.

Now we consider the right-hand sides. First, note that Eq.\ 
\eqref{foo_eq1} also implies that
\begin{equation} \label{foo_eq2}
Q^{1}_{1} F^{1}_{m} = \ds l_{1} \circ f_{m}.
\end{equation}
Recall Eq.\  \eqref{big_morphism_eq_2} gives
\[
F^{m}_{m}(\ds x_{1} \odot \cdots \odot \ds x_{m})  =F^{1}_{1}(\ds x_{1}) \odot F^{1}_{1}(\ds x_{2}) \odot \cdots \odot F^{1}_{1}(\ds x_{m}).
\]  
For each $x_{i}$, we have $\deg{F^{1}_{1}(\ds x_{i})}=-1$ and
$F^{1}_{1}(\ds x_{i}) = \ds f_{1}(x_{i})$. Therefore,
\[
Q^{1}_{m}F^{m}_{m}(\ds x_{1} \odot \cdots \odot  \ds x_{m}) = \ds  l_{m}(f_{1}(x_{1}),\ldots,f_{1}(x_{m})).
\]
Combining the above equality with Eq.\  \eqref{foo_eq2}, we see that the
right-hand sides of Eqs.\ \eqref{Lie_alg_prop_P_eq1} and \eqref{Lie_alg_prop_P_eq2} are the desuspension of
the right-hand sides of Eqs.\ \eqref{acor_eq1} and \eqref{acor_eq2}, respectively.
Hence, $F$ is a $L_{\infty}$-morphism. 

It is easy to see to see that the converse follows by reversing the above arguments.
\end{proof}



\subsection{{\boldmath $L_{\infty}$}-morphisms and central {\boldmath $n$}-extensions}
Let $(\g,[\cdot,\cdot])$ be a Lie algebra and $c \maps \Lambda^{n+1}
\g \to \R$ a degree $n+1$ cocycle in the Chevalley-Eilenberg complex
associated to $\g$. A theorem of Baez and Crans \cite[Thm.\ 55]{hd6} implies
that this data gives a Lie $n$-algebra $\widehat{\g}_{c}$
whose underlying vector space is
\begin{align*}
L_{0}&=\g,\\
L_{i}&=0 \quad 2-n \leq i \leq  -1,\\
L_{1-n}&=\R,
\end{align*}
and whose only non-trivial multibrackets are
\begin{align*}
l_{2}(x_{1},x_{2}) &= 
\begin{cases}
[x_{1},x_{2}] & \text{if $x_{1},x_{2} \in \g$}\\
0 & \text{otherwise}
\end{cases}\\
l_{n+1}(x_{1},\hdots,x_{n+1}) &=
\begin{cases}
c(x_{1},\hdots,x_{n+1}) & \text{if $x_{1},\hdots,x_{n+1} \in \g$}\\
0 & \text{otherwise}
\end{cases} \\
\end{align*}
We call such Lie $n$-algebras {\bf central \boldmath ${n}$-extensions} of $\g$.
\begin{prop} \label{ext_morph_prop}
Let $\g$ be a Lie algebra, $c \in \Hom(\Lambda^{n+1}\g,\R)$ a
$(n+1)$-cocycle, and $\widehat{\g}_{c}$ the corresponding central $n$-extension.
If $(L,l_{k})$ is a Lie $n$-algebra
satisfying property \eqref{property}, then a collection of $n$
skew-symmetric maps
\begin{equation*}
\begin{split}
f_{1} &\maps \g \oplus \R[n-1] \to L\\
f_{m} &\maps \g^{\tensor m} \to L, \quad \deg{f_{m}} = 1-m, \quad 2 \leq m \leq n
\end{split}
\end{equation*}
determine an $L_{\infty}$-morphism $\S(\ds \widehat{\g}_{c}) \to \S(\ds L)$ if and
only if 
\begin{equation} \label{ext_eq0}
l_{1}f_{1}(r)=0 \quad \forall r \in \R,
\end{equation}
and $\forall x_{i} \in \g$
\begin{multline} \label{ext_eq1}
\sum_{1 \leq i < j \leq m}
(-1)^{i+j+1}f_{m-1}([x_{i},x_{j}],x_{1},\ldots,\widehat{x_{i}},\ldots,\widehat{x_{j}},\ldots,x_{m})\\
=l_{1} f_{m}(x_{1},\ldots,x_{m}) + l_{m}(f_{1}(x_{1}),\ldots,f_{1}(x_{m})).
\end{multline}
for $2 \leq m \leq n$ and
\begin{multline} \label{ext_eq2}
\sum_{1 \leq i < j \leq n+1}
(-1)^{i+j+1}f_{n}([x_{i},x_{j}],x_{1},\ldots,\widehat{x_{i}},\ldots,\widehat{x_{j}},\ldots,x_{n+1})
+ f_{1}c(x_{1},\ldots,{x}_{n+1}) \\
=l_{n+1}(f_{1}(x_{1}),\ldots,f_{1}(x_{n+1})).
\end{multline}
\end{prop}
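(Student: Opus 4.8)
The plan is to prove this exactly as we proved Cor.\ \ref{Lie_alg_P_cor}, the only difference being that the source is now the central $n$-extension $\widehat{\g}_{c}$ rather than the Lie algebra $\g$, so its codifferential $D$ on $C(\widehat{\g}_{c})=\S(\ds \widehat{\g}_{c})$ has \emph{two} nonzero structure maps instead of one. By Thm.\ \ref{coalg_Linfty_thm} and Eq.\ \eqref{struc_skew}, the maps $\hat{l}_{2}=[\cdot,\cdot]$ and $\hat{l}_{n+1}=c$ of $\widehat{\g}_{c}$ determine $D^{1}_{2}$ and $D^{1}_{n+1}$, and these are the only nonzero $D^{1}_{k}$. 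Feeding these into Eq.\ \eqref{coder_eq} shows that the only nonzero projections $D^{k}_{m}$ are $D^{m-1}_{m}$ (coming from the bracket, as in \eqref{coder_liealg}) and $D^{m-n}_{m}$ (coming from $c$, and hence only relevant when $m\ge n+1$). Given the maps $f_{1},\ldots,f_{n}$, I would build the coalgebra morphism $F\maps \S(\ds \widehat{\g}_{c})\to\S(\ds L)$ via Prop.\ \ref{FHT_prop} from the data $F^{1}_{k}$ obtained from $f_{k}$ through Eq.\ \eqref{morph_eq1}, where $f_{k}$ for $k\ge 2$ is extended by zero on any argument involving the degree $(1-n)$ generator of $\widehat{\g}_{c}$, and $F^{1}_{k}=0$ for $k>n$. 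It then remains to show that the compatibility Eq.\ \eqref{codiff_compat_eq}, i.e.\ $FQ=Q'F$, is equivalent to \eqref{ext_eq0}--\eqref{ext_eq2}.

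I would analyze Eq.\ \eqref{codiff_compat_eq} degree by degree. For $m=1$ it reads $Q'^{1}_{1}F^{1}_{1}=0$; on $\ds\g$ this holds automatically for the degree reasons noted in the proof of Prop.\ \ref{Lie_alg_prop_P2}, but on the desuspended generator $\ds\R$, which sits in degree $-n$, it becomes the genuine condition $l_{1}f_{1}(r)=0$, which is exactly \eqref{ext_eq0}. For $2\le m\le n$ only $D^{m-1}_{m}$ survives on the left (the $c$-contribution $D^{m-n}_{m}$ needs $m\ge n+1$), and, using Property \eqref{property} together with Eq.\ \eqref{big_morphism_eq_2} precisely as in Prop.\ \ref{Lie_alg_prop_P2}, the right-hand side collapses to $Q'^{1}_{1}F^{1}_{m}+Q'^{1}_{m}F^{m}_{m}$; translating suspensions via \eqref{foo_eq1} and \eqref{foo_eq2} reproduces \eqref{ext_eq1}. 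The essential new case is $m=n+1$: here both $D^{n}_{n+1}$ and $D^{1}_{n+1}$ contribute, so the left-hand side is $F^{1}_{n}D^{n}_{n+1}+F^{1}_{1}D^{1}_{n+1}$. The first summand gives the Chevalley--Eilenberg-type sum over $f_{n}([x_{i},x_{j}],\ldots)$ exactly as before, while $D^{1}_{n+1}=\sQ_{n+1}$ sends $\ds x_{1}\odot\cdots\odot\ds x_{n+1}$ to (a sign times) $\ds c(x_{1},\ldots,x_{n+1})$, so $F^{1}_{1}D^{1}_{n+1}$ contributes the term $f_{1}c(x_{1},\ldots,x_{n+1})$; the right-hand side again reduces to $l_{n+1}(f_{1}(x_{1}),\ldots,f_{1}(x_{n+1}))$, yielding \eqref{ext_eq2}.

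Finally I would check that no further conditions arise for $m\ge n+2$. On the right, $F^{1}_{m}=0$ and $Q'^{1}_{m}=0$ since $L$ is a Lie $n$-algebra, so the right-hand side vanishes; on the left, $F^{1}_{m-1}D^{m-1}_{m}=0$ because $m-1\ge n+1$ forces $F^{1}_{m-1}=0$, and the surviving $c$-term $F^{1}_{m-n}D^{m-n}_{m}$ vanishes because $D^{m-n}_{m}$ produces an element containing the $\ds\R$ factor, on which $f_{m-n}$ (with $m-n\ge 2$) is zero by our extension convention. Thus \eqref{codiff_compat_eq} holds for all $m$ iff \eqref{ext_eq0}--\eqref{ext_eq2} hold, and the converse direction follows by reversing these identifications, just as in Cor.\ \ref{Lie_alg_P_cor}. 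The main obstacle I anticipate is bookkeeping: pinning down the Koszul sign emerging from $\sQ_{n+1}$ (via \eqref{struc_skew}) and the desuspensions so that the coefficient of $f_{1}c(x_{1},\ldots,x_{n+1})$ in \eqref{ext_eq2} comes out to $+1$, and verifying carefully that the $c$-generated projections $D^{m-n}_{m}$ for $n+2\le m\le 2n$ really are annihilated by the extension-by-zero of the $f_{k}$.
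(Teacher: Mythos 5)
Your proposal is correct and follows essentially the same route as the paper: reduce to the coalgebra identity $FQ=Q'F$, observe that the codifferential of $\widehat{\g}_{c}$ has exactly the two families of nonzero projections $D^{m-1}_{m}$ and $D^{m-n}_{m}$, use Property \eqref{property} together with Eq.\ \eqref{big_morphism_eq_2} to collapse the right-hand side as in Prop.\ \ref{Lie_alg_prop_P2}, and then translate via Eq.\ \eqref{morph_eq1} to obtain \eqref{ext_eq0}--\eqref{ext_eq2}, with the new condition at $m=1$ coming from $\ds\R$ and the new term $F^{1}_{1}D^{1}_{n+1}$ at $m=n+1$ producing $f_{1}c$. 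The residual sign bookkeeping you flag is also left implicit in the paper's own proof, so nothing essential is missing.
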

\begin{proof}
  Observe the similarity between the above formulas
  and those given in Cor.\ \ref{Lie_alg_P_cor} for a
  $L_{\infty}$-morphism from $\g$ to $(L,l_{k})$.  
  Let $D$ denote the codifferential on $\S(\ds \widehat{\g}_{c})$.
  We proceed as we did in the proof of Prop.\ \ref{Lie_alg_prop_P2} and conclude
  that a coalgebra morphism $F \maps \S(\ds \widehat{\g}_{c}) \to \S(\ds
  L)$ is an $L_{\infty}$-morphism iff
\begin{equation*}
\begin{split}
Q_{1}F_{1}(\ds r) &=0 \quad \forall r \in \R[n-1], \\
\sF_{m-1} D^{m-1}_{m} &= Q^{1}_{1} \sF_{m} + Q^{1}_{m}F^{m}_{m} \quad 2 \leq m \leq n,\\
\end{split}
\end{equation*}
and
\[
F^{1}_{n}D^{n}_{n+1} + F^{1}_{1}D^{1}_{n+1} = Q^{1}_{n+1}F^{n+1}_{n+1}.
\]
Rewriting these in terms of structure maps $(f_{k})$ (cf.\ the proof
of Cor.\ \ref{Lie_alg_P_cor}), we obtain Eqs.\ \eqref{ext_eq0},
\eqref{ext_eq1}, and \eqref{ext_eq2}.
\end{proof}

Note that a central $n$-extension itself satisfies Property
\eqref{property}, so we have the following corollary:
\begin{cor} \label{ext_qiso}
If $[c]=[c'] \in H^{n+1}_{\CE}(\g,\R)$, then the central
$n$-extensions $\widehat{\g}_{c}$ and $\widehat{\g}_{c'}$ are quasi-isomorphic.
\end{cor}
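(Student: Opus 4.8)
The plan is to exhibit an explicit $L_{\infty}$-quasi-isomorphism from $\widehat{\g}_{c}$ to $\widehat{\g}_{c'}$, built directly out of a primitive for $c-c'$. Since $[c]=[c']$, choose $b\in\Hom(\Lambda^{n}\g,\R)$ with $c-c'=\delta_{\CE}b$. The source $\widehat{\g}_{c}$ is a central $n$-extension of the Lie algebra $\g$, and the target $\widehat{\g}_{c'}$ is a Lie $n$-algebra satisfying Property \eqref{property} (as noted immediately above the corollary). Hence Prop.\ \ref{ext_morph_prop} applies verbatim, and it suffices to produce skew-symmetric structure maps $f_{1}\maps \g\oplus\R[n-1]\to\widehat{\g}_{c'}$ and $f_{m}\maps\g^{\tensor m}\to\widehat{\g}_{c'}$ (for $2\le m\le n$) satisfying Eqs.\ \eqref{ext_eq0}, \eqref{ext_eq1}, \eqref{ext_eq2}.

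First I would observe that the underlying complexes of $\widehat{\g}_{c}$ and $\widehat{\g}_{c'}$ coincide: each is $\g$ in degree $0$ and $\R$ in degree $1-n$, with \emph{trivial} differential $l_{1}=0$. This rigidity forces most of the data. Indeed, $f_{m}$ has degree $1-m$ and its domain $\g^{\tensor m}$ sits in degree $0$, so $f_{m}$ takes values in $(\widehat{\g}_{c'})_{1-m}$, which vanishes unless $1-m\in\{0,1-n\}$, i.e.\ unless $m=1$ or $m=n$. I therefore set
\[
f_{1}=\id \quad(\text{the identity on both }\g\text{ and }\R),\qquad
f_{m}=0 \ \ \text{for } 2\le m\le n-1,\qquad f_{n}=b,
\]
the intermediate maps being zero for purely degree-theoretic reasons.

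Next I would verify the defining equations. Eq.\ \eqref{ext_eq0} holds since $l_{1}=0$. For Eq.\ \eqref{ext_eq1} with $2\le m\le n$: when $m=2$ both sides equal $[x_{1},x_{2}]$ (using $f_{1}=\id$ and $l_{2}=[\cdot,\cdot]$), while for $3\le m\le n$ both sides vanish, because $f_{m-1}=0$ by degree, $f_{m}$ feeds into $l_{1}=0$, and $l_{m}=0$ (the only nonzero brackets of $\widehat{\g}_{c'}$ being $l_{2}$ and $l_{n+1}$). The only substantive check is Eq.\ \eqref{ext_eq2}. Since $f_{n}=b$ and $f_{1}=\id$, its left-hand side is
\[
\sum_{1\le i<j\le n+1}(-1)^{i+j+1}\,b\bigl([x_{i},x_{j}],x_{1},\ldots,\widehat{x_{i}},\ldots,\widehat{x_{j}},\ldots,x_{n+1}\bigr)+c(x_{1},\ldots,x_{n+1})
= -\delta_{\CE}b + c,
\]
where I have used the definition \eqref{eq:CE_diff} of $\delta_{\CE}$ to absorb the sign $(-1)^{i+j+1}=-(-1)^{i+j}$. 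The right-hand side is $l_{n+1}(f_{1}(x_{1}),\ldots,f_{1}(x_{n+1}))=c'(x_{1},\ldots,x_{n+1})$. Thus Eq.\ \eqref{ext_eq2} reduces exactly to $c-\delta_{\CE}b=c'$, which is our choice of $b$.

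By Prop.\ \ref{ext_morph_prop} the collection $(f_{k})$ is an $L_{\infty}$-morphism $\widehat{\g}_{c}\to\widehat{\g}_{c'}$. Since both differentials are trivial, the cohomology of each underlying complex is its underlying graded vector space, and $f_{1}=\id$ induces the identity isomorphism on cohomology; hence $(f_{k})$ is an $L_{\infty}$-quasi-isomorphism in the sense of Def.\ \ref{Linfty_qiso_def}. As quasi-isomorphism generates an equivalence relation (Remark \ref{quasi-remark}), $\widehat{\g}_{c}$ and $\widehat{\g}_{c'}$ are quasi-isomorphic. The only delicate point is the sign bookkeeping in Eq.\ \eqref{ext_eq2}: matching $(-1)^{i+j+1}$ against the Chevalley--Eilenberg sign convention so that the primitive $b$ enters with the correct sign to yield $c-c'=\delta_{\CE}b$; everything else is forced by degree.
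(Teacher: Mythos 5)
Your proof is correct and takes essentially the same route as the paper's: set $f_{1}=\id$, $f_{k}=0$ for $2\le k\le n-1$, $f_{n}$ equal to a primitive of $c-c'$, and invoke Prop.\ \ref{ext_morph_prop}, concluding that $f_{1}=\id$ makes this a quasi-isomorphism. The only difference is that you actually carry out the verification the paper dismisses as ``easy to see,'' and in doing so you fix the sign consistently --- with the convention of Eq.\ \eqref{ext_eq2} one indeed needs $\delta_{\CE}b=c-c'$ when taking $f_{n}=b$, whereas the paper's stated choice $c'=c+\delta_{\CE}b$ would require $f_{n}=-b$.
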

\begin{proof}
Let $b \maps \Lambda^{n} \g \to \R$ such that $c' = c + \delta_{\CE}b$.
Consider the collection of skew-symmetric maps:
$f_{1}=\id_{\g \oplus \R[n-1]}$, $f_{k}=0$ for $2 \leq k \leq n-1$
and $f_{n}=b$. Using Prop.\ \ref{ext_morph_prop}, it's easy to see
these give an $L_{\infty}$-morphism $\widehat{\g}_{c} \to
\widehat{\g}_{c'}$. Since $f_{1}$ is the identity, it is clearly a quasi-isomorphism.
\end{proof}

\section{Proof of Theorem \ref{thm:CartanMM}} \label{sec:proof_CartanMM}

In this appendix, we prove Thm.\ \ref{thm:CartanMM}, which provides an
explicit formula for contructing a moment map from a cocycle in the
Cartan complex.  For this, we need to construct a natural chain map
$\Phi \maps C_G^*(M)\to \bOmega(G \ltimes M_{\bu})$, which is a
quasi-isomorphism when $G$ is compact. This chain map seems to be well
known; see for example \cite[Appendix C]{Meinrenken}. 
However, we need an explicit formula.

The rationale for this construction of $\Phi$ is the following. 
We wish to identify $C^{\ast}_{G}(M)$ with differential forms on the
base space, i.e.\ the (homotopy) quotient $G \ltimes M_{\bu}$. So, 
in essence, we require a connection. Hence, we replace $M$ with a
equivalent space  $E_\bu G \times M_\bu$, 
which is the total space of a principal $G$ bundle over $G \ltimes
M_{\bu}$, and furthermore equipped with a canonical connection. 
The connection induces a well-known chain map (the ``Cartan
map'') between the Cartan model of the total space, and forms on the
base. This map is an extension of the usual Chern-Weil homomorphism,
and therefore we require  a workable theory of 
Lie algebra-valued forms in the simplicial setting. 
In particular, we would like a commutative product on differential
forms. Unfortunately, the product on the $\bOmega(G \ltimes M_{\bu})$
is only homotopy commutative. So we temporarily replace this complex by
an equivalent one which is strictly commutative, namely
Dupont's model for the de Rham complex of $G \ltimes M_{\bu}$.

\subsection{Differential forms on simplicial manifolds}

If $\bX$ is a simplicial manifold with face maps $d_i: X_n \to
X_{n-1}$, $i=0,\ldots,n$, then the simplicial differential $\del_n \maps
\Omega^{*}(X_{n}) \to \Omega^{*}(X_{n +1})$ is
\begin{equation} \label{eq:simp_dif}
\del_n = \sum_{i=0}^{n+1} (-1)^{i} d^{\ast}_i.
\end{equation}
The Bott-Shulman-Stasheff complex is the total complex of the
double complex of differential forms on $\bX$:
\begin{equation}\begin{split}\label{eq:diffFormsSimplMfd}
 \Omega^{j,k}(\bX)&:=\Omega^k(X_j), \\
\Omega^*(\bX)&:=\bigl(\Tot(\Omega^{*,*}(\bX)), D\bigr), \\
D&:= \del + (-1)^j d, 
\end{split}\end{equation}
where $d$ is the de Rham differential.

\begin{ep}
Let $M$ be a manifold and $M_\bu$ the simplicial manifold $M_n = M$,
whose face and degeneracy maps are $\id_M$. Since all $\partial_n$ are either zero or isomorphisms, the inclusion 
\begin{equation}\label{eq:mfdSimpMfd}
(\Omega^{*}(M),d) =(\Omega^{*}(M_0),d) \stackrel{\iota}{\hookrightarrow} (\Omega^{*}(M_\bu),D)
\end{equation}
is an quasi-isomorphism. 
\end{ep}

\begin{ep}
 If $M$ is a $G$-manifold, let $\EGM$ denote the product $E_{\bu}G
\times M_{\bu}$, i.e.,  the simplicial manifold 
\[
[n] \mapsto E_{n}G \times M =G^{n+1} \times M
\]
with the ``usual'' face and degeneracy maps.
\end{ep}

\begin{prop}
The projection $\pi_M\colon \EGM \to M_{\bu}$ induces a quasi-isomorphism
\begin{equation}\label{eq:piStar}
 \pi_M^{\ast} \maps \Omega^*(M_{\bu}) \to \Omega^* \bigl(\EGM \bigr).
\end{equation}
\end{prop}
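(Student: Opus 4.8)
The plan is to compare $\bOmega(M_{\bu})$ and $\bOmega(\EGM)$ through the spectral sequences of their defining double complexes. Since $\pi\colon \EGM\to M_{\bu}$ is a simplicial map, $\pi^{*}$ is a morphism of first-quadrant double complexes $\Omega^{*,*}(M_{\bu})\to\Omega^{*,*}(\EGM)$ intertwining both the de Rham differential $d$ and the simplicial differential $\del$. I would filter each total complex so as to take $\del$-cohomology first; because both double complexes are concentrated in the first quadrant the resulting spectral sequences converge, and by the comparison theorem it suffices to show that $\pi^{*}$ induces an isomorphism on the $E_{1}$-pages, i.e.\ on the cohomology of the columns $\bigl(\Omega^{k}(X_{\bullet}),\del\bigr)$ for each fixed de Rham degree $k$.

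For $M_{\bu}$ all face maps are $\id_{M}$, so $\del$ alternates between $0$ and $\id$ and $H^{j}_{\del}\bigl(\Omega^{k}(M_{\bu})\bigr)=\Omega^{k}(M)$ for $j=0$ and vanishes for $j>0$. For $\EGM$ I would exploit that $E_{\bullet}G$ is simplicially contractible: the maps $h_{n}\colon G^{n+1}\times M\to G^{n+2}\times M$, $(g_{0},\dots,g_{n},m)\mapsto(g_{0},\dots,g_{n},e,m)$, form an extra degeneracy satisfying $d_{n+1}h_{n}=\id$ and $d_{i}h_{n}=h_{n-1}d_{i}$ for $i\le n$ (with augmentation $\epsilon=\pi_{0}\colon G\times M\to M$ and $h_{-1}\colon m\mapsto(e,m)$). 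Dualizing these identities, $h^{*}$ is a contracting homotopy for the augmented cosimplicial complex $0\to\Omega^{k}(M)\xrightarrow{\epsilon^{*}}\Omega^{k}(G\times M)\xrightarrow{\del}\Omega^{k}(G^{2}\times M)\to\cdots$, so this complex is exact. Hence $H^{j}_{\del}\bigl(\Omega^{k}(\EGM)\bigr)=0$ for $j>0$ and, for $j=0$, equals $\ker\del_{0}=\im\epsilon^{*}\cong\Omega^{k}(M)$. The section $s\colon M_{\bu}\to\EGM$, $s_{n}(m)=(e,\dots,e,m)$, is simplicial with $\pi\circ s=\id$, and its level-zero component is $\epsilon$; thus $\pi^{*}$ carries $\Omega^{k}(M)=H^{0}_{\del}(\Omega^{k}(M_{\bu}))$ isomorphically onto $\im\epsilon^{*}=H^{0}_{\del}(\Omega^{k}(\EGM))$.

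Combining these computations, $\pi^{*}$ is an isomorphism on the $E_{1}$-page of both spectral sequences (each concentrated in the column $j=0$, where the remaining differential is the de Rham $d$ on $\Omega^{\bullet}(M)$); the comparison theorem then gives that $\pi^{*}$ is an isomorphism on $E_{\infty}$ and hence on the cohomology of the total complexes. Equivalently, one can package the contraction into an explicit total homotopy $K$ of degree $-1$, built from $h^{*}$ and commuting with $d$, with $DK+KD=\id-\pi^{*}s^{*}$, which together with $s^{*}\pi^{*}=\id$ exhibits $\pi^{*}$ as a homotopy equivalence. I expect the only real work to be the bookkeeping in the contraction step: verifying the extra-degeneracy identities for $\{h_{n}\}$ and tracking the signs so that $h^{*}$ is genuinely a contracting homotopy for $\del$ (equivalently, that the defect of $DK+KD$ is exactly the augmentation projector $\pi^{*}s^{*}$). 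Convergence of the spectral sequences is automatic from first-quadrantness, and the de Rham-degree comparison is then immediate.
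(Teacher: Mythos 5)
Your proof is correct, but it takes a genuinely different route from the paper's. The paper argues topologically: it invokes the Bott--Shulman--Stasheff de Rham theorem to identify $H\bigl(\bOmega(\bX)\bigr)$ with the real singular cohomology of the fat realization $\fat{\bX}$, then uses Segal's comparison of fat and thin realizations, the fact that thin realization preserves products, the contractibility of $|E_\bu G|$, and the K\"unneth formula to see that the induced map on realizations is a cohomology isomorphism, whence so is $\pi^{*}$. Your argument never leaves the double complexes: you filter by de Rham degree, compute the $\del$-cohomology of the columns---trivially for $M_\bu$, and for $\EGM$ via the extra degeneracy $h_n(g_0,\dots,g_n,p)=(g_0,\dots,g_n,e,p)$, whose identities $d_{n+1}h_n=\id$, $d_ih_n=h_{n-1}d_i$ ($i\le n$) do hold for the deletion face maps of $E_\bu G\times M$---and then apply the mapping lemma for the (bounded-in-each-total-degree, hence convergent) spectral sequences. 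This is more elementary and self-contained: it avoids geometric realization entirely, and with it the paracompactness hypothesis in the de Rham theorem, Segal's Proposition A1, and K\"unneth; it makes transparent that no compactness of $G$ is needed; and it delivers more, since $s^{*}\pi^{*}=\id$ together with the column contraction exhibits $\pi^{*}$ as a chain homotopy equivalence rather than a bare quasi-isomorphism. What the paper's route buys is economy in context: the Bott--Shulman--Stasheff theorem is already in play in that section to identify $\bOmega(G\ltimes M_\bu)$ with equivariant cohomology, so the topological identifications come essentially for free there.

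One inaccuracy in your optional closing remark: if $K$ is built from $h^{*}$ alone (with the degree-wise sign making the de Rham cross-terms cancel), then $DK+KD=\id-P$, where $P$ is the projector onto $\im\,\epsilon^{*}$ concentrated in simplicial degree $0$. This is \emph{not} $\pi^{*}s^{*}$, which is nonzero in every simplicial degree. A homotopy with defect exactly $\id-\pi^{*}s^{*}$ does exist (for instance, using $\pi^{*}s^{*}P=P$ and $\id\simeq P$ one gets $\pi^{*}s^{*}\simeq P\simeq\id$, and the composite homotopies can be written out), but it is not the naive $K$. Since this remark is only a repackaging of the already complete spectral-sequence argument, it does not affect the validity of your proof.
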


\begin{proof}
Recall, that if $\bX$ is a simplicial manifold which is paracompact in
each dimension, then
the de Rham theorem of Bott-Shulman-Stasheff \cite{BSS:1976} implies
that there exists a natural isomorphism
\[
H \bigl( \bOmega(\bX) \bigr) \xto{\cong}
H \bigl(\fat{\bX} \bigr),
\]
where $H \bigl(\fat{\bX} \bigr)$ is the singular cohomology with $\R$ coefficients of the fat
realization of $\bX$. We denote by $|\bX|$  the thin geometric realization of $\bX$. Since $|\cdot|$ preserves products, and since both $G$ and $M$ are manifolds, it follows from \cite[Prop.\ A1]{Segal:1974} that we have a commuting diagram
\begin{equation} \label{diag1}
\xymatrix{
 H \bigl( \bOmega(M_{\bu}) \bigr) \ar[r]^{\cong}
 \ar[d]_{\pi_M^{\ast}} & H \bigl(\fat{M_{\bu}} \bigr) \ar[r]^{\cong}
 \ar[d]^{\fat{\pi_M}^{\ast}} & H\bigl(|M_{\bu}| \bigr)\ar[r]^{=} \ar[d]^{|\pi_M|^{\ast} } &  H\bigl(|M_{\bu}| \bigr)\ar[d]^{\left(\pi_{|M_\bu|}\right)^{\ast} }\\
 H \bigl( \bOmega(\EGM) \bigr) \ar[r]^-{\cong} & 
H \bigl(\fat{\EGM} \bigr)\ar[r]^-{\cong} & H \bigl(|\EGM| \bigr) \ar[r]^-{\cong} & H \bigl(|E_\bu G|\times|M_\bu| \bigr).
}
\end{equation}

Since $|E_\bu G|$ is contractible, the K\"{u}nneth formula implies that the right vertical arrow in the diagram
\eqref{diag1} is an isomorphism. Hence $\pi_M^{\ast}$ is also an isomorphism.
\end{proof}

\begin{remark}(cf. \cite[Appendix C.2]{Meinrenken})
Equip $\EGM$ with the diagonal $G$-action
\begin{equation}\begin{split}\label{eq:actionEGM}
G\times E_nG\times M &\to E_nG\times M, \\
(h,g_0,\ldots, g_n,p)&\mapsto (g_0h^{-1},\ldots, g_nh^{-1}, hp).
\end{split}\end{equation}
The projection
\[
\pi_M \maps  \EGM \to M_{\bu}
\]
is a morphism of simplicial $G$-manifolds.

Note that the map 
\begin{equation*}\begin{split}\label{eq:isoBG}
G^{n+1}\times M&\to G^n\times M, \\ 
(g_0,\ldots,g_n,p)&\mapsto (g_0 g_1^{-1},\ldots,g_{n-1}g_n^{-1},g_n p)
\end{split}\end{equation*}
induces an isomorphism of simplicial manifolds \[E_\bu G\times_G M\cong G\ltimes M_\bu,\] where $E_\bu G\times_G M$ is the quotient of $E_{\bu}G\times M$ by the diagonal $G$-action.
Futhermore, the bundle $E_n G\times M \to E_n G\times_G M\cong G^n\times M$ is trivial with section $s$
\begin{equation}\begin{split}\label{eq:trivSection}
 s\colon G^n\times M&\to G^{n+1}\times M = E_n G\times M,\\
 (g_1,\ldots,g_n,p) &\mapsto (e,g_1^{-1},\ldots, (g_1\cdots g_n)^{-1},g_1\cdots g_n p).
\end{split}\end{equation} 
\end{remark}

\subsection{Simplicial differential forms}
We recall  the notion of simplicial differential forms introduced by Dupont \cite[Def 2.1]{Dupont}:

Let $\bX$ be a simplicial manifold with face maps $d_i\colon X_q\to X_{q-1}$ for $i=0,\ldots,q$. Let $\Delta^q\subset\R^{q+1}$ be the standard $q$-simplex and $\varepsilon_i\colon \Delta^{q-1}\to\Delta^q$ the inclusion of the $i$-th face.

A \textbf{simplicial differential {\boldmath $n$}-form} $\varphi$ on $\bX$ consists of a sequence of forms \[\varphi^{(q)}\in\Omega^n(\Delta^q\times X_q),~q=0,1,\ldots \] satisfying 
\begin{equation*}\label{eq:simplForms}
 (\varepsilon_i\times \id)^*\varphi^{(q)} = (\id\times d_i)^*\varphi^{(q-1)}
\end{equation*}
for all $q$ and all $i=1,\ldots,q$.

The set of all simplicial $n$-forms on $\bX$ is denoted
$\Omega^n_{\spl}(\bX)$.  Equipped with the usual de Rham differential
$d$, $\Omega^*_{\spl}(\bX)$ is a differential graded commutative
algebra, which is also the total complex of the following double
complex:
\begin{equation}\label{eq:simplFormsDoubleComplex}
 \Omega^n_{\spl}(\bX) = \bigoplus_{j+k = n}\Omega_{\spl}^{j,k}(\bX).
\end{equation}
Here, similar to \eqref{eq:prodDoubleCx}, $\Omega_{\spl}^{j,k}(\bX)$
consists of those simplicial differential $n$-forms $\varphi =
(\varphi^{(q)})$ with the property
\[\varphi^{(q)}\in\Gamma(\Delta^q\times X_q,\Lambda^j T^*\Delta^q\otimes\Lambda^kT^*X_q)\subset\Omega^{j+k}(\Delta^q\times X_q).\]
The de Rham differential $d$ on $\Omega_{\spl}^*(\bX)$ is 
\[d = d^\Delta + (-1)^j d^X, \]
 where $d^\Delta$ and $d^X$ denote the de Rham differentials in the $\Delta^q$ and $X_q$-directions, respectively. 

Dupont proved that $\Omega^*(\bX)$ and $\Omega_{\spl}^*(\bX)$ are quasi-isomorphic.
\begin{thm}[{\cite[Thm.\ 2.3]{Dupont}}]\label{thm:Dupont}
 There are natural maps of doubles complexes 
\[\xymatrix{\bigl(\Omega_{\spl}^{*,*}(\bX), d^\Delta, d^X\bigr)\ar@<.5ex>[r]^(.55){\mathscr{I}}&\ar^(.45){\mathscr{C}}@<.5ex>[l]\bigl(\Omega^{*,*}(\bX), \del, d\bigr),}\]
 which give natural chain homotopy equivalences between $\bigl(\Omega_{\spl}^{*,k}(\bX),d^\Delta\bigr)$ and $\bigl(\Omega^{*,k}(\bX),\del\bigr)$. 

In particular, $\mathscr{C}$ and $\mathscr{I}$ induce quasi-isomorphisms between the total complexes $\bigl(\Omega^*_{\spl}(\bX),d\bigr)$ and $\bigl(\Omega^*(\bX), D\bigr)$.
\end{thm}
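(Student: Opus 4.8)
This is Dupont's theorem, so in lieu of a full proof I sketch the structure of the argument. The plan is to realize $\mathscr{I}$ as fiber integration over the geometric simplices and $\mathscr{C}$ as an extension operator assembled from Dupont's elementary forms, and then to show that the two are mutually inverse up to an explicit homotopy in the $\Delta$-direction. The passage to total complexes is then a formal consequence.

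First I would pin down the integration map. Fixing the $X$-degree $k$, I send the bidegree-$(j,k)$ part of a simplicial form $\varphi=(\varphi^{(q)})$ to the fiber integral
$$\mathscr{I}(\varphi) := \int_{\Delta^{j}} \varphi^{(j)} \in \Omega^{k}(X_{j}) = \Omega^{j,k}(\bX),$$
taken over the top-dimensional fiber $\Delta^{j}$ of the component $\varphi^{(j)}\in\Gamma(\Delta^{j}\times X_{j},\Lambda^{j}T^{*}\Delta^{j}\otimes \Lambda^{k}T^{*}X_{j})$. That $\mathscr{I}$ is a chain map of double complexes is checked in two halves. Compatibility with the $X$-direction, $\mathscr{I}\circ d^{X}=d\circ\mathscr{I}$, is immediate because differentiation along $X_{j}$ commutes with integration along $\Delta^{j}$. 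Compatibility with the simplicial direction, $\mathscr{I}\circ d^{\Delta}=\del\circ\mathscr{I}$, is Stokes' theorem:
$$\mathscr{I}(d^{\Delta}\varphi)=\int_{\Delta^{j+1}} d^{\Delta}\varphi^{(j+1)} = \int_{\partial\Delta^{j+1}}\varphi^{(j+1)} = \sum_{i}(-1)^{i}\int_{\Delta^{j}}(\varepsilon_{i}\times\id)^{*}\varphi^{(j+1)},$$
and the defining compatibility $(\varepsilon_{i}\times\id)^{*}\varphi^{(j+1)}=(\id\times d_{i})^{*}\varphi^{(j)}$ of simplicial forms converts the boundary-face restrictions into the pullbacks $\sum_{i}(-1)^{i}d_{i}^{*}$ that make up $\del$.

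Next I would construct $\mathscr{C}$. On each $\Delta^{q}$ with barycentric coordinates $t_{0},\dots,t_{q}$, introduce Dupont's elementary forms
$$W_{i_{0}\cdots i_{p}} = p!\sum_{\alpha=0}^{p}(-1)^{\alpha}\,t_{i_{\alpha}}\,dt_{i_{0}}\wedge\cdots\wedge\widehat{dt_{i_{\alpha}}}\wedge\cdots\wedge dt_{i_{p}},$$
whose behavior under face inclusions mirrors the simplicial face maps and which satisfy the normalization $\int_{\Delta^{p}}W_{0\cdots p}=1$. The map $\mathscr{C}\colon\Omega^{*,*}(\bX)\to\Omega^{*,*}_{\spl}(\bX)$ is assembled by wedging these forms against the appropriate face-pullbacks of a given $\omega\in\Omega^{k}(X_{q})$; one checks that this produces a compatible family on the $\Delta^{q}\times X_{q}$ and that it intertwines $(\del,d)$ with $(d^{\Delta},d^{X})$, using the structure equations satisfied by the $W_{i_{0}\cdots i_{p}}$. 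The normalization above then gives $\mathscr{I}\circ\mathscr{C}=\id$ directly.

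The heart of the proof, and the step I expect to be the main obstacle, is the reverse composite. Here I would exhibit an explicit operator $h$ on simplicial forms, lowering the $\Delta$-degree by one and built from a fiberwise Poincar\'e homotopy contracting each $\Delta^{q}$ along its barycentric structure, satisfying
$$d^{\Delta}h + h\,d^{\Delta} = \mathscr{C}\circ\mathscr{I} - \id$$
on $(\Omega_{\spl}^{*,k}(\bX),d^{\Delta})$ for each fixed $k$. Producing $h$ compatibly across all simplicial degrees, so that it is genuinely a chain homotopy of the full double complex in the simplex direction, is where Dupont's careful combinatorial bookkeeping of the elementary forms and the simplicial compatibility relations is needed. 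Granting it, $\mathscr{I}$ and $\mathscr{C}$ are mutually inverse chain homotopy equivalences between the rows $(\Omega_{\spl}^{*,k},d^{\Delta})$ and $(\Omega^{*,k},\del)$. Finally, a map of first-quadrant double complexes that is a quasi-isomorphism on each row induces a quasi-isomorphism on total complexes --- by comparison of the two filtration spectral sequences, or equivalently by a mapping-cone/acyclic-assembly argument --- so $\mathscr{I}$ and $\mathscr{C}$ induce quasi-isomorphisms between $(\Omega^{*}_{\spl}(\bX),d)$ and $(\Omega^{*}(\bX),D)$, as claimed.
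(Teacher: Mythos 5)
The paper gives no proof of this statement: it is imported verbatim as Dupont's Theorem~2.3, with the maps $\mathscr{I}$ and $\mathscr{C}$ only recalled explicitly in Eqs.~\eqref{eq:dupontI} and \eqref{eq:dupontC}. Your sketch correctly reproduces the structure of Dupont's original argument --- Stokes' theorem combined with the simplicial compatibility condition for the chain-map property of fiber integration, the Whitney elementary forms normalized so that $\mathscr{I}\circ\mathscr{C}=\id$, an explicit fiberwise contraction homotopy giving $\mathscr{C}\circ\mathscr{I}\simeq\id$ in the simplex direction, and a first-quadrant spectral-sequence comparison to pass to total complexes --- so it matches the cited source, and there is nothing in the paper itself to compare against.
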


The map $\mathscr{I}$ in Dupont's theorem is integration over the
fiber: if $\varphi \in \Omega_{\spl}^{j,k}(\bX)$, then
\begin{equation}\label{eq:dupontI}
\mathscr{I}(\varphi) := \int_{\Delta^j}\varphi^{(j)}\in\Omega^k(X_j)
\end{equation}
Going the other direction, if $ \beta \in  \Omega^k(X_j)$, 
then the simplicial form $\mathscr{C}(\beta)\in\Omega^{j,k}_{\spl}(\bX)$ is:
\begin{equation}\label{eq:dupontC}\begin{split}
\mathscr{C}(\beta)^{(q)} :=&
\begin{cases}
 j!\sum_{|I|=j}\sum_{\ell=0}^j(-1)^\ell t_{i_\ell} dt_{i_0}\wedge\ldots\wedge \widehat{dt_{i_\ell}}\wedge\ldots\wedge dt_{i_j}\wedge \mu_I^*\beta & \textrm{if }q\geq j,\\
0 & \textrm{if }q<j.
\end{cases}
\end{split}\end{equation}
Here $I=(i_0,\ldots,i_j)$ is a multi-index with $0\leq i_0<\cdots<
i_j\leq q$, $|I|:=j$, and $\mu_I = d_{\tilde{\iota}_{q-j}}\circ\ldots\circ d_{\tilde{\iota}_1}\colon X_q\to X_j$ is the face map corresponding to the complementary sequence $0\leq \tilde{\iota}_1<\cdots< \tilde{\iota}_{q-j}\leq q$ of $I$. 

Composing the inclusion $\iota$ \eqref{eq:mfdSimpMfd}, $\pi_M^*$ \eqref{eq:piStar} and Dupont's map $\mathscr{C}$ \eqref{eq:dupontC} for $\bX = \EGM$, we obtain a quasi-isomorphism

\begin{equation}
\Omega^*(M)\xto{\iota} \Omega^*(M_\bu)\xto{\pi_M^*} \Omega^*(\EGM)\xto{\mathscr{C}} \Omega_{\spl}^*(\EGM).
\end{equation}

Note that a $k$-form $\alpha\in\Omega^{k}(M)$ is mapped to the simplicial $k$-form $\mathscr{C}(\pi_M^*\alpha)\in\Omega_{\spl}^{0,k}(\EGM)$ which is given by the sequence
\[ \mathscr{C}(\pi_M^*\alpha)^{(q)} = \pi_M^*\alpha\in\Omega^{0,k}(\Delta^q\times E_q G\times M).\]

The map induced by $\mathscr{C}\circ\pi_M^*\circ \iota$ on the corresponding Cartan complexes, which we consider next, provides the first step in the construction of the chain map $C_G^*(M)\to \bOmega(G\ltimes M)$.

\subsection{Cartan complexes}
If $A$ is a $G^{\star}$-module in the sense of \cite[Def.\ 2.3.1]{G-S}, with differential $d^A$ and insertion operation $\iota_\g^A$, let
\begin{equation}\begin{split}\label{eq:CartanComplexGstarModule}
C_{G}(A):=& \bigl(S(\g^{\vee}) \tensor A \bigr)^{G}, \\
 d_G =& \delta + d^A
\end{split}\end{equation}
denote the Cartan complex (\cite[section 6.5]{G-S}),  where $\delta = - \Sym\circ\iota_\g^A$ the composition of $-\iota_\g^A$ and the symmetrization $\Sym\colon \g^\vee\otimes S^*(\g^\vee)\to S^{*+1}(\g^\vee)$.
This is also the total complex of the double complex 
\begin{equation*}\begin{split}
 C_G^{i,j}(A) :=& \bigl(S^i(\g^\vee)\otimes A^{j-i}\bigr)^G,\\
 C_G^*(A) =& \Tot( C_G^{*,*}(A)).
\end{split}\end{equation*}
Define the decreasing filtration on $C_G(A)$:
\begin{equation} \label{eq:filt}
 F_p C_G(A):=\bigoplus_{i\geq p}\bigoplus_j C_G^{i,j}(A)
\end{equation}
If $A$ is bounded below, then the associated spectral sequence clearly converges.

\begin{lemma}\label{lem:inducedQuiso}
Let $G$ be a compact Lie group, $A$ and $B$ two $G^\star$-modules
which are bounded below as complexes and let $\phi\colon A\to B$ be a quasi-isomorphism of $G^\star$-modules, i.e., a morphism of $G^\star$-modules, which induces an isomorphism of $G$-modules on total cohomology. Then the induced map of Cartan complexes
\[\id_{S(\g^\vee)}\otimes\phi\colon C_G(A)\to C_G(B) \]
is a quasi-isomorphism.
\end{lemma}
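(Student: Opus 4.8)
The plan is to run a spectral sequence comparison using the filtration \eqref{eq:filt} by symmetric degree. First I would observe that $\id_{S(\g^\vee)}\otimes\phi$ preserves this filtration: it acts as the identity on the $S(\g^\vee)$-factor and so does not change the symmetric degree $i$, hence it induces a morphism between the associated spectral sequences of $C_G(A)$ and $C_G(B)$. Because both $A$ and $B$ are bounded below, in each total degree $n$ only finitely many of the terms $C_G^{i,n}=\bigl(S^i(\g^\vee)\otimes A^{n-i}\bigr)^G$ are nonzero (the constraint $i\geq 0$ together with $A^{n-i}=0$ below the lower bound forces $0\leq i\leq n-N$), so both filtrations are bounded in each total degree and the two spectral sequences converge to $H(C_G(A))$ and $H(C_G(B))$ respectively.

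Next I would identify the first pages. Writing $d_G=\delta+d^A$ as in \eqref{eq:CartanComplexGstarModule}, the operator $\delta=-\Sym\circ\iota_\g^A$ raises the symmetric degree by one and therefore vanishes on the associated graded, so the $E_0$-differential is induced by $\id\otimes d^A$. Thus $E_0^{p,\bullet}=\bigl(S^p(\g^\vee)\otimes A^{\bullet}\bigr)^G$ with differential $\id\otimes d^A$. Here compactness of $G$ enters decisively: averaging with respect to normalized Haar measure produces a $G$-equivariant chain projection onto the invariants, so $(-)^G$ is exact and commutes with cohomology. Combined with the K\"unneth isomorphism (with $S^p(\g^\vee)$ carrying the zero differential) this yields $E_1^{p,\bullet}\cong\bigl(S^p(\g^\vee)\otimes H(A,d^A)\bigr)^G$, and identically for $B$.

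On the $E_1$-page the morphism induced by $\id\otimes\phi$ is $\id_{S^p(\g^\vee)}\otimes H(\phi)$ followed by passage to invariants. By hypothesis $H(\phi)\colon H(A)\to H(B)$ is an isomorphism of $G$-modules, so $\id\otimes H(\phi)$ is a $G$-equivariant isomorphism and stays an isomorphism after applying the exact functor $(-)^G$. Hence $\id\otimes\phi$ induces an isomorphism on $E_1$, and therefore on every later page and on $E_\infty$. Since both spectral sequences converge, the comparison theorem then forces $\id_{S(\g^\vee)}\otimes\phi$ to be an isomorphism on total cohomology, i.e.\ a quasi-isomorphism.

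The only genuinely non-formal inputs, and hence the points to treat with care, are the exactness of the invariants functor---precisely where compactness of $G$ is used, via the averaging projection---and the convergence of the two spectral sequences, guaranteed by the boundedness-below of $A$ and $B$. Once these two facts are in hand, the rest is a routine application of the spectral sequence comparison theorem, so I do not expect any further obstacle.
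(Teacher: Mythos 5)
Your proof is correct and follows essentially the same route as the paper: filter by symmetric degree as in \eqref{eq:filt}, use compactness of $G$ (exactness of invariants) to identify the $E_1$-pages as $\bigl(S^p(\g^\vee)\otimes H(A)\bigr)^G$, and invoke boundedness below for convergence together with the spectral sequence comparison theorem. The only difference is that you spell out the steps the paper delegates to citations (Guillemin--Sternberg for the $E_1$ identification, McCleary for the comparison theorem), modulo a harmless indexing slip where the $A$-degree in total degree $n$ should be $n-2i$ rather than $n-i$, since $\g^\vee$ sits in degree $2$.
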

\begin{proof}
 The induced map $\id_{S(\g^\vee)}\otimes\phi$ respects the filtrations defined in \eqref{eq:filt}. Since
$\phi$ is a quasi-isomorphism and $G$ is compact, $\id_{S(\g^\vee)} \tensor \phi$ induces an isomorphism
between the $E_1$ pages
\[ E_1^{p,q}(A) = \bigl( S^p(\g^\vee)\otimes H^{q-p}(A)\bigr)^G \to \bigl( S^p(\g^\vee)\otimes H^{q-p}(B)\bigr)^G = E_1^{p,q}(B)\]
 of the associated spectral
sequences (e.g. \cite[Thm.\ 6.5.1]{G-S}). Since $A$ and $B$ are bounded below, the filtrations are bounded in each degree. Therefore, $\id_{S(\g^\vee)} \tensor \pi_M^{\ast}$ is a quasi-isomorphism (e.g. \cite[Thm.\ 3.5]{McCleary}).
\end{proof}

\begin{ep}
For $M$ a $G$-manifold the Cartan complex of $\Omega^*(M)$ with the usual $G^\star$-module structure is the usual Cartan complex for $M$:
\begin{equation}\label{eq:cartanM}
 C_G(M)= C_G(\Omega^*(M)).
\end{equation}
\end{ep}

\begin{ep} For a simplicial $G$-manifold $\bX$, the complex $\Omega^*(\bX) = \Tot(\Omega^{*,*}(\bX))$ \eqref{eq:diffFormsSimplMfd} with differential $D=\del + (-1)^j d$ and the insertion operation $\iota_\g^{\Omega^*(\bX)}:=(-1)^j\iota_\g$ is a $G^\star$-module. Note that
\[ D \iota_\g^{\Omega^*(\bX)} + \iota_\g^{\Omega^*(\bX)}D = d\iota_\g + \iota_\g d\]
is still the usual Lie derivative. 
Its Cartan complex 
\begin{equation}\label{eq:cartanSimp}
\begin{split}
C_G^*(\bX)&:= \Bigl (C_G(\Tot(\Omega^{*,*}(X_\bu)),D_G \Bigr),\\
D_G &:=(-1)^j\delta + D =  \del + (-1)^j\delta + (-1)^j d
\end{split}
\end{equation}
is also the total complex of the tricomplex
\begin{equation*} \label{eq:tricomplex}
C_G^{i,j,k}(\bX) := 
\Bigl( S^{i}(\g^{\vee}) \tensor \Omega^{k-i}(X_j) \Bigr)^{G}.
\end{equation*}
Note the grading is such that if $x \in C_G^{i,j,k}(\bX)$, then $\deg x = i + j+ k$. 
\end{ep}

\begin{ep}
For a simplicial $G$-manifold $\bX$, consider the Cartan complex of $\Omega^*_{\spl}(\bX)$ \eqref{eq:simplFormsDoubleComplex} with the usual $G^\star$-module structure:
\begin{equation}\begin{split}
C_{G,\spl}^*(\bX) :=& C_G(\Omega^*_{\spl}(\bX)) \\
 D_G :=& \delta + d^\Delta + (-1)^j d^X.
\end{split}\end{equation}
Note that this is the total complex of the tricomplex
\begin{equation}\begin{split}\label{eq:simplCartanComplex}
C_{G,\spl}^{i,j,k}(\bX) :=& \Bigl( S^{i}(\g^{\vee}) \tensor \Omega^{j,k-i}_{\spl}(\bX) \Bigr)^{G} \subset  \prod_{q=0}^\infty \Bigl(S^i(\g^\vee)\tensor\Omega^{j,k-i}(\Delta^q\times X_q)\Bigr)^G. \\
\end{split}\end{equation}
\end{ep}

 Note that the $G^\star$-module structures on $\Omega(M)$, $\bOmega(\EGM)$ and $\bOmega_{\spl}(\EGM)$ are chosen in such a way, that the quasi-isomorphisms $\iota$ \eqref{eq:mfdSimpMfd}, $\pi_M^*$ \eqref{eq:piStar} and Dupont's map $\mathscr{C}$ \eqref{eq:dupontC} are maps of $G^\star$-modules. 
Therefore, Lemma \ref{lem:inducedQuiso} now implies

\begin{prop} \label{prop:2}
If $G$ is compact, then the map induced by $\mathscr{C}\circ\pi_M^*\circ \iota\colon\bOmega(M)\to \Omega_{\spl}^*(\EGM)$ on the total Cartan complexes 
\[
\jmath:=\id_{S(\g^{\vee})} \tensor (\mathscr{C}\circ\pi_M^*\circ \iota) \maps C_{G}^*(M) \to C_{G,\spl}^* \bigl(\EGM \bigr).
\]
is a quasi-isomorphism.
\end{prop}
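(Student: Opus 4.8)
The plan is to prove Proposition \ref{prop:2} by assembling three quasi-isomorphisms, each of which arises from applying Lemma \ref{lem:inducedQuiso} to one of the $G^\star$-module maps $\iota$, $\pi^*$, and $\mathscr{C}$. The central point is that Lemma \ref{lem:inducedQuiso} converts a quasi-isomorphism of $G^\star$-modules into a quasi-isomorphism of the associated Cartan complexes, provided the modules are bounded below. So the entire argument reduces to checking that each of the three maps $\iota \maps \Omega^*(M) \to \Omega^*(M_\bu)$, $\pi^* \maps \Omega^*(M_\bu) \to \Omega^*(\EGM)$, and $\mathscr{C} \maps \Omega^*(\EGM) \to \Omega^*_{\spl}(\EGM)$ is a quasi-isomorphism of $G^\star$-modules, and that the relevant complexes are bounded below.

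First I would verify that each map is a morphism of $G^\star$-modules for the $G^\star$-structures fixed in the excerpt. This is essentially bookkeeping: the inclusion $\iota$ into $\Omega^*(M_\bu)$ respects the insertion operators by construction (the degeneracies act as $\id_M$); the pullback $\pi^*$ intertwines the insertion operators because $\pi \maps \EGM \to M_\bu$ is $G$-equivariant; and Dupont's map $\mathscr{C}$ is compatible with the $G^\star$-structures by the remark preceding the proposition, which explicitly notes that the three maps are $G^\star$-module maps. Next, each map is a quasi-isomorphism of the underlying complexes: $\iota$ by the displayed Example containing \eqref{eq:mfdSimpMfd}, $\pi^*$ by the Proposition establishing \eqref{eq:piStar}, and $\mathscr{C}$ by Dupont's Theorem \ref{thm:Dupont}. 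One must also confirm boundedness below: $\Omega^*(M)$, $\Omega^*(M_\bu)$, $\Omega^*(\EGM)$, and $\Omega^*_{\spl}(\EGM)$ are all concentrated in non-negative degrees, so the hypotheses of Lemma \ref{lem:inducedQuiso} are met in each case.

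With these checks in place, Lemma \ref{lem:inducedQuiso} applies three times, using the compactness of $G$. It yields quasi-isomorphisms on Cartan complexes
\[
C_G^*(M) \xrightarrow{\id\tensor\iota} C_G^*(M_\bu) \xrightarrow{\id\tensor\pi^*} C_{G}^*(\EGM) \xrightarrow{\id\tensor\mathscr{C}} C_{G,\spl}^*(\EGM),
\]
where I identify $C_G^*(M) = C_G(\Omega^*(M))$ and $C_{G,\spl}^*(\EGM) = C_G(\Omega^*_{\spl}(\EGM))$ via the Examples defining \eqref{eq:cartanM} and \eqref{eq:simplCartanComplex}. The composite $\jmath = \id_{S(\g^\vee)} \tensor (\mathscr{C}\circ\pi^*\circ\iota)$ is therefore a quasi-isomorphism, being a composite of quasi-isomorphisms, which is precisely the claim.

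The main obstacle I anticipate is not in the formal composition but in the verification that $\mathscr{C}$ and $\pi^*$ are genuinely maps of $G^\star$-modules rather than merely chain maps — in particular, checking that the nonstandard insertion operator $\iota_\g^{\Omega^*(\bX)} = (-1)^j \iota_\g$ on the Bott-Shulman-Stasheff complex is respected and that the same sign convention propagates correctly through Dupont's $\mathscr{C}$, which involves integration over simplices. The excerpt finesses this via the remark preceding the proposition, so for the purposes of this proof I would invoke that remark and Dupont's naturality rather than recompute the fiber integrals; nonetheless, the careful alignment of the three $G^\star$-structures (so that all three maps are simultaneously module maps) is where the genuine content lies, with compactness of $G$ entering only through Lemma \ref{lem:inducedQuiso}'s use of averaging to identify the $E_1$ pages.
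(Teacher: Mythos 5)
Your proof is correct and follows essentially the same route as the paper: the paper likewise reduces everything to Lemma \ref{lem:inducedQuiso}, citing the remark that $\iota$, $\pi^*$ and Dupont's $\mathscr{C}$ are quasi-isomorphisms of $G^\star$-modules, with compactness entering only through that lemma. The only cosmetic difference is that the paper applies the lemma once to the composite $\mathscr{C}\circ\pi^*\circ\iota$, whereas you apply it three times and compose the induced quasi-isomorphisms of Cartan complexes.
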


\subsection{The Cartan map}
Given a principal $G$-bundle $\pi\colon P\to B$ with connection
$A\in\Omega^1(P,\g)^G$, Cartan \cite{Cartan} constructed a chain map
$C_G^*(P)\to \Omega^*(B)$ now known as the \textbf{Cartan map}:
\begin{equation}
\Car^A\colon C_G^*(P) \to \Omega^*(P)^G_{\hor}\cong\Omega^*(B).
\end{equation}
If $\beta \in \bigl(S^i(\g^\vee)\otimes\Omega^*(P)\bigr)^G$, then
\[
\Car^A(\beta):= \hor_A\bigl(\langle (F_{A})^{i}
,\beta\rangle\bigr)\in\Omega^{*+2i}(P)^G_{\hor}.
\]
Here, $F_A= dA+ \frac{1}{2}[A,A]\in\Omega^2(P,\g)^G_{\hor}$ is the
curvature of $A$, 
\[ (F_{A})^{i} = \underset{i}{\underbrace{F_A\wedge\cdots\wedge F_A}}\in \Omega^{2i}(P,\g^{\otimes i})^G_{hor},\]
$\langle\cdot,\cdot\rangle$ denotes the natural pairing induced by $\g\otimes\g^\vee \to \R$, 
and $\hor_A\colon\Omega^*(P)^G\to\Omega^*(P)^G_{\hor}$ denotes the projection to horizontal forms defined by $A$.

\begin{remark}
 Recall that $\pi^*\colon \Omega^*(B)\to \Omega^*(P)^G_{\hor}$ is an
 isomorphism. If $G$ is compact, then $\pi^\ast$ composed with the inclusion 
\[\Omega^*(B)\xto{\pi^*}\Omega^*(P)^G_{\hor}\hookrightarrow C_G^*(P)\]
 induces an isomorphism in cohomology, with homotopy inverse $\Car^A$.
\end{remark}

There is also a simplicial version of this construction: Let $P_\bu\to B_\bu$ be a simplicial principal $G$-bundle with a simplicial connection $A\in\Omega_{\spl}^1(P_\bu,\g)^G$. The connection $A$ is given by a sequence of connections $A^{(q)}\in\Omega^1(\Delta^q\times P_q,\g)^G$ on the principal $G$-bundles $\Delta^q\times P_q\to \Delta^q\times B_p$. Applying the degree-wise Cartan maps 
\[\Car^{A^{(q)}}\colon C_G^*(\Delta^q\times P_q)\to \Omega^*(\Delta^q\times B_q)\]
gives a chain map between the total complexes \eqref{eq:simplCartanComplex} and \eqref{eq:simplFormsDoubleComplex}
\begin{equation}\label{eq:CartanMap}
\Car^A\colon C_{G,\spl}^*(P_\bu)\to \Omega_{\spl}^*(B_\bu).
\end{equation}

If $G$ is compact, then $\Car^A$ is a quasi-isomorphism.

\begin{ep}
 Let $M$ be a $G$-manifold and $\EGM\to E_\bu G\times_G M$ the simplicial principal $G$-bundle. For $i=0,\ldots,q$, let
\[\pi_i\colon E_q G\times M = G^{q+1}\times M\to G\]
 denote the projections and let $\theta_L\in\Omega^1(G,\g)^G$ denote the left-invariant Maurer-Cartan form on $G$.
Following Dupont \cite{Dupont}, we consider the canonical simplical connection 
\begin{equation}\begin{split}\label{eq:conn}
 \theta=&\bigl(\theta^{(q)}\bigr)\in\Omega^{0,1}_{\spl}(\EGM,\g)^G,\\
 \theta^{(q)} :=& \sum_{i=0}^q t_i \pi_i^*\theta_L\in\Omega^1(\Delta^q\times E_q G\times M,\g)^G,
\end{split}\end{equation}
where $t_i$, $i=0,\ldots,q$ are barycentric coordinates on $\Delta^q$.
The curvature of $\theta$ is 
\begin{equation*}
 F_{\theta^{(q)}} = \underbrace{d^\Delta\theta^{(q)}}_{\textrm{type }1,1} + \underbrace{d^{E_q G\times M}\theta^{(q)}+ \tfrac{1}{2}[\theta^{(q)},\theta^{(q)}]}_{\textrm{type }0,2} 
\in\Omega^{1,1}(\Delta^q\times E_qG \times M,\g)^G_{\hor}\oplus\Omega^{0,2}(\Delta^q\times E_qG\times M,\g)^G_{\hor}.
\end{equation*}

Making use of the trivializing section $s \maps G^q \times M \to
G^{q+1} \times M$ \eqref{eq:trivSection}, we can write the Cartan map for the connection $\theta^{(q)}$ on $\Delta^q \times E_q G\times M\to \Delta^q \times G^q\times M$ as
\begin{equation}\label{eq:CartanThetaq}\begin{split}
\Car^{\theta^{(q)}}\colon C_G^{i,j}(\Delta^q \times E_q G\times M)&\to \Omega^{i+j}(\Delta^q\times G^q\times M), \\
\beta&\mapsto \langle s^*F_{\theta^{(q)}}^i,s^*\hor_{\theta^{(q)}}\beta\rangle.
\end{split}\end{equation}

For example, for $q=0$, we have $\theta^{(0)} = \pi_0^*\theta_L$, $F_{\theta^{(0)}} = 0$ and $s^*\hor_{\theta^{(0)}} = s^*$, and hence
\begin{equation}\label{eq:CartanTheta0}
\Car^{\theta^{(0)}}(\beta) = 
\begin{cases}
 s^*\beta & \textrm{if }\beta\in C_G^{0,*}(G\times M), \\
 0 & \textrm{else}.
\end{cases}
\end{equation}

For $q=1$, we have 
\begin{equation}\label{eq:curvature}\begin{split}
\theta^{(1)} &=  t_0\pi_0^*\theta_L + t_1\pi_1^*\theta_L,\\ 
F_{\theta^{(1)}} &= -dt_1\wedge(\pi_0^*\theta_L-\pi_1^*\theta_L) - \tfrac{t_0t_1}{2}[\pi_0^*\theta_L-\pi_1^*\theta_L,\pi_0^*\theta_L-\pi_1^*\theta_L].
\end{split}\end{equation}

Let $I\colon G\to G$ denote the inversion on $G$, $L_g$ the diffeomorphism of $M$ corresponding to $g$ and let $\theta_R\in\Omega^1(G,\g)$ be the right-invariant Maurer-Cartan form. The differential of the section $s\colon G\times M\to G^2\times M$ 
 from \eqref{eq:trivSection} is
\begin{equation}\label{eq:ds} s_*|_{(g,p)}(\tilde{x},w) = \bigl(0,I_*(\tilde{x}),(L_g)_*(w)- v_{\theta_R(\tilde{x})}|_{gp}\bigr)~\textrm{ for } \tilde{x}\in T_gG, w\in T_pM.
\end{equation}
Therefore, 
\begin{equation}\label{eq:pushDownCurvature}\begin{split} s^* F_{\theta^{(1)}} &= -dt_1\wedge\pi_G^*\theta_R - \tfrac{t_0t_1}{2}\pi_G^*[\theta_R,\theta_R], \\
s^* F_{\theta^{(1)}}^i  &= (-1)^i i \tfrac{(t_0t_1)^{i-1}}{2^{i-1}} dt_1\wedge\pi_G^*\bigl(\theta_R\wedge[\theta_R,\theta_R]^{i-1}\bigr) + \bigl(-\tfrac{t_0t_1}{2} \pi_G^*[\theta_R,\theta_R]\bigr)^i.
\end{split}\end{equation}

The horizontal projection for the connection $\theta^{(1)}$ on $\Delta^1\times E_1G \times M = \Delta^1\times G^2\times M$ is given by 
\begin{equation*}\begin{split}
 T_{(t,g_0,g_1,p)}(\Delta^1\times G^2\times M)\to& T_{(t,g_0,g_1,p)}(\Delta^1\times G^2\times M), \\
 (a,\tilde{x}_0,\tilde{x}_1,w' )\mapsto& 
 (a,\tilde{x}_0,\tilde{x}_1,w')  - v^{G^2\times M}_{t_0 \theta_L(\tilde{x}_0) + t_1\theta_L(\tilde{x}_1)},
\end{split}\end{equation*}
where $v^{G^2\times M}$ is the fundamental vector field for the action $G\curvearrowright G^2\times M$ \eqref{eq:actionEGM}. In particular, its component in $T_pM$ is 
\begin{equation}\label{eq:dpiHor} (\pi_M)_*\bigl(\hor_{\theta^{(1)}}(a,\tilde{x}_0,\tilde{x}_1,w')\bigr) = w' - t_0 v_{\theta_L(\tilde{x}_0)}|_p - t_1 v_{\theta_L(\tilde{x}_1)}|_p. \end{equation}
\end{ep}

\begin{remark}
 Note that, since $F_{\theta^{(q)}}$ is of homogeneous total degree $2$, but not of homogeneous bi-degree, the chain map 
 \begin{equation}
\Car^\theta\colon C^*_{G,\spl}(\EGM)\to \bOmega_{\spl}(G\ltimes M)
\end{equation}
only preserves the grading on the total complexes, but maps elements of homogeneous tri-degree to elements of inhomogeneous bi-degree.
\end{remark}

\begin{remark}
 Note that $\theta = \mathscr{C}(\pi_0^*\theta_L)$, with $\pi_0^*\theta_L\in\Omega^1(E_0G\times M,\g)^G = \Omega^1(G\times M,\g)^G$ the pullback of the left-invariant Maurer-Cartan form. 
\end{remark}

\subsection{Cartan complex and Bott-Shulman-Stasheff complex}

We can now compose the chain maps defined above and, if $G$ is compact, obtain a 
quasi-isomorphism between the Cartan complex and the Bott-Shulman-Stasheff complex: 
\begin{prop}\label{prop:CartanBS}
 Let $G$ be a Lie group and $M$ a $G$-manifold. There is a natural chain map from the Cartan model to the Bott-Shulman-Stasheff model 
 \begin{equation}\label{eq:CartanBS}
\Phi \maps   C_G^*(M)\xto{\jmath} C_{G,\spl}^*(\EGM)\xto{\Car^{\theta}}\Omega_{\spl}^*(G\ltimes M_\bu)\xto{\mathscr{I}} \bOmega(G \ltimes M_{\bu}),
 \end{equation}
 where 
\begin{itemize}
 \item $\jmath$ is the chain map from Prop.\ \ref{prop:2},
 \item $\Car^\theta$ is the simplicial Cartan map \eqref{eq:CartanMap} for the simplical connection $\theta$ \eqref{eq:conn} on $\EGM \to E_\bu G\times_G M\cong G\ltimes M_\bu$, 
 \item $\mathscr{I}$ is the quasi-isomorphism \eqref{eq:dupontI} defined by Dupont in Thm.\ \ref{thm:Dupont}.
\end{itemize}

If $G$ is compact, then all of the above are quasi-isomorphisms and
hence $\Phi \maps C_G^*(M)\to \bOmega(G \ltimes M_{\bu})$ is a quasi-isomorphism.
\end{prop}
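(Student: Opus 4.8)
The plan is to assemble the quasi-isomorphism $\Phi$ as the composite of three maps, each of which has already been shown (or can be shown with the tools at hand) to be a quasi-isomorphism when $G$ is compact, and then invoke the two-out-of-three principle for quasi-isomorphisms: a composite of quasi-isomorphisms is again a quasi-isomorphism. Concretely, the proof reduces to verifying that each of $\jmath$, $\Car^\theta$, and $\mathscr{I}$ is a chain map, and that each induces an isomorphism on cohomology under the compactness hypothesis.

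First I would record that $\Phi$ is a chain map. This is immediate from the fact that each factor is a chain map: $\jmath = \id_{S(\g^\vee)}\tensor(\mathscr{C}\circ\pi^*\circ\iota)$ is a chain map because $\iota$, $\pi^*$, and Dupont's $\mathscr{C}$ are maps of $G^\star$-modules (as noted just before Prop.\ \ref{prop:2}), so tensoring with $\id_{S(\g^\vee)}$ and passing to invariants respects the Cartan differentials; $\Car^\theta$ is a chain map by its construction as the degree-wise (simplicial) Cartan map \eqref{eq:CartanMap}; and $\mathscr{I}$ is a chain map by Dupont's Thm.\ \ref{thm:Dupont}.

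Next I would verify that each factor is a quasi-isomorphism for $G$ compact. The map $\jmath$ is a quasi-isomorphism by Prop.\ \ref{prop:2}. The map $\mathscr{I}$ is a quasi-isomorphism by Thm.\ \ref{thm:Dupont} (it is part of a chain homotopy equivalence between $\Omega^*_{\spl}(\bX)$ and $\Omega^*(\bX)$, here for $\bX = G\ltimes M_\bu$). The remaining factor, $\Car^\theta$, is the point that needs the most care: one must argue that the simplicial Cartan map is a quasi-isomorphism. Degree-wise, each $\Car^{A^{(q)}}$ is a homotopy inverse to the inclusion $\pi^*\colon\Omega^*(\Delta^q\times B_q)\hookrightarrow C_G^*(\Delta^q\times P_q)$ whenever $G$ is compact, by the standard Chern--Weil/Cartan theory recalled in the remark following \eqref{eq:CartanMap}. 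Passing from the degree-wise statement to the statement on total complexes is handled by a spectral sequence (or filtration) comparison argument entirely analogous to Lemma \ref{lem:inducedQuiso}: filter both tricomplexes \eqref{eq:simplCartanComplex} and the simplicial de Rham double complex \eqref{eq:simplFormsDoubleComplex} by simplicial degree $j$, observe that $\Car^\theta$ respects these filtrations, and use that the induced map on $E_1$-pages is an isomorphism because in each fixed simplicial degree $q$ it is the degree-wise Cartan isomorphism; boundedness in each total degree then yields the isomorphism on total cohomology.

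The main obstacle I anticipate is precisely this last step: controlling $\Car^\theta$ on the total complex. Because the curvature $F_{\theta^{(q)}}$ of the canonical connection is of pure total degree $2$ but of mixed bi-degree (as flagged in the remark after the Cartan-map example), $\Car^\theta$ preserves only the total grading and not the individual filtration degrees in the naive way; so one has to choose the filtration carefully (by simplicial degree, along which $\theta^{(q)}$ and its curvature are compatible with the face maps) so that the associated graded map is genuinely the degree-wise Cartan map and the convergence hypotheses of the comparison theorem (e.g.\ \cite[Thm.\ 3.5]{McCleary}) are met. Once the filtration is correctly set up, the argument is formal. Therefore the conclusion follows: for $G$ compact, $\jmath$, $\Car^\theta$, and $\mathscr{I}$ are each quasi-isomorphisms, and hence so is their composite $\Phi$.
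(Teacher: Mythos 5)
Your proposal follows essentially the same route as the paper, which states this proposition without a separate proof: it is treated there as the immediate composition of the three chain maps, with $\jmath$ a quasi-isomorphism by Prop.\ \ref{prop:2}, $\Car^\theta$ by the assertion made right after Eq.\ \eqref{eq:CartanMap} (a standard fact from Chern--Weil/Cartan theory, cf.\ the reference to Meinrenken), and $\mathscr{I}$ by Dupont's Thm.\ \ref{thm:Dupont}. Your extra spectral-sequence sketch for $\Car^\theta$ fills in a fact the paper simply asserts; the only caution is that, as written, it conflates the simplex-form degree $j$ of the bicomplex \eqref{eq:simplFormsDoubleComplex} with the simplicial level $q$ when identifying the $E_1$-page, though you correctly flag the mixed bi-degree of $F_{\theta^{(q)}}$ as the point needing care.
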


\begin{remark}
 Note that if $G$ is not compact, $\jmath$ and $\Car^\theta$ can fail to be quasi-isomorphisms. 

In the case of $\jmath$, taking invariants will in general not commute with taking cohomology, \cite[Thm.\ 6.5.1]{G-S} and hence Lemma \ref{lem:inducedQuiso} fails in general.

 If $G$ is non-compact, then, in general, the Cartan model does not compute the equivariant cohomology. Since the equivariant cohomology of a principal $G$-bundle equals the cohomology of the base, the Cartan map $\Car^\theta$ is not an isomorphism in general. 
\end{remark}

\subsection{Homotopy moment maps from Cartan cocycles} \label{sec:CartanMM_proof}
We are now in the position to prove Thm. \ref{thm:CartanMM} by applying Thm.\ \ref{thm:BSmm} to the image of a cocycle in the Cartan complex under the chain map \eqref{eq:CartanBS} and to obtain an explicit homotopy moment map:

\begin{proof}[Proof of Thm. \ref{thm:CartanMM}]
Let $\omega + \sum_{i=1}^{\lfloor\frac{n+1}{2}\rfloor} P_i\in
C_G^{n+1}(M)$ be a cocycle. If $G$ is compact, then using
the chain map $\Phi$ \eqref{eq:CartanBS} and the second part of Thm.\
\ref{thm:BSmm}, we immediately obtain a homotopy moment map. 

If $G$ is an arbitrary Lie group, we will show that the
$\Omega^{0,n+1}(G \ltimes M_{\bu}) = \Omega^{n+1}(M)$ and
$\Omega^{1,n}(G \ltimes M_{\bu}) = \Omega^{n}(G\times M)$-components
of the image of $\omega + \sum_{i=1}^{\lfloor\frac{n+1}{2}\rfloor}
P_i$ in $\Omega^{n+1}(G \ltimes M_{\bu})$ are, in fact,
$G$-invariant. This will then allow us to use the first part of Thm.\ \ref{thm:BSmm} to construct a homotopy moment map.

The elements
\begin{equation*}\begin{split}
 \jmath(\omega)&\in C_{G,\spl}^{0,0,n+1}(\EGM)\subset \prod_{q=0}^\infty \Omega^{0,n+1}(\Delta^q\times E_q G\times M)^G, \\
 \jmath(P_i)&\in C_{G,\spl}^{i,0,n-i+1}(\EGM) \subset 
\prod_{q=0}^\infty \Bigl(S^i(\g^\vee)\tensor\Omega^{0,n-2i+1}(\Delta^q\times E_qG\times M)\Bigr)^G,
\end{split}\end{equation*}
are given by the sequences $\jmath(\omega)^{(q)}=\pi_M^*\omega$ and $\jmath(P_i)^{(q)} = \pi_M^*P_i$, respectively.

The images of $\jmath(\omega)$ and $\jmath(P_i)$ under the simplicial Cartan map $\Car^\theta$ are given by the sequences 
\begin{equation}\begin{split}\label{eq:CarOmegaPi}
 \Car^{\theta^{(q)}}(\pi_M^*\omega) &= s^*\hor_{\theta^{(q)}}\pi_M^*\omega, \\
 \Car^{\theta^{(q)}}(\pi_M^*P_i)    &= 
 \langle s^* F_{\theta^{(q)}}^i, s^*\hor_{\theta^{(q)}} \pi_M^*P_i\rangle,
\end{split}\end{equation}
respectively. Here $s$ is the trivializing section \eqref{eq:trivSection}.

Keeping in mind that Thm.\ \ref{thm:BSmm} only uses the components in $\Omega^{0,n+1}(G \ltimes M_{\bu})$ and $\Omega^{1,n}(G \ltimes M_{\bu})$, we only need to compute the $(0,n+1)$- and $(1,n)$-components of $\mathscr{I}\bigl(\Car^\theta(\jmath(\omega))\bigr)$ and $\mathscr{I}\bigl(\Car^\theta(\jmath(P_i))\bigr)$. 
Since $\mathscr{I}$ is a map of bicomplexes, we only need the Cartan maps for $\theta^{(0)}$ and $\theta^{(1)}$. 

Equation \eqref{eq:CartanTheta0} implies that the $\Omega^{0,n+1}(G \ltimes M_{\bu})$-component of $\mathscr{I}\bigl(\Car^\theta(\jmath(\omega + \sum_i P_i))\bigr)$ is indeed 
\begin{equation*}
\int_{\Delta^0}\Car^{\theta^{(0)}}\Bigl(\pi_M^*\omega+\sum_{i=1}^{\lfloor \frac{n+1}{2}\rfloor} P_i\Bigr) = s^*\pi_M^*\omega = \omega, 
\end{equation*}
and, in particular, $G$-invariant.

We now turn to the $\Omega^{1,n}(G \ltimes M_{\bu})$-component. Since $\Car^{\theta^{(1)}}(\pi_M^*\omega)\in\Omega^{0,{n+1}}(\Delta^1\times E_1 G\times M)^G$, we have 
\begin{equation*}\begin{split}
 \int_{\Delta^1}\Car^{\theta^{(1)}}(\pi_M^*\omega) &= 0.
\end{split}\end{equation*}

Thus, the  $\Omega^{1,n}(G \ltimes M_{\bu})$-component of $\mathscr{I}\bigl(\Car^\theta(\jmath(\omega + \sum_i P_i))\bigr)$, from which the homotopy moment map is constructed, is 
\[\sum_{i=1}^{\lfloor \frac{n+1}{2}\rfloor}\int_{\Delta^1} \Car^{\theta^{(1)}}(\pi_M^*P_i) = \sum_{i=1}^{\lfloor \frac{n+1}{2}\rfloor}\int_{\Delta^1} \langle s^* F_{\theta^{(1)}}^i, s^*\hor_{\theta^{(1)}} \pi_M^*P_i\rangle.\]
We will now compute this explicitly, and also show that it defines a $G$-invariant $n$-form on $G\times M$, so that we can apply the first part of Thm.\ \ref{thm:BSmm}.

Combining \eqref{eq:ds} and \eqref{eq:dpiHor}, and using $I^*\theta_L = -\theta_R$ as well as $t_0=1-t_1$ and $(L_{g^{-1}})_* (v_{\theta_R(\tilde{x})}) = v_{\theta_L(\tilde{x})}$, we have
\begin{equation*}\begin{split}
 (\pi_M)_*\bigl(\hor_{\theta^{(1)}} s_*(\tilde{x},w)\bigr) =&  (\pi_M)_*\bigl(\hor_{\theta^{(1)}}(0,I_*(\tilde{x}),(L_g)_*(w)- v_{\theta_R(\tilde{x})}|_{g p})\bigr)\\
 =& (L_g)_*(w)- v_{\theta_R(\tilde{x})}|_{gp} 
- t_1 v_{I^*\theta_L(\tilde{x})}|_{gp} 
 =(L_g)_*(w) - t_0 v_{\theta_R(\tilde{x})}|_{gp} \\
 =&(L_g)_*(w - t_0 v_{\theta_L(\tilde{x})}|_{p})
\end{split}\end{equation*}
 for all $(\tilde{x},w)\in T_{(g,p)}(G\times M)$. Using the $G$-invariance of $P_i$, i.e. $L_g^*P_i = \Ad_g^\vee P_i$, we have
\begin{equation}\label{eq:horPi}
 s^*\hor_{\theta^{(1)}}\pi_M^*P_i = (\Ad_g^\vee)^{\otimes i}P_i \circ \phi_{t_0}^{\otimes(n-2i+1)},
\end{equation}
where $\phi_{t_0}$ denotes the map $T_gG\oplus T_pM \ni (\tilde{x},w) \mapsto w-t_0 v_{\theta_L(\tilde{x})}|_{p}\in T_p M$.
Combining \eqref{eq:CartanThetaq}, \eqref{eq:pushDownCurvature}, \eqref{eq:horPi} and $\Ad_{g^{-1}}\theta_R = \theta_L$, we have 
\begin{equation*}
 \Car^{\theta^{(1)}}(\pi_M^*P_i) = \bigl\langle(-1)^i i \tfrac{(t_0t_1)^{i-1}}{2^{i-1}}   dt_1\wedge\pi_G^*\bigl(\theta_L\wedge[\theta_L,\theta_L]^{i-1}\bigr)+ \bigl(-\tfrac{t_0t_1}{2} \pi_G^*[\theta_L,\theta_L]\bigr)^i, P_i \circ \phi_{t_0}^{\otimes(n-2i+1)} \bigr\rangle.
\end{equation*}
Since $\phi_{t_0}$ is invariant under the left action $(g,(t,h,p))\mapsto (t,gh,p)$, this also proves that \[\Car^{\theta^{(1)}}(\pi_M^*P_i)\in\Omega^{n}(\Delta^1\times G\times M)^G,\]
 where $G$ acts by the same action. Hence also
\begin{equation}\label{eq:CarPi}
 \int_{\Delta^1} \Car^{\theta^{(1)}}(\pi_M^*P_i) =\int_{\Delta^1} (-1)^i i \tfrac{(t_0t_1)^{i-1}}{2^{i-1}}   dt_1\wedge\bigl\langle\pi_G^*\bigl(\theta_L\wedge[\theta_L,\theta_L]^{i-1}\bigr), P_i \circ \phi_{t_0}^{\otimes(n-2i+1)} \bigr\rangle\in \Omega^{1,n}(G \ltimes M_{\bu})^G.
\end{equation}
However, recall that $\phi_{t_0}$ depends on $t_0 =1- t_1$. For $x_1,\ldots, x_k\in\g$ and $w_1,\ldots,w_{n-k}\in T_pM$ we have 
\begin{equation*}\begin{split}
&\bigl\langle\pi_G^*\bigl(\theta_L\wedge[\theta_L,\theta_L]^{i-1}\bigr), P_i \circ \phi_{t_0}^{\otimes(n-2i+1)} \bigr\rangle \bigl((x_1,0),\ldots,(x_k,0),(0,w_1),\ldots,(0,w_{n-k})\bigr) \\
=&  \sum_{\sigma\in Sh(2i-1,k-2i+1)}(-1)^\sigma(-t_0)^{k-2i+1}\langle\theta_L\wedge[\theta_L,\theta_L]^{i-1}(x_{\sigma(1)},\ldots,x_{\sigma(2i-1)}), P_i(v_{x_{\sigma(2i)}},\ldots,v_{x_{\sigma(k)}},w_1,\ldots,w_{n-k}) \rangle \\
=& \tfrac{k!t_0^{k-2i+1}}{(k-2i+1)!}\Alt_k\bigl(\iota_\g^{k-2i+1} P_i(\cdot,\underbrace{[\cdot,\cdot],\ldots,[\cdot,\cdot]}_{i-1})\bigr)(x_1,\ldots,x_k)(w_1,\ldots,w_{n-k}).
\end{split}\end{equation*}
Combining this with \eqref{eq:CarPi}, and since $\int_0^1 t_0^{k-i} t_1^{i-1}dt_1 =  \frac{(i-1)!(k-i)!}{k!}$, we see that the image of $\omega+ \sum_{i=1}^{\lfloor\frac{n+1}{2}\rfloor}P_i$ in $C_\g^{*}(M)$ is 
\begin{equation*}\begin{split}
  f^\vs:=  \sum_{k=1}^{n}\sum_{i=1}^{\lfloor\frac{n+1}{2}\rfloor}r\Bigl(\int_{\Delta^1}\Car^{\theta^{(1)}}(\pi_M^*P_i)\Bigr)_k
=&\sum_{k=1}^{n}\underbrace{\sum_{i=1}^{\lfloor\frac{k+1}{2}\rfloor} \tfrac{(-1)^i i!(k-i)!}{2^{i-1}(k-2i+1)!}  \Alt_k\bigl(\iota_\g^{k-2i+1}P_i(\cdot,\underbrace{[\cdot,\cdot],\ldots,[\cdot,\cdot]}_{i-1})\bigr)}_{ f^\vs_k}.
\end{split}\end{equation*}
With $f_k = \vs(k)f^\vs_k$, this completes the proof.
\end{proof}



\end{document}